\numberwithin{equation}{section}
\theoremstyle{definition}
\newtheorem{defn}[equation]{Definition}
\newtheorem{notatn}[equation]{Notation}
\newtheorem{rmk}[equation]{Remark}
\theoremstyle{plain}
\newtheorem{lemma}[equation]{Lemma}
\newtheorem{prop}[equation]{Proposition}
\newtheorem{thm}[equation]{Theorem}
\newtheorem{cor}[equation]{Corollary}
\newcommand{\fbar}{{\overline{f}}}
\newcommand{\jbar}{{\overline{\jmath}}}
\newcommand{\ubar}{{\overline{u}}}
\newcommand{\etabar}{{\overline{\eta}}}
\newcommand{\phibar}{{\overline{\phi}}}
\newcommand{\Gammatilde}{{\widetilde{\Gamma}}}
\newcommand{\Aut}{{\operatorname{Aut}}}
\newcommand{\Inn}{{\operatorname{Inn}}}
\newcommand{\Out}{{\operatorname{Out}}}
\newcommand{\Ker}{{\operatorname{Ker}}}
\newcommand{\conj}{{\operatorname{\complement}}}
\newcommand{\id}{{\operatorname{id}}}
\newcommand{\res}{{\operatorname{res}}}
\newcommand{\infl}{{\operatorname{infl\,}}}
\newcommand{\tgr}{{\operatorname{tgr}}}
\newcommand{\rd}{{\operatorname{rd}}}
\newcommand{\upperleft}[2]{{\sideset{^{#1}}{}{\mathop{#2}}}}
\def\clap#1{\hbox to 0pt{\hss#1\hss}}
\def\mathllap{\mathpalette\mathllapinternal}
\def\mathrlap{\mathpalette\mathrlapinternal}
\def\mathclap{\mathpalette\mathclapinternal}
\def\mathllapinternal#1#2{%
  \llap{$\mathsurround=0pt#1{#2}$}}
\def\mathrlapinternal#1#2{%
  \rlap{$\mathsurround=0pt#1{#2}$}}
\def\mathclapinternal#1#2{%
  \clap{$\mathsurround=0pt#1{#2}$}}
\begin{document}

\title{Iterated group extensions}

\author{CheeWhye Chin}

\address{Department of Mathematics\\
	 National University of Singapore\\
         10 Lower Kent Ridge Road\\
         Singapore 119076\\
	 Singapore}


\email{cheewhye@math.nus.edu.sg}



\date{August 29, 2010}



\keywords{}

\subjclass[2010]{Primary: 20J05; Secondary: 20J06}
%
%

\begin{abstract}
We introduce
the notion of
iterated group extensions,
which, roughly speaking,
is what one obtains
by forming a group extension
of a group extension.
We interpret
iterated extensions
in terms of
group cohomology,
in the same way as
Eilenberg-MacLane did
for usual group extensions.
From the $E_2$-spectral sequence
of a group extension,
there is a 6-term long exact sequence
in which
various cohomology groups
of degree~1 or~2
appear.
We give
an explicit identification of
each cohomology group
and
each morphism
appearing in
this long exact sequence
in terms of
iterated extensions
and associated notions.
These identifications
enable us
to uncover
natural relations
between
(iterated) extensions,
their automorphism groups,
and their outer actions. 
\end{abstract}

\maketitle


\section{Introduction}

A group extension
consists of
an exact sequence of
groups
\[
  \mathllap{
    (KGQ)
    \ :\ 
    \quad
  }
  K
  \rInto^{\qquad i \qquad}
  G
  \rOnto^{\qquad \pi \qquad}
  Q
\]
in which
$i$ is an isomorphism
of $K$ with
a normal subgroup of $G$,
and $\pi$ is a surjective homomorphism
from $G$ onto $R$
with $i(K)$ as kernel.
The conjugation action $\conj_K^G$
of $G$ on $K$
induces an outer action
$\theta : Q \rTo \Out(K)$
of $Q$ on $K$
making the following diagram
commute:
\[
  \begin{diagram}
  K
  &
  \rInto^{i}
  &
  G
  &
  \rOnto^{\pi}
  &
  Q
  \\
  \dOnto
  &
  &
  \dTo^{\conj_K^G}
  &
  &
  \dTo
  \
  \theta
  \\
  \mathllap{
    \conj_K^G(K)
    \ =\
  }
  \Inn(K)
  &
  \rInto^{\qquad}
  &
  \Aut(K)
  &
  \rOnto^{\qquad}
  &
  \Out(K)
  .
  \end{diagram}
\]
We say that
the triplet
$(\,
 G
 \,,\,
 i
 \,,\,
 \pi
 \,)
$
(or simply $G$ itself)
is an \emph{extension}
of $K$ by $Q$;
we refer to
$K$ as the \emph{kernel},
$Q$ as the \emph{quotient},
and
$\theta$ as the \emph{outer action}
of the extension.
Two extensions
$(\,
 G_\ell
 \,,\,
 i_\ell
 \,,\,
 \pi_\ell
 \,)
$
of $K$ by $Q$
(for $\ell=1,2$)
are \emph{isomorphic}
iff
there exists
an isomorphism
$\varphi : G_1 \rTo^{\simeq} G_2$
of groups
such that
$\varphi \circ i_1
 =
 i_2
$
and
$\pi_2 \circ \varphi
 =
 \pi_1
$.

When the groups
$K$ and $Q$
and the outer action
$\theta$
are given,
we may regard
the triplet
$(\,
 K
 \,,\,
 Q
 \,,\,
 \theta
 \,)
$
as constituting
an \emph{extension problem}.
The groups $G$
that can be obtained
as extensions
of $K$ by $Q$
with outer action $\theta$
can be classified:
according to
Eilenberg and MacLane
(cf.~\cite{EilenbergMacLane-GroupCohom-II}
theorem~11.1),
the set of
isomorphism classes of
all such extensions
forms a torsor
(possibly empty)
under the cohomology group
$H^2(Q,Z(K))$,
where the center $Z(K)$ of $K$
is regarded as a $Q$-module
via the action
induced by $\theta$.

\begin{center}
\begin{minipage}[t]{0.74\linewidth}
\setlength{\parindent}{20pt}
Let us now
iterate this process
of forming group extensions.
Thus,
we suppose
we have obtained
a group $N$
as an extension of
$K$ by $P$,
and we consider
an extension $G$
of $N$ by $R$
such that
$K$ is normal in $G$
(e.g.~when $K$ is
a characteristic subgroup
of $N$).
The group $G$ is then
an extension
whose kernel is $K$
and whose quotient $Q$
is itself
an extension of $P$ by $R$;
i.e.~we have
the lattice diagram
on the right.
What are the groups~$G$
which can be obtained
this way?
\end{minipage}
\hfill
\begin{minipage}[t]{0.19\linewidth}
\begin{footnotesize}
$ \begin{array}[t]{cll}
  G
  &
  &
  \\
  \Big{|}
  &
  \mathllap{
    \Big{\}}\
  }
  R
  &
  \mathllap{
    \smash{\bigg\rmoustache}
  }
  \\
  N
  &
  &
  Q
  \\
  \Big{|}
  &
  \mathllap{
    \Big{\}}\
  }
  P
  &
  \mathllap{
    \smash{\bigg\lmoustache}
  }
  \\
  K
  &
  &
  \\
  \Big{|}
  &
  &
  \\
  \{1\}
  &
  &
  \end{array}
$
\end{footnotesize}
\end{minipage}
\end{center}

To deal with this situation,
we fix the two
group extensions
\[
  (KNP)
  \ :\ 
  K
  \rInto^{\quad i_0 \quad}
  N
  \rOnto^{\quad \pi_0 \quad}
  P
  \qquad
  \text{and}
  \qquad
  (PQR)
  \ :\ 
  P
  \rInto^{\quad \jbar \quad}
  Q
  \rOnto^{\quad \phibar \quad}
  R
  ,
\]
and make
the following:

\begin{defn}
\label{defn:iterext}
An \emph{iterated extension}
of $(KNP)$ by $(PQR)$
is a triplet
$(\,
 G
 \,,\,
 j
 \,,\,
 \pi
 \,)
$
consisting of
a group $G$,
an injective homomorphism
$j : N \rInto G$
and a surjective homomorphism
$\pi : G \rOnto Q$,
such that
setting
$i := j \circ i_0$
and
$\phi := \phibar \circ \pi$,
one has
$\pi \circ j
 =
 \jbar \circ \pi_0$,
$j(N) = \Ker(\phi)$
and
$i(K) = \Ker(\pi)$;
in other words,
the following diagram
commutes
and has
exact rows and columns:
\begin{equation}
  \begin{diagram}
  K
  &
  \rInto^{\qquad i_0 \qquad}
  &
  N
  &
  \rOnto^{\qquad \pi_0 \qquad}
  &
  P
  \\
  \dEq
  &
  &
  \dInto_{j}
  &
  &
  \dInto_{\jbar}
  \\
  K
  &
  \rInto^{\qquad i \qquad}
  &
  G
  &
  \rOnto^{\qquad \pi \qquad}
  &
  Q
  \\
  &
  &
  \dOnto_{\phi}
  &
  &
  \dOnto_{\phibar}
  \\
  &
  &
  R
  &
  \rEq
  &
  R
  \end{diagram}
  \label{diag:KNGQR}
\end{equation}
Its \emph{$Q$-main extension}
is the extension
$(\,
 G
 \,,\,
 i
 \,,\,
 \pi
 \,)
$
of $K$ by $Q$
obtained by
setting
$i := j \circ i_0$.
Two iterated extensions
$(\,
 G_\ell
 \,,\,
 j_\ell
 \,,\,
 \pi_\ell
 \,)
$
of $(KNP)$ by $(PQR)$
(for $\ell=1,2$)
are \emph{isomorphic}
iff
there exists
an isomorphism
$\varphi : G_1 \rTo^{\simeq} G_2$
of groups
such that
$\varphi \circ j_1 = j_2$
and
$\pi_2 \circ \varphi = \pi_1$.
\end{defn}

When $P = \{1\}$
and hence
$R = Q$
and
$N = K$,
an iterated extension
of $(KNP)$ by $(PQR)$
reduces to
a usual group extension
of $K$ by $Q$
In general,
an iterated extension
of $(KNP)$ by $(PQR)$
always gives rise to
its $Q$-main extension
which is
a usual group extension
of $K$ by $Q$;
conversely:

\begin{defn}
\label{defn:P-subextension}
Let
$(\,
 G
 \,,\,
 i
 \,,\,
 \pi
 \,)
$
be an extension
of $K$ by $Q$.
Its \emph{$P$-subextension}
is the extension
$(\,
 N
 \,,\,
 i_0
 \,,\,
 \pi_0
 \,)
$
of $K$ by $P$
where
$N := \pi^{-1}(P)$,
and where
$i_0 : K \rInto N$
and
$\pi_0 : N \rOnto P$
are the homomorphisms
induced by
$i$ and $\pi$
respectively
(as in diagram~\eqref{diag:KNGQR}).
If we let
$j : N \rInto G$
denote
the canonical inclusion,
then
$(\,
 G
 \,,\,
 j
 \,,\,
 \pi
 \,)
$
is an iterated extension
of $(KNP)$ by $(PQR)$
as in definition~\ref{defn:iterext}.
\end{defn}

The purpose of
this work
is to illustrate
how iterated extensions
form
an integral part of
the theory of group extensions.
We will interpret
iterated extensions
in terms of
group cohomology,
extending the result of
Eilenberg-MacLane
mentioned earlier.
Specifically,
we fix
an outer action
$\theta : Q \rTo \Out(K)$
of $Q$ on $K$
and consider
the Lyndon-Hochschild-Serre spectral sequence
(cf.~\cite{Lyndon-CohomGroupExt},
\cite{HochschildSerre-CohomGroupExt})
for the extension $(PQR)$
with coefficients in $Z(K)$,
regarded as
an $Q$-module
via the action induced by $\theta$.
The $E_2$-terms
give the following
exact sequence:
\begin{equation}
  \begin{array}{lclclcl}
  0
  &
  \rTo
  &
  H^1(R,Z(K)^P)
  &
  \rTo^{\infl}
  &
  H^1(Q,Z(K))
  &
  \rTo^{\res}
  &
  H^1(P,Z(K))^{R}
  \\[2ex]
  &
  \rTo^{\tgr}
  &
  H^2(R,Z(K)^P)
  &
  \rTo^{\infl}
  &
  H^2_P(Q,Z(K))
  &
  \rTo^{\rd}
  &
  H^1(R,H^1(P,Z(K)))
  ,
  \end{array}
  \label{eqn:long exact seq}
\end{equation}
where
\[
  H^2_P(Q,Z(K))
  \ :=\
  \Ker
  \left(
    H^2(Q,Z(K))
    \rTo^{\res}
    H^2(P,Z(K))
  \right)
  .
\]
We will give
an explicit identification of
each cohomology group
and each morphism
appearing above
in terms of
iterated extensions
and associated notions.
These identifications
then enable us
to translate
the exactness of~\eqref{eqn:long exact seq}
into
natural relations between
(iterated) extensions,
their automorphism groups,
and their outer actions.
Our work extends
and (we hope) clarifies
that of Hochschild
in~\cite{Hochschild-BasicConstructions},
which gives
an interpretation
of the exactness of~\eqref{eqn:long exact seq}
entirely within
the context of
group extensions,
but under the assumption that
the group $K$
(in our notation here)
is abelian.
Much of
what is presented here
is probably known
in one form or another,
and we make
no claim of true originality
for any particular result
or construction.
Our aim coincides with
that of~\cite{Hochschild-BasicConstructions}:
to give
a self-contained
and systematic exposition
of the relevant ideas
---
one which (we hope)
will serve as
a basis for
future applications.

The main difficulty
in giving
a coherent discussion of
iterated group extensions
lies in
finding
the right generalization of
the notion of
outer action.
We believe
this is served by
the notion of
\emph{mod-$K$outer action},
which we introduce
and discuss
in section~\ref{sect:mod-K outer action}.
After that,
the paper is organized
``sequentially''
following
the long exact sequence~\eqref{eqn:long exact seq},
as a glance at the section headings
will reveal.
Throughout,
we adopt the convention that
the cohomology of various groups
are defined by normalized
cocycles and coboundaries
(cf.~\cite{EilenbergMacLane-GroupCohom-I}~\S6),
in the sense that
the relevant cochains
take on
the trivial value
whenever
any one of their arguments
is the identity element.
We also assume throughout that
any map between groups
sends the identity element
of the source
to the identity element
of the target;
this applies in particular
to sections 
and liftings
of homomorphisms.
These conventions
do not change
the substance of
our discussion,
but they do
tremendously simplify
the computations involved.
We use solid arrows
(such as $A \rTo B$)
to denote homomorphisms
and use dotted arrows
(such as $A \rDotsto B$)
to denote
maps between groups
which are not homomorphisms
---
for instance,
cochains, sections and liftings.

\section{Preliminaries}

\begin{notatn}
\label{notatn:fixed data}
For the rest of this paper,
we fix the triplet
$(\,
 K
 \,,\,
 PQR
 \,,\,
 \theta
 \,)
$
consisting of:
\[
  \begin{aligned}
    &
    \text{a group}
    &
    \quad
    &
    K
    ;
    \\
    &
    \text{an extension}
    &
    \quad
    &
    \smash{
    (PQR)
    \ :\ 
    P
    \rInto^{\quad \jbar \quad}
    Q
    \rOnto^{\quad \phibar \quad}
    R
    }
    ;
    \\
    \text{and}
    \quad
    &
    \text{an outer action}
    &
    \quad
    &
    \theta : Q \rTo \Out(K)
    \quad
    \text{of $Q$ on $K$}
    .
  \end{aligned}
  \qquad\qquad
\]
We let
\quad
$\theta|_P
 :
 P
 \rInto^{\jbar}
 Q
 \rTo^{\theta}
 \Out(K)
$
\quad
denote
the outer action
of $P$ on $K$
obtained
by restricting $\theta$
to $P$.
\end{notatn}

The outer action $\theta$
induces
an action of $Q$
on the center $Z(K)$ of $K$,
which we also denote by $\theta$.
Since
$P$ is a normal subgroup
of $Q$,
the subgroup $Z(K)^P$
of elements of $Z(K)$
fixed by $P$
is stable
under the action of $Q$.
Hence:

\begin{notatn}
\label{notatn:theta_0}
The outer action $\theta$
induces
an action of $R$
on $Z(K)^P$,
denoted by
\[
  \theta_0
  \ :\ 
  R
  \rTo
  \Aut(Z(K)^P)
  .
\]
An element $r \in R$
sends $z \in Z(K)^P$
to
$\upperleft
 {\theta_0(r)}
 {z}
 =
 \upperleft
 {\theta(q)}
 {z}
$,
where $q \in Q$
is any element
such that
$\phibar(q) = r$
in $R$.
If
$(\,
 G
 \,,\,
 i
 \,,\,
 \pi
 \,)
$
is an extension
of $K$ by $Q$
with outer action $\theta$,
and we set
$\phi := \phibar \circ \pi$,
then
$\upperleft
 {\theta_0(r)}
 {z}
 =
 g
 \cdot
 z
 \cdot
 g^{-1}
$
for any $g \in G$
such that
$\phi(g) = r$
in $R$.
\end{notatn}

The action $\theta_0$
of $R$ on $Z(K)^P$
is the one
which is used
for defining
$H^1(R,Z(K)^P)$
and
$H^2(R,Z(K)^P)$
in the exact sequence~\eqref{eqn:long exact seq};
these groups
will be discussed
in sections~\ref{sect:iterext aut}
and~\ref{sect:iterext classification}
respectively.

\begin{notatn}
\label{notatn:Q action on Z^1}
The outer action $\theta$
induces
an action of $Q$
on the abelian group of
all maps
from $P$ to $Z(K)$:
an element
$q \in Q$
sends such a map
$\lambda$
to $\upperleft{q}{\lambda}$
given by
\[
  \upperleft{q}{\lambda}(p)
  \ :=\
  \upperleft
  {\theta(q)}
  {\lambda(q^{-1} p q)}
  .
\]
It is clear that
this action
normalizes the subgroups
of 1-cocycles
and 1-coboundaries;
hence
$\theta$ induces
an action of $Q$
on $Z^1(P,Z(K))$
and on $B^1(P,Z(K))$.
\end{notatn}

Passing to the quotient,
one obtains
an action of $Q$
on $H^1(P,Z(K))$
induced by $\theta$,
which is also trivial
when restricted to
the subgroup $P$.
To verify the latter claim,
first note that
for any
$\lambda \in Z^1(P,Z(K)))$
and any
$p_0 \in P \subseteq Q$,
one has
$1
 =
 \lambda(p_0 p_0^{-1})
 =
 \lambda(p_0)
 \cdot
 \upperleft
 {\theta|_P(p_0)}
 {\lambda(p_0^{-1})}
$,
which implies that
$\upperleft
 {\theta|_P(p_0)}
 {\lambda(p_0^{-1})}
 =
 \lambda(p_0)^{-1}
$.
Thus
for any $p \in P$,
one has
\[
  \begin{aligned}
  \upperleft{p_0}{\lambda}
  (p)
  \ =\
  \upperleft
  {\theta|_P(p_0)}
  {\lambda(p_0^{-1}\,p\,p_0)}
  &
  \ =\
  \upperleft
  {\theta|_P(p_0)}
  {\left(\,
    \lambda(p_0^{-1})
    \cdot
    \upperleft
    {\theta|_P(p_0^{-1})}
    {\lambda(p\,p_0)}
   \,\right)
  }
  \\
  &
  \ =\
  \upperleft
  {\theta|_P(p_0)}
  {\lambda(p_0^{-1})}
  \cdot
  \lambda(p)
  \cdot
  \upperleft
  {\theta|_P(p)}
  {\lambda(p_0)}
  \ =\
  z_0^{-1}
  \cdot
  \upperleft
  {\theta|_P(p)}
  {z_0}
  \cdot
  \lambda(p)
  \end{aligned}
\]
where
$z_0 := \lambda(p_0)
 \in Z(K)
$;
in other words,
$\upperleft{p_0}{\lambda}
 =
 (\partial z_0)
 \cdot
 \lambda
$
in $Z^1(P,Z(K))$.
Hence:

\begin{notatn}
\label{notatn:R action on H^1}
The outer action $\theta$
induces
an action of $R$
on $H^1(P,Z(K))$:
an element $r \in R$
sends
$[\lambda]
 \in
 H^1(P,Z(K))
$
to the cohomology class
$[\upperleft{q}{\lambda}]
 \in
 H^1(P,Z(K))
$
of the 1-cocycle
$\upperleft{q}{\lambda}$,
where $q \in Q$
is any element
such that
$\phibar(q) = r$
in $R$.
\end{notatn}

The above action
of $R$ on $H^1(P,Z(K)^P)$
is that used for
defining
the cohomology groups
$H^1(R,H^1(P,Z(K)))$
in the exact sequence~\eqref{eqn:long exact seq};
this group
will be discussed
in section~\ref{sect:mod-K outer action classification}.

\section{Mod-$K$\,outer automorphism groups
and mod-$K$\,outer actions}
\label{sect:mod-K outer action}

\begin{defn}
\label{defn:Out(N;K)}
Let
\quad
$\smash{
 (KNP)
 \ :\ 
 K
 \rInto^{\quad i_0 \quad}
 N
 \rOnto^{\quad \pi_0 \quad}
 P
}$
\quad
be any extension
of $K$ by $P$.
We let
$\Aut_K(N)
 :=
 \{\ 
   \eta \in \Aut(N)
   \,:\,
   \eta(K) = K
 \ \}
$
denote
the group of
automorphisms of $N$
stabilizing $K$.
It contains
the normal subgroup
$\conj_N(K)$
consisting of
inner automorphisms
induced by
elements of $K$.
The \emph{mod-$K$outer automorphism group of $N$}
is the quotient group
\[
  \Out(N;K)
  \ :=\ 
  \dfrac{\Aut_K(N)}{\conj_N(K)}
  .
\]
A \emph{mod-$K$outer action on $N$}
is a homomorphism
(from the acting group)
to $\Out(N;K)$.
\end{defn}

The group
$\Out(N;K)$
serves as
an intermediary between
the outer automorphism groups
of $K$, $N$ and $P$:
one has the diagram
\[
  \begin{array}{rcrclcl}
  &
  \Aut(K)
  &
  \lTo^{\quad NK \quad}
  &
  \Aut_K(N)
  &
  \rTo^{\quad NP \quad}
  &
  \Aut(P)
  &
  \\
  \mathrlap{
    \smash{\bigg\lmoustache}
  }
  &
  \Big{|}
  &
  \mathrlap{
    \smash{\bigg\lmoustache}
  }
  &
  \Big{|}
  &
  \mathllap{
    \Big{\}}\
  }
  \Out_K(N)
  &
  \Big{|}
  &
  \mathllap{
    \Big{\}}\
  }
  \Out(P)
  \\
  \Out(K)
  &
  \Big{|}
  &
  \mathllap{
    \Out(N;K)
  }
  &
  \Inn(N)
  &
  \rOnto^{\quad\quad\quad}
  &
  \Inn(P)
  &
  \\
  \mathrlap{
    \smash{\bigg\rmoustache}
  }
  &
  \Big{|}
  &
  \mathrlap{
    \smash{\bigg\rmoustache}
  }
  &
  \Big{|}
  &
  &
  \Big{|}
  &
  \\
  &
  \Inn(K)
  &
  \lOnto^{\quad\quad\quad}
  &
  \conj_N(K)
  &
  \rOnto^{\quad\quad\quad}
  &
  \{1\}
  &
  \end{array}
\]
in which
$NK$ and $NP$
are the canonical homomorphisms
obtained by
considering the effects
induced on $K$ and on $P$
respectively
by an automorphism of $N$
which stabilizes $K$.
From this,
we see that
there are
canonical homomorphisms
making the following diagram
commute:
\[
  \begin{diagram}
  \Aut(K)
  &
  \lTo^{\quad NK \quad}
  &
  \Aut_K(N)
  &
  &
  \\
  \dOnto
  &
  &
  \dOnto
  &
  \rdTo^{\quad NP \quad}
  \qquad
  &
  \\
  \Out(K)
  &
  \lTo
  &
  \Out(N;K)
  &
  \rTo^{\qquad\quad}
  &
  \Aut(P)
  \\
  &
  &
  \dOnto
  &
  &
  \dOnto
  \\
  &
  &
  \Out_K(N)
  &
  \rTo^{\qquad\quad}
  &
  \Out(P)
  .
  \end{diagram}
\]
A mod-$K$outer action on $N$
thus induces
an outer action on $K$
and
a ``true'' action on $P$.
\begin{footnote}
{\quad
There is also
an induced
outer action on $N$,
but this will not be
important
for our discussion here.
}
\end{footnote}

\begin{defn}
\label{defn:mod-K outer action of ext}
Let
\quad
$\smash{
 (KNP)
 \ :\ 
 K
 \rInto^{\quad i_0 \quad}
 N
 \rOnto^{\quad \pi_0 \quad}
 P
}$
\quad
be an extension
of $K$ by $P$.
The conjugation action $\conj_N$
of $N$ on itself
induces a homomorphism
$\Theta_P : P \rTo \Out(N;K)$
making
the following diagram
commute:
\[
  \begin{diagram}
  K
  &
  \rInto^{i_0}
  &
  N
  &
  \rOnto^{\pi_0}
  &
  P
  \\
  \dOnto
  &
  &
  \dTo^{\conj_N}
  &
  &
  \dTo
  \
  \Theta_P
  \\
  \conj_N(K)
  &
  \rInto^{\qquad}
  &
  \Aut_K(N)
  &
  \rOnto^{\qquad}
  &
  \Out(N;K)
  .
  \end{diagram}
\]
The homomorphism
$\Theta_P : P \rTo \Out(N;K)$
is called
the \emph{mod-$K$outer action
of the extension $(KNP)$}.
\end{defn}

Suppose the extension~$(KNP)$
has outer action
given by $\theta|_P$.
The mod-$K$outer action $\Theta_P$
then induces both
the outer action $\theta|_P$
of $P$ on $K$
as well as
the conjugation action $\conj_P$
of $P$ on itself,
thus making
the following diagram
commute:
\[
  \begin{diagram}
  &
  &
  P
  &
  \rOnto
  &
  \Inn(P)
  \\
  &
  \ldTo^{\theta|_P}
  &
  \dTo
  \
  \Theta_P
  &
  \rdTo^{\conj_P}
  &
  \dInto
  \\
  \Out(K)
  &
  \lTo^{\qquad}
  &
  \Out(N;K)
  &
  \rTo^{\qquad}
  &
  \Aut(P)
  .
  \end{diagram}
\]
The given outer action
$\theta : Q \rTo \Out(K)$
of $Q$ on $K$
is a homomorphism
which prolongs
the outer action $\theta|_P$
of $P$ on $K$;
one has
$\theta \circ \jbar
 =
 \theta|_P
$.
On the other hand,
in the extension~$(PQR)$,
the conjugation action
of $Q$ on $P$
is a homomorphism
$\conj_P^Q : Q \rTo \Aut(P)$
which prolongs
the conjugation action $\conj_P$
of $P$ on itself;
one has
$\conj_P^Q \circ \jbar
 =
 \conj_P
$.
The homomorphisms
$\theta$ and $\conj_P^Q$
thus make
the following diagram
commute:
\begin{equation}
  \begin{diagram}
  &
  &
  P
  &
  &
  &
  &
  \\
  &
  \ldTo(2,5)^{\theta|_P}
  &
  &
  \rdTo(2,5)^{\Theta_P}
  \rdTo(4,3)^{\conj_P}
  &
  &
  &
  \\
  &
  &
  \dInto_{\jbar}
  &
  &
  &
  &
  \\
  &
  &
  Q
  &
  &
  \rTo^{\conj_P^Q}
  &
  &
  \Aut(P)
  \\
  &
  \ldTo^{\theta}
  \qquad
  \qquad
  &
  &
  &
  &
  \ruTo
  &
  \\
  \Out(K)
  &
  \lTo
  &
  &
  &
  \Out(N;K)
  &
  &
  \end{diagram}
  \label{diag:theta conj_P^Q}
\end{equation}

\begin{defn}
\label{defn:theta conj_P^Q prolongation}
Let $\Theta_P$ be
as in definition~\ref{defn:mod-K outer action of ext}.
A \emph{prolongation of $\Theta_P$}
is a mod-$K$outer action
$\Theta
 :
 Q
 \rTo
 \Out(N;K)
$
of $Q$ on $N$
such that
$\Theta \circ \jbar
 =
 \Theta_P
$
as homomorphisms
$P \rTo \Out(N;K)$.
When the extension $(KNP)$
has outer action
given by $\theta|_P$,
we may speak of
a \emph{$(\theta,\conj_P^Q)$-prolongation of $\Theta_P$},
which is
a prolongation $\Theta$ of $\Theta_P$
that can be inserted into
the diagram~\eqref{diag:theta conj_P^Q}
to make it commutative,
i.e.~such that
the following diagram
commutes:
\[
  \begin{diagram}
  &
  &
  Q
  &
  &
  \\
  &
  \ldTo^{\theta}
  &
  \dTo
  \
  \Theta
  &
  \rdTo^{\conj_P^Q}
  &
  \\
  \Out(K)
  &
  \lTo^{\qquad}
  &
  \Out(N;K)
  &
  \rTo^{\qquad}
  &
  \Aut(P)
  .
  \end{diagram}
\]
\end{defn}

\begin{defn}
\label{defn:mod-K outer action of iterext}
Let
$(\,
 G
 \,,\,
 j
 \,,\,
 \pi
 \,)
$
be an iterated extension
of $(KNP)$ by $(PQR)$,
as in definition~\ref{defn:iterext}.
The conjugation action
$\conj_N^G : G \rTo \Aut_K(N)$
of $G$ on $N$
stabilizes $K$
and induces
a homomorphism
$\Theta : Q \rTo \Out(N;K)$
making the following diagram
commute:
\begin{equation}
  \begin{diagram}
  \mathllap{
    (KGQ)
    \ :\ 
    \quad
  }
  K
  &
  \rInto^{i}
  &
  G
  &
  \rOnto^{\pi}
  &
  Q
  \\
  \dOnto
  &
  &
  \dTo^{\conj_N^G}
  &
  &
  \dTo
  \
  \Theta
  \\
  \conj_N(K)
  &
  \rInto^{\qquad}
  &
  \Aut_K(N)
  &
  \rOnto^{\qquad}
  &
  \Out(N;K)
  .
  \end{diagram}
  \label{diag:G Theta}
\end{equation}
The homomorphism
$\Theta : Q \rTo \Out(N;K)$
is called
the \emph{mod-$K$outer action}
of the iterated extension
$(\,
 G
 \,,\,
 j
 \,,\,
 \pi
 \,)
$.
If the $Q$-main extension
$(KGQ)$
of the iterated extension
has outer action
given by $\theta$,
the mod-$K$outer action $\Theta$
is a $(\theta,\conj_P^Q)$-prolongation
of $\Theta_P$.
\end{defn}

\begin{center}
\begin{minipage}[t]{0.60\linewidth}
\relax
\begin{defn}
\label{defn:iterext pbm}
Generalizing
the notion of
an extension problem,
we define
an \emph{iterated extension problem}
as a triplet
$(\,
 KNP
 \,,\,
 PQR
 \,,\,
 \Theta
 \,)
$
in which
$(KNP)$ and $(PQR)$
are group extensions
and
$\Theta$
is a mod-$K$outer action
of $Q$ on $N$,
satisfying
the following conditions:
the outer action of
the extension~$(KNP)$
is $\theta|_P$,
and
$\Theta$ is a
$(\theta,\conj_P^Q)$-prolongation of
the mod-$K$outer action
$\Theta_P$
of $P$ on $N$
induced by
the extension $(KNP)$.
These data
are conveniently organized
in the form of
the diagram
on the right.
\end{defn}
\end{minipage}
\hfill
\begin{minipage}[t]{0.35\linewidth}
\medskip
$\begin{diagram}[t]
  K
  &
  \rInto^{\qquad i_0 \qquad}
  &
  N
  &
  \rOnto^{\qquad \pi_0 \qquad}
  &
  P
  \\
  \dEq
  &
  &
  &
  \Theta
  &
  \dInto_{\jbar}
  \\
  K
  &
  &
  &
  &
  Q
  \\
  &
  &
  &
  &
  \dOnto_{\phibar}
  \\
  &
  &
  R
  &
  \rEq
  &
  R
  \end{diagram}
$
\end{minipage}
\end{center}

When the group $K$
is contained in
the center $Z(N)$ of $N$,
the mod-$K$outer action $\Theta$
becomes
a ``true'' action
$Q \rTo \Aut_K(N)$
of $Q$ on $N$
which induces
the conjugation action
of $Q$ on $P$;
the iterated extension problem
$(\,
 KNP
 \,,\,
 PQR
 \,,\,
 \Theta
 \,)
$
then amounts to
a \emph{crossed module}
in the sense of
Whitehead
(via the composite homomorphism
$N
 \rTo^{\quad \jbar \,\circ\, \pi_0 \quad}
 Q
$;
cf.~\cite{Whitehead-CombinatorialHomotopy}~\S2).
If $K$ is in fact
equal to
the center $Z(N)$ of $N$,
we obtain
the notion of
an \emph{$S$-exact sequence}
considered by MacLane
in~\cite{MacLane-GroupCohom-III}~\S2.

\section{$H^1(R,Z(K)^P)$
and the automorphisms of
iterated extensions}
\label{sect:iterext aut}

Let
$(\,
 G
 \,,\,
 j
 \,,\,
 \pi
 \,)
$
be an iterated extension
of $(KNP)$ by $(PQR)$,
whose $Q$-main extension $(KGQ)$
has outer action $\theta$.
In accordance
with~definition~\ref{defn:iterext},
the automorphism group
of the iterated extension
is
\[
  \Aut(KNGQR)
  \ :=\
  \{\
    \xi \in \Aut(G)
    \ :\
    \text{$\xi \circ j = j$
          and
          $\pi \circ \xi = \pi$}
  \ \}
  .
\]

\begin{thm}
\label{thm:iterext aut}
The map
\[
  \begin{array}{rcl}
  - \star
  \ :\
  Z^1(R,Z(K)^P)
  &
  \rTo^{\simeq}
  &
  \Aut(KNGQR)
  ,
  \\[1ex]
  \lambda
  &
  \rMapsto
  &
  \text{the map}
  \quad
  \lambda \star
  \ :=\ 
  \bigl(\
  g
  \mapsto
  \lambda(\phi(g))
  \cdot
  g
  \ \bigr)
  ,
  \end{array}
\]
is a well-defined
isomorphism of groups.
\end{thm}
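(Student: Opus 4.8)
The plan is to show directly that the formula $\lambda\star$ defines an automorphism of $G$ lying in $\Aut(KNGQR)$, that the assignment $\lambda\mapsto\lambda\star$ is a homomorphism, and that it is bijective; well-definedness will follow along the way. First I would check that for $\lambda\in Z^1(R,Z(K)^P)$ the map $\lambda\star\colon g\mapsto \lambda(\phi(g))\cdot g$ is a group homomorphism. Here $\lambda(\phi(g))$ is to be read via $i$ as an element of $G$; since it lies in $i(Z(K))$ it is central enough to commute with what it needs to. Writing $r=\phi(g)$, $r'=\phi(g')$, the cocycle identity $\lambda(rr')=\lambda(r)\cdot{}^{\theta_0(r)}\lambda(r')$ together with the identity ${}^{\theta_0(r)}z = g\cdot z\cdot g^{-1}$ for $z\in Z(K)^P$ (Notation~\ref{notatn:theta_0}) gives $\lambda(\phi(gg'))\cdot gg' = \lambda(r)\cdot g\cdot\lambda(r')\cdot g^{-1}\cdot gg' = (\lambda\star g)(\lambda\star g')$, so $\lambda\star$ is a homomorphism. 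It is invertible with inverse $g\mapsto \lambda(\phi(g))^{-1}\cdot g$ (using that $\phi(\lambda\star g)=\phi(g)$ because $\lambda$ takes values in $i(K)=\Ker\phi$), hence $\lambda\star\in\Aut(G)$.

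Next I would verify the two conditions defining $\Aut(KNGQR)$. Since $j(N)=\Ker\phi$, for $n\in N$ we have $\phi(j(n))=1$, so $\lambda\star(j(n)) = \lambda(1)\cdot j(n) = j(n)$ by normalization of $\lambda$; thus $(\lambda\star)\circ j = j$. And $\pi(\lambda\star g) = \pi(\lambda(\phi(g)))\cdot\pi(g) = \pi(g)$ because $\lambda(\phi(g))\in i(K)=\Ker\pi$; thus $\pi\circ(\lambda\star)=\pi$. So $\lambda\star\in\Aut(KNGQR)$, and the map $-\star$ is well defined.

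For the homomorphism property, a direct computation gives, for $\lambda,\mu\in Z^1(R,Z(K)^P)$ and any $g$ with $\phi(g)=r$,
\[
  (\lambda\star)\circ(\mu\star)(g)
  \ =\
  \lambda(r)\cdot\mu(r)\cdot g
  \ =\
  (\lambda\cdot\mu)(r)\cdot g
  \ =\
  (\lambda\cdot\mu)\star(g),
\]
where the middle step uses that $\lambda(r),\mu(r)\in Z(K)$ commute and that the product in $Z^1(R,Z(K)^P)$ is pointwise; note $\lambda\star$ acts as the identity on $i(K)$ so no twisting intervenes. Hence $-\star$ is a group homomorphism.

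Finally, injectivity and surjectivity. If $\lambda\star=\id_G$ then $\lambda(\phi(g))=1$ for all $g\in G$; since $\phi$ is surjective, $\lambda\equiv 1$, giving injectivity. For surjectivity, take $\xi\in\Aut(KNGQR)$. Since $\pi\circ\xi=\pi$ and $\xi\circ j = j$, for each $g\in G$ the element $\xi(g)\cdot g^{-1}$ lies in $\Ker\pi = i(K)$; moreover $\xi$ restricts to the identity on $j(N)$, and $j(N)=\Ker\phi\supseteq i(K)$, so conjugating $i(K)$ by $\xi(g)$ versus by $g$ shows $\xi(g)g^{-1}$ centralizes... more carefully: $\xi(g)g^{-1}$ commutes with everything in $i(N)$ because $\xi$ fixes $j(N)$ pointwise and $\pi$-composes to the identity, which forces $\xi(g)g^{-1}\in i(Z(K))$ and in fact in $i(Z(K)^P)$ after checking $P$-invariance via the $N$-conjugation action. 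The map $g\mapsto \xi(g)g^{-1}$ then descends through $\phi$ to a function $\lambda\colon R\to Z(K)^P$, which one checks is a normalized $1$-cocycle for $\theta_0$, and $\xi=\lambda\star$. The main obstacle I expect is precisely this last step: pinning down that $\xi(g)g^{-1}$ lands in $i(Z(K)^P)$ (not merely $i(K)$) and that the resulting $\lambda$ is well defined on $R$ and satisfies the cocycle condition — all of which hinge on exploiting $\xi\circ j = j$ on $N=\pi^{-1}(P)$ together with the description of $\theta_0$ in Notation~\ref{notatn:theta_0}.
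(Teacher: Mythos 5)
Your argument is correct, and for the surjectivity step it takes a genuinely different (section-free) route from the paper. The paper chooses a section $u$ of $\phi$, sets $\lambda(r):=\upperleft{\xi}{u(r)}\cdot u(r)^{-1}$, proves that $\lambda$ lands in $Z(K)^P$ by a conjugation computation against $\xi\circ j=j$, derives the cocycle relation from the factor set of $u$, and finally checks $\lambda\star=\xi$ by writing $g=u(r)\cdot j(n)$. You instead define the map $g\mapsto \xi(g)\,g^{-1}$ on all of $G$ and show it descends through $\phi$; once the descent and the containment in $i(Z(K)^P)$ are known, the cocycle identity ($\lambda(r_1r_2)=\xi(g_1)\,\lambda(r_2)\,g_1^{-1}\cdot g_1g_2(g_1g_2)^{-1}\cdot\ldots=\lambda(r_1)\cdot g_1\lambda(r_2)g_1^{-1}$) and the equality $\xi=\lambda\star$ are immediate, with no factor set needed. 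This buys a cleaner, choice-free formula (the paper's remark that its $\lambda$ is independent of the section is exactly your descent statement), at the cost of having to justify the containment directly. That justification, which you flag as the expected obstacle, goes through but should be said precisely: it uses not only that $\xi$ fixes $j(N)$ pointwise but also that $j(N)$ is normal in $G$. For $n\in N$ one has $g\,j(n)\,g^{-1}\in j(N)$, hence $\xi(g)\,j(n)\,\xi(g)^{-1}=\xi\bigl(g\,j(n)\,g^{-1}\bigr)=g\,j(n)\,g^{-1}$, so $\xi(g)g^{-1}$ centralizes $g\,j(n)\,g^{-1}$; letting $n$ range over $N$ shows $\xi(g)g^{-1}$ centralizes $j(N)$, and since $\pi\circ\xi=\pi$ puts $\xi(g)g^{-1}$ in $i(K)$, it lies in $i\bigl(Z(N)\cap K\bigr)=i(Z(K)^P)$ — no separate ``$P$-invariance check'' is needed beyond this identification. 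The descent through $\phi$ is then the one-line computation $\xi(g\,j(n))\,(g\,j(n))^{-1}=\xi(g)\,j(n)\,j(n)^{-1}g^{-1}=\xi(g)g^{-1}$. The remaining parts of your proof (well-definedness, the explicit inverse of $\lambda\star$, the homomorphism property, injectivity) coincide with the paper's.
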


Here,
$Z(K)^P$ is regarded as
an $R$-module
via the action $\theta_0$
as in notation~\ref{notatn:theta_0}.
Note that
$Z^1(R,Z(K)^P)$ depends only on
the given data
$(\,
 K
 \,,\,
 PQR
 \,,\,
 \theta
 \,)
$
as in notation~\ref{notatn:fixed data},
whereas
the automorphism group
$\Aut(KNGQR)$ is defined
only when
the iterated extension
$(\,
 G
 \,,\,
 j
 \,,\,
 \pi
 \,)
$
is given.

\begin{proof}
Let
$\lambda
 \in
 Z^1(R,Z(K)^P)
$
be any 1-cocycle,
and let
$\xi := \lambda \star$
be the map
from $G$ to itself
given by
$\upperleft{\xi}{g}
 :=
 \lambda(\,\phi(g)\,)
 \cdot
 g
$.
For any
$g_1,g_2 \in G$,
the cocycle relation
satisfied by $\lambda$
yields
\[
  \begin{aligned}
  \lambda(\,\phi(g_1)\,\phi(g_2)\,)
  \cdot
  g_1
  \cdot
  g_2
  &
  \ =\
  \lambda(\,\phi(g_1)\,)
  \cdot
  \underbrace
  { \upperleft
    {\theta_0(\phi(g_1))}
    {\lambda(\,\phi(g_2)\,)}
  }_{\ =\
     g_1
     \cdot
     \lambda(\,\phi(g_2)\,)
     \cdot
     g_1^{-1}}
  \cdot
  g_1
  \cdot
  g_2
  \\
  &
  \ =\
  \lambda(\,\phi(g_1)\,)
  \cdot
  g_1
  \cdot
  \lambda(\,\phi(g_2)\,)
  \cdot
  g_2
  \qquad
  \text{in $G$}
  ,
  \end{aligned}
\]
which shows that
$\upperleft{\xi}{(g_1g_2)}
 =
 \upperleft{\xi}{g_1}
 \cdot
 \upperleft{\xi}{g_2}
$;
thus $\xi$ is
an endomorphism of $G$.
As $\lambda(1_R) = 1_{Z(K)^P}$,
we have
$\xi \circ j = j$,
and since
$\lambda$ takes values
in $Z(K)^P \subseteq K$,
we have
$\pi \circ \xi = \pi$.
It follows that
$\xi \in \Aut(KNGQR)$
is an automorphism
of the iterated extension.
The map $- \star$
which sends $\lambda$ to $\xi$
is thus
a well-defined map
from $Z^1(R,Z(K)^P)$
to $\Aut(KNGQR)$.
For any
$\lambda_1,\lambda_2 \in Z^1(R,Z(K)^P)$,
applying the automorphism
$\lambda_2 \star$
followed by
$\lambda_1 \star$
to $g \in G$
gives
\[
  \lambda_1
  \bigl(\,
  \underbrace{
    \phi
    \bigl(
      \lambda_2(\phi(g))
      \cdotp
      g
    \bigr)
  }_{
    \ =\ 
    \phi(g)
  }
  \,\bigr)
  \cdot
  \lambda_2(\phi(g))
  \cdot
  g
  \ =\ 
  (\lambda_1\lambda_2)(\phi(g))
  \cdot
  g
  \qquad
  \text{in $G$}
  ,
\]
which is the same as
applying
$(\lambda_1\lambda_2) \star$
to $g$;
this shows that
$- \star$ is
a group homomorphism.
If $- \star$
sends
$\lambda \in Z^1(R,Z(K)^P)$
to
$\id_G \in \Aut(KNGQR)$,
then
$\lambda(\phi(g))
 =
 1_G
$
for every $g \in G$,
which implies that
$\lambda$ is
the trivial 1-cocycle;
hence $- \star$ is
injective.

We now show that
$- \star$ is surjective.
Given an automorphism
$\xi \in \Aut(KNGQR)$,
we choose
any section
$u : R \rDotsto G$
of $G \rOnto^{\phi} R$,
and define the map
$\lambda : R \rDotsto G$
by
$\lambda(r)
 :=
 \upperleft{\xi}{u(r)}
 \cdot
 u(r)^{-1}
$.
(It will be seen
eventually that
$\lambda$ is in fact
independent of the choice of
the section $u$.)
For any $r \in R$,
the fact that
$\pi \circ \xi = \pi$
implies that
$\pi
 (\,
   \upperleft{\xi}{u(r)}
 \,)
 \ =\
 \pi
 (\,
 u(r)
 \,)
$
in $Q$;
hence
$\lambda$ takes values
in $K$.
On the other hand,
the fact that
$\xi \circ j = j$
implies that
for any $n \in N$,
one has
$\upperleft{\xi}{j(n)}
 =
 j(n)
$,
and hence
\[
  \begin{aligned}
  j
  \Bigl(\
    \upperleft
    {\conj_N(\lambda(r))}
    {\bigl(\,
       \upperleft
       {\conj_N^G(u(r))}
       {n}
     \,\bigr)
    }
  \ \Bigr)
  &
  \ =\
  \upperleft{\xi}{u(r)}
  \cdot
  j(n)
  \cdot
  \upperleft{\xi}{u(r)}^{-1}
  \\
  &
  \ =\
  \upperleft
  {\xi}
  {\Bigl(\ 
    u(r)
    \cdot
    j(n)
    \cdot
    u(r)^{-1}
   \ \Bigr)
  }
  \\
  &
  \ =\
  \upperleft
  {\xi}
  {\Bigl(\ 
   j
   \bigl(\,
     \upperleft
     {\conj_N^G(u(r))}
     {n}
   \,\bigr)
   \ \Bigr)
  }
  \ =\
  j
  \left(\,
    \upperleft
    {\conj_N^G(u(r))}
    {n}
  \,\right)
  \qquad
  \text{in $G$}
  ,
  \end{aligned}
\]
which implies that
$\conj_N(\,\lambda(r)\,)
 =
 \id_N
$;
this shows that
$\lambda$ takes values in
$Z(K)^P = Z(N) \cap K$.
Now let
$f
 :
 R \times R
 \rDotsto
 N
$
be the (right) factor set
corresponding to
the section $u$,
characterized by
the property that
for any $r_1,r_2 \in R$,
one has
\[
  u(r_1)
  \cdot
  u(r_2)
  \ =\
  u(r_1r_2)
  \cdot
  j
  (\,
    f(r_1,r_2)
  \,)
  \qquad
  \text{in $G$}
  .
\]
Using the fact that
$\upperleft
 {\xi}
 {\bigl(\,
    j(\,f(r_1,r_2)\,)
  \,\bigr)
 }
 =
 j(\,f(r_1,r_2)\,)
$,
we have
\[
  \begin{aligned}
  \lambda(r_1r_2)
  &
  \ =\
  \upperleft{\xi}{u(r_1r_2)}
  \cdot
  u(r_1r_2)^{-1}
  \\
  &
  \ =\
  \upperleft
  {\xi}
  {\bigl(\,
    u(r_1)
    \cdot
    u(r_2)
    \cdot
    j(\,f(r_1,r_2)\,)^{-1}
   \,\bigr)
  }
  \cdot
  \bigl(\,
    u(r_1)
    \cdot
    u(r_2)
    \cdot
    j(\,f(r_1,r_2)\,)^{-1}
  \,\bigr)^{-1}
  \\
  &
  \ =\
  \upperleft{\xi}{u(r_1)}
  \cdot
  \upperleft{\xi}{u(r_2)}
  \cdot
  u(r_2)^{-1}
  \cdot
  u(r_1)^{-1}
  \\
  &
  \ =\
  \lambda(r_1)
  \cdot
  u(r_1)
  \cdot
  \lambda(r_2)
  \cdot
  u(r_1)^{-1}
  \ =\
  \lambda(r_1)
  \cdot
  \upperleft
  {\theta_0(r_1)}
  {\lambda(r_2)}
  ,
  \end{aligned}
\]
and so
$\lambda : R \rDotsto Z(K)^P$
is a 1-cocycle.
The homomorphism $- \star$
maps
$\lambda \in Z^1(R,Z(K)^P)$
to the automorphism
$\xi' \in \Aut(KNGQR)$
which sends
an arbitrary element
$g \in G$,
written in the form
$g = u(r) \cdot j(n)$
(with $n \in N$ and $r \in R$),
to the element
\[
  \upperleft
  {\xi'}
  {g}
  \ =\
  \lambda(r)
  \cdot
  u(r)
  \cdot
  j(n)
  \ =\
  \bigl(\,
  \upperleft
  {\xi}
  {u(r)}
  \cdot
  u(r)^{-1}
  \,\bigr)
  \cdot
  \bigl(\,
  u(r)
  \cdot
  \upperleft
  {\xi}
  {j(n)}
  \,\bigr)
  \ =\
  \upperleft
  {\xi}
  {\bigl(\,
    u(r)
    \cdot
    j(n)
   \,\bigr)
  }
  \ =\
  \upperleft
  {\xi}
  {g}
  \qquad
  \text{in $G$}
  .
\]
This shows that
$\xi' = \xi$,
and hence
$- \star$ is surjective.
\end{proof}

\begin{rmk}
The proof shows that
the inverse of
the isomorphism $- \star$
of theorem~\ref{thm:iterext aut}
is given by
\[
  \Aut(KNGQR)
  \rTo^{\simeq}
  Z^1(R,Z(K)^P)
  ,
  \qquad
  \xi
  \rMapsto
  \bigl(\,
    r
    \mapsto
    \upperleft{\xi}{u(r)}
    \cdot
    u(r)^{-1}
  \,\bigr)
  ,
\]
for any choice of
a section $u$ of $\phi$.
\end{rmk}

\begin{rmk}
It follows from
theorem~\ref{thm:iterext aut}
that
the group
$\Aut(KNGQR)$
is an abelian subgroup of
$\Aut(G)$,
which
(via the canonical isomorphism $- \star$
of the theorem)
depends only on
the given data
$(\,
 K
 \,,\,
 PQR
 \,,\,
 \theta
 \,)
$
(cf.~notation~\ref{notatn:fixed data})
and not on
the iterated extension~$(KNGQR)$.
\end{rmk}

The automorphism group
$\Aut(KNGQR)$
of the iterated extension
$(\,
 G
 \,,\,
 j
 \,,\,
 \pi
 \,)
$
contains
the normal subgroup
\[
  \conj_G(Z(K)^P)
  \ =\
  \{\
    \conj_G(z) \in \Inn(G)
    \ :\
    z \in Z(K)^P
  \ \}
\]
consisting of
inner automorphisms of $G$
induced by
elements of $Z(K)^P$.
The fact that
$Z(K)^P = Z(N) \cap K$
imples
\[
  \conj_G(Z(K)^P)
  \ =\ 
  \conj_G(K)
  \ \cap\ 
  \Aut(KNGQR)
  \qquad
  \text{in $\Aut(G)$}
  .
\]

\begin{cor}
The isomorphism $- \star$
of theorem~\ref{thm:iterext aut}
restricts to
an isomorphism
\[
  \begin{array}{rcl}
  - \star
  \ :\
  B^1(R,Z(K)^P)
  &
  \rTo^{\simeq}
  &
  \conj_G(Z(K)^P)
  ,
  \\[1ex]
  \partial z_0
  &
  \rMapsto
  &
  \conj_G(z_0^{-1})
  ,
  \end{array}
\]
where
$\partial z_0$ denotes
the 1-coboundary
$\partial z_0(r)
 :=
 z_0^{-1}
 \cdot
 \upperleft
 {\theta_0(r)}
 {z_0}
$
for any
$z_0 \in Z(K)^P$.
(Note the presence of
the inversion
in $\conj_G(z_0^{-1})$
corresponding to $\partial z_0$.)
\end{cor}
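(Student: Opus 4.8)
The plan is to exhibit the restriction of $-\star$ to $B^1(R,Z(K)^P)$ as the claimed isomorphism by direct computation, using the explicit formula for $-\star$ from theorem~\ref{thm:iterext aut}. Since $-\star$ is already known to be an isomorphism of groups from $Z^1(R,Z(K)^P)$ onto $\Aut(KNGQR)$, it suffices to show that it carries the subgroup $B^1(R,Z(K)^P)$ bijectively onto $\conj_G(Z(K)^P)$; injectivity is then automatic, and surjectivity onto the image of $B^1$ is automatic, so the real content is just identifying that image with $\conj_G(Z(K)^P)$ and pinning down the formula $\partial z_0 \mapsto \conj_G(z_0^{-1})$.

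First I would fix $z_0 \in Z(K)^P$ and compute $(\partial z_0)\star$ directly. By definition, for $g \in G$ one has
\[
  \upperleft{(\partial z_0)\star}{g}
  \ =\
  (\partial z_0)(\phi(g))
  \cdot
  g
  \ =\
  z_0^{-1}
  \cdot
  \upperleft{\theta_0(\phi(g))}{z_0}
  \cdot
  g .
\]
Now I invoke notation~\ref{notatn:theta_0}: since $z_0 \in Z(K)^P$ and $\phi(g) \in R$, the element $\upperleft{\theta_0(\phi(g))}{z_0}$ equals $g \cdot z_0 \cdot g^{-1}$. Substituting this in gives
\[
  \upperleft{(\partial z_0)\star}{g}
  \ =\
  z_0^{-1}
  \cdot
  g
  \cdot
  z_0
  \cdot
  g^{-1}
  \cdot
  g
  \ =\
  z_0^{-1}
  \cdot
  g
  \cdot
  z_0
  \ =\
  \upperleft{\conj_G(z_0^{-1})}{g} ,
\]
so $(\partial z_0)\star = \conj_G(z_0^{-1})$ in $\Aut(G)$. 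This establishes that $-\star$ maps $B^1(R,Z(K)^P)$ into $\conj_G(Z(K)^P)$ and that on elements the correspondence is $\partial z_0 \mapsto \conj_G(z_0^{-1})$.

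It remains to check surjectivity onto $\conj_G(Z(K)^P)$: every inner automorphism $\conj_G(z)$ with $z \in Z(K)^P$ arises as $(\partial z^{-1})\star$, which is immediate from the boxed computation applied with $z_0 = z^{-1}$. Finally, injectivity of the restriction follows from injectivity of $-\star$ on all of $Z^1(R,Z(K)^P)$ (theorem~\ref{thm:iterext aut}), and the restriction is a homomorphism because $-\star$ is. The only point requiring a moment's care is that the stated correspondence $\partial z_0 \mapsto \conj_G(z_0^{-1})$ genuinely has the inversion, which is precisely what the computation above makes transparent; I do not expect any serious obstacle, as everything reduces to the single substitution via notation~\ref{notatn:theta_0}. (One may also remark that the identity $Z(K)^P = Z(N) \cap K$, already used in the theorem's proof, together with the displayed equation $\conj_G(Z(K)^P) = \conj_G(K) \cap \Aut(KNGQR)$ preceding the corollary, confirms that $\conj_G(Z(K)^P)$ is the correct target subgroup of $\Aut(KNGQR)$.)
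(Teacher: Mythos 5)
Your proposal is correct and proceeds exactly as the paper does: the single substitution $\upperleft{\theta_0(\phi(g))}{z_0} = g \cdot z_0 \cdot g^{-1}$ from notation~\ref{notatn:theta_0} yields $(\partial z_0)\star = \conj_G(z_0^{-1})$, which is the entire content of the paper's proof. Your added remarks on injectivity and surjectivity (via $z_0 = z^{-1}$) merely spell out what the paper leaves implicit in ``the corollary follows,'' so there is no substantive difference.
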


\begin{proof}
For any $z_0 \in Z(K)^P$,
the coboundary
$\partial z_0 \in B^1(R,Z(K)^P)$
is mapped by $- \star$
to the automorphism of $G$
which sends
an arbitrary element
$g \in G$
to
\[
  (\partial z_0)
  (\phibar(g))
  \cdot
  g
  \ =\ 
  z_0^{-1}
  \cdot
  \underbrace{
  \upperleft
  {\theta_0(\phi(g))}
  {z_0}
  }_{\mathclap{
    \ =\
    g
    \cdot
    z_0
    \cdot
    g^{-1}
  }}
  \cdot
  g
  \ =\
  z_0^{-1}
  \cdot
  g
  \cdot
  z_0
  \ =\ 
  \upperleft
  {\conj_G(z_0^{-1})}
  {g}
  \qquad
  \text{in $G$}
  .
\]
Thus
$- \star$
maps
$\partial z_0$
to
$\conj_G(z_0^{-1})$
in $\Aut(KNGQR)$,
and the corollary follows.
\end{proof}

Extending the notation
in definition~\ref{defn:Out(N;K)},
we let
$\Aut_{N,K}(G)$ denote
the subgroup of
$\Aut_K(G)$
consisting of
automorphisms of $G$
stabilizing
both $N$ and $K$;
it also contains
$\conj_G(K)$
as a normal subgroup,
and we let
$\Out_N(G;K)
 :=
 \Aut_{N,K}(G)
 \,/\,
 \conj_G(K)
$
denote the quotient group.
Let $\Out(KNGQR;K)$ denote
the image of $\Aut(KNGQR)$
in $\Out_N(G;K)$.
There are
canonical homomorphisms
from $\Aut_{N,K}(G)$
to $\Aut_K(N)$ and $\Aut(Q)$,
obtained by considering
the effects
induced on $N$ and on $Q$
respectively
by an automorphism of $G$
which stabilizes
both $N$ and $K$;
passing to the quotient
modulo $\conj_G(K)$,
these induce
corresponding homomorphisms
from $\Out_N(G;K)$
to $\Out(N;K)$ and $\Aut(Q)$,
and we have
\[
  \Out(KNGQR;K)
  \ =\ 
  \Ker
  \Bigl(\ 
    \Out_N(G;K)
    \rTo
    \Out(N;K) \times \Aut(Q)
  \ \Bigr)
  \qquad
  \text{in $\Out_N(G;K)$}
  .
\]
In virtue of
the identity
$\conj_G(Z(K)^P)
 =
 \conj_G(K)
 \cap
 \Aut(KNGQR)
$,
we also have
\[
  \Out(KNGQR;K)
  \ =\
  \dfrac{\Aut(KNGQR)}{\conj_G(Z(K)^P)}
  ,
\]
and hence:

\begin{cor}
\label{cor:iterext aut H^1}
The isomorphism $- \star$
of theorem~\ref{thm:iterext aut}
induces an isomorphism
\[
  - \star
  \ :\
  H^1(R,Z(K)^P)
  \rTo^{\simeq}
  \Out(KNGQR;K)
  \quad
  \ \subseteq\ 
  \Out_N(G;K)
  .
\]
\end{cor}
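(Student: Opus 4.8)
The plan is to deduce Corollary~\ref{cor:iterext aut H^1} directly from Theorem~\ref{thm:iterext aut} and the preceding Corollary by passing to quotients. By Theorem~\ref{thm:iterext aut} we have the isomorphism $-\star : Z^1(R,Z(K)^P) \rTo^{\simeq} \Aut(KNGQR)$, and by the preceding Corollary this restricts to an isomorphism $B^1(R,Z(K)^P) \rTo^{\simeq} \conj_G(Z(K)^P)$. Since $H^1(R,Z(K)^P) = Z^1(R,Z(K)^P)/B^1(R,Z(K)^P)$ by definition, and since the displayed identity $\Out(KNGQR;K) = \Aut(KNGQR)/\conj_G(Z(K)^P)$ was established just before the statement, the first step is simply to observe that an isomorphism of groups carrying a subgroup isomorphically onto a subgroup descends to an isomorphism of the quotients. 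This is a purely formal argument: $-\star$ maps the short exact sequence $1 \rTo B^1 \rTo Z^1 \rTo H^1 \rTo 1$ isomorphically onto $1 \rTo \conj_G(Z(K)^P) \rTo \Aut(KNGQR) \rTo \Out(KNGQR;K) \rTo 1$, hence induces the claimed isomorphism on the rightmost terms.

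Second, I would spell out the induced map explicitly for the record: the class $[\lambda] \in H^1(R,Z(K)^P)$ is sent to the image in $\Out_N(G;K)$ of the automorphism $\lambda\star : g \mapsto \lambda(\phi(g))\cdot g$. One should check this is well-defined independently of the cocycle representative, but that is exactly the content of the preceding Corollary (changing $\lambda$ by $\partial z_0$ changes $\lambda\star$ by $\conj_G(z_0^{-1})$, which is trivial in $\Out_N(G;K)$ since $z_0 \in K$), so nothing new is needed. Injectivity of the induced map is immediate from injectivity of $-\star$ on cocycles together with the fact that the kernel of $Z^1 \to H^1$ maps exactly onto the kernel of $\Aut(KNGQR) \to \Out_N(G;K)$ intersected with $\Aut(KNGQR)$, namely $\conj_G(Z(K)^P)$; surjectivity onto $\Out(KNGQR;K)$ holds because $\Out(KNGQR;K)$ was \emph{defined} as the image of $\Aut(KNGQR)$ and $-\star$ is already surjective onto $\Aut(KNGQR)$.

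I do not anticipate a genuine obstacle here: the corollary is a formal consequence of the theorem and the two displayed identities preceding it. The only point requiring a modicum of care is verifying the containment $\Out(KNGQR;K) \subseteq \Out_N(G;K)$ asserted in the statement, but this is built into the definition of $\Out(KNGQR;K)$ as the image of $\Aut(KNGQR)$ inside $\Out_N(G;K)$, so it is automatic. In writing this up I would keep the proof to two or three lines: invoke Theorem~\ref{thm:iterext aut}, invoke the preceding Corollary, and note that passing to the quotients $Z^1/B^1$ and $\Aut(KNGQR)/\conj_G(Z(K)^P)$ yields the asserted isomorphism, with the latter quotient identified with $\Out(KNGQR;K)$ by the displayed formula.
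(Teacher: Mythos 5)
Your proposal is correct and matches the paper exactly: the paper states this corollary as an immediate consequence ("hence") of theorem~\ref{thm:iterext aut}, the $B^1$-corollary, and the displayed identity $\Out(KNGQR;K)=\Aut(KNGQR)/\conj_G(Z(K)^P)$, which is precisely the pass-to-quotients argument you give. Nothing further is needed.
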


\section{$H^1(Q,Z(K))$
and the automorphisms of
extensions}

Let
$(\,
 G
 \,,\,
 i
 \,,\,
 \pi
 \,)
$
be an extension
of $K$ by $Q$
with outer action
$\theta$.
Its automorphism group
\[
  \Aut(KGQ)
  \ :=\
  \{\
    \xi \in \Aut(G)
    \ :\
    \text{$\xi \circ i = i$
          and
          $\pi \circ \xi = \pi$}
  \ \}
\]
contains
the normal subgroup
\[
  \conj_G(Z(K))
  \ =\
  \{\
    \conj_G(z) \in \Inn(G)
    \ :\
    z \in Z(K)
  \ \}
  \ =\ 
  \conj_G(K)
  \cap
  \Aut(KGQ)
\]
consisting of
inner automorphisms of $G$
induced by
elements of $Z(K)$.
Let $\Out(KGQ;K)$ denote
the image of $\Aut(KGQ)$
in $\Out(G;K)$;
one has
\[
  \Out(KGQ;K)
  \ =\ 
  \dfrac{\Aut(KGQ)}{\conj_G(Z(K))}
  \ =\ 
  \Ker
  \Bigl(\ 
    \Out(G;K)
    \rTo
    \Out(K) \times \Aut(Q)
  \ \Bigr)
  .
\]

The results of
section~\ref{sect:iterext aut}
specialize
to analogous results
for the extension $(KGQ)$
by putting $P = \{1\}$
and hence
$R = Q$,
$\phi = \pi$,
and
$N = K$,
$j = i$.
We state these results
in this section
for later references.

\begin{thm}
\label{thm:ext aut}
The map
\[
  \begin{array}{rcl}
  - \star
  \ :\ 
  Z^1(Q,Z(K))
  &
  \rTo^{\simeq}
  &
  \Aut(KGQ)
  ,
  \\[1ex]
  \lambda
  &
  \rMapsto
  &
  \text{the map}
  \quad
  \lambda \star
  \ :=\ 
  \bigl(\
  g
  \mapsto
  \lambda(\pi(g))
  \cdot
  g
  \ \bigr)
  ,
  \end{array}
\]
is a well-defined
isomorphism of groups,
whose inverse
is given by
\[
  \Aut(KGQ)
  \rTo^{\simeq}
  Z^1(Q,Z(K))
  ,
  \qquad
  \xi
  \rMapsto
  \bigl(\,
    q
    \mapsto
    \upperleft{\xi}{s(q)}
    \cdot
    s(q)^{-1}
  \,\bigr)
  ,
\]
for any choice of
a section $s$ of $\pi$.
Thus
the group
$\Aut(KGQ)$
is an abelian subgroup of
$\Aut(G)$
which depends only on
the extension problem
$(\,
 K
 \,,\,
 Q
 \,,\,
 \theta
 \,)
$
and not on
the extension~$(KGQ)$.
\end{thm}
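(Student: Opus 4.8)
The plan is to derive Theorem~\ref{thm:ext aut} as the special case $P = \{1\}$ of Theorem~\ref{thm:iterext aut}, together with the Remark recording the inverse of $-\star$ and the Remark recording that $\Aut(KNGQR)$ is abelian and depends only on the fixed data. First I would record the elementary reduction, already noted just after Definition~\ref{defn:iterext}: when $P = \{1\}$ one has $N = K$ with $i_0 = \id_K$ and $\pi_0$ trivial, and one may take $R = Q$ with $\phibar = \id_Q$ and $\jbar$ trivial; an iterated extension $(\,G\,,\,j\,,\,\pi\,)$ of $(KNP)$ by $(PQR)$ is then nothing but an ordinary extension $(\,G\,,\,i\,,\,\pi\,)$ of $K$ by $Q$ with $j = i$ and $\phi = \phibar \circ \pi = \pi$. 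In particular $\Aut(KNGQR) = \Aut(KGQ)$, and the fixed data $(\,K\,,\,PQR\,,\,\theta\,)$ of Notation~\ref{notatn:fixed data} degenerates to the extension problem $(\,K\,,\,Q\,,\,\theta\,)$.

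Next I would identify the coefficient module. Since $Z(K)^P = Z(K)^{\{1\}} = Z(K)$, and since the $R$-action $\theta_0$ of Notation~\ref{notatn:theta_0} is obtained by lifting $r \in R$ to $q \in Q$ with $\phibar(q) = r$ and applying $\upperleft{\theta(q)}{z}$, the action $\theta_0$ of $R = Q$ on $Z(K)^P = Z(K)$ is precisely the original action $\theta$ of $Q$ on $Z(K)$. Hence $Z^1(R,Z(K)^P) = Z^1(Q,Z(K))$ as $Q$-modules, and the isomorphism $-\star$ of Theorem~\ref{thm:iterext aut}, namely $\lambda \mapsto \bigl(g \mapsto \lambda(\phi(g)) \cdot g\bigr)$, becomes verbatim the map $\lambda \mapsto \bigl(g \mapsto \lambda(\pi(g)) \cdot g\bigr)$ of the present statement.

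With these identifications in hand, every assertion of Theorem~\ref{thm:ext aut} is a transcription of something already proved. That $-\star$ is a well-defined isomorphism of groups is Theorem~\ref{thm:iterext aut} itself. The formula for its inverse, $\xi \mapsto \bigl(q \mapsto \upperleft{\xi}{s(q)} \cdot s(q)^{-1}\bigr)$ for any section $s$ of $\pi$, is the Remark immediately following the proof of that theorem, with $u$ replaced by $s$ and $\phi$ by $\pi$. Finally, the assertion that $\Aut(KGQ)$ is an abelian subgroup of $\Aut(G)$ depending only on $(\,K\,,\,Q\,,\,\theta\,)$ is the second Remark after Theorem~\ref{thm:iterext aut}, read through the degeneration of the fixed data noted above.

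Since the proof is purely a matter of unwinding definitions in the degenerate case, there is no genuine obstacle; the only point worth a moment's attention is verifying that every structure appearing as a hypothesis of Theorem~\ref{thm:iterext aut} --- the two extensions, the outer action $\theta|_P$ of $P$ on $K$, and the $(\theta,\conj_P^Q)$-prolongation $\Theta$ --- is consistently supplied by an ordinary extension $(KGQ)$ with outer action $\theta$ once $P = \{1\}$; this is immediate because all of the $P$-level data becomes trivial. Alternatively, one could simply rerun the proof of Theorem~\ref{thm:iterext aut} word for word, taking $u$ to be a section of $\pi$ and $\phi = \pi$ throughout --- no step of that argument uses $P \neq \{1\}$ in an essential way.
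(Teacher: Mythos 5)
Your proposal is correct and coincides with the paper's own treatment: the paper proves Theorem~\ref{thm:ext aut} by exactly the same device, declaring that the results of section~\ref{sect:iterext aut} ``specialize to analogous results for the extension $(KGQ)$ by putting $P = \{1\}$ and hence $R = Q$, $\phi = \pi$, and $N = K$, $j = i$,'' which is the degeneration you spell out (including the identification $Z(K)^P = Z(K)$ with $\theta_0 = \theta$). Your extra care in checking that the hypotheses of Theorem~\ref{thm:iterext aut} are trivially satisfied when $P=\{1\}$ is harmless and only makes explicit what the paper leaves implicit.
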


\begin{cor}
\label{cor:ext aut B^1}
The isomorphism of theorem~\ref{thm:ext aut}
restricts to
an isomorphism
\[
  \begin{array}{rcl}
  - \star
  \ :\ 
  B^1(Q,Z(K))
  &
  \rTo^{\simeq}
  &
  \conj_G(Z(K))
  ,
  \\[1ex]
  \partial z_0
  &
  \rMapsto
  &
  \conj_G(z_0^{-1})
  ,
  \end{array}
\]
where
$\partial z_0$ denotes
the 1-coboundary
$\partial z_0(q)
 :=
 z_0^{-1}
 \cdot
 \upperleft
 {\theta(q)}
 {z_0}
$
for any
$z_0 \in Z(K)$.
(Note the presence of
the inversion
in $\conj_G(z_0^{-1})$
corresponding to $\partial z_0$.)
\end{cor}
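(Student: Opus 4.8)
The plan is to obtain this by specializing the corollary that follows Theorem~\ref{thm:iterext aut} to the case $P = \{1\}$, $R = Q$, $\phi = \pi$, $N = K$, $j = i$ (so that $Z(K)^P = Z(K)$, the action $\theta_0$ becomes $\theta$, and $\conj_G(Z(K)^P) = \conj_G(Z(K))$), just as Theorem~\ref{thm:ext aut} was itself deduced from Theorem~\ref{thm:iterext aut}. Since the argument is short, I would in any case record the direct computation.

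By Theorem~\ref{thm:ext aut}, $-\star$ is a group isomorphism from $Z^1(Q,Z(K))$ onto $\Aut(KGQ)$; in particular its restriction to the subgroup $B^1(Q,Z(K))$ is an injective homomorphism. It therefore suffices to determine the image of $B^1(Q,Z(K))$ under $-\star$ and to check that it equals $\conj_G(Z(K))$. For this I would simply evaluate $-\star$ on a generic $1$-coboundary.

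Fix $z_0 \in Z(K)$ and let $\partial z_0 \in B^1(Q,Z(K))$ be the coboundary $\partial z_0(q) := z_0^{-1} \cdot \upperleft{\theta(q)}{z_0}$. By the definition of $-\star$, the automorphism $(\partial z_0)\star$ sends an arbitrary $g \in G$ to $(\partial z_0)(\pi(g)) \cdot g = z_0^{-1} \cdot \upperleft{\theta(\pi(g))}{z_0} \cdot g$. Since the extension $(KGQ)$ has outer action $\theta$ and $z_0 \in Z(K)$, we have $\upperleft{\theta(\pi(g))}{z_0} = g \cdot z_0 \cdot g^{-1}$ (the right-hand side being well defined, independently of the choice of lift of $\pi(g)$, precisely because $z_0$ is central in $K$). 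Hence $(\partial z_0)\star$ sends $g$ to $z_0^{-1} \cdot g \cdot z_0 = \upperleft{\conj_G(z_0^{-1})}{g}$ in $G$; that is, $(\partial z_0)\star = \conj_G(z_0^{-1})$.

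As $z_0$ ranges over $Z(K)$, the elements $\partial z_0$ exhaust $B^1(Q,Z(K))$ and the elements $\conj_G(z_0^{-1})$ exhaust $\conj_G(Z(K))$; combined with the injectivity noted above, this shows that $-\star$ carries $B^1(Q,Z(K))$ bijectively onto $\conj_G(Z(K))$, with $\partial z_0 \mapsto \conj_G(z_0^{-1})$, as claimed. There is no genuine obstacle here; the only point needing a moment's care is the bookkeeping of the inversion, namely that the normalization convention for the coboundary operator forces $\partial z_0$ to correspond to $\conj_G(z_0^{-1})$ rather than to $\conj_G(z_0)$.
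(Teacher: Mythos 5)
Your proposal is correct and matches the paper's treatment: the paper obtains this corollary by specializing the iterated-extension case ($P=\{1\}$, $R=Q$, $N=K$, $j=i$), and the underlying computation there is exactly the one you record, namely evaluating $-\star$ on $\partial z_0$ and using $\upperleft{\theta(\pi(g))}{z_0} = g \cdot z_0 \cdot g^{-1}$ to get $\conj_G(z_0^{-1})$. Your added remarks on injectivity (inherited from theorem~\ref{thm:ext aut}) and surjectivity onto $\conj_G(Z(K))$ are the implicit "and the corollary follows" step of the paper, so nothing is missing.
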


\begin{cor}
\label{cor:ext aut H^1}
The isomorphism of theorem~\ref{thm:ext aut}
induces an isomorphism
\[
  - \star
  \ :\ 
  H^1(Q,Z(K))
  \rTo^{\simeq}
  \Out(KGQ;K)
  \quad
  \ \subseteq\ 
  \Out(G;K)
  .
\]
\end{cor}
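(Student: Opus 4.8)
The plan is to deduce Corollary~\ref{cor:ext aut H^1} from Theorem~\ref{thm:ext aut} and Corollary~\ref{cor:ext aut B^1} simply by passing to quotients; this is of course the specialization of Corollary~\ref{cor:iterext aut H^1} to the case $P = \{1\}$, $R = Q$, $\phi = \pi$, $N = K$, $j = i$, but I would spell the argument out directly for the record.

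First I would recall that Theorem~\ref{thm:ext aut} provides a group isomorphism $-\star : Z^1(Q,Z(K)) \rTo^{\simeq} \Aut(KGQ)$, and that Corollary~\ref{cor:ext aut B^1} asserts this isomorphism restricts to an isomorphism of the subgroup $B^1(Q,Z(K))$ onto the subgroup $\conj_G(Z(K))$ of $\Aut(KGQ)$. Since $-\star$ is bijective and carries $B^1(Q,Z(K))$ onto $\conj_G(Z(K))$, the full preimage of $\conj_G(Z(K))$ under $-\star$ is precisely $B^1(Q,Z(K))$. Moreover $B^1(Q,Z(K))$ is a normal subgroup of the abelian group $Z^1(Q,Z(K))$, while $\conj_G(Z(K))$ is a normal subgroup of $\Aut(KGQ)$, as already observed before the statement.

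Consequently $-\star$ descends to a well-defined isomorphism on quotient groups
\[
  \frac{Z^1(Q,Z(K))}{B^1(Q,Z(K))}
  \ \rTo^{\simeq}\
  \frac{\Aut(KGQ)}{\conj_G(Z(K))}
  ,
\]
which I again denote by $-\star$. The left-hand side is $H^1(Q,Z(K))$ by definition of group cohomology, and the right-hand side equals $\Out(KGQ;K)$ by the identity $\Out(KGQ;K) = \Aut(KGQ)/\conj_G(Z(K))$ recorded just before the statement. Composing with the inclusion $\Out(KGQ;K) \subseteq \Out(G;K)$ then yields the claimed isomorphism.

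I do not expect any genuine obstacle here: all the substantive content has already been established in Theorem~\ref{thm:ext aut} and Corollary~\ref{cor:ext aut B^1}, and what remains is the elementary observation that an isomorphism of groups which restricts to an isomorphism between two normal subgroups induces an isomorphism of the corresponding quotient groups. If a more formal justification is desired, one can instead invoke the five lemma applied to the morphism induced by $-\star$ between the short exact sequences $1 \to B^1(Q,Z(K)) \to Z^1(Q,Z(K)) \to H^1(Q,Z(K)) \to 1$ and $1 \to \conj_G(Z(K)) \to \Aut(KGQ) \to \Out(KGQ;K) \to 1$.
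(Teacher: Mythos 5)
Your proposal is correct and matches the paper's treatment: the paper obtains this corollary as the specialization (with $P=\{1\}$, $R=Q$, $N=K$, $j=i$) of corollary~\ref{cor:iterext aut H^1}, whose content is exactly the quotient-passing you spell out — the isomorphism $-\star$ of theorem~\ref{thm:ext aut} carries $B^1(Q,Z(K))$ onto $\conj_G(Z(K))$ (corollary~\ref{cor:ext aut B^1}), hence descends to $H^1(Q,Z(K)) \rTo^{\simeq} \Aut(KGQ)/\conj_G(Z(K)) = \Out(KGQ;K) \subseteq \Out(G;K)$. No gap; the five-lemma remark is unnecessary but harmless.
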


\begin{rmk}
In the literature
(e.g.~in~\cite{EilenbergMacLane-GroupCohom-I}~\S3),
the groups
$Z^1$ and $B^1$
of 1-cocycles and 1-coboundaries
are often described
as the groups of
crossed homomorphisms
and
principal homomorphisms
respectively,
and $H^1$ is described as
the quotient of
these two groups.
Our results above
offer an interpretation of
$Z^1(Q,Z(K))$,
$B^1(Q,Z(K))$
and
$H^1(Q,Z(K))$
which is more relevant for
studying
group extensions;
this will be seen later
in sections~\ref{sect:ext aut Theta-compatible}
and~\ref{sect:mod-K outer action classification}.
It seems that
the description of $H^1$
as in corollary~\ref{cor:ext aut H^1}
is usually given
(e.g.~\cite{Eilenberg-TopologicalMethods} end of~\S4)
only for the case
when $K = Z(K)$ is abelian,
though the general case
is not more difficult.
\end{rmk}

\section{Inflation
from $H^1(R,Z(K)^P)$
to $H^1(Q,Z(K))$}

Let
$(KNGQR)
 =
 (\,
 G
 \,,\,
 j
 \,,\,
 \pi
 \,)
$
be an iterated extension
of $(KNP)$ by $(PQR)$,
whose $Q$-main extension $(KGQ)$
has outer action $\theta$.
By definition,
any automorphism of
the iterated extension
$(KNGQR)$
is also an automorphism of
its $Q$-main extension
$(KGQ)$,
so there is
a canonical inclusion homomorphism
\begin{equation}
  \Aut(KNGQR)
  \rInto
  \Aut(KGQ)
  .
  \label{eqn:aut infl}
\end{equation}
Via the canonical isomorphisms of
theorem~\ref{thm:iterext aut}
and
theorem~\ref{thm:ext aut},
one sees that
this inclusion homomorphism
corresponds to
the inflation map
of cocycles;
i.e.~the following diagram
commutes:
\[
  \begin{diagram}
  Z^1(R,Z(K)^P)
  &
  \rInto^{\quad \infl \quad}
  &
  Z^1(Q,Z(K))
  \\
  \dTo^{
    \text{thm.~\ref{thm:iterext aut}}
    \quad
    \wr|
  }_{
    \quad
    - \star
  }
  &
  &
  \dTo^{
    - \star
    \quad
  }_{
    |\wr
    \quad
    \text{thm.~\ref{thm:ext aut}}
  }
  \\
  \Aut(KNGQR)
  &
  \rInto_{\text{\eqref{eqn:aut infl}}}
  &
  \Aut(KGQ)
  .
  \end{diagram}
\]
Passing to
the quotient in cohomology,
we obtain
the corresponding
commutative diagram:
\begin{equation}
  \begin{diagram}
  H^1(R,Z(K)^P)
  &
  \rTo^{\quad \infl \quad}
  &
  H^1(Q,Z(K))
  \\
  \dTo^{
    \text{cor.~\ref{cor:iterext aut H^1}}
    \quad
    \wr|
  }_{
    \quad
    - \star
  }
  &
  {\Bigg.}
  &
  \dTo^{
    - \star
    \quad
  }_{
    |\wr
    \quad
    \text{cor.~\ref{cor:ext aut H^1}}
  }
  \\
  \Out(KNGQR;K)
  &
  \rTo
  &
  \Out(KGQ;K)
  .
  \end{diagram}
  \label{diag:ext aut infl}
\end{equation}
Consequently,
the injectivity of
the inflation homomorphism
\[
  0
  \rTo
  H^1(R,Z(K)^P)
  \rTo^{\quad \infl \quad}
  H^1(Q,Z(K))
\]
in the sequence~\eqref{eqn:long exact seq}
translates as:

\begin{prop}
The inclusion homomorphism~\eqref{eqn:aut infl}
induces
an injective homomorphism
\[
  \Out(KNGQR;K)
  \rInto
  \Out(KGQ;K)
  .
\]
In other words,
an automorphism
$\xi \in \Aut(KNGQR)$
of the iterated extension~$(KNGQR)$
lies in $\conj_G(Z(K))$
if and only if
it lies in $\conj_G(Z(K)^P)$.
\end{prop}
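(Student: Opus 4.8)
The plan is to obtain the statement as a formal consequence of the commutative diagram~\eqref{diag:ext aut infl} together with the exactness of~\eqref{eqn:long exact seq}, without any fresh computation. First I would record that in~\eqref{diag:ext aut infl} both vertical arrows are isomorphisms --- the left one by corollary~\ref{cor:iterext aut H^1}, the right one by corollary~\ref{cor:ext aut H^1} --- while the top horizontal arrow is the inflation map $\infl : H^1(R,Z(K)^P) \rTo H^1(Q,Z(K))$, which is injective because the exact sequence~\eqref{eqn:long exact seq} begins with $0 \rTo H^1(R,Z(K)^P) \rTo^{\infl} H^1(Q,Z(K))$. In a commutative square whose top arrow is injective and whose two vertical arrows are bijective, the bottom arrow is injective; applying this to~\eqref{diag:ext aut infl} gives at once that the homomorphism $\Out(KNGQR;K) \rTo \Out(KGQ;K)$ induced by the inclusion~\eqref{eqn:aut infl} is injective. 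That is the first assertion of the proposition.

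Next I would translate this into the group-theoretic reformulation. Recall from the discussion preceding the proposition that $\Out(KNGQR;K) = \Aut(KNGQR)/\conj_G(Z(K)^P)$ and $\Out(KGQ;K) = \Aut(KGQ)/\conj_G(Z(K))$, and that the arrow between them is the one induced by the inclusion~\eqref{eqn:aut infl} $\Aut(KNGQR) \rInto \Aut(KGQ)$, which does carry $\conj_G(Z(K)^P)$ into $\conj_G(Z(K))$ since $Z(K)^P \subseteq Z(K)$. Hence the kernel of $\Out(KNGQR;K) \rTo \Out(KGQ;K)$ equals $\bigl(\conj_G(Z(K)) \cap \Aut(KNGQR)\bigr)\big/\conj_G(Z(K)^P)$, and its vanishing is equivalent to the equality $\conj_G(Z(K)) \cap \Aut(KNGQR) = \conj_G(Z(K)^P)$ in $\Aut(G)$. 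The inclusion $\supseteq$ is immediate, and the inclusion $\subseteq$ says precisely that an automorphism $\xi \in \Aut(KNGQR)$ which lies in $\conj_G(Z(K))$ must already lie in $\conj_G(Z(K)^P)$; this is the ``in other words'' form of the statement.

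I do not expect a genuine obstacle: the proposition is a transcription of the injectivity of inflation through the identifications established in section~\ref{sect:iterext aut} and its specialization. In fact the equality $\conj_G(Z(K)) \cap \Aut(KNGQR) = \conj_G(Z(K)^P)$ can also be seen directly by chaining the two identities $\conj_G(Z(K)^P) = \conj_G(K) \cap \Aut(KNGQR)$ and $\conj_G(Z(K)) = \conj_G(K) \cap \Aut(KGQ)$ recorded just before the proposition, using $\Aut(KNGQR) \subseteq \Aut(KGQ)$; so the proposition may equally be read as a consistency check on the dictionary. The only point deserving care in the write-up is verifying that the bottom arrow of~\eqref{diag:ext aut infl} really is the map induced by~\eqref{eqn:aut infl} on the stated quotients --- equivalently, that passing from the cocycle-level square (with the $-\star$ isomorphisms of theorems~\ref{thm:iterext aut} and~\ref{thm:ext aut} as verticals) to the cohomology-level square carries the subgroups $\conj_G(Z(K)^P) \subseteq \Aut(KNGQR)$ and $\conj_G(Z(K)) \subseteq \Aut(KGQ)$ onto the images of $B^1(R,Z(K)^P)$ and $B^1(Q,Z(K))$; this is the content of the coboundary corollaries (including corollary~\ref{cor:ext aut B^1}), and once it is noted the argument is complete.
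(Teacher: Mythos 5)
Your argument is correct, but it is not the route the paper's own proof takes. The paper proves the nontrivial inclusion by a short hands-on computation: if $\xi\in\Aut(KNGQR)$ equals $\conj_G(z_0^{-1})$ with $z_0\in Z(K)$, then for $p\in P$ and any $n\in N$ with $\pi_0(n)=p$ one has $z_0^{-1}\cdot\upperleft{\theta|_P(p)}{z_0}=\upperleft{\xi}{n}\cdot n^{-1}=1$, so $z_0\in Z(K)^P$; i.e.\ it verifies the statement directly inside $G$, independently of the exactness of~\eqref{eqn:long exact seq}. You instead obtain it formally: the top arrow of~\eqref{diag:ext aut infl} is injective because~\eqref{eqn:long exact seq} begins with $0\to H^1(R,Z(K)^P)\to H^1(Q,Z(K))$, the verticals are the isomorphisms of corollaries~\ref{cor:iterext aut H^1} and~\ref{cor:ext aut H^1}, hence the bottom arrow is injective, and identifying its kernel with $\bigl(\conj_G(Z(K))\cap\Aut(KNGQR)\bigr)/\conj_G(Z(K)^P)$ gives the ``in other words'' form. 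That is sound, and your care about the coboundary corollaries (including corollary~\ref{cor:ext aut B^1}) in passing to quotients is exactly the right point to check; but note that this route consumes the injectivity of inflation as an external input, whereas the paper presents the proposition precisely as the group-theoretic \emph{translation} of that injectivity, so its direct proof functions as an independent consistency check on the dictionary rather than a consequence of it. Your secondary argument --- chaining the recorded identities $\conj_G(Z(K)^P)=\conj_G(K)\cap\Aut(KNGQR)$ and $\conj_G(Z(K))=\conj_G(K)\cap\Aut(KGQ)$ with $\Aut(KNGQR)\subseteq\Aut(KGQ)$ to get $\conj_G(Z(K))\cap\Aut(KNGQR)=\conj_G(K)\cap\Aut(KNGQR)=\conj_G(Z(K)^P)$ --- is a valid and arguably slicker self-contained proof than the paper's computation, though it ultimately rests on the same fact ($Z(K)^P=Z(N)\cap K$, i.e.\ that fixing all of $N$ forces $P$-invariance of $z_0$) that the paper's calculation makes explicit.
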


\begin{proof}
We can see this directly.
It is clear that
$\conj_G(Z(K)^P)$
is a subgroup of
$\conj_G(Z(K))$.
Conversely,
suppose
$\xi \in \Aut(KNGQR)$
is of the form
$\conj_G(z_0^{-1})$
for some
$z_0 \in Z(K)$.
For any $p \in P$
and any element
$n \in N$
such that
$\pi_0(n) = p$,
one has
\[
  z_0^{-1}
  \cdot
  \upperleft
  {\theta|_P(p)}
  {z_0}
  \ =\
  z_0^{-1}
  \cdot
  n
  \cdot
  z_0
  \cdot
  n^{-1}
  \ =\
  \upperleft
  {\conj_G(z_0^{-1})}
  {n}
  \cdot
  n^{-1}
  \ =\
  \upperleft{\xi}{n}
  \cdot
  n^{-1}
  \qquad
  \text{in $N$}
  .
\]
Since
$\xi$ acts trivially
on $n \in N$
by hypothesis,
this implies that
$\upperleft
 {\theta|_P(p)}
 {z_0}
 =
 z_0
$,
and the proposition
follows.
\end{proof}

\section{$H^1(P,Z(K))^R$
and the $\Theta$-compatible
automorphisms of
extensions}
\label{sect:ext aut Theta-compatible}

Throughout this section,
we fix
an extension
\quad
$\smash{
 (KNP)
 \ :\ 
 K
 \rInto^{\quad i_0 \quad}
 N
 \rOnto^{\quad \pi_0 \quad}
 P
}$
\quad
of $K$ by $P$
with mod-$K$outer action $\Theta_P$,
and we let
$\Theta : Q \rTo \Out(N;K)$
be a prolongation of $\Theta_P$.

For any
$q \in Q$,
let
$\Sigma(q) \in \Aut_K(N)$ be
a lift of $\Theta(q) \in \Out(N;K)$.
Then
for any automorphism
$\eta \in \Aut(KNP)$,
the automorphism
$\Sigma(q)
 \circ
 \eta
 \circ
 \Sigma(q)^{-1}
$
of $N$
also acts trivially
on $K$ and on $P$,
and so
it lies in $\Aut(KNP)$
as well.
Another lift of $\Theta(q)$
would be of the form
$\Sigma(q) \circ \conj_N(k)$
for some $k \in K$;
but the identity
$\eta^{-1} \circ \conj_N(k) \circ \eta
 =
 \conj_N(\,\upperleft{\eta^{-1}}{k}\,)
 =
 \conj_N(k)
$
shows that
$\conj_N(k)$
commutes with $\eta$,
and so
it follows that
the automorphism
$\Sigma(q)
 \circ
 \eta
 \circ
 \Sigma(q)^{-1}
 \in
 \Aut(KNP)
$
is independent of
the choice of $\Sigma(q)$
as a lift of $\Theta(q)$.
Hence:

\begin{notatn}
\label{notatn:Q action on Aut(KNP)}
The mod-$K$outer action $\Theta$
induces an action of $Q$
on the abelian group
$\Aut(KNP)$:
an element $q \in Q$
sends
$\eta \in \Aut(KNP)$
to the automorphism
$\Sigma(q)
 \circ
 \eta
 \circ
 \Sigma(q)^{-1}
 \in
 \Aut(KNP)
$
for any choice of
a lift
$\Sigma(q) \in \Aut_K(N)$
of $\Theta(q) \in \Out(N;K)$.
\end{notatn}

It is clear that
this action
normalizes the subgroup
$\conj_N(Z(K))$.
Passing to the quotient,
one recovers
the obvious action of $Q$
on the subgroup
$\Out(KNP;K)$ of $\Out(N;K)$
induced by $\Theta$
(given by conjugation
in $\Out(N;K)$).
Since $\Theta$ is
a prolongation of $\Theta_P$,
this action
becomes trivial
when it is
restricted to $P$,
as indicated by
the following:

\begin{lemma}
\label{lemma:Theta_P fixed under Aut(KNP)}
For any
$\eta \in \Aut(KNP)$
and any $p \in P$,
one has
\[
  \etabar^{-1}
  \circ
  \Theta_P(p)
  \circ
  \etabar
  \ =\
  \Theta_P(p)
  \qquad
  \text{in $\Out(N;K)$}
  ,
\]
where $\etabar \in \Out(KNP;K)$ denotes
the image of $\eta$
in $\Out(N;K)$.
\end{lemma}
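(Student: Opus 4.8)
The plan is to unwind the definition of $\Theta_P$ from definition~\ref{defn:mod-K outer action of ext}: for $p \in P$ and any $n \in N$ with $\pi_0(n) = p$, the element $\Theta_P(p) \in \Out(N;K)$ is the image of the inner automorphism $\conj_N(n) \in \Aut_K(N)$. Since an element $\eta \in \Aut(KNP)$ satisfies $\eta \circ i_0 = i_0$, it stabilizes $K$, so $\Aut(KNP) \subseteq \Aut_K(N)$ and the image $\etabar$ of $\eta$ in $\Out(N;K) = \Aut_K(N)/\conj_N(K)$ is defined; in particular, conjugation by $\etabar$ inside $\Out(N;K)$ makes sense.

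First I would fix such an $n$, so that $\etabar^{-1} \circ \Theta_P(p) \circ \etabar$ is, by definition, the image in $\Out(N;K)$ of the automorphism $\eta^{-1} \circ \conj_N(n) \circ \eta$ of $N$. Next, since $\eta^{-1}$ is a group homomorphism, a direct computation (apply both sides to an arbitrary $m \in N$) gives $\eta^{-1} \circ \conj_N(n) \circ \eta = \conj_N\bigl(\eta^{-1}(n)\bigr)$ in $\Aut_K(N)$. Hence $\etabar^{-1} \circ \Theta_P(p) \circ \etabar$ is the image of $\conj_N\bigl(\eta^{-1}(n)\bigr)$ in $\Out(N;K)$.

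Finally, membership $\eta \in \Aut(KNP)$ also gives $\pi_0 \circ \eta = \pi_0$, hence $\pi_0 \circ \eta^{-1} = \pi_0$, so $\pi_0\bigl(\eta^{-1}(n)\bigr) = \pi_0(n) = p$. Thus $\eta^{-1}(n)$ is again a lift of $p$ through $\pi_0$, and the defining property of $\Theta_P$ shows that $\conj_N\bigl(\eta^{-1}(n)\bigr)$ also represents $\Theta_P(p)$ in $\Out(N;K)$. Comparing the two descriptions yields $\etabar^{-1} \circ \Theta_P(p) \circ \etabar = \Theta_P(p)$, as desired.

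There is no real obstacle here; the argument is just a matter of carefully tracking the definitions. The only points requiring attention are that $\eta$ (equivalently $\eta^{-1}$) genuinely lies in $\Aut_K(N)$, so that conjugation within $\Out(N;K)$ is legitimate, and that $\eta$ respects the quotient map $\pi_0$ — both of which are exactly the two conditions defining $\Aut(KNP)$. This lemma can be viewed as the $\Out(N;K)$-analogue of the observation, made just above the statement, that $\conj_N(k)$ commutes with every $\eta \in \Aut(KNP)$ when $k \in K$.
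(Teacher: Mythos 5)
Your proof is correct. It shares its skeleton with the paper's argument: both pick a lift $n$ of $p$ through $\pi_0$, note that $\conj_N(n)$ represents $\Theta_P(p)$, and compute $\eta^{-1}\circ\conj_N(n)\circ\eta = \conj_N\bigl(\eta^{-1}(n)\bigr)$ in $\Aut_K(N)$. Where you diverge is the final justification. The paper invokes the identification $\Aut(KNP)\simeq Z^1(P,Z(K))$ of theorem~\ref{thm:ext aut}: writing $\eta^{-1}(n)=\lambda(p)^{-1}\cdot n$ for the cocycle $\lambda$ corresponding to $\eta$, it sees that $\eta^{-1}\circ\conj_N(n)\circ\eta$ differs from $\conj_N(n)$ by $\conj_N(\lambda(p))^{-1}\in\conj_N(Z(K))$, hence has the same image in $\Out(N;K)$. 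You instead observe only that $\pi_0\bigl(\eta^{-1}(n)\bigr)=p$, so $\eta^{-1}(n)$ is another lift of $p$, and the well-definedness of $\Theta_P$ (from definition~\ref{defn:mod-K outer action of ext}) already forces $\conj_N\bigl(\eta^{-1}(n)\bigr)$ to represent $\Theta_P(p)$. Your route is slightly more economical: it uses nothing about the action of $\eta$ on $K$ beyond $\eta(K)=K$, so it in fact proves the statement for every $\eta\in\Aut_K(N)$ inducing the identity on $P$, not only for $\eta\in\Aut(KNP)$. What the paper's cocycle computation buys in exchange is the sharper statement that the discrepancy lies in $\conj_N(Z(K))$, which dovetails with the surrounding $Q$-equivariance bookkeeping (notation~\ref{notatn:Q action on Aut(KNP)} and corollary~\ref{cor:ext aut R-equivariant H^1}); for the lemma as stated, either justification suffices.
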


\begin{proof}
Let $n \in N$ be
any element
such that
$\pi_0(n) = p$
in $P$;
then
$\conj_N(n) \in \Aut_K(N)$
is a lift of
$\Theta_P(p) \in \Out(N;K)$.
If $\lambda \in Z^1(P,Z(K))$
is the 1-cocycle
corresponding to
the automorphism
$\eta \in \Aut(KNP)$,
then
$\upperleft{\eta^{-1}}{n}
 =
 \lambda(p)^{-1}
 \cdot
 n
$,
whence
\[
  \eta^{-1}
  \circ
  \conj_N(n)
  \circ
  \eta
  \ =\
  \conj_N
  \left(
  \upperleft{\eta^{-1}}{n}
  \right)
  \ =\
  \underbrace{
    \conj_N(\lambda(p))^{-1}
  }_{
    \text{in $\conj_N(Z(K))$}
  }
  \ \circ\
  \conj_N(n)
  \qquad
  \text{in $\Aut_K(N)$}
  ,
\]
and the lemma follows.
\end{proof}

\begin{notatn}
\label{notatn:R action on Out(KNP;K)}
The mod-$K$outer action $\Theta$
induces
an action of $R$
on $\Out(KNP;K)$:
an element $r \in R$
sends
$\etabar
 \in
 \Out(KNP;K)
$
to
$\Theta(q)
 \circ
 \etabar
 \circ
 \Theta(q)^{-1}
 \in
 \Out(KNP;K)
$,
where $q \in Q$
is any element
such that
$\phibar(q) = r$
in $R$.
\end{notatn}

\begin{thm}
\label{thm:ext aut Q-equivariant}
Suppose
$(\,
 KNP
 \,,\,
 PQR
 \,,\,
 \Theta
 \,)
$
is an iterated extension problem.
Then
theorem~\ref{thm:ext aut}
applied to
the extension~$(KNP)$
yields
the canonical isomorphism
\[
  \begin{array}[t]{rcl}
  - \star
  \ :\ 
  Z^1(P,Z(K))
  &
  \rTo^{\simeq}
  &
  \Aut(KNP)
  \\
  \lambda
  &
  \rMapsto
  &
  \bigl(\
  n
  \mapsto
  \lambda(\pi_0(n))
  \cdot
  n
  \ \bigr)
  \end{array}
  \qquad
  \text{which is $Q$-equivariant}
  .
\]
Here,
the action of $Q$
on $Z^1(P,Z(K))$
is given by
notation~\ref{notatn:Q action on Z^1},
while
the action of $Q$
on $\Aut(KNP)$
is given by
notation~\ref{notatn:Q action on Aut(KNP)}.
In other words,
for any $q \in Q$
and any choice of
a lift
$\Sigma(q) \in \Aut_K(N)$
of $\Theta(q) \in \Out(N;K)$,
if
$\lambda \in Z^1(P,Z(K))$
and
$\eta \in \Aut(KNP)$
correspond to each other,
then
$\upperleft{q}{\lambda}
 \in
 Z^1(P,Z(K))
$
and
$\Sigma(q)
 \circ
 \eta
 \circ
 \Sigma(q)^{-1}
 \in
 \Aut(KNP)
$
correspond to each other.
\end{thm}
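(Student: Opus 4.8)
The plan is to obtain the $Q$-equivariance from Theorem~\ref{thm:ext aut} --- applied to the extension $(KNP)$, i.e.\ with $(K,P,N)$ playing the roles of $(K,Q,G)$ --- together with a single conjugation computation. That theorem already gives that $-\star$ is a well-defined isomorphism of abelian groups, so the only thing left to check is that it intertwines the $Q$-action on $Z^1(P,Z(K))$ of Notation~\ref{notatn:Q action on Z^1} with the $Q$-action on $\Aut(KNP)$ of Notation~\ref{notatn:Q action on Aut(KNP)}.

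The first step is to extract from the hypothesis that $(KNP,PQR,\Theta)$ is an iterated extension problem (Definition~\ref{defn:iterext pbm}) what a lift looks like on $K$ and on $P$. Fix $q \in Q$, a cocycle $\lambda \in Z^1(P,Z(K))$, a lift $\Sigma(q) \in \Aut_K(N)$ of $\Theta(q) \in \Out(N;K)$, and set $\eta := \lambda\star \in \Aut(KNP)$. Since $\Theta$ is a $(\theta,\conj_P^Q)$-prolongation of $\Theta_P$ (Definition~\ref{defn:theta conj_P^Q prolongation}), composing $\Theta$ with the canonical homomorphisms $\Out(N;K) \rTo \Aut(P)$ and $\Out(N;K) \rTo \Out(K)$ (cf.\ the diagrams following Definition~\ref{defn:Out(N;K)}) yields $\conj_P^Q$ and $\theta$ respectively. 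As the homomorphisms $NP : \Aut_K(N) \rTo \Aut(P)$ and $NK : \Aut_K(N) \rTo \Aut(K)$ factor those composites, this means exactly that
\[
  NP(\Sigma(q)) \ =\ \conj_P^Q(q) \qquad\text{in }\Aut(P),
\]
while $\alpha_q := NK(\Sigma(q)) \in \Aut(K)$ represents the class $\theta(q) \in \Out(K)$. Because inner automorphisms of $K$ fix $Z(K)$ pointwise --- which is just why $\theta$ induces a well-defined action of $Q$ on $Z(K)$ --- this second fact gives $\alpha_q(z) = \upperleft{\theta(q)}{z}$ for all $z \in Z(K)$.

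The second step is the computation of $\Sigma(q) \circ \eta \circ \Sigma(q)^{-1}$ on an arbitrary $n \in N$, identifying $K$ with its image $i_0(K) \subseteq N$. Put $p := \pi_0(n)$ and $n' := \Sigma(q)^{-1}(n)$. Applying $\pi_0$ and using $\pi_0 \circ \Sigma(q)^{-1} = NP(\Sigma(q))^{-1} \circ \pi_0 = \conj_P^Q(q^{-1}) \circ \pi_0$ gives $\pi_0(n') = q^{-1} p q$, whence $\eta(n') = \lambda(q^{-1} p q) \cdot n'$. Now $\Sigma(q)$ sends $n'$ back to $n$, stabilizes $K$ with restriction $\alpha_q$, and $\lambda(q^{-1}pq) \in Z(K)$; hence
\[
  \bigl(\Sigma(q) \circ \eta \circ \Sigma(q)^{-1}\bigr)(n)
  \ =\
  \alpha_q\bigl(\lambda(q^{-1} p q)\bigr) \cdot n
  \ =\
  \upperleft{\theta(q)}{\lambda(q^{-1} p q)} \cdot n
  \ =\
  \upperleft{q}{\lambda}(p) \cdot n ,
\]
which, since $p = \pi_0(n)$, equals $(\upperleft{q}{\lambda}\star)(n)$. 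As $n$ was arbitrary, $\Sigma(q) \circ \eta \circ \Sigma(q)^{-1} = \upperleft{q}{\lambda}\star$, i.e.\ $\upperleft{q}{(\lambda\star)} = (\upperleft{q}{\lambda})\star$ in the notation of Notations~\ref{notatn:Q action on Aut(KNP)} and~\ref{notatn:Q action on Z^1}; this is the claimed $Q$-equivariance.

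Nothing more is required: bijectivity and the homomorphism property of $-\star$ are Theorem~\ref{thm:ext aut}, and the facts that $\Sigma(q) \circ \eta \circ \Sigma(q)^{-1}$ again lies in $\Aut(KNP)$ and is independent of the choice of lift are exactly the discussion preceding Notation~\ref{notatn:Q action on Aut(KNP)}. The only delicate point --- purely a matter of chasing definitions --- is the first step: reading off from the $(\theta,\conj_P^Q)$-prolongation property that a lift $\Sigma(q)$ induces on $P$ precisely $\conj_P^Q(q)$ and on $K$ an automorphism in the outer class $\theta(q)$, the residual inner ambiguity being harmless since it is invisible on $Z(K)$. After that, the conjugation calculation above is immediate.
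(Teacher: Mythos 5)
Your proposal is correct and follows essentially the same route as the paper: it reads off from the $(\theta,\conj_P^Q)$-prolongation property that any lift $\Sigma(q)$ induces $\conj_P^Q(q)$ on $P$ and (up to the harmless inner ambiguity) $\theta(q)$ on $Z(K)$, and then performs the same three-step conjugation computation $n \mapsto \Sigma(q)^{-1}(n) \mapsto \eta(\Sigma(q)^{-1}(n)) \mapsto \upperleft{q}{\lambda}(\pi_0(n))\cdot n$ that the paper gives. The only difference is that you spell out the first step (via the homomorphisms $NK$ and $NP$) in more detail than the paper's terse assertion, which is fine.
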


\begin{proof}
Recall that
by the definition~\ref{defn:iterext pbm}
of an iterated extension problem,
the extension $(KNP)$
has outer action
given by $\theta|_P$,
and $\Theta$ is
a $(\theta,\conj_P^Q)$-prolongation of $\Theta_P$.
Therefore,
the automorphism
$\Sigma(q) \in \Aut_K(N)$,
being a lift of
$\Theta(q) \in \Out(N;K)$,
must induce
both
the action of
$\theta(q) \in \Out(K)$
on $Z(K)$
as well as
the conjugation action
$\conj_P^Q(q) \in \Aut(P)$
on $P$.
Suppose
$\lambda \in Z^1(P,Z(K))$
and
$\eta \in \Aut(KNP)$
correspond to each other.
For any $n \in N$,
we then have
\[
  \begin{aligned}
  \upperleft
  {(\ 
     \Sigma(q)
     \ \circ\ 
     \eta
     \ \circ\ 
     \Sigma(q)^{-1}
   \ )
  }
  {n}
  &
  \ =\ 
  \upperleft
  {\Sigma(q)}
  {\Bigl(\
    \lambda
    \bigl(\,
      \pi_0
      (\upperleft
       {\Sigma(q)^{-1}}
       {n}
      )
    \,\bigr)
    \cdot
      (\upperleft
       {\Sigma(q)^{-1}}
       {n}
      )
   \ \Bigr)
  }
  \\
  &
  \ =\ 
  \upperleft
  {\theta(q)}
  {\lambda
   (\,
    q^{-1}\,\pi_0(n)\,q
   \,)
  }
  \cdot
  n
  \ =\ 
  \upperleft{q}{\lambda}
  (\pi_0(n))
  \cdot
  n
  \qquad
  \text{in $N$}
  .
  \end{aligned}
\]
This shows that
$\upperleft{q}{\lambda}
 \in
 Z^1(P,Z(K))
$
and
$\Sigma(q)
 \circ
 \eta
 \circ
 \Sigma(q)^{-1}
 \in
 \Aut(KNP)
$
correspond to each other.
\end{proof}

Restricted to
the subgroup $B^1(P,Z(K))$ of
1-coboundaries,
theorem~\ref{thm:ext aut Q-equivariant},
asserts that
when corollary~\ref{cor:ext aut B^1}
is applied to
the extension~$(KNP)$,
the resulting canonical isomorphism
\[
  \begin{array}[t]{rcl}
  - \star
  \ :\ 
  B^1(P,Z(K))
  &
  \rTo^{\simeq}
  &
  \conj_N(Z(K))
  \\
  \partial z_0
  &
  \rMapsto
  &
  \conj_N(z_0^{-1})
  \end{array}
  \qquad
  \text{is also $Q$-equivariant}
  .
\]
Note that
this is merely
a reformulation of
the fact that
if $z_0 \in Z(K)$
and $q \in Q$,
then
for any choice of
a lift
$\Sigma(q) \in \Aut_K(N)$
of $\Theta(q) \in \Out(N;K)$,
one has
\[
  \Sigma(q)
  \circ
  \conj_N(z_0^{-1})
  \circ
  \Sigma(q)^{-1}
  \ =\
  \conj_N
  \bigl(\,
    \upperleft
    {\Sigma(q)}
    {z_0^{-1}}
  \,\bigr)
  \ =\
  \conj_N
  \bigl(\,
    \upperleft
    {\theta(q)}
    {z_0^{-1}}
  \,\bigr)
  \qquad
  \text{in $\Aut(KNP)$}
  .
\]
Upon passing to
the quotient groups,
we obtain:

\begin{cor}
\label{cor:ext aut R-equivariant H^1}
Suppose
$(\,
 KNP
 \,,\,
 PQR
 \,,\,
 \Theta
 \,)
$
is an iterated extension problem.
Then
corollary~\ref{cor:ext aut H^1}
applied to
the extension~$(KNP)$
yields
the canonical isomorphism
\[
  - \star
  \ :\ 
  H^1(P,Z(K))
  \rTo^{\simeq}
  \Out(KNP;K)
  \qquad
  \text{which is $R$-equivariant}
  .
\]
Here,
the action of $R$
on $H^1(P,Z(K))$
is given by
notation~\ref{notatn:R action on H^1},
while that on $\Out(KNP;K)$
is given by
notation~\ref{notatn:R action on Out(KNP;K)}.
In other words,
for any $r \in R$
and any $q \in Q$
such that
$\phibar(q) = r$
in $R$,
if
$[\lambda] \in H^1(P,Z(K))$
and
$\etabar \in \Out(KNP;K)$
correspond to each other,
then
$\upperleft{r}{[\lambda]}
 \in
 H^1(P,Z(K))
$
and
$\Theta(q)
 \circ
 \etabar
 \circ
 \Theta(q)^{-1}
 \in
 \Out(KNP;K)
$
correspond to each other.
\end{cor}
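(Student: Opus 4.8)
The plan is to obtain corollary~\ref{cor:ext aut R-equivariant H^1} from theorem~\ref{thm:ext aut Q-equivariant} by passing to quotient groups, in exactly the way corollary~\ref{cor:ext aut H^1} is obtained from theorem~\ref{thm:ext aut}.

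First, I would invoke theorem~\ref{thm:ext aut Q-equivariant}: applied to the extension~$(KNP)$ it gives a $Q$-equivariant isomorphism $-\star$ from $Z^1(P,Z(K))$ onto $\Aut(KNP)$, and by the discussion immediately following that theorem this restricts to a $Q$-equivariant isomorphism $-\star$ from $B^1(P,Z(K))$ onto $\conj_N(Z(K))$. Passing to the quotients, I get an isomorphism from $H^1(P,Z(K)) = Z^1(P,Z(K))/B^1(P,Z(K))$ onto $\Out(KNP;K) = \Aut(KNP)/\conj_N(Z(K))$ which intertwines the $Q$-action on $H^1(P,Z(K))$ induced by notation~\ref{notatn:Q action on Z^1} with the $Q$-action on $\Out(KNP;K)$ induced by notation~\ref{notatn:Q action on Aut(KNP)}. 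Since this quotient map is induced by the very same cochain-level map $-\star$, it coincides with the isomorphism of corollary~\ref{cor:ext aut H^1} applied to $(KNP)$; this identification is the only genuine bookkeeping point, and it is immediate.

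Second, I would note that both of these $Q$-actions factor through the surjection $\phibar\colon Q \to R$. For $H^1(P,Z(K))$ this is precisely the content of the computation carried out in the Preliminaries just before notation~\ref{notatn:R action on H^1}, where it is shown that $\upperleft{p_0}{\lambda} = (\partial z_0)\cdot\lambda$ for $p_0 \in P$, so that the $P$-action on $H^1(P,Z(K))$ is trivial and the $Q$-action descends to the $R$-action of notation~\ref{notatn:R action on H^1}. For $\Out(KNP;K)$ the $Q$-action is conjugation inside $\Out(N;K)$, with $P$ acting through $\Theta_P$; lemma~\ref{lemma:Theta_P fixed under Aut(KNP)} shows that this $P$-action on $\Out(KNP;K)$ is trivial, so the $Q$-action descends to the $R$-action of notation~\ref{notatn:R action on Out(KNP;K)}. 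As the quotient isomorphism is $Q$-equivariant and both $Q$-actions factor through $R$ via the same surjection $\phibar$, it is automatically $R$-equivariant; unwinding the definitions of the two descended $R$-actions in terms of a choice of $q \in Q$ with $\phibar(q) = r$ yields precisely the concrete statement at the end of the corollary.

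I do not expect any real obstacle here: the corollary is a formal consequence of theorem~\ref{thm:ext aut Q-equivariant} together with the two already-established facts that the relevant $Q$-actions are trivial on the normal subgroup $P$ (the Preliminaries computation on the cohomology side, lemma~\ref{lemma:Theta_P fixed under Aut(KNP)} on the automorphism side). The one step warranting care is simply checking that the $R$-action obtained by descent matches the one fixed in notation~\ref{notatn:R action on H^1} and notation~\ref{notatn:R action on Out(KNP;K)}, which holds by construction, since in each case the action of $r \in R$ is defined by choosing any $q \in Q$ above $r$ and letting $q$ act.
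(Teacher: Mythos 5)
Your proposal is correct and follows essentially the same route as the paper: the paper obtains the corollary by passing to quotients in the $Q$-equivariant isomorphism of theorem~\ref{thm:ext aut Q-equivariant} (together with its $Q$-equivariant restriction $B^1(P,Z(K)) \rTo^{\simeq} \conj_N(Z(K))$), the descended $R$-actions being exactly those of notations~\ref{notatn:R action on H^1} and~\ref{notatn:R action on Out(KNP;K)}, whose well-definedness rests on the Preliminaries computation and lemma~\ref{lemma:Theta_P fixed under Aut(KNP)} just as you say. No gaps.
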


\begin{defn}
\label{defn:aut Theta-compatible}
An automorphism
$\eta \in \Aut_K(N)$
of $N$
is called
\emph{$\Theta$-compatible}
iff
for any $q \in Q$,
one has
\[
  \Theta(q)
  \circ
  \etabar
  \circ
  \Theta(q)^{-1}
  \ =\
  \etabar
  \qquad
  \text{in $\Out(N;K)$}
  ,
\]
where $\etabar$ denotes
the image of $\eta$
in $\Out(N;K)$;
in other words,
iff
$\etabar \in \Out(N;K)$
commutes with
$\Theta(q)$
for all $q \in Q$.
\end{defn}

If
$(\,
 KNP
 \,,\,
 PQR
 \,,\,
 \Theta
 \,)
$
is an iterated extension problem,
the automorphisms
$\eta \in \Aut(KNP)$
of the extension $(KNP)$
which are $\Theta$-compatible
will be of particular interest
to us;
the significance of
these automorphisms
will be explained later
in section~\ref{sect:iterext twist}.
Thus we introduce
the group
\begin{equation}
  \Aut_\Theta(KNP)
  \ :=\
  \{\
    \eta \in \Aut(KNP)
    \ :\
    \text{$\eta$ is $\Theta$-compatible}
  \ \}
  ,
  \label{eqn:Aut_Theta(KNP)}
\end{equation}
which contains
the subgroup
$\conj_N(Z(K))
 =
 \conj_N(K) \cap \Aut(KNP)
$.
Let
$\Out_\Theta(KNP;K)$ denote
the image of $\Aut_\Theta(KNP)$
in $\Out(N;K)$;
we have
\begin{equation}
  \Out_\Theta(KNP;K)
  \ =\ 
  \dfrac{\Aut_\Theta(KNP)}{\conj_N(Z(K))}
  \ =\ 
  \begin{aligned}[t]
  \{\
    \etabar \in \Out(KNP;K)
    \ :\ 
    &
    \text{for any $q \in Q$,
          one has}
  \\
    &
    \Theta(q) \circ \etabar \circ \Theta(q)^{-1}
    \ =\ 
    \etabar
  \ \}
  .
  \end{aligned}
  \label{eqn:Out_Theta(KNP;K)}
\end{equation}

In the situation of
corollary~\ref{cor:ext aut R-equivariant H^1},
if $\lambda \in Z^1(P,Z(K))$
and
$\eta \in \Aut(KNP)$
correspond to each other,
then $\eta$ is
$\Theta$-compatible
if and only if
the cohomology class
$[\lambda] \in H^1(P,Z(K))$
is fixed
under the action of $R$,
i.e.~if and only if
$[\lambda]
 \in
 H^1(P,Z(K))^R
$.
In other words:

\begin{cor}
\label{cor:ext aut Theta-compatible H^1}
Suppose
$(\,
 KNP
 \,,\,
 PQR
 \,,\,
 \Theta
 \,)
$
is an iterated extension problem.
The canonical isomorphism
(cf.~corollary~\ref{cor:ext aut R-equivariant H^1})
obtained by
applying corollary~\ref{cor:ext aut H^1}
to the extension~$(KNP)$
restricts to
an isomorphism
\[
  - \star
  \ :\ 
  H^1(P,Z(K))^R
  \rTo^{\simeq}
  \Out_\Theta(KNP;K)
  \quad
  \ \subseteq\ 
  \Out(N;K)
  .
\]
\end{cor}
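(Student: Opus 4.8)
The plan is to obtain this purely formally from corollary~\ref{cor:ext aut R-equivariant H^1}, by passing to $R$-fixed points on both sides of the equivariant isomorphism it provides. Recall that corollary~\ref{cor:ext aut R-equivariant H^1} furnishes a canonical isomorphism $- \star : H^1(P,Z(K)) \rTo^{\simeq} \Out(KNP;K)$ which is $R$-equivariant, where $R$ acts on the source via notation~\ref{notatn:R action on H^1} and on the target via notation~\ref{notatn:R action on Out(KNP;K)}. Any isomorphism of $R$-groups restricts to a bijection between the respective subgroups of $R$-fixed elements; hence $- \star$ restricts to an isomorphism $H^1(P,Z(K))^R \rTo^{\simeq} \Out(KNP;K)^R$, the superscript $R$ on the right denoting the subgroup fixed under the action of notation~\ref{notatn:R action on Out(KNP;K)}.

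It then remains only to identify the two fixed-point subgroups with the groups named in the statement. For the source, the $R$-action on $H^1(P,Z(K))$ used in corollary~\ref{cor:ext aut R-equivariant H^1} is that of notation~\ref{notatn:R action on H^1}, which is precisely the action used to define the term $H^1(P,Z(K))^R$ appearing in~\eqref{eqn:long exact seq}; so there is nothing further to check. For the target, notation~\ref{notatn:R action on Out(KNP;K)} sends $r \in R$ to conjugation by $\Theta(q)$ for any $q \in Q$ with $\phibar(q) = r$; therefore $\etabar \in \Out(KNP;K)$ is $R$-fixed if and only if $\Theta(q) \circ \etabar \circ \Theta(q)^{-1} = \etabar$ in $\Out(N;K)$ for every $q \in Q$. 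By definition~\ref{defn:aut Theta-compatible} this says exactly that a representing automorphism $\eta$ is $\Theta$-compatible, and by~\eqref{eqn:Out_Theta(KNP;K)} the resulting subgroup is $\Out_\Theta(KNP;K)$. Combining the two identifications yields the asserted isomorphism $- \star : H^1(P,Z(K))^R \rTo^{\simeq} \Out_\Theta(KNP;K)$.

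Since every ingredient has already been prepared — the equivariant isomorphism of corollary~\ref{cor:ext aut R-equivariant H^1}, the $R$-action on $\Out(KNP;K)$ of notation~\ref{notatn:R action on Out(KNP;K)}, and the description~\eqref{eqn:Out_Theta(KNP;K)} of $\Out_\Theta(KNP;K)$ as a fixed-point subgroup — there is no genuine obstacle; the only point demanding care is notational, namely confirming that the ``$R$'' decorating $H^1(P,Z(K))^R$ in the long exact sequence~\eqref{eqn:long exact seq} is literally the action of notation~\ref{notatn:R action on H^1}, so that taking fixed points on the source side produces exactly the group in the statement. Alternatively, one can argue at the cocycle level through theorem~\ref{thm:ext aut Q-equivariant}: if $\lambda \in Z^1(P,Z(K))$ corresponds to $\eta \in \Aut(KNP)$, then $[\upperleft{q}{\lambda}] = [\lambda]$ for all $q \in Q$ — i.e.\ $[\lambda] \in H^1(P,Z(K))^R$ — holds precisely when $\eta$ is $\Theta$-compatible, which reproves the corollary directly.
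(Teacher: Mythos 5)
Your proof is correct and is essentially the paper's own argument: the paper likewise deduces the corollary by taking $R$-fixed points in the $R$-equivariant isomorphism of corollary~\ref{cor:ext aut R-equivariant H^1}, observing (via notation~\ref{notatn:R action on Out(KNP;K)} and~\eqref{eqn:Out_Theta(KNP;K)}) that $R$-invariance of $[\lambda]$ corresponds exactly to $\Theta$-compatibility of $\eta$. No further comment is needed.
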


\section{Restriction
from $H^1(Q,Z(K))$
to $H^1(P,Z(K))^R$}

Let
$(\,
 G
 \,,\,
 i
 \,,\,
 \pi
 \,)
$
be an extension
of $K$ by $Q$
with outer action
$\theta$,
and denote
its $P$-subextension
by
$(KNP)
 =
 (\,
 N
 \,,\,
 i_0
 \,,\,
 \pi_0
 \,)
$.
As in definition~\ref{defn:P-subextension},
let $j : N \rInto G$ denote
the canonical inclusion,
so that
$(KNGQR)
 =
 (\,
 G
 \,,\,
 j
 \,,\,
 \pi
 \,)
$
is an iterated extension
of $(KNP)$ by $(PQR)$,
whose
$Q$-main extension
is the given extension
$(KGQ)
 =
 (\,
 G
 \,,\,
 i
 \,,\,
 \pi
 \,)
$.
Any automorphism $\xi$
of the extension~$(KGQ)$
maps $j(N) \subseteq G$
to itself,
because
$\xi$ induces
the trivial automorphism
on $Q$.
The restriction $\xi|_N$
of $\xi$ to $N$
is thus
a well-defined
automorphism of
the $P$-subextension $(KNP)$;
it is characterized by
the property that
$j \circ \xi|_N
 =
 \xi \circ j
$
as homomorphisms
$N \rInto G$.
This gives
a canonical homomorphism
\begin{equation}
  \Aut(KGQ)
  \rTo
  \Aut(KNP)
  ,
  \qquad
  \xi
  \rMapsto
  \xi|_N
  ,
  \label{eqn:aut res}
\end{equation}
whose kernel is
by definition
the automorphism group
$\Aut(KNGQR)$
of the iterated extension
$(KNGQR)$.
Via the canonical isomorphism of
theorem~\ref{thm:ext aut}
applied to
the extensions
$(KGQ)$ and $(KNP)$,
one sees that
this homomorphism
corresponds to
the restriction map
of cocycles;
i.e.~the following diagram
commutes:
\[
  \begin{diagram}
  Z^1(Q,Z(K))
  &
  \rTo^{\quad \res \quad}
  &
  Z^1(P,Z(K))
  \\
  \dTo^{
    \text{thm.~\ref{thm:ext aut}}
    \quad
    \wr|
  }_{
    \quad
    - \star
  }
  &
  &
  \dTo^{
    - \star
    \quad
  }_{
    |\wr
    \quad
    \text{thm.~\ref{thm:ext aut}}
  }
  \\
  \Aut(KGQ)
  &
  \rTo_{\text{\eqref{eqn:aut res}}}
  &
  \Aut(KNP)
  .
  \end{diagram}
\]
Passing to
the quotient in cohomology,
we obtain
a commutative diagram
in which
the restriction homomorphism
maps
$H^1(Q,Z(K))$
into the subgroup
$H^1(P,Z(K))^R
 \subseteq
 H^1(P,Z(K))
$;
this amounts to
the following:

\begin{prop}
\label{prop:aut res}
Let $\Theta : Q \rTo \Out(N;K)$ be
the mod-$K$outer action of $Q$ on $N$
(cf.~definition~\ref{defn:mod-K outer action of iterext})
of the iterated extension $(KNGQR)$.
Then
for any automorphism
$\xi \in \Aut(KGQ)$
of the extension~$(KGQ)$,
its restriction $\xi|_N$ to $N$
is a $\Theta$-compatible automorphism
of the extension $(KNP)$.
In other words,
the canonical homomorphism
in~\eqref{eqn:aut res}
maps $\Aut(KGQ)$
into $\Aut_\Theta(KNP)$,
and
the following diagram
commutes:
\[
  \begin{diagram}
  H^1(Q,Z(K))
  &
  \rTo^{\quad \res \quad}
  &
  H^1(P,Z(K))^R
  \\
  \dTo^{
    \text{cor.~\ref{cor:ext aut H^1}}
    \quad
    \wr|
  }_{
    \quad
    - \star
  }
  &
  {\Bigg.}
  &
  \dTo^{
    - \star
    \quad
  }_{
    |\wr
    \quad
    \text{cor.~\ref{cor:ext aut Theta-compatible H^1}}
  }
  \\
  \Out(KGQ;K)
  &
  \rTo
  &
  \Out_\Theta(KNP;K)
  .
  \end{diagram}
\]
\end{prop}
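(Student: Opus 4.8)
The plan is to prove the first assertion directly --- that the restriction $\xi|_N$ of any $\xi \in \Aut(KGQ)$ is a $\Theta$-compatible automorphism of the extension $(KNP)$ --- and then to read off the commutative $H^1$-square by combining this with the commutative diagram of $1$-cocycles already displayed just above the statement, together with corollaries~\ref{cor:ext aut H^1} and~\ref{cor:ext aut Theta-compatible H^1}.

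For the first assertion, I would fix $\xi \in \Aut(KGQ)$ and $q \in Q$, choose $g \in G$ with $\pi(g) = q$, and recall from definition~\ref{defn:mod-K outer action of iterext} that $\Theta(q) \in \Out(N;K)$ is the image of $\conj_N^G(g) \in \Aut_K(N)$, the latter being characterized by $j(\upperleft{\conj_N^G(g)}{n}) = g \cdot j(n) \cdot g^{-1}$ for $n \in N$. Applying $\xi$ to this identity and using $\xi \circ j = j \circ \xi|_N$ (together with the injectivity of $j$), one obtains the identity $\xi|_N \circ \conj_N^G(g) \circ (\xi|_N)^{-1} = \conj_N^G(\xi(g))$ of automorphisms of $N$. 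Since $\pi \circ \xi = \pi$, the element $\xi(g) \cdot g^{-1}$ lies in $\Ker(\pi) = i(K) = j(i_0(K))$; hence $\conj_N^G(\xi(g))$ and $\conj_N^G(g)$ differ by conjugation in $N$ by an element of $i_0(K)$, that is, by an element of $\conj_N(K)$. Passing to $\Out(N;K) = \Aut_K(N)/\conj_N(K)$ therefore yields $\overline{\xi|_N} \circ \Theta(q) \circ \overline{\xi|_N}^{-1} = \Theta(q)$, i.e.\ $\overline{\xi|_N}$ commutes with $\Theta(q)$; as $q \in Q$ was arbitrary, $\xi|_N$ is $\Theta$-compatible, and so~\eqref{eqn:aut res} restricts to a homomorphism $\Aut(KGQ) \to \Aut_\Theta(KNP)$.

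To obtain the $H^1$-diagram I would then note that~\eqref{eqn:aut res} carries $\conj_G(Z(K))$ into $\conj_N(Z(K))$ --- the restriction of $\conj_G(i(z))$ to $N$ being conjugation in $N$ by $i_0(z)$, for $z \in Z(K)$ --- while the restriction map on cocycles carries $B^1(Q,Z(K))$ into $B^1(P,Z(K))$. Combining this with the commutative cocycle square displayed above the statement and passing to these quotients, one gets a commutative square whose lower row is $\Out(KGQ;K) \to \Out_\Theta(KNP;K)$ and whose upper row is the restriction map $H^1(Q,Z(K)) \to H^1(P,Z(K))^R$; here the vertical maps are the isomorphisms of corollary~\ref{cor:ext aut H^1} (applied to the extension $(KGQ)$) and of corollary~\ref{cor:ext aut Theta-compatible H^1} (applied to the $P$-subextension $(KNP)$, which is legitimate since $(\,KNP\,,\,PQR\,,\,\Theta\,)$ is an iterated extension problem: its $Q$-main extension $(KGQ)$ has outer action $\theta$, so by definition~\ref{defn:mod-K outer action of iterext} its mod-$K$ outer action $\Theta$ is a $(\theta,\conj_P^Q)$-prolongation of $\Theta_P$, while the $P$-subextension has outer action $\theta|_P$). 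In particular the restriction map does land in $H^1(P,Z(K))^R$.

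The only genuine computation is the one in the second paragraph; the main --- and rather minor --- obstacle is keeping the bookkeeping straight when identifying $N$ with $j(N) \subseteq G$, so that $\conj_N^G$, $\conj_N$ and the restriction $\xi|_N$ interact correctly, and observing that the choice of a lift $g$ of $q$ is immaterial --- the latter being already guaranteed by the well-definedness of $\Theta$ built into definition~\ref{defn:mod-K outer action of iterext}.
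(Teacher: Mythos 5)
Your proposal is correct and follows essentially the same route as the paper: a direct conjugation computation in $G$ (yours conjugates the lift $\conj_N^G(g)$ of $\Theta(q)$ by $\xi|_N$, the paper conjugates $\xi|_N$ by the lift, and the difference lands in $\conj_N(K)$ either way, the paper even pinning it down in $\conj_N(Z(K))$), followed by passing the already-established cocycle-level square to the quotients to get the $H^1$-diagram. The verification that corollary~\ref{cor:ext aut Theta-compatible H^1} applies, via definition~\ref{defn:mod-K outer action of iterext}, matches the paper's setup as well.
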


\begin{proof}
Here is a direct verification of
the $\Theta$-compatibility of $\xi|_N$.
For any $q \in Q$,
we choose a lift
$\Sigma(q) \in \Aut_K(N)$
of
$\Theta(q) \in \Out(N;K)$
and compute
the effect of
the automorphism
$\Sigma(q)
 \circ
 \xi|_N
 \circ
 \Sigma(q)^{-1}
 \in
 \Aut_K(N)
$
on an element $n \in N$.
By definition,
$\Sigma(q)$ acts on $N$
via conjugation by
any element $g \in G$
such that
$\pi(g) = q$ in $Q$.
It follows that
\[
  \begin{aligned}
  j
  \Bigl(\ 
  \upperleft
  {\Sigma(q)
   \,\circ\,
   \xi|_N
   \,\circ\,
   \Sigma(q)^{-1}
  }
  {(\,n\,)}
  \ \Bigr)
  &
  \ =\
  g
  \cdot
  \upperleft
  {\xi}
  {\Bigl(\,
    g^{-1}
    \cdot
    j(n)
    \cdot
    g
   \,\Bigr)
  }
  \cdot
  g^{-1}
  \\
  &
  \ =\
  \bigl(\,
  \underbrace{
    g
    \,
    \upperleft{\xi}{g^{-1}}
  }_{
    z_0^{-1}
  }
  \,\bigr)
  \cdot
  \upperleft
  {\xi}
  {j(n)}
  \cdot
  \bigl(\,
  \underbrace{
    \upperleft{\xi}{g^{}}
    \,
    g^{-1}
  }_{
    z_0
  }
  \,\bigr)
  \ =\
  j
  \Bigl(\ 
  \upperleft
  {\conj_N(z_0^{-1})
   \,\circ\,
   \xi|_N
  }
  {(\,n\,)}
  \ \Bigr)
  \qquad
  \text{in $G$}
  ,
  \end{aligned}
\]
where
$z_0
 :=
 \upperleft{\xi}{g^{}}
 \,
 g^{-1}
$
in
$Z(K)$.
This shows that
\[
  \Sigma(q)
  \circ
  \xi|_N
  \circ
  \Sigma(q)^{-1}
  \ =\
  \conj_N(z_0^{-1})
  \circ
  \xi|_N
  \qquad
  \text{in $\Aut(KNP)$}
  ,
\]
which gives
what we want.
\end{proof}

\begin{rmk}
Suppose
an iterated extension problem
$(\,
 KNP
 \,,\,
 PQR
 \,,\,
 \Theta
 \,)
$
is given in advance,
and
$(\,
 G
 \,,\,
 i
 \,,\,
 \pi
 \,)
$
arises as
an extension of $K$ by $Q$
with outer action $\theta$.
Let $N' := \pi^{-1}(P)$,
and let
$\Theta' : Q \rTo \Out(N';K)$ be
the mod-$K$-outer action
induced by
the conjugation action of $G$
on $N'$.
Proposition~\ref{prop:aut res}
thus applies
and shows that
restriction of
an automorphism $\xi$
of the extension~$(KGQ)$
is a $\Theta'$-compatible automorphism
of its $P$-subextension~$(KN'P)$.
Our given extension $(KNP)$
and
the $P$-subextension $(KN'P)$
of $(KGQ)$
are both
extensions of $K$ by $P$
with the same
outer action $\theta|_P$,
but they need not be
isomorphic extensions.
However,
by theorem~\ref{thm:ext aut},
the automorphism groups
$\Aut(KNP)$ and $\Aut(KN'P)$
of these two extensions
depend only on
the extension problem
$(\,
 K
 \,,\,
 P
 \,,\,
 \theta|_P
 \,)
$;
they are therefore
canonically isomorphic.
By identifying
these two automorphism groups,
we see that
proposition~\ref{prop:aut res}
remains valid as stated,
provided that
we interpret
the notation $\xi|_N$
as referring to
the automorphism of
the extension $(KNP)$
obtained from
the restriction
of $\xi$ to
the $P$-subextension $(KN'P)$
via the canonical isomorphism
between
$\Aut(KNP)$
and
$\Aut(KN'P)$.
\end{rmk}

By the commutative diagram~\ref{diag:ext aut infl}
and proposition~\ref{prop:aut res},
the exactness of
\[
  H^1(R,Z(K)^P)
  \rTo^{\quad \infl \quad}
  H^1(Q,Z(K))
  \rTo^{\quad \res \quad}
  H^1(P,Z(K))^R
\]
in the sequence~\eqref{eqn:long exact seq}
translates as:

\begin{prop}
The subgroup
$\Out(KNGQR;K)
 \subseteq
 \Out(KGQ;K)
$
is the kernel of
the canonical homomorphism
\[
  \Out(KGQ;K)
  \rTo
  \Out_\Theta(KNP;K)
  \qquad
  \text{induced by
        the homomorphism~\eqref{eqn:aut res}}
  .
\]
In other words,
if $\xi \in \Aut(KGQ)$
is an automorphism of
the extension~$(KGQ)$,
its restriction $\xi|_N$ to $N$
lies in $\conj_N(Z(K))$
if and only if
there exists
$z_0 \in Z(K)$
such that
$\conj_G(z_0)
 \circ
 \xi
$
belongs to
$\Aut(KNGQR)
 \subseteq
 \Aut(KGQ)
$.
\end{prop}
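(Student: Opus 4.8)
The plan is to read the statement off from the exactness of~\eqref{eqn:long exact seq} at the term $H^1(Q,Z(K))$, transported through the dictionary already assembled in the earlier sections. The first thing I would record is that the triplet $(\,KNP\,,\,PQR\,,\,\Theta\,)$, in which $(KNP)$ denotes the $P$-subextension of the given extension $(KGQ)$ and $\Theta$ denotes the mod-$K$ outer action of the iterated extension $(KNGQR)$ (definition~\ref{defn:mod-K outer action of iterext}), is an iterated extension problem in the sense of definition~\ref{defn:iterext pbm}: the $P$-subextension has outer action $\theta|_P$, and $\Theta$ is a $(\theta,\conj_P^Q)$-prolongation of $\Theta_P$ precisely because the $Q$-main extension of $(KNGQR)$ is $(KGQ)$, whose outer action is $\theta$. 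Consequently corollary~\ref{cor:ext aut H^1} furnishes an isomorphism $H^1(Q,Z(K)) \cong \Out(KGQ;K)$, corollary~\ref{cor:iterext aut H^1} furnishes $H^1(R,Z(K)^P) \cong \Out(KNGQR;K)$, and corollary~\ref{cor:ext aut Theta-compatible H^1} applied to $(KNP)$ furnishes $H^1(P,Z(K))^R \cong \Out_\Theta(KNP;K)$.

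Next I would invoke the two commutative squares already established. The square~\eqref{diag:ext aut infl} identifies $\infl\colon H^1(R,Z(K)^P) \to H^1(Q,Z(K))$ with the homomorphism $\Out(KNGQR;K) \to \Out(KGQ;K)$ induced by the inclusion~\eqref{eqn:aut infl}; since $\infl$ is injective in~\eqref{eqn:long exact seq}, this legitimizes regarding $\Out(KNGQR;K)$ as a subgroup of $\Out(KGQ;K)$, which then corresponds to the image of $\infl$. The commutative square displayed in proposition~\ref{prop:aut res} identifies $\res\colon H^1(Q,Z(K)) \to H^1(P,Z(K))^R$ with the canonical homomorphism $\Out(KGQ;K) \to \Out_\Theta(KNP;K)$ induced by~\eqref{eqn:aut res}, so the kernel of the latter corresponds to $\Ker(\res)$. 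Exactness of~\eqref{eqn:long exact seq} at $H^1(Q,Z(K))$ is the equality of the kernel of $\res$ with the image of $\infl$, and transporting it through the three isomorphisms above yields the first assertion.

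For the ``in other words'' reformulation I would simply unwind the quotients. A class in $\Out(KGQ;K) = \Aut(KGQ)/\conj_G(Z(K))$ is represented by some $\xi \in \Aut(KGQ)$; by proposition~\ref{prop:aut res} one has $\xi|_N \in \Aut_\Theta(KNP)$, and the image of the class in $\Out_\Theta(KNP;K) = \Aut_\Theta(KNP)/\conj_N(Z(K))$ is trivial exactly when $\xi|_N \in \conj_N(Z(K))$. On the other hand, the class of $\xi$ lies in the subgroup $\Out(KNGQR;K)$ exactly when $\xi$ is equivalent modulo $\conj_G(Z(K))$ to an element of $\Aut(KNGQR)$, that is, when $\conj_G(z_0)\circ\xi \in \Aut(KNGQR)$ for some $z_0 \in Z(K)$. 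That the two conditions agree can also be checked by hand: if $\conj_G(z_0)\circ\xi \in \Aut(KNGQR)$, then $(\conj_G(z_0)\circ\xi)\circ j = j$ forces $\xi|_N = \conj_N(z_0^{-1})$, using $z_0 \in Z(K) \subseteq N$; conversely, if $\xi|_N = \conj_N(z_0^{-1})$, then $\conj_G(z_0)\circ\xi$ restricts to $\id_N$ and satisfies $\pi\circ(\conj_G(z_0)\circ\xi) = \pi$ because $z_0$ lies in $\Ker(\pi)$, hence $\conj_G(z_0)\circ\xi \in \Aut(KNGQR)$.

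I do not expect a genuine obstacle here: once the dictionary of the earlier sections is available the argument is formal bookkeeping. The points that require attention are (i) verifying that $(\,KNP\,,\,PQR\,,\,\Theta\,)$ really satisfies the hypotheses of definition~\ref{defn:iterext pbm}, so that corollary~\ref{cor:ext aut Theta-compatible H^1} is applicable; (ii) keeping the inversion $z_0 \mapsto z_0^{-1}$ straight when transferring between $\conj_G$ and $\conj_N$; and (iii) remembering, as in the remark following proposition~\ref{prop:aut res}, that here ``$N$'' is understood to be the $P$-subextension of the ambient extension $(KGQ)$.
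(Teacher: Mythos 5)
Your proposal is correct and follows essentially the same route as the paper: the transport through the dictionary (diagram~\eqref{diag:ext aut infl}, proposition~\ref{prop:aut res}, and exactness of~\eqref{eqn:long exact seq} at $H^1(Q,Z(K))$) is exactly how the paper arrives at the statement, and your hands-on verification that $\conj_G(z_0)\circ\xi\in\Aut(KNGQR)$ is equivalent to $\xi|_N=\conj_N(z_0^{-1})$ is precisely the direct argument the paper gives as its proof.
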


\begin{proof}
Suppose
$\xi|_N$ belongs to
$\conj_N(Z(K))$;
then
$\xi|_N
 =
 \conj_N(z_0^{-1})
$
in $\Aut(KNP)$
for some
$z_0 \in Z(K)$,
and hence
the composite automorphism
$\conj_G(z_0) \circ \xi$
of $G$
induces the identity
on $N$ and on $Q$,
which means that
it belongs to
$\Aut(KNGQR)$.
The converse is clear
from the fact that
$\Aut(KNGQR)$
is the kernel of
the homomorphism~\eqref{eqn:aut res}.
\end{proof}

\section{$H^2(R,Z(K)^P)$
and the classification of
iterated extensions}
\label{sect:iterext classification}

Let
$(\,
 KNP
 \,,\,
 PQR
 \,,\,
 \Theta
 \,)
$
be an iterated extension problem
(cf.~definition~\ref{defn:iterext pbm}).
Throughout this section,
we fix the following
choices of:
\[
  \begin{aligned}
  &
  \text{a section}
  &
  \quad
  \ubar
  &
  \ :\ 
  R
  \rDotsto
  Q
  &
  \quad
  &
  \text{of}
  \quad
  Q \rOnto^{\phibar} R
  ,
  \\
  \text{and}
  \quad
  &
  \text{a lifting}
  &
  \quad
  \Delta
  &
  \ :\ 
  R
  \rDotsto
  \Aut_K(N)
  &
  \quad
  &
  \text{of}
  \quad
  \Theta \circ \ubar : R \rDotsto \Out(N;K)
  .
  \end{aligned}
\]
(Recall that
according to
the convention
we have imposed,
sections and liftings
are required to
send
the identity element of
the source group
to
the identity element of
the target group;
thus
$\ubar(1_R) = 1_Q$
and
$\Delta(1_R) = \id_N$.)

\begin{defn}
A \emph{$(\ubar,\Delta)$-sectioned
iterated extension
of $(KNP)$ by $(PQR)$}
consists of
a quadruple
$(\,
 G
 \,,\,
 j
 \,,\,
 \pi
 \,,\,
 u
 \,)
$,
where
the triplet
$(\,
 G
 \,,\,
 j
 \,,\,
 \pi
 \,)
$
is an
iterated extension
of $(KNP)$ by $(PQR)$,
and
$u : R \rDotsto G$
is a section of
$G \rOnto^{\phi} R$
such that
\[
  \begin{aligned}
  \pi \circ u
  &
  \ =\
  \ubar
  &
  \qquad
  &
  \text{as maps}
  \quad
  R \rDotsto Q
  \\
  \text{and}
  \qquad
  \conj_N^G \circ u
  &
  \ =\
  \Delta
  &
  \qquad
  &
  \text{as maps}
  \quad
  R \rDotsto \Aut_K(N)
  .
  \end{aligned}
\]
Two such
sectioned iterated extensions
$(\,
 G_\ell
 \,,\,
 j_\ell
 \,,\,
 \pi_\ell
 \,,\,
 u_\ell
 \,)
$
(for $\ell=1,2$)
are \emph{isomorphic}
iff
there exists
an isomorphism of
iterated extensions
$\varphi
 :
 (\,
 G_1
 \,,\,
 j_1
 \,,\,
 \pi_1
 \,)
 \rTo^{\simeq}
 (\,
 G_2
 \,,\,
 j_2
 \,,\,
 \pi_2
 \,)
$
such that
$\varphi \circ u_1
 =
 u_2
$
as maps
$R \rDotsto G_2$.
\end{defn}

If
$(\,
 G
 \,,\,
 j
 \,,\,
 \pi
 \,,\,
 u
 \,)
$
is a $(\ubar,\Delta)$-sectioned
iterated extension
of $(KNP)$ by $(PQR)$,
its mod-$K$outer action
is necessarily
equal to $\Theta$.
Indeed,
if we choose
a section
$s_0 : P \rDotsto N$
of $N \rOnto^{\pi_0} P$,
we can define
the map
$s : Q \rDotsto G$
in terms of $u$ and $s_0$
by setting,
for any $q \in Q$
written in the form
$q = \jbar(p) \cdot \ubar(r)$
(with $p \in P$ and $r \in R$),
\[
  s(q)
  \ :=\ 
  j(\,s_0(p)\,)
  \cdot
  u(r)
  \qquad
  \text{in $G$}
  .
\]
Then
it is clear that
$s$ is sections of
$G \rOnto^{\pi} Q$.
The conjugation action
of $s(q)$ on $N$
is given by
$\conj_N^G(s(q))
 =
 \conj_N(s_0(p))
 \circ
 \Delta(r)
$
in $\Aut_K(N)$,
whose image in $\Out(N;K)$
is
$\Theta_P(s_0(p))
 \circ
 \Theta(\ubar(r))
 =
 \Theta(q)
$.
Thus
$\conj_N^G \circ s : Q \rDotsto \Aut_K(N)$
is a lifting of $\Theta$,
which shows that
the diagram~\eqref{diag:G Theta}
commutes;
this proves our claim.

Conversely,
any iterated extension
of $(KNP)$ by $(PQR)$
with mod-$K$outer action $\Theta$
can be enriched into
a $(\ubar,\Delta)$-sectioned
iterated extension:

\begin{lemma}
\label{lemma:choice of u}
Let
$(\,
 G
 \,,\,
 j
 \,,\,
 \pi
 \,)
$
be an iterated extension
of $(KNP)$ by $(PQR)$
with mod-$K$outer action
$\Theta$.
There exists
a section
$u : R \rDotsto G$
of $G \rOnto^{\phi} R$
such that
$(\,
 G
 \,,\,
 j
 \,,\,
 \pi
 \,,\,
 u
 \,)
$
is a $(\ubar,\Delta)$-sectioned
iterated extension.
Multiplying $u$ by
any map
$R \rDotsto Z(K)^P$
results in
another such section,
and all such sections
are obtained this way.
\end{lemma}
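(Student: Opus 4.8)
The plan is to construct $u$ one value at a time. Since $\phi = \phibar \circ \pi$ and $\phibar \circ \ubar = \id_R$, any map $u : R \rDotsto G$ with $\pi \circ u = \ubar$ is automatically a section of $\phi$, so it is enough to produce a $u$ satisfying $\pi \circ u = \ubar$, $\conj_N^G \circ u = \Delta$ and the normalization $u(1_R) = 1_G$. For $r = 1_R$ we put $u(1_R) := 1_G$, which is consistent because $\ubar(1_R) = 1_Q$ and $\Delta(1_R) = \id_N$.

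Now fix $r \in R$ and choose, using surjectivity of $\pi$, some $g \in G$ with $\pi(g) = \ubar(r)$. By definition~\ref{defn:mod-K outer action of iterext}, i.e.~the commutativity of diagram~\eqref{diag:G Theta} for the mod-$K$\,outer action $\Theta$ of the given iterated extension, the image of $\conj_N^G(g)$ in $\Out(N;K)$ equals $\Theta(\pi(g)) = (\Theta \circ \ubar)(r)$; and this is also the image of $\Delta(r)$ in $\Out(N;K)$, since $\Delta$ is a lifting of $\Theta \circ \ubar$. Hence $\Delta(r) \circ \conj_N^G(g)^{-1}$ lies in the normal subgroup $\conj_N(K)$ of $\Aut_K(N)$; write it as $\conj_N(k)$ for some $k \in K$ (viewed as a subgroup of $N$ via $i_0$). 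Because $j$ is a homomorphism and $j(N) = \Ker(\phi)$ is normal in $G$, conjugation in $G$ by $i(k) = j(i_0(k))$ induces on $N$ exactly the automorphism $\conj_N(k)$, so $\conj_N^G(i(k) \cdot g) = \conj_N(k) \circ \conj_N^G(g) = \Delta(r)$, while $\pi(i(k) \cdot g) = \pi(g) = \ubar(r)$ because $i(K) = \Ker(\pi)$. Setting $u(r) := i(k) \cdot g$ thus gives a section with the required properties, proving existence.

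For the last two assertions, let $u'$ be any section with $\pi \circ u' = \ubar$. Then $u'(r) \cdot u(r)^{-1} \in \Ker(\pi) = i(K)$ for every $r$, so $u' = \bigl(r \mapsto i(\gamma(r)) \cdot u(r)\bigr)$ for a unique map $\gamma : R \rDotsto K$. The requirement $\conj_N^G \circ u' = \Delta$, equivalently $\conj_N^G \circ u' = \conj_N^G \circ u$, holds if and only if $\conj_N^G(i(\gamma(r))) = \id_N$ for all $r$, i.e.~$\conj_N(\gamma(r)) = \id_N$, i.e.~$\gamma(r) \in Z(N) \cap K$ for all $r$; using the identity $Z(K)^P = Z(N) \cap K$, this says precisely that $\gamma$ takes values in $Z(K)^P$. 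Therefore $(G, j, \pi, u')$ is a $(\ubar, \Delta)$-sectioned iterated extension exactly when $u'$ arises from $u$ by pointwise multiplication by a map $R \rDotsto Z(K)^P$, which is the claim.

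The step I expect to demand the most care is the middle one: lifting $\ubar(r)$ to an element of $G$ whose conjugation action on $N$ is \emph{exactly} $\Delta(r)$, and not merely $\Delta(r)$ up to an inner automorphism coming from $\conj_N(K)$. The two facts that make it work are that $\conj_N^G(g)$ and $\Delta(r)$ already agree in $\Out(N;K)$ --- which is the combination of $\Theta$ being the mod-$K$\,outer action of the iterated extension and of $\Delta$ lifting $\Theta \circ \ubar$ --- and that the corrective factor can be taken inside $i(K) = \Ker(\pi)$, so that replacing $g$ by $i(k) \cdot g$ alters $\conj_N^G(g)$ by $\conj_N(k)$ without disturbing $\pi(g)$. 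Everything else is routine bookkeeping with the normalization conventions.
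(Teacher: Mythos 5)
Your proof is correct and follows essentially the same route as the paper: the paper starts from an arbitrary section of $\phi$ and corrects it first by a map into $N$ (to get $\pi\circ u=\ubar$) and then by a map into $K$ (to get $\conj_N^G\circ u=\Delta$, using that $\conj_N^G\circ u$ and $\Delta$ both lift $\Theta\circ\ubar$), while you carry out the same two corrections pointwise; the characterization of all admissible sections via $Z(K)^P = Z(N)\cap K = \Ker(\conj_N)\cap\Ker(\pi)$ is likewise the paper's argument.
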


\begin{proof}
We start with
any section
$u$ of $\phi$.
The commutativity of
diagram~\eqref{diag:KNGQR}
shows that
$\pi \circ u$
is a lifting of
$Q \rOnto^{\phibar} R$,
and hence
$\pi \circ u$
differs multiplicatively
from $\ubar$
by a map
$R \rDotsto P$.
Since $\pi$
sends $N$
surjectively onto $P$,
we can adjust
our choice of $u$
by a map
$R \rDotsto N$
to get
$\pi \circ u = \ubar$.
The commutativity of
the diagram~\eqref{diag:G Theta}
then implies that
$\conj_N^G \circ u$
is a lifting of
$\Theta \circ \ubar$.
Since
$\Delta$ is also
a lifting of
$\Theta \circ \ubar$,
the two maps
$\conj_N^G \circ u$
and $\Delta$
differ multiplicatively
from each other
by a map
$R \rDotsto \conj_N(K)$;
and since
$\conj_N$ sends $K$
surjectively onto $\conj_N(K)$,
we can further adjust
our section $u$
by a map
$R \rDotsto K$
to get
$\conj_N^G \circ u = \Delta$,
which shows
the existence claim.
The remaining assertions
follow from
the observation that
$Z(K)^P$ is precisely
the intersection of
$Z(N) = \Ker(\conj_N)$
with
$K = \Ker(\pi)$
in $N$.
\end{proof}

Let
$(\,
 G
 \,,\,
 j
 \,,\,
 \pi
 \,,\,
 u
 \,)
$
be a fixed
$(\ubar,\Delta)$-sectioned
iterated extension
of $(KNP)$ by $(PQR)$.
For any 1-cocycle
$d \in Z^2(R,Z(K)^P)$,
let
$m_d : G \times G \rDotsto G$
be the map
given by
the product of
$d(\phi(-),\phi(-))$
with the multiplication map
in $G$;
that is,
\[
  m_d(g_1,g_2)
  \ :=\
  d(\phi(g_1),\phi(g_2))
  \cdot
  g_1
  \cdot
  g_2
  \qquad
  \text{in $G$}
  .
\]

\begin{lemma}
\label{lemma:G boxtimes d well-def}
The underlying set of $G$
given with $m_d$
as the multiplication map
is a group;
more precisely,
the map $m_d$
is associative,
has $1_G$ as
the identity element,
and its inversion map
is given by
\[
  v_d
  \ :\
  G
  \rDotsto
  G
  ,
  \qquad
  v_d(g)
  \ :=\
  \upperleft
  {\theta_0(\phi(g))^{-1}}
  {d(\phi(g),\phi(g)^{-1})^{-1}}
  \cdot
  g^{-1}
  .
\]
Moreover,
if we let
$d \boxtimes G$ denote
the resulting group
with $m_d$ as multiplication,
the maps
$j : N \rInto d \boxtimes G$
and
$\pi : d \boxtimes G \rOnto Q$
are homomorphisms,
and the map
$u : R \rDotsto d \boxtimes G$
is a section of
$d \boxtimes G \rOnto^{\phi} R$
such that
$(\,
 d \boxtimes G
 \,,\,
 j
 \,,\,
 \pi
 \,,\,
 u
 \,)
$
is a
$(\ubar,\Delta)$-sectioned
iterated extension
of $(KNP)$ by $(PQR)$.
\end{lemma}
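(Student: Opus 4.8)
The plan is to exploit throughout the fact that each value of $d$ lies in $Z(K)^P \subseteq Z(K) \subseteq K$, hence, viewed inside $G$ via $i = j\circ i_0$, in $i(K) = \Ker(\pi) \subseteq \Ker(\phi)$. Consequently $\phi(m_d(g_1,g_2)) = \phi(g_1)\phi(g_2)$ and $\pi(m_d(g_1,g_2)) = \pi(g_1)\pi(g_2)$ for all $g_1,g_2$; in particular the maps $\phi$ and $\pi$ ``do not notice'' the change of multiplication. The only tool for moving a $d$-factor past an element of the original group $G$ is notation~\ref{notatn:theta_0}: conjugation in $G$ by any $g$ with $\phi(g) = r$ carries $z \in Z(K)^P$ to $\upperleft{\theta_0(r)}{z}$. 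Once all $d$-factors are collected on the left they live in the abelian group $Z(K)^P$, and each required identity reduces to a statement there.

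For associativity I would expand
\[
  m_d(m_d(g_1,g_2),g_3)
  \ =\
  d(\phi(g_1)\phi(g_2),\phi(g_3))
  \cdot
  d(\phi(g_1),\phi(g_2))
  \cdot
  g_1 g_2 g_3
\]
and
\[
  m_d(g_1,m_d(g_2,g_3))
  \ =\
  d(\phi(g_1),\phi(g_2)\phi(g_3))
  \cdot
  \upperleft{\theta_0(\phi(g_1))}{d(\phi(g_2),\phi(g_3))}
  \cdot
  g_1 g_2 g_3
  ,
\]
the twist in the second line being precisely the slide of $d(\phi(g_2),\phi(g_3))$ past $g_1$. Since all four $d$-factors lie in the abelian group $Z(K)^P$, the two expressions coincide exactly by the normalized $2$-cocycle relation for $d$. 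The identity axiom $m_d(1_G,g) = m_d(g,1_G) = g$ is immediate from the normalization $d(1_R,-) = d(-,1_R) = 1$. To check the proposed inversion I would verify $m_d(g,v_d(g)) = 1_G$ directly --- conjugating the $d$-factor of $v_d(g)$ past $g$ via $\theta_0(\phi(g))$ cancels it against the leading factor $d(\phi(g),\phi(g)^{-1})$ --- and $m_d(v_d(g),g) = 1_G$ using the identity $d(r^{-1},r) = \upperleft{\theta_0(r)^{-1}}{d(r,r^{-1})}$, which is the cocycle relation at $(r^{-1},r,r^{-1})$ combined with normalization.

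It then remains to recognize $(d\boxtimes G, j, \pi, u)$ as a $(\ubar,\Delta)$-sectioned iterated extension. The map $j$ is a homomorphism because $\phi(j(n)) = 1_R$ and $d$ is normalized; more generally $m_d$ restricts to the original product of $G$ on any pair one of whose entries lies in $j(N)$. The map $\pi$ is a surjective homomorphism because its $d$-factors lie in $\Ker(\pi)$ and its underlying map is unchanged. Since the underlying set and the maps $j$, $\pi$ (hence also $\phi = \phibar\circ\pi$ and $i = j\circ i_0$) are unchanged, all the kernel-and-image conditions of definition~\ref{defn:iterext} persist, so $(d\boxtimes G, j, \pi)$ is an iterated extension; likewise $u$ is still a section of $\phi$ with $\pi\circ u = \ubar$. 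The single genuine computation left is $\conj_N^{d\boxtimes G}\circ u = \Delta$: for $n\in N$ and $r\in R$ I would compute the $m_d$-conjugate $m_d\bigl(u(r),\,m_d(j(n),v_d(u(r)))\bigr)$ of $j(n)$ by $u(r)$, reduce the inner product to the original one, substitute $v_d(u(r)) = \upperleft{\theta_0(r)^{-1}}{d(r,r^{-1})^{-1}}\cdot u(r)^{-1}$, then slide the $d$-factor past $j(n)$ (using $Z(K)^P = Z(N)\cap K$) and past $u(r)$ (via $\theta_0(r)$); the two $d(r,r^{-1})$-factors cancel, leaving $u(r)\cdot j(n)\cdot u(r)^{-1} = j(\upperleft{\Delta(r)}{n})$ since $\conj_N^G(u(r)) = \Delta(r)$ in the original group $G$.

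The main obstacle is purely organizational: keeping scrupulous track of the $\theta_0$-twists incurred every time a $Z(K)^P$-valued factor is pushed past an element of $G$, so that the $2$-cocycle relation and the normalization of $d$ can be invoked at the right moment; and, in the last step, using $Z(K)^P = Z(N)\cap K$ to push such a factor past an arbitrary element of $j(N)$. No individual manipulation is deep once this bookkeeping is in place.
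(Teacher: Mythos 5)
Your proposal is correct and follows essentially the same route as the paper: push every $Z(K)^P$-valued $d$-factor to the left using the $\theta_0$-conjugation identity of notation~\ref{notatn:theta_0}, invoke the normalized cocycle relation for associativity, units and inverses, note that $j$ and $\pi$ remain homomorphisms so the iterated-extension structure persists, and verify $\conj_N^{d \boxtimes G} \circ u = \Delta$. The only cosmetic deviations are that you check the left-inverse identity $m_d(v_d(g),g)=1_G$ directly via the cocycle relation at $(r^{-1},r,r^{-1})$ (the paper gets it from the right-inverse computation together with associativity), and that you compute the $m_d$-conjugation of $j(n)$ by $u(r)$ directly, whereas the paper first observes that $\conj_N^G$ is still a homomorphism on $d \boxtimes G$ and then that it coincides with $\conj_N^{d \boxtimes G}$.
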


\begin{proof}
For any
$g_1,g_2,g_3 \in G$,
let
\[
  \begin{aligned}
  g_{12}
  &
  \ :=\
  m_d(g_1,g_2)
  \ =\
  d(\phi(g_1),\phi(g_2))
  \cdot
  g_1
  \cdot
  g_2
  \\
  \text{and}
  \qquad
  g_{23}
  &
  \ :=\
  m_d(g_2,g_3)
  \ =\
  d(\phi(g_2),\phi(g_3))
  \cdot
  g_2
  \cdot
  g_3
  \qquad
  \text{in $G$}
  ;
  \end{aligned}
\]
thus
$\phi(g_{12})
 =
 \phi(g_1)\phi(g_2)
$
and
$\phi(g_{23})
 =
 \phi(g_2)\phi(g_3)
$
in $R$.
The cocycle relation
satisfied by $d$
yields
\[
  \begin{aligned}
  m_d(m_d(g_1,g_2),g_3)
  &
  \ =\
  d(\phi(g_{12}),\phi(g_3))
  \cdot
  g_{12}
  \cdot
  g_3
  \\
  &
  \ =\
  d(\phi(g_1)\phi(g_2),\phi(g_3))
  \cdot
  d(\phi(g_1),\phi(g_2))
  \cdot
  g_1
  \cdot
  g_2
  \cdot
  g_3
  \\
  &
  \ =\
  d(\phi(g_1),\phi(g_2)\phi(g_3))
  \cdot
  \underbrace{
    \upperleft
    {\theta_0(\phi(g_1))}
    {d(\phi(g_2),\phi(g_3))}
  }_{
    \ =\
    g_1
    \cdot
    d(\phi(g_2),\phi(g_3))
    \cdot
    g_1^{-1}
  }
  \cdot
  g_1
  \cdot
  g_2
  \cdot
  g_3
  \\
  &
  \ =\
  d(\phi(g_1),\phi(g_{23}))
  \cdot
  g_1
  \cdot
  g_{23}
  \ =\
  m_d(g_1,m_d(g_2,g_3))
  \qquad
  \text{in $G$}
  ,
  \end{aligned}
\]
which shows that
$m_d$ is associative.
The fact that
$d(1_R,r) = d(r,1_R) = 1_{Z(K)^P}$
for any $r \in R$
shows that
$m_d(1_G,g) = m_d(g,1_G) = g$
for any $g \in G$.
Since
$\phi(v_d(g)) = \phi(g)^{-1}$
in $R$,
we have
\[
  m_d(g,v_d(g))
  \ =\
  d(\phi(g),\phi(g)^{-1})
  \cdot
  g
  \cdot
  \underbrace{
    \upperleft
    {\theta_0(\phi(g))^{-1}}
    {d(\phi(g),\phi(g)^{-1})^{-1}}
  }_{
    \ =\
    g^{-1}
    \cdot
    d(\phi(g),\phi(g)^{-1})^{-1}
    \cdot
    g
  }
  \cdot
  g^{-1}
  \ =\
  1_G
  \qquad
  \text{in $G$}
  ;
\]
this together with
the associativity of $m_d$
show that
we also have
$m_d(v_d(g),g) = 1_G$
in $G$.
Thus
the underlying set of $G$
given with $m_d$
as the multiplication map
is a group,
which we denote as
$d \boxtimes G$
from now on.
We continue to use
the dot-product notation for
multiplication in $G$,
but every
$m_d$-multiplication
in $d \boxtimes G$
will be written out
explicitly.

For any $n_1,n_2 \in N$
and any $g_1,g_2 \in d \boxtimes G$,
one has
\[
  \begin{aligned}
  m_d(j(n_1),j(n_2))
  &
  \ =\
  \underbrace{
    d(1_R,1_R)
  }_{
    \ =\
    1_{Z(K)^P}
  }
  \cdot
  j(n_1)
  \cdot
  j(n_2)
  &
  \ =\
  &
  j(n_1n_2)
  &
  \qquad
  &
  \text{in $d \boxtimes G$}
  ,
  \\
  \pi(m_d(g_1,g_2))
  &
  \ =\
  \pi
  \bigl(\,
    \underbrace{
      d(\phi(g_1),\phi(g_2))
    }_{
      \text{in $Z(K)^P$}
    }
    \cdot
    g_1
    \cdot
    g_2
  \,\bigr)
  &
  \ =\
  &
  \pi(g_1)
  \,
  \pi(g_2)
  &
  \qquad
  &
  \text{in $Q$}
  ,
  \\
  \text{and}
  \qquad
  \conj_N^G(m_d(g_1,g_2))
  &
  \ =\
  \conj_N^G
  \bigl(\,
    \underbrace{
      d(\phi(g_1),\phi(g_2))
    }_{
      \text{in $Z(K)^P$}
    }
    \cdot
    g_1
    \cdot
    g_2
  \,\bigr)
  &
  \ =\
  &
  \conj_N^G(g_1)
  \,
  \conj_N^G(g_2)
  &
  \qquad
  &
  \text{in $\Aut_K(N)$}
  .
  \end{aligned}
\]
These identities
show that
\[
  j
  \ :\
  N
  \rTo
  d \boxtimes G
  ,
  \qquad
  \pi
  \ :\
  d \boxtimes G
  \rTo
  Q
  \qquad
  \text{and}
  \qquad
  \conj_N^G
  \ :\
  d \boxtimes G
  \rTo
  \Aut_K(N)
\]
are homomorphisms.
It is then clear that
$(\,
 d \boxtimes G
 \,,\,
 j
 \,,\,
 \pi
 \,)
$
is an iterated extension
of $(KNP)$ by $(PQR)$.
The fact that
$d(1_R,\phi(g))
 =
 d(\phi(g),1_R)
 =
 1_{Z(K)^P}
$
means that
for any $n \in N$
and any $g \in d \boxtimes G$,
one has
\[
  m_d
  \bigl(\,
  j
  \bigl(\,
    \upperleft
    {\conj_N^G(g)}
    {n}
  \,\bigr)
  \,,\,
  g
  \,\bigr)
  \ =\
  j
  \bigl(\,
    \upperleft
    {\conj_N^G(g)}
    {n}
  \,\bigr)
  \cdot
  g
  \ =\
  g
  \cdot
  j(n)
  \ =\
  m_d(g,j(n))
  \qquad
  \text{in $d \boxtimes G$}
  ,
\]
or equivalently,
\[
  j
  \bigl(\,
    \upperleft
    {\conj_N^G(g)}
    {n}
  \,\bigr)
  \ =\
  m_d
  \bigl(\,
    m_d(g,j(n))
    \,,\,
    v_d(g)
  \,\bigr)
  \ =\ 
  j
  \bigl(\,
    \upperleft
    {\conj_N^{d \boxtimes G}(g)}
    {n}
  \,\bigr)
  \qquad
  \text{in $d \boxtimes G$}
  .
\]
This shows that
$\conj_N^G$
is also equal to
the conjugation action
$\conj_N^{d \boxtimes G}$
of $d \boxtimes G$ on $N$.
Therefore,
the map
$u : R \rDotsto d \boxtimes G$
is a section of
$d \boxtimes G \rOnto^{\phi} R$
satisfying
\[
  \begin{aligned}
  \pi \circ u
  &
  \ =\
  \ubar
  &
  \qquad
  &
  \text{as maps}
  \quad
  R \rDotsto Q
  \\
  \text{and}
  \qquad
  \conj_N^{d \boxtimes G} \circ u
  &
  \ =\
  \Delta
  &
  \qquad
  &
  \text{as maps}
  \quad
  R \rDotsto \Aut_K(N)
  ,
  \end{aligned}
\]
whence
$(\,
 d \boxtimes G
 \,,\,
 j
 \,,\,
 \pi
 \,,\,
 u
 \,)
$
is a $(\ubar,\Delta)$-sectioned
iterated extension
of $(KNP)$ by $(PQR)$.
\end{proof}

\begin{thm}
\label{thm:iterext classification}
Let
$(\,
 G
 \,,\,
 j
 \,,\,
 \pi
 \,,\,
 u
 \,)
$
be a $(\ubar,\Delta)$-sectioned
iterated extension
of $(KNP)$ by $(PQR)$.
Then the map
\[
  \begin{array}{rcl}
  - \boxtimes G
  \ :\
  Z^2(R,Z(K)^P)
  &
  \
  \rTo^{\simeq}
  \
  &
  \left\{\
  \begin{array}{l}
  \text{isomorphism classes of}
  \\
  \text{$(\ubar,\Delta)$-sectioned
        iterated extensions}
  \\
  \text{of $(KNP)$ by $(PQR)$}
  \end{array}
  \ \right\}
  ,
  \\[4ex]
  d
  &
  \rMapsto
  &
  \begin{array}[t]{l}
  \text{the isomorphism class of}
  \\
  (\,
  d \boxtimes G
  \,,\,
  j
  \,,\,
  \pi
  \,,\,
  u
  \,)
  \quad
  \text{as defined above}
  \end{array}
  \end{array}
\]
is a well-defined bijection.
\end{thm}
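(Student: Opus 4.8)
The plan is to show the displayed map is a well-defined bijection. Well-definedness is immediate: by Lemma~\ref{lemma:G boxtimes d well-def}, $d\boxtimes G$ together with $j,\pi,u$ is a $(\ubar,\Delta)$-sectioned iterated extension, so its isomorphism class lies in the target set; and the rule $d\mapsto[(d\boxtimes G,j,\pi,u)]$ involves no choices. It remains to prove injectivity and surjectivity. (Incidentally one checks $d\boxtimes(d'\boxtimes G)=(dd')\boxtimes G$ and $1\boxtimes G=G$, so the target carries an action of $Z^2(R,Z(K)^P)$ for which our map is the orbit map through $[(G,j,\pi,u)]$; I argue both halves directly.) The key point for injectivity is a rigidity property: an isomorphism $\varphi$ of sectioned iterated extensions from $(G_1,j_1,\pi_1,u_1)$ to $(G_2,j_2,\pi_2,u_2)$ is forced, since $\varphi$ is a homomorphism with $\varphi\circ j_1=j_2$ and $\varphi\circ u_1=u_2$, and every element of $G_1$ is uniquely of the form $u_1(r)\cdot j_1(n)$ (as $u_1$ is a section of $\phi$ with kernel $j_1(N)$), whence $\varphi(u_1(r)\cdot j_1(n))=u_2(r)\cdot j_2(n)$. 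Applying this with $G_1=d\boxtimes G$, $G_2=d'\boxtimes G$ (so $j_1=j_2=j$, $u_1=u_2=u$) and using $m_d(u(r),j(n))=u(r)\cdot j(n)=m_{d'}(u(r),j(n))$ in the common underlying set $G$, we get that $\varphi$ is the identity map of $G$; but a homomorphism $(G,m_d)\to(G,m_{d'})$ equal to the identity forces $m_d=m_{d'}$, and evaluating on $u(r_1),u(r_2)$ gives $d=d'$.

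For surjectivity, fix an arbitrary $(\ubar,\Delta)$-sectioned iterated extension $(G',j',\pi',u')$. Writing every element of $G$ (resp.\ $G'$) uniquely as $u(r)\cdot j(n)$ (resp.\ $u'(r)\cdot j'(n)$), define the bijection $\Psi\colon G'\to G$ by $\Psi(u'(r)\cdot j'(n)):=u(r)\cdot j(n)$; by construction $\Psi\circ j'=j$, $\Psi\circ u'=u$, $\pi\circ\Psi=\pi'$ and $\phi\circ\Psi=\phi'$. Let $f$ and $f'$ be the right factor sets of $u$ in $G$ and of $u'$ in $G'$. Since $\pi\circ u=\ubar=\pi'\circ u'$ and $\conj_N^G\circ u=\Delta=\conj_N^{G'}\circ u'$, the composites $\pi_0\circ f$ and $\conj_N\circ f$ are determined by $\ubar$ and $\Delta$ alone (explicitly, $\pi_0\circ f$ is the right factor set of $\ubar$ in $(PQR)$, and $\conj_N(f(r_1,r_2))=\Delta(r_1r_2)^{-1}\,\Delta(r_1)\,\Delta(r_2)$), hence coincide with $\pi_0\circ f'$ and $\conj_N\circ f'$; therefore $\delta:=f'\cdot f^{-1}$ takes its values in $\Ker(\pi_0)\cap\Ker(\conj_N)=K\cap Z(N)=Z(K)^P$.

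Now a direct computation of $u(r_1)j(n_1)\cdot u(r_2)j(n_2)$ in $G$ using $\conj_N^G(u(r))=\Delta(r)$, together with the analogous computation in $G'$ and the fact that conjugation by $u(r_1r_2)$ acts on $Z(K)^P$ through $\theta_0(r_1r_2)$ (notation~\ref{notatn:theta_0}), yields
\[
  \Psi(g_1'g_2')
  \ =\
  d\bigl(\phi(\Psi(g_1')),\phi(\Psi(g_2'))\bigr)
  \cdot
  \Psi(g_1')
  \cdot
  \Psi(g_2')
  \quad\text{in $G$},
\]
where $d(r_1,r_2):=\upperleft{\theta_0(r_1r_2)}{\delta(r_1,r_2)}$; in other words $\Psi(g_1'g_2')=m_d(\Psi(g_1'),\Psi(g_2'))$. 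Transporting the group law of $G'$ along the bijection $\Psi$ shows that the underlying set of $G$ equipped with $m_d$ is a group; evaluating associativity of $m_d$ on $u(r_1),u(r_2),u(r_3)$ then produces exactly the $2$-cocycle identity for $d$ relative to the action $\theta_0$ (and $d(1_R,-)=d(-,1_R)=1$ because $\Psi(1_{G'})=1_G$), so $d\in Z^2(R,Z(K)^P)$ and this group is $d\boxtimes G$. Consequently $\Psi\colon G'\to d\boxtimes G$ is a group isomorphism with $\Psi\circ j'=j$, $\pi\circ\Psi=\pi'$, $\Psi\circ u'=u$, i.e.\ an isomorphism of $(\ubar,\Delta)$-sectioned iterated extensions, so $[G']=[d\boxtimes G]$ and surjectivity follows.

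I expect the main obstacle to be organizational rather than conceptual: computing the defect of $\Psi$ from being a homomorphism while keeping the left/right placement of factor sets consistent with the convention built into $m_d$, and tracking the conjugation twist carefully enough to see that the defect is precisely $\theta_0(r_1r_2)$ applied to the difference $\delta$ of factor sets — this is what makes $d$ come out as an honest normalized $2$-cocycle. Once that bookkeeping is handled, the theorem is just the statement that $(\ubar,\Delta)$-sectioned iterated extensions are ``affine over $Z^2(R,Z(K)^P)$,'' and nothing beyond Lemma~\ref{lemma:G boxtimes d well-def} is needed.
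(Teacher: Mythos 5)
Your argument is correct, and the computations in it are the same ones the paper runs, but you organize them differently. The paper first isolates an isomorphism criterion (lemma~\ref{lemma:isom criterion for sectioned iterext}): two $(\ubar,\Delta)$-sectioned iterated extensions are isomorphic iff their left factor sets coincide; injectivity and surjectivity of $-\boxtimes G$ are then both deduced from it, with the surjectivity half showing that the difference $d$ of left factor sets lands in $Z(K)^P$ (by applying $\pi$ and $\conj_N^G$, exactly as you do) and satisfies the cocycle identity via the non-abelian cocycle relation for factor sets. You bypass that lemma: for injectivity you note that $\varphi\circ j=j$, $\varphi\circ u=u$ and the normal form $u(r)\cdot j(n)$ force any isomorphism $d\boxtimes G\to d'\boxtimes G$ to be the identity on the underlying set (using $m_d(u(r),j(n))=u(r)\cdot j(n)$ for normalized $d$), whence $m_d=m_{d'}$ and $d=d'$; for surjectivity you build the normal-form bijection $\Psi$ directly --- in substance the same map $g\mapsto j(n'(g))\cdot u(\phi'(g))$ that the paper constructs inside its proof of the criterion --- and read off its multiplicative defect, getting the cocycle identity from associativity by transport of structure rather than from the factor-set relation. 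Your use of right factor sets is what makes the twist $\upperleft{\theta_0(r_1r_2)}{\delta(r_1,r_2)}$ appear; with the paper's left factor sets the difference is the cocycle on the nose (and indeed $\Delta(r_1r_2)$ applied to your $\delta$ equals the paper's $d$, so the two constructions agree). What your route buys is a self-contained proof with a cleaner injectivity (rigidity of sectioned isomorphisms); what the paper's route buys is the factor-set criterion and the explicit inverse map of remark~\ref{rmk:iterext classification}, which are reused later (e.g.\ in corollary~\ref{cor:iterext classification B^2} and propositions~\ref{prop:aut tgr} and~\ref{prop:iterext infl}), so with your organization you would want to record the normal-form/rigidity facts in reusable form. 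When writing it up, make explicit that $\Delta(r_1r_2)$ acts on $Z(K)^P$ through $\theta_0(r_1r_2)$ (it lifts $\Theta(\ubar(r_1r_2))$, which induces $\theta(\ubar(r_1r_2))$ on $K$ since $\Theta$ is a $(\theta,\conj_P^Q)$-prolongation), and note that the normalization of $d$ follows already from the normalization of the factor sets $f,f'$, which is a touch more direct than the appeal to $\Psi(1_{G'})=1_G$.
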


Here,
$Z(K)^P$ is regarded as
an $R$-module
via the action $\theta_0$
as in notation~\ref{notatn:theta_0}.
Note that
$Z^2(R,Z(K)^P)$
depends only on
the given data
$(\,
 K
 \,,\,
 PQR
 \,,\,
 \theta
 \,)
$
as in notation~\ref{notatn:fixed data},
whereas
the set
on the right hand side
is defined
only when
the iterated extension problem
$(\,
 KNP
 \,,\,
 PQR
 \,,\,
 \Theta
 \,)
$
as well as
the choices of
$\ubar$ and $\Delta$
are given;
moreover,
the bijection itself
depends on
the choice of
$(\,
  G
  \,,\,
  j
  \,,\,
  \pi
  \,,\,
  u
 \,)
$
as a
$(\ubar,\Delta)$-sectioned
iterated extension
(assuming that
one exists).

The above lemma
shows that
the map $- \boxtimes G$ in question
is well-defined;
hence
the proof theorem~\ref{thm:iterext classification}
will be accomplished
when we show that
$- \boxtimes G$ is injective
and surjective.
Our work
is facilitated by
the following result,
which gives a criterion
for showing that
two sectioned iterated extensions
are isomorphic.

\begin{lemma}
\label{lemma:isom criterion for sectioned iterext}
For $\ell=1,2$,
let
$(\,
 G_\ell
 \,,\,
 j_\ell
 \,,\,
 \pi_\ell
 \,,\,
 u_\ell
 \,)
$
be a $(\ubar,\Delta)$-sectioned
iterated extension
of $(KNP)$ by $(PQR)$,
and let
$f_\ell : R \times R \rDotsto N$
be the (left) factor set
characterized by
the property that
for any $r_1,r_2 \in R$,
one has
\[
  u_\ell(r_1)
  \cdot
  u_\ell(r_2)
  \ =\
  j_\ell
  (\,
    f_\ell(r_1,r_2)
  \,)
  \cdot
  u_\ell(r_1r_2)
  \qquad
  \text{in $G_\ell$}
  .
\]
Then the two
$(\ubar,\Delta)$-sectioned
iterated extensions
$(\,
 G_\ell
 \,,\,
 j_\ell
 \,,\,
 \pi_\ell
 \,,\,
 u_\ell
 \,)
$
are isomorphic
if and only if
$f_1 = f_2$
as maps
$R \times R \rDotsto N$.
\end{lemma}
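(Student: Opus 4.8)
The plan is to prove both directions by direct computation with the defining relations of $f_1$ and $f_2$, the only real ingredient being the unique ``normal form'' for elements of $G_\ell$. Since $u_\ell$ is a section of $G_\ell \rOnto^{\phi} R$ and $j_\ell(N) = \Ker(\phi)$ by Definition~\ref{defn:iterext}, every $g \in G_\ell$ can be written uniquely as $g = j_\ell(n) \cdot u_\ell(r)$ with $r = \phi(g) \in R$ and $n \in N$. First I would record the two multiplication rules available in $G_\ell$: the commutation relation $u_\ell(r) \cdot j_\ell(n) = j_\ell\bigl(\upperleft{\Delta(r)}{n}\bigr) \cdot u_\ell(r)$, valid because $\conj_N^{G_\ell} \circ u_\ell = \Delta$ by the definition of a $(\ubar,\Delta)$-sectioned iterated extension, together with the factor-set relation from the statement. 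Combining these yields the product formula, in normal-form coordinates,
\[
  \bigl(j_\ell(n) \cdot u_\ell(r)\bigr) \cdot \bigl(j_\ell(n') \cdot u_\ell(r')\bigr)
  \ =\
  j_\ell\bigl(n \cdot \upperleft{\Delta(r)}{n'} \cdot f_\ell(r,r')\bigr) \cdot u_\ell(r r')
  \qquad \text{in $G_\ell$,}
\]
which exhibits the group law of $G_\ell$ as completely determined by the common datum $\Delta$ and by the factor set $f_\ell$.

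For the ``only if'' direction I would apply an isomorphism $\varphi$ of $(\ubar,\Delta)$-sectioned iterated extensions to the relation defining $f_1$; since $\varphi \circ j_1 = j_2$ and $\varphi \circ u_1 = u_2$, this produces the identity $u_2(r_1) \cdot u_2(r_2) = j_2\bigl(f_1(r_1,r_2)\bigr) \cdot u_2(r_1 r_2)$, and comparing with the relation defining $f_2$, cancelling $u_2(r_1 r_2)$, and using the injectivity of $j_2$ gives $f_1 = f_2$.

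For the ``if'' direction, write $f := f_1 = f_2$ and define $\varphi : G_1 \rTo G_2$ in normal-form coordinates by $\varphi\bigl(j_1(n) \cdot u_1(r)\bigr) := j_2(n) \cdot u_2(r)$. This is visibly a bijection satisfying $\varphi \circ j_1 = j_2$ and $\varphi \circ u_1 = u_2$, and it satisfies $\pi_2 \circ \varphi = \pi_1$ because $\pi_\ell\bigl(j_\ell(n) \cdot u_\ell(r)\bigr) = \jbar(\pi_0(n)) \cdot \ubar(r)$ by the compatibilities $\pi_\ell \circ j_\ell = \jbar \circ \pi_0$ and $\pi_\ell \circ u_\ell = \ubar$. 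That $\varphi$ is a homomorphism is then immediate from the product formula applied in $G_1$ and in $G_2$: both $\varphi(g \cdot g')$ and $\varphi(g) \cdot \varphi(g')$ come out equal to $j_2\bigl(n \cdot \upperleft{\Delta(r)}{n'} \cdot f(r,r')\bigr) \cdot u_2(r r')$, precisely because the two instances of the formula involve the same $\Delta$ and the same $f$. Hence $\varphi$ is an isomorphism of $(\ubar,\Delta)$-sectioned iterated extensions.

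The computation is routine; there is no genuine obstacle beyond bookkeeping with the normal form, the only mildly substantive step being the verification that $\varphi$ is a homomorphism, which the product formula reduces to a one-line comparison. The point worth stressing is that the argument uses essentially that \emph{both} extensions are sectioned by the \emph{same} pair $(\ubar,\Delta)$ --- this is what makes the $\Delta$-commutation relation transportable --- in combination with the hypothesis $f_1 = f_2$; the product formula is exactly the device that packages these two inputs together.
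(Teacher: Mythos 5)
Your proposal is correct and follows essentially the same route as the paper: the ``only if'' direction by transporting the factor-set relation through $\varphi$ and using injectivity of $j_2$, and the ``if'' direction by defining $\varphi$ via the normal form $g = j_\ell(n)\cdot u_\ell(r)$ (the paper's projection map $n_\ell$) and verifying it is a homomorphism from the product formula built out of the shared $\Delta$-commutation relation and the common factor set. The only cosmetic difference is that the paper exhibits the inverse by ``reversing the roles'' of $G_1$ and $G_2$, whereas you read bijectivity directly off the uniqueness of the normal form; these are the same observation.
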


\begin{proof}
First, suppose
$\varphi : G_1 \rTo^{\simeq} G_2$
is an isomorphism of
$(\ubar,\Delta)$-sectioned
iterated extensions.
For any $r_1,r_2 \in R$,
applying $\varphi$ to
the identity
\[
  u_1(r_1)
  \cdot
  u_1(r_2)
  \ =\
  j_1
  (\,
    f_1(r_1,r_2)
  \,)
  \cdot
  u_1(r_1r_2)
  \qquad
  \text{in $G_1$}
\]
gives
\[
  u_2(r_1)
  \cdot
  u_2(r_2)
  \ =\
  j_2
  (\,
    f_1(r_1,r_2)
  \,)
  \cdot
  u_2(r_1r_2)
  \qquad
  \text{in $G_2$}
  .
\]
Comparing this
with the identity
\[
  u_2(r_1)
  \cdot
  u_2(r_2)
  \ =\
  j_2
  (\,
    f_2(r_1,r_2)
  \,)
  \cdot
  u_2(r_1r_2)
  \qquad
  \text{in $G_2$}
  ,
\]
we see that
$f_1 = f_2$
as maps
$R \times R \rDotsto N$.

Conversely,
suppose we have
$f_1 = f_2$
as maps
$R \times R \rDotsto N$.
For $\ell=1,2$,
let
$\conj_N^{G_\ell} : G_\ell \rTo \Aut_K(N)$
denote
the conjugation action
of $G_\ell$ on $N$,
characterized by
the property that
for any $g \in G_\ell$
and any $n \in N$,
one has
\[
  j_\ell
  (\,
    \upperleft
    {\conj_N^{G_\ell}(g)}
    {n}
  \,)
  \ =\
  g
  \cdot
  j_\ell(n)
  \cdot
  g^{-1}
  \qquad
  \text{in $G_\ell$}
  .
\]
By assumption,
we have
$\conj_N^{G_\ell} \circ u_\ell = \Delta$
as maps
$R \rDotsto \Aut_K(N)$.
Next,
let $n_\ell : G_\ell \rDotsto N$ be
the projection map
corresponding to
the section $u_\ell$,
characterized by
the property that
for any $g \in G_\ell$,
one has
\[
  g
  \ =\
  j_\ell(n_\ell(g))
  \cdot
  u_\ell(\phi_\ell(g))
  \qquad
  \text{in $G_\ell$}
  .
\]
Define the map
$\varphi
 :
 G_1
 \rTo
 G_2
$
by setting,
for each $g \in G_1$,
\[
  \varphi(g)
  \ :=\
  j_2(n_1(g))
  \cdot
  u_2(\phi_1(g))
  \qquad
  \text{in $G_2$}
  .
\]
Then
for any $g,g' \in G_1$,
\[
  \begin{aligned}
  g
  \,
  g'
  &
  \ =\
  j_1(n_1(g))
  \cdot
  u_1(\phi_1(g))
  \cdot
  j_1(n_1(g'))
  \cdot
  u_1(\phi_1(g'))
  \\
  &
  \ =\
  j_1(n_1(g))
  \cdot
  j_1
  \bigl(\,
  \upperleft
  {(\,
      \conj_N^{G_1}
      \,\circ\,
      u_1
   \,)
   (\phi_1(g))
  }
  {n_1(g')}
  \,\bigr)
  \cdot
  u_1(\phi_1(g))
  \cdot
  u_1(\phi_1(g'))
  \\
  &
  \ =\
  j_1
  \bigl(\,
  \underbrace{
  n_1(g)
  \cdot
  \upperleft
  {\Delta(\phi_1(g))}
  {n_1(g')}
  \cdot
  f_1(\phi_1(g),\phi_1(g'))
  }_{
    \ =\
    n_1(gg')
  }
  \,\bigr)
  \cdot
  u_1
  \bigl(\,
    \phi_1(gg')
  \,\bigr)
  \qquad
  \text{in $G_1$}
  ,
  \end{aligned}
\]
and hence
by definition,
\[
  \varphi(gg')
  \ =\
  j_2
  \bigl(\,
  \underbrace{
  n_1(g)
  \cdot
  \upperleft
  {\Delta(\phi_1(g))}
  {n_1(g')}
  \cdot
  f_1(\phi_1(g),\phi_1(g'))
  }_{
    \ =\
    n_1(gg')
  }
  \,\bigr)
  \cdot
  u_2
  \bigl(\,
    \phi_1(gg')
  \,\bigr)
  \qquad
  \text{in $G_2$}
  .
\]
On the other hand,
\[
  \begin{aligned}
  \varphi(g)
  \,
  \varphi(g')
  &
  \ =\
  j_2(n_1(g))
  \cdot
  u_2(\phi_1(g))
  \cdot
  j_2(n_1(g'))
  \cdot
  u_2(\phi_1(g'))
  \\
  &
  \ =\
  j_2(n_1(g))
  \cdot
  j_2
  \bigl(\,
  \upperleft
  {(\,
      \conj_N^{G_2}
      \,\circ\,
      u_2
   \,)
   (\phi_1(g))
  }
  {n_1(g')}
  \,\bigr)
  \cdot
  u_2(\phi_1(g))
  \cdot
  u_2(\phi_1(g'))
  \\
  &
  \ =\
  j_2
  \bigl(\,
  \underbrace{
  n_1(g)
  \cdot
  \upperleft
  {\Delta(\phi_1(g))}
  {n_1(g')}
  \cdot
  f_2(\phi_1(g),\phi_1(g'))
  }_{
    \ =\
    n_2(gg')
  }
  \,\bigr)
  \cdot
  u_2
  \bigl(\,
    \phi_1(gg')
  \,\bigr)
  \qquad
  \text{in $G_2$}
  .
  \end{aligned}
\]
Comparing
the final expressions
for $\varphi(gg')$
and $\varphi(g)\,\varphi(g')$,
we see that
the assumption
$f_1 = f_2$
implies that
$\varphi(g)\,\varphi(g')
 =
 \varphi(gg')
$,
whence $\varphi$ is
a group homomorphism.
Reversing the roles of
$G_1$ and $G_2$
then yields
a homomorphism
$G_2 \rTo G_1$
which is evidently
the inverse of $\varphi$,
whence $\varphi$ is
a group isomorphism.
The fact that
$u_\ell(1_R) = 1_{G_\ell}$
(for $\ell=1,2$)
means that
if $g \in G_\ell$
is of the form
$g = j_\ell(n)$
for some $n \in N$,
then
$n_\ell(g) = n$ in $N$;
from this
it follows that
$\varphi \circ j_1
 =
 j_2
$
as homomorphisms
$N \rInto G_2$.
The fact that
$\pi_\ell \circ u_\ell = \ubar$
as maps
$R \rDotsto Q$
means that
for any $g \in G_1$,
\[
  \begin{aligned}
  (\pi_2 \circ \varphi)(g)
  &
  \ =\
  \pi_2
  \bigl(\,
    j_2(n_1(g))
    \cdot
    u_2(\phi_1(g))
  \,\bigr)
  \\
  &
  \ =\
  (\jbar \circ \pi_0)
  (n_1(g))
  \cdot
  (\pi_2 \circ u_2)
  (\phi_1(g))
  \\
  &
  \ =\
  (\jbar \circ \pi_0)
  (n_1(g))
  \cdot
  (\pi_1 \circ u_1)
  (\phi_1(g))
  \\
  &
  \ =\
  \pi_1
  \bigl(\,
    j_1(n_1(g))
    \cdot
    u_1(\phi_1(g))
  \,\bigr)
  \ =\
  \pi_1(g)
  \qquad
  \text{in $Q$}
  ;
  \end{aligned}
\]
whence
$\pi_2 \circ \varphi
 =
 \pi_1
$
as homomorphisms
$G_1 \rOnto Q$.
Finally,
an element $g \in G_\ell$
of the form
$g = u_\ell(r)$
for some $r \in R$
gives
$n_\ell(g) = 1_N$ in $N$;
from this
it follows that
$\varphi \circ u_1 = u_2$
as maps
$R \rDotsto G_2$.
Therefore,
$\varphi : G_1 \rTo^{\simeq} G_2$
is an isomorphism of
$(\ubar,\Delta)$-sectioned
iterated extensions.
\end{proof}

\begin{proof}
[Proof of theorem~\ref{thm:iterext classification}]
For $\ell=1,2$,
let $d_\ell \in Z^2(R,Z(K)^P)$ be
a 2-cocycle,
which is mapped by
$- \boxtimes G$
to the
$(\ubar,\Delta)$-sectioned
iterated extension
$(\,
 d_\ell \boxtimes G
 \,,\,
 j
 \,,\,
 \pi
 \,,\,
 u
 \,)
$;
its (left) factor set
$f_\ell : R \times R \rDotsto N$
is then characterized by
the property that
for any $r_1,r_2 \in R$,
one has
\[
  m_{d_\ell}
  \bigl(\,
    u(r_1)
    \,,\,
    u(r_2)
  \,\bigr)
  \ =\
  m_{d_\ell}
  \bigl(\,
    j(\,f_\ell(r_1,r_2)\,)
    \,,\,
    u(r_1r_2)
  \,\bigr)
  \qquad
  \text{in $d_\ell \boxtimes G$}
  ,
\]
which,
since $d_\ell$ is
a normalized cocycle,
means that
\[
  d_\ell(r_1,r_2)
  \cdot
  u(r_1)
  \cdot
  u(r_2)
  \ =\
  j(\,f_\ell(r_1,r_2)\,)
  \cdot
  u(r_1r_2)
  \qquad
  \text{in $G$}
  .
\]
If the two
$(\ubar,\Delta)$-sectioned
iterated extensions
$(\,
 G_\ell
 \,,\,
 j_\ell
 \,,\,
 \pi_\ell
 \,,\,
 u_\ell
 \,)
$
(for $\ell=1,2$)
are isomorphic,
then
$f_1 = f_2$
as maps
$R \times R \rDotsto N$
by lemma~\ref{lemma:isom criterion for sectioned iterext},
from which
it follows that
$d_1 = d_2$
as 2-cocycles
$R \times R \rDotsto Z(K)^P$.
Hence
the map $- \boxtimes G$
is injective.

We now show
the surjectivity of $- \boxtimes G$.
Let
$(\,
 G^*
 \,,\,
 j^*
 \,,\,
 \pi^*
 \,,\,
 u^*
 \,)
$
be any
$(\ubar,\Delta)$-sectioned
iterated extension
of $(KNP)$ by $(PQR)$.
The (left) factor sets
$f : R \times R \rDotsto N$
and
$f^* : R \times R \rDotsto N$
of
$(\,
 G
 \,,\,
 j
 \,,\,
 \pi
 \,,\,
 u
 \,)
$
and
$(\,
 G^*
 \,,\,
 j^*
 \,,\,
 \pi^*
 \,,\,
 u^*
 \,)
$
are characterized by
the property that
for any $r_1,r_2 \in R$,
one has
\begin{align}
  u(r_1)
  \cdot
  u(r_2)
  &
  \ =\
  j
  (\,
    f(r_1,r_2)
  \,)
  \cdot
  u(r_1r_2)
  &
  \qquad
  &
  \text{in $G$}
  \label{eqn:f}
  \\
  \text{and}
  \qquad
  u^*(r_1)
  \cdot
  u^*(r_2)
  &
  \ =\
  j^*
  (\,
    f^*(r_1,r_2)
  \,)
  \cdot
  u^*(r_1r_2)
  &
  \qquad
  &
  \text{in $G^*$}
  .
  \label{eqn:f^*}
\end{align}
Applying the homomorphisms
$\conj_N^G$ and $\conj_N^{G^*}$
to equations~\eqref{eqn:f}
and~\eqref{eqn:f^*}
respectively,
we obtain
\[
  \conj_N^G
  \bigl(\,
  j
  (\,
    f(r_1,r_2)
  \,)
  \,\bigr)
  \ =\
  \Delta(r_1)
  \circ
  \Delta(r_2)
  \circ
  \Delta(r_1r_2)^{-1}
  \ =\
  \conj_N^{G^*}
  \bigl(\,
  j^*
  (\,
    f^*(r_1,r_2)
  \,)
  \,\bigr)
  \qquad
  \text{in $\Aut_K(N)$}
  ,
\]
which shows that
$f^*
 =
 d
 \cdot
 f
$
for some map
$d : R \times R \rDotsto Z(N)$.
On the other hand,
applying the homomorphisms
$\pi$ and $\pi^*$
to equations~\eqref{eqn:f}
and~\eqref{eqn:f^*}
respectively,
we have
\[
  \pi
  \bigl(\,
  j
  (\,
    f(r_1,r_2)
  \,)
  \,\bigr)
  \ =\
  \ubar(r_1)
  \,
  \ubar(r_2)
  \,
  \ubar(r_1r_2)^{-1}
  \ =\
  \pi^*
  \bigl(\,
  j^*
  (\,
    f^*(r_1,r_2)
  \,)
  \,\bigr)
  \qquad
  \text{in $Q$}
  ,
\]
which implies that
$d = f^* \cdot f^{-1}$
takes values
in $Z(N) \cap K = Z(K)^P$.
The associativity of multiplication
in $G$ and $G^*$
shows that
the factor sets
$f$ and $f^*$
satisfy the same
``non-abelian cocycle'' relation:
for any
$r_1,r_2,r_3 \in R$,
one has
\[
  \begin{aligned}
  f(r_1,r_2)
  \,
  f(r_1r_2,r_3)
  &
  \ =\
  \upperleft
  {\Delta(r_1)}
  {f(r_2,r_3)}
  \,
  f(r_1,r_2r_3)
  &
  \qquad
  &
  \\
  \text{and}
  \qquad
  f^*(r_1,r_2)
  \,
  f^*(r_1r_2,r_3)
  &
  \ =\
  \upperleft
  {\Delta(r_1)}
  {f^*(r_2,r_3)}
  \,
  f^*(r_1,r_2r_3)
  &
  \qquad
  &
  \text{in $N$}
  .
  \end{aligned}
\]
These and the fact that
the automorphism
$\Delta(r_1)$ of $N$
induces the automorphism
$\theta_0(r_1)$ of $Z(K)^P$
imply that
\[
  d(r_1,r_2)
  \,
  d(r_1r_2,r_3)
  \ =\
  \upperleft
  {\theta_0(r_1)}
  {d(r_2,r_3)}
  \,
  d(r_1,r_2r_3)
  \qquad
  \text{in $Z(K)^P$}
  ;
\]
thus
$d : R \times R \rDotsto Z(K)^P$
is a 2-cocycle.
The map $- \boxtimes G$
sends $d \in Z^2(R,Z(K)^P)$
to the isomorphism class of
the $(\ubar,\Delta)$-sectioned
iterated extension
$(\,
 d \boxtimes G
 \,,\,
 j
 \,,\,
 \pi
 \,,\,
 u
 \,)
$,
whose corresponding
(left) factor set
$f' : R \times R \rDotsto N$
is characterized by
the property that
for any $r_1,r_2 \in R$,
one has
\[
  m_d
  \bigl(\,
    u(r_1)
    \,,\,
    u(r_2)
  \,\bigr)
  \ =\
  m_d
  \bigl(\,
    j(\,f'(r_1,r_2)\,)
    \,,\,
    u(r_1r_2)
  \,\bigr)
  \qquad
  \text{in $d \boxtimes G$}
  ,
\]
which is to say
\[
  d(r_1,r_2)
  \cdot
  u(r_1)
  \cdot
  u(r_2)
  \ =\
  j(\,f'(r_1,r_2)\,)
  \cdot
  u(r_1r_2)
  \qquad
  \text{in $G$}
  .
\]
Comparing this
with equation~\eqref{eqn:f},
we see that
$f' = d \cdot f = f^*$
as maps
$R \times R \rDotsto N$.
Lemma~\ref{lemma:isom criterion for sectioned iterext}
can now be applied
to show that
$(\,
 d \boxtimes G
 \,,\,
 j
 \,,\,
 \pi
 \,,\,
 u
 \,)
$
and
$(\,
 G^*
 \,,\,
 j^*
 \,,\,
 \pi^*
 \,,\,
 u^*
 \,)
$
are isomorphic
$(\ubar,\Delta)$-sectioned
iterated extensions.
Hence
the map $- \boxtimes G$
is surjective.
\end{proof}

\begin{rmk}
\label{rmk:iterext classification}
The proof shows that
the inverse of
the bijection $- \boxtimes G$
of theorem~\ref{thm:iterext classification}
is given by
\[
  \begin{array}{rcl}
  \left\{\
  \begin{array}{l}
  \text{isomorphism classes of}
  \\
  \text{$(\ubar,\Delta)$-sectioned
        iterated extensions}
  \\
  \text{of $(KNP)$ by $(PQR)$}
  \end{array}
  \ \right\}
  &
  \
  \rTo^{\simeq}
  \
  &
  Z^2(R,Z(K)^P)
  ,
  \\[3ex]
  (\,
  G'
  \,,\,
  j'
  \,,\,
  \pi'
  \,,\,
  u'
  \,)
  &
  \rMapsto
  &
  \left(\
    \begin{aligned}
    (r_1,r_2)
    \ \mapsto\
    &
    f'(u'(r_1),u'(r_2))
    \cdot
    \\
    &
    f(u(r_1),u(r_2))^{-1}
    \end{aligned}
  \ \right)
  ,
  \end{array}
\]
where $f$ and $f'$
are the (left) factor sets of
$(\,
 G
 \,,\,
 j
 \,,\,
 \pi
 \,,\,
 u
 \,)
$
and
$(\,
 G'
 \,,\,
 j'
 \,,\,
 \pi'
 \,,\,
 u'
 \,)
$
respectively.
\end{rmk}

\begin{defn}
Two $(\ubar,\Delta)$-sectioned
iterated extensions
$(\,
 G_\ell
 \,,\,
 j_\ell
 \,,\,
 \pi_\ell
 \,,\,
 u_\ell
 \,)
$
of $(KNP)$ by $(PQR)$
are \emph{equivalent}
iff
the underlying
iterated extensions
$(\,
 G_\ell
 \,,\,
 j_\ell
 \,,\,
 \pi_\ell
 \,)
$
(without the sections)
are isomorphic.
\end{defn}

By lemma~\ref{lemma:choice of u},
it follows that
an equivalence class of
$(\ubar,\Delta)$-sectioned
iterated extensions
of $(KNP)$ by $(PQR)$
is the same as
an isomorphism class of
iterated extensions
of $(KNP)$ by $(PQR)$
with mod-$K$outer action
$\Theta$.

\begin{lemma}
\label{lemma:equivalent iterext}
Let
$(\,
 G
 \,,\,
 j
 \,,\,
 \pi
 \,,\,
 u
 \,)
$
be a $(\ubar,\Delta)$-sectioned
iterated extension
of $(KNP)$ by $(PQR)$.
Multiplying the section $u$ by
any 1-cochain
$z : R \rDotsto Z(K)^P$
results in
another section
$z \cdot u$
such that
$(\,
 G
 \,,\,
 j
 \,,\,
 \pi
 \,,\,
 z \cdot u
 \,)
$
is a $(\ubar,\Delta)$-sectioned
iterated extension
of $(KNP)$ by $(PQR)$,
which is equivalent to
$(\,
 G
 \,,\,
 j
 \,,\,
 \pi
 \,,\,
 u
 \,)
$
by construction.
Conversely,
any
$(\ubar,\Delta)$-sectioned
iterated extension
of $(KNP)$ by $(PQR)$
which is equivalent to
$(\,
 G
 \,,\,
 j
 \,,\,
 \pi
 \,,\,
 u
 \,)
$
is isomorphic to one
obtained this way.
\end{lemma}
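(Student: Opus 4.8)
The plan is to treat the two assertions separately, in both cases reducing everything to the identity $Z(K)^P = Z(N) \cap K$ inside $N$ together with the equalities $i(K) = \Ker(\pi)$ and $j(N) = \Ker(\phi)$ coming from diagram~\eqref{diag:KNGQR}.

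For the forward assertion I would, given a $1$-cochain $z : R \rDotsto Z(K)^P$, simply set $(z \cdot u)(r) := z(r) \cdot u(r)$ in $G$ and verify the three defining conditions of a $(\ubar,\Delta)$-sectioned iterated extension. Since $z(r)$ lies in $i(K) = \Ker(\pi) \subseteq \Ker(\phi)$, one has $\phi((z\cdot u)(r)) = \phi(u(r)) = r$, so $z \cdot u$ is a section of $G \rOnto^{\phi} R$, and $\pi \circ (z\cdot u) = \pi \circ u = \ubar$; moreover $\conj_N^G((z\cdot u)(r)) = \conj_N^G(z(r)) \circ \conj_N^G(u(r)) = \id_N \circ \Delta(r) = \Delta(r)$, where $\conj_N^G(z(r)) = \id_N$ precisely because $z(r) \in Z(K)^P = Z(N) \cap K$. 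The normalization $z(1_R) = 1$ gives $(z\cdot u)(1_R) = 1_G$. Since the underlying iterated extension of $(\,G\,,\,j\,,\,\pi\,,\,z\cdot u\,)$ is literally $(\,G\,,\,j\,,\,\pi\,)$, the identity automorphism $\id_G$ exhibits it as equivalent to $(\,G\,,\,j\,,\,\pi\,,\,u\,)$.

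For the converse I would start with a $(\ubar,\Delta)$-sectioned iterated extension $(\,G'\,,\,j'\,,\,\pi'\,,\,u'\,)$ equivalent to $(\,G\,,\,j\,,\,\pi\,,\,u\,)$, choose an isomorphism of iterated extensions $\varphi : (\,G\,,\,j\,,\,\pi\,) \rTo^{\simeq} (\,G'\,,\,j'\,,\,\pi'\,)$ (so $\varphi \circ j = j'$ and $\pi' \circ \varphi = \pi$), and produce the required $1$-cochain as $z(r) := (\varphi^{-1} \circ u')(r) \cdot u(r)^{-1}$ in $G$. The key preliminary observation is that $\varphi$ intertwines the conjugation actions on $N$, i.e.\ $\conj_N^{G'}(\varphi(g)) = \conj_N^G(g)$ in $\Aut_K(N)$ for all $g \in G$ (immediate from $\varphi \circ j = j'$ and injectivity of $j'$), and that $\phi' \circ \varphi = \phi$; then $\varphi^{-1} \circ u'$ is again a section of $G \rOnto^{\phi} R$, and comparing $\pi \circ (\varphi^{-1}\circ u') = \pi' \circ u' = \ubar = \pi \circ u$ shows $z(r) \in \Ker(\pi) = i(K)$, while comparing $\conj_N^G \circ (\varphi^{-1}\circ u') = \conj_N^{G'}\circ u' = \Delta = \conj_N^G\circ u$ shows $\conj_N^G(z(r)) = \id_N$, hence $z(r) \in Z(N)$. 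Thus $z : R \rDotsto Z(N)\cap K = Z(K)^P$ is a $1$-cochain, normalized because $u$ and $u'$ are, and by construction $\varphi \circ (z\cdot u) = u'$; so $\varphi$ is an isomorphism of $(\ubar,\Delta)$-sectioned iterated extensions from $(\,G\,,\,j\,,\,\pi\,,\,z\cdot u\,)$ onto $(\,G'\,,\,j'\,,\,\pi'\,,\,u'\,)$, which is exactly the claim.

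All of these verifications are short; the only thing that takes any care is keeping the three subgroups $\Ker(\phi) = j(N)$, $\Ker(\pi) = i(K)$ and $Z(K)^P = Z(N)\cap K$ of $G$ straight, together with the observation --- already exploited in the proof of lemma~\ref{lemma:G boxtimes d well-def} --- that an isomorphism of iterated extensions automatically intertwines the conjugation actions of the two groups on $N$. I do not expect any genuine obstacle.
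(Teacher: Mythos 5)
Your proposal is correct and follows essentially the same route as the paper: the forward direction is the same direct verification that $z\cdot u$ still satisfies $\pi\circ(z\cdot u)=\ubar$ and $\conj_N^G\circ(z\cdot u)=\Delta$ because $z$ takes values in $Z(K)^P=Z(N)\cap K$, and the converse likewise transports the section of the equivalent extension back to $G$ via the isomorphism of iterated extensions and observes that the two sections of $\phi$ then differ by a $1$-cochain valued in $\Ker(\pi)\cap\Ker(\conj_N^G)=Z(K)^P$. The only difference is that you spell out explicitly (with the intertwining of conjugation actions) what the paper leaves as the terse assertion that an equivalent sectioned iterated extension is isomorphic to $(\,G\,,\,j\,,\,\pi\,,\,u'\,)$ for some section $u'$; the substance is identical.
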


\begin{proof}
It is clear that
$z \cdot u : R \rDotsto G$
given by
$(z \cdot u)(r)
 :=
 z(r)\,u(r)
$
is also
a section of
$G \rOnto^{\phi} R$,
and because
$z$ takes values in
$Z(K)^P = Z(N) \cap K$,
we have
\[
  \begin{aligned}
  \pi \circ (z \cdot u)
  &
  \ =\
  \ubar
  &
  \qquad
  &
  \text{as maps}
  \quad
  R \rDotsto Q
  \\
  \text{and}
  \qquad
  \conj_N^G \circ (z \cdot u)
  &
  \ =\
  \Delta
  &
  \qquad
  &
  \text{as maps}
  \quad
  R \rDotsto \Aut_K(N)
  .
  \end{aligned}
\]
This shows that
$(\,
 G
 \,,\,
 j
 \,,\,
 \pi
 \,,\,
 z \cdot u
 \,)
$
is also
a $(\ubar,\Delta)$-sectioned
iterated extension
of $(KNP)$ by $(PQR)$.
By definition,
any $(\ubar,\Delta)$-sectioned
iterated extension
of $(KNP)$ by $(PQR)$
which is
equivalent to
$(\,
 G
 \,,\,
 j
 \,,\,
 \pi
 \,,\,
 u
 \,)
$
must be
isomorphic to
$(\,
 G
 \,,\,
 j
 \,,\,
 \pi
 \,,\,
 u'
 \,)
$
for some section
$u' : R \rDotsto G$
of $G \rOnto^{\phi} R$;
and since
\[
  \begin{aligned}
  \pi \circ u'
  &
  \ =\
  \ubar
  &
  \ =\
  &
  \pi \circ u
  &
  \qquad
  &
  \text{as maps}
  \quad
  R \rDotsto Q
  \\
  \text{and}
  \qquad
  \conj_N^G \circ u'
  &
  \ =\
  \Delta
  &
  \ =\
  &
  \conj_N^G \circ u
  &
  \qquad
  &
  \text{as maps}
  \quad
  R \rDotsto \Aut_K(N)
  ,
  \end{aligned}
\]
it follows that
$u'$ and $u$
differ multiplicatively by
some 1-cochain
$z : R \rDotsto Z(K)^P$.
\end{proof}

\begin{cor}
\label{cor:iterext classification B^2}
The bijection $- \boxtimes G$
of theorem~\ref{thm:iterext classification}
restricts to
a bijection
\[
  \begin{array}{rcl}
  - \boxtimes G
  \ :\
  B^2(R,Z(K)^P)
  &
  \rTo^{\simeq}
  &
  \left\{\
  \begin{array}{l}
  \text{isomorphism classes of}
  \\
  \text{$(\ubar,\Delta)$-sectioned
        iterated extensions}
  \\
  \text{of $(KNP)$ by $(PQR)$}
  \\
  \text{which are equivalent to}
  \quad
  (\,
  G
  \,,\,
  j
  \,,\,
  \pi
  \,,\,
  u
  \,)
  \end{array}
  \ \right\}
  ,
  \\[6ex]
  \partial z
  &
  \rMapsto
  &
  \begin{array}[t]{l}
  \text{the isomorphism class of}
  \\
  (\,
  G
  \,,\,
  j
  \,,\,
  \pi
  \,,\,
  z \cdot u
  \,)
  \quad
  \text{as defined above}
  ,
  \end{array}
  \end{array}
\]
where
$\partial z$ denotes
the 2-coboundary
$\partial z(r_1,r_2)
 :=
 z(r_1)
 \cdot
 \upperleft
 {\theta_0(r_1)}
 {z(r_2)}
 \cdot
 z(r_1r_2)^{-1}
$
for any 1-cochain
$z : R \rDotsto Z(K)^P$.
\end{cor}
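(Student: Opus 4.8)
The strategy is to reduce the corollary to Theorem~\ref{thm:iterext classification} together with Lemmas~\ref{lemma:isom criterion for sectioned iterext} and~\ref{lemma:equivalent iterext}, the only genuine computation being the identification of the left factor set attached to a modified section. Fix a 1-cochain $z : R \rDotsto Z(K)^P$; by Lemma~\ref{lemma:equivalent iterext} the quadruple $(\,G\,,\,j\,,\,\pi\,,\,z\cdot u\,)$ is again a $(\ubar,\Delta)$-sectioned iterated extension, and it shares the underlying iterated extension $(\,G\,,\,j\,,\,\pi\,)$ with $(\,G\,,\,j\,,\,\pi\,,\,u\,)$, hence is equivalent to it. Let $f : R \times R \rDotsto N$ be the left factor set of $(\,G\,,\,j\,,\,\pi\,,\,u\,)$, so $u(r_1)\cdot u(r_2) = j(\,f(r_1,r_2)\,)\cdot u(r_1r_2)$. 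Since $z(r_2) \in Z(K)^P \subseteq Z(K)$ and $\phi(u(r_1)) = r_1$, notation~\ref{notatn:theta_0} gives $u(r_1)\cdot z(r_2)\cdot u(r_1)^{-1} = \upperleft{\theta_0(r_1)}{z(r_2)}$, whence
\[
  (z\cdot u)(r_1)\cdot(z\cdot u)(r_2)
  \ =\
  \bigl(\,
    z(r_1)
    \cdot
    \upperleft{\theta_0(r_1)}{z(r_2)}
    \cdot
    z(r_1r_2)^{-1}
  \,\bigr)
  \cdot
  j(\,f(r_1,r_2)\,)
  \cdot
  (z\cdot u)(r_1r_2)
  \qquad
  \text{in $G$}
  .
\]
The factor in parentheses is precisely $\partial z(r_1,r_2) \in Z(K)^P \subseteq K$ by definition, so the left factor set of $(\,G\,,\,j\,,\,\pi\,,\,z\cdot u\,)$ is $\partial z \cdot f$; but this is exactly the left factor set of $(\,\partial z\boxtimes G\,,\,j\,,\,\pi\,,\,u\,)$ as computed in the proof of Theorem~\ref{thm:iterext classification} (equivalently, as given by the inverse formula of Remark~\ref{rmk:iterext classification}). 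By Lemma~\ref{lemma:isom criterion for sectioned iterext} the two are isomorphic $(\ubar,\Delta)$-sectioned iterated extensions. This establishes the displayed formula $\partial z \mapsto [(\,G\,,\,j\,,\,\pi\,,\,z\cdot u\,)]$ and, in particular, shows that $-\boxtimes G$ carries $B^2(R,Z(K)^P)$ into the set of isomorphism classes of sectioned iterated extensions equivalent to $(\,G\,,\,j\,,\,\pi\,,\,u\,)$.

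Next I would check that this restriction of $-\boxtimes G$ is a bijection onto that set. Injectivity is immediate from the injectivity of $-\boxtimes G$ on all of $Z^2(R,Z(K)^P)$ (Theorem~\ref{thm:iterext classification}). For surjectivity, let $(\,G'\,,\,j'\,,\,\pi'\,,\,u'\,)$ be any $(\ubar,\Delta)$-sectioned iterated extension equivalent to $(\,G\,,\,j\,,\,\pi\,,\,u\,)$. By the converse part of Lemma~\ref{lemma:equivalent iterext} it is isomorphic, as a sectioned iterated extension, to $(\,G\,,\,j\,,\,\pi\,,\,z\cdot u\,)$ for some 1-cochain $z : R \rDotsto Z(K)^P$, hence by the previous paragraph to $(\,\partial z\boxtimes G\,,\,j\,,\,\pi\,,\,u\,)$. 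Since $-\boxtimes G$ is a bijection, the unique 2-cocycle whose image is the class of $(\,G'\,,\,j'\,,\,\pi'\,,\,u'\,)$ is then $\partial z \in B^2(R,Z(K)^P)$, as needed.

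The one step that genuinely requires work is the factor-set computation above; its crux is the conjugation identity $u(r_1)\cdot z(r_2)\cdot u(r_1)^{-1} = \upperleft{\theta_0(r_1)}{z(r_2)}$, which turns the pointwise product of the 1-cochain $z$ with the section $u$ into the coboundary $\partial z$ carrying the correct $R$-action $\theta_0$. Everything else is a formal consequence of Theorem~\ref{thm:iterext classification} and the two lemmas.
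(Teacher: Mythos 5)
Your proposal is correct and follows essentially the paper's own argument: the heart in both is the computation showing that the left factor set of $(\,G\,,\,j\,,\,\pi\,,\,z\cdot u\,)$ equals $(\partial z)\cdot f$ (via the conjugation identity coming from notation~\ref{notatn:theta_0} and the centrality of $Z(K)^P$ in $N$), followed by lemma~\ref{lemma:isom criterion for sectioned iterext} to identify it with $(\,\partial z\boxtimes G\,,\,j\,,\,\pi\,,\,u\,)$. The only difference is that you spell out explicitly the injectivity and surjectivity of the restricted map, which the paper leaves to theorem~\ref{thm:iterext classification} and lemma~\ref{lemma:equivalent iterext} in the surrounding discussion.
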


\begin{proof}
For any 1-cochain
$z : R \rDotsto Z(K)^P$,
the 2-coboundary
$\partial z \in B^2(R,Z(K)^P)$
is mapped by $- \boxtimes G$
to the isomorphism class of
the $(\ubar,\Delta)$-sectioned
iterated extension
$(\,
 \partial z \boxtimes G
 \,,\,
 j
 \,,\,
 \pi
 \,,\,
 u
 \,)
$,
whose corresponding
(left) factor set
$f' : R \times R \rDotsto N$
is given by
$f' = (\partial z) \cdot f$,
where $f$ is
the (left) factor set of 
$(\,
 G
 \,,\,
 j
 \,,\,
 \pi
 \,,\,
 u
 \,)
$.
On the other hand,
if $f'' : R \times R \rDotsto N$
is the (left) factor set of
the $(\ubar,\Delta)$-sectioned
iterated extension
$(\,
 G
 \,,\,
 j
 \,,\,
 \pi
 \,,\,
 z \cdot u
 \,)
$,
then
for any $r_1,r_2 \in R$,
one has
\[
  z(r_1)\,u(r_1)
  \cdot
  z(r_2)\,u(r_2)
  \ =\
  j(\,f''(r_1,r_2)\,)
  \cdot
  z(r_1r_2)\,u(r_1r_2)
  \qquad
  \text{in $G$}
  .
\]
Since
$z(r_1r_2)
 \in
 Z(K)^P
$
commutes with
$j(\,f''(r_1,r_2)\,)
 \in
 N
$,
this shows that
\[
  \begin{aligned}
  \partial z(r_1,r_2)
  \cdot
  u(r_1)
  \cdot
  u(r_2)
  &
  \ =\
  z(r_1r_2)^{-1}
  \cdot
  z(r_1)
  \cdot
  \upperleft
  {\theta_0(r_1)}
  {z(r_2)}
  \cdot
  u(r_1)
  \cdot
  u(r_2)
  \\
  &
  \ =\
  z(r_1r_2)^{-1}
  \cdot
  z(r_1)
  \cdot
  u(r_1)
  \cdot
  z(r_2)
  \cdot
  u(r_2)
  \\
  &
  \ =\
  j(\,f''(r_1,r_2)\,)
  \cdot
  u(r_1r_2)
  \qquad
  \text{in $G$}
  .
  \end{aligned}
\]
Comparing this
with equation~\eqref{eqn:f},
we see that
$f'' = (\partial z) \cdot f = f'$
as maps
$R \times R \rDotsto N$.
Lemma~\ref{lemma:isom criterion for sectioned iterext}
can now be applied
to show that
$(\,
 \partial z \boxtimes G
 \,,\,
 j
 \,,\,
 \pi
 \,,\,
 u
 \,)
$
and
$(\,
 G
 \,,\,
 j
 \,,\,
 \pi
 \,,\,
 z \cdot u
 \,)
$
are isomorphic
$(\ubar,\Delta)$-sectioned
iterated extensions.
\end{proof}

Lemma~\ref{lemma:choice of u}
and
theorem~\ref{thm:iterext classification}
show that
$Z^2(R,Z(K)^P)$
acts transitively
(from the left)
on the set of
isomorphism classes of
iterated extensions
of $(KNP)$ by $(PQR)$
with mod-$K$outer action $\Theta$,
while
corollary~\ref{cor:iterext classification B^2}
shows that
the stabilizer subgroup
of any given
isomorphism class
is $B^2(R,Z(K)^P)$.
Hence we have:

\begin{cor}
\label{cor:iterext classification H^2}
Let
$(\,
 G
 \,,\,
 j
 \,,\,
 \pi
 \,)
$
be an iterated extension
of $(KNP)$ by $(PQR)$
with mod-$K$outer action $\Theta$.
The bijection $- \boxtimes G$
of theorem~\ref{thm:iterext classification}
induces
a bijection
\[
  \begin{array}{rcl}
  - \boxtimes G
  \ :\
  H^2(R,Z(K)^P)
  &
  \rTo^{\simeq}
  &
  \left\{\
  \begin{array}{l}
  \text{isomorphism classes of}
  \\
  \text{iterated extensions}
  \\
  \text{of $(KNP)$ by $(PQR)$}
  \\
  \text{with mod-$K$ outer action $\Theta$}
  \end{array}
  \ \right\}
  \end{array}
\]
which is independent of
the auxiliary choice of
the pair $(\ubar,\Delta)$.
\end{cor}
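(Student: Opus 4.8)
The plan is to exhibit the claimed bijection as the orbit--stabilizer description of a transitive action of the abelian group $Z^2(R,Z(K)^P)$ on the target set $Y$ of isomorphism classes of iterated extensions of $(KNP)$ by $(PQR)$ with mod-$K$\,outer action $\Theta$.

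First I would set up the action. Given a $2$-cocycle $d \in Z^2(R,Z(K)^P)$ and an iterated extension $(G',j',\pi')$ of $(KNP)$ by $(PQR)$ with mod-$K$\,outer action $\Theta$, pick a section $u'$ (lemma~\ref{lemma:choice of u}) making $(G',j',\pi',u')$ a $(\ubar,\Delta)$-sectioned iterated extension; by lemma~\ref{lemma:G boxtimes d well-def} the quadruple $(d\boxtimes G',j',\pi',u')$ is again one, so $(d\boxtimes G',j',\pi')$ is again an iterated extension of $(KNP)$ by $(PQR)$ with mod-$K$\,outer action $\Theta$. Its underlying group and the maps $j',\pi'$ are built only from $(G',j',\pi')$, the homomorphism $\phi=\phibar\circ\pi'$, the action $\theta_0$ and $d$, hence do not depend on $u'$; and any isomorphism of iterated extensions $(G',j',\pi')\to(G'',j'',\pi'')$ is automatically one $(d\boxtimes G',j',\pi')\to(d\boxtimes G'',j'',\pi'')$, since it satisfies $\varphi\circ j'=j''$ and $\pi''\circ\varphi=\pi'$ and therefore commutes with $\phi$ and with the inclusion of $K$. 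Thus $d\cdot[(G',j',\pi')]:=[(d\boxtimes G',j',\pi')]$ is well defined on $Y$. Inspecting the multiplication maps gives $d_1\boxtimes(d_2\boxtimes G')=(d_1d_2)\boxtimes G'$ as groups with the same $j',\pi'$ (this uses only that $d_1$ takes values in $\Ker(\phi)$, that $d_2$ is normalized, and that $Z(K)^P$ is abelian), while the trivial cocycle acts as the identity; so this is a left action of $Z^2(R,Z(K)^P)$ on $Y$.

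Next I would fix the given iterated extension $(G,j,\pi)$ together with a section $u$ (lemma~\ref{lemma:choice of u}) making $(G,j,\pi,u)$ a $(\ubar,\Delta)$-sectioned iterated extension. The action is transitive at $[(G,j,\pi)]$: every class in $Y$ is represented by some $(\ubar,\Delta)$-sectioned iterated extension (lemma~\ref{lemma:choice of u}), which by the surjectivity in theorem~\ref{thm:iterext classification} is isomorphic to $(d\boxtimes G,j,\pi,u)$ for some $d$, hence that class equals $d\cdot[(G,j,\pi)]$. The stabilizer of $[(G,j,\pi)]$ is $B^2(R,Z(K)^P)$: indeed $d\cdot[(G,j,\pi)]=[(G,j,\pi)]$ means $(d\boxtimes G,j,\pi)$ and $(G,j,\pi)$ are isomorphic iterated extensions, i.e.\ (since equivalence of $(\ubar,\Delta)$-sectioned iterated extensions means isomorphism of the underlying iterated extensions, and by lemma~\ref{lemma:G boxtimes d well-def}) the sectioned iterated extensions $(d\boxtimes G,j,\pi,u)$ and $(G,j,\pi,u)$ are equivalent, which by corollary~\ref{cor:iterext classification B^2} happens exactly when $d\in B^2(R,Z(K)^P)$.

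By the orbit--stabilizer correspondence, the orbit map $d\mapsto d\cdot[(G,j,\pi)]$ descends to a bijection $H^2(R,Z(K)^P)\rTo^{\simeq}Y$, namely $[d]\mapsto[(d\boxtimes G,j,\pi)]$; this is the map induced by $-\boxtimes G$. Finally, independence of the pair $(\ubar,\Delta)$ (and of the auxiliary choice of $u$): the formula $[d]\mapsto[(d\boxtimes G,j,\pi)]$ involves only $(G,j,\pi)$, $\phi$, $\theta_0$ and $d$, whereas $\ubar$, $\Delta$ and $u$ were invoked solely to apply theorem~\ref{thm:iterext classification} and corollary~\ref{cor:iterext classification B^2} in establishing transitivity and the stabilizer; since the resulting map is described without reference to them, it is independent of them. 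I expect the only points requiring care to be the well-definedness of the $\boxtimes$-action on isomorphism classes together with the identity $d_1\boxtimes(d_2\boxtimes G')=(d_1d_2)\boxtimes G'$, and the passage, through the equivalence of sectioned iterated extensions, from corollary~\ref{cor:iterext classification B^2} to the stabilizer computation; neither is deep, but both must be carried out precisely.
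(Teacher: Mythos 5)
Your proposal is correct and follows essentially the same route as the paper: the paper also deduces the corollary by observing that lemma~\ref{lemma:choice of u} and theorem~\ref{thm:iterext classification} give a transitive action of $Z^2(R,Z(K)^P)$ on the set of isomorphism classes, that corollary~\ref{cor:iterext classification B^2} identifies the stabilizer as $B^2(R,Z(K)^P)$, and that the resulting bijection $[d]\mapsto[(d\boxtimes G,j,\pi)]$ makes no reference to $(\ubar,\Delta)$ or $u$. You merely make explicit the well-definedness of the $\boxtimes$-action on isomorphism classes and the identity $d_1\boxtimes(d_2\boxtimes G')=(d_1d_2)\boxtimes G'$, which the paper leaves implicit.
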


\begin{notatn}
\label{notatn:iterext cohom class}
For any pair of
iterated extensions
$(\,
 G
 \,,\,
 j
 \,,\,
 \pi
 \,)
$
and
$(\,
 G'
 \,,\,
 j'
 \,,\,
 \pi'
 \,)
$
of $(KNP)$ by $(PQR)$
with the same
mod-$K$outer action
$\Theta$,
let
$\dfrac{(G',j',\pi')}{(G,j,\pi)}
 \ \in\
 H^2(R,Z(K)^P)
$
denote the unique
cohomology class~$[d]$
such that
$(\,
 G'
 \,,\,
 j'
 \,,\,
 \pi'
 \,)
$
is isomorphic to
$(\,
 d \boxtimes G
 \,,\,
 j
 \,,\,
 \pi
 \,)
$
for any 2-cocycle
$d \in Z^2(R,Z(K)^P)$
belonging to
the cohomology class~$[d]$.
We also write
\[
  (\,
  G'
  \,,\,
  j'
  \,,\,
  \pi'
  \,)
  \ \cong\
  [d]
  \boxtimes
  (\,
  G
  \,,\,
  j
  \,,\,
  \pi
  \,)
  \qquad
  \text{as iterated extensions
        of $(KNP)$ by $(PQR)$}
  .
\]
\end{notatn}

\section{Transgression
from $H^1(P,Z(K))^R$
to $H^2(R,Z(K)^P)$}
\label{sect:iterext twist}

The \emph{transgression homomorphism}
\[
  \tgr
  \ :\
  H^1(P,Z(K))^R
  \rTo
  H^2(R,Z(K)^P)
  ,
  \qquad
  [\lambda]
  \rMapsto\relax
  \tgr[\lambda]
  ,
\]
which appear in
the exact sequence~\eqref{eqn:long exact seq},
arises from
the $E_2$-spectral sequence
for the extension $(PQR)$
with coefficients in $Z(K)$;
let us first recall
its explicit description.

Given
$[\lambda] \in H^1(P,Z(K))^R$,
we choose a 1-cocycle
$\lambda \in Z^1(P,Z(K))$
representing it.
Let $w : Q \rDotsto Z(K)$ be
any 1-cochain
such that
$w|_P = \lambda$
and such that
$\partial w$
factors through
$R \times R$
and takes values in $Z(K)^P$.
Then $w$
defines a 2-cocycle
$d_\lambda
 :
 R \times R
 \rDotsto
 Z(K)^P
$
(for the action $\theta_0$
as in notation~\ref{notatn:theta_0})
characterized by
the property that
for any $q_1,q_2 \in Q$,
one has
\[
  d_\lambda
  \bigl(\,
    \phibar(q_1)
    \,,\,
    \phibar(q_2)
  \,\bigr)
  \ =\
  \partial w
  (\,q_1,q_2\,)
  \ =\
  w(q_1)
  \cdot
  \upperleft
  {\theta(q_1)}
  {w(q_2)}
  \cdot
  w(q_1q_2)^{-1}
  \qquad
  \text{in $Z(K)^P$}
  .
\]
The cohomology class
$[d_\lambda]$
in $H^2(R,Z(K)^P)$
is independent of
the choices of
the 1-cocycle~$\lambda$
and
the 1-cochain~$w$
with the above properties.
The transgression image of
$[\lambda]$
is then defined as
\[
  \tgr[\lambda]
  \ :=\
  [d_\lambda]
  \qquad
  \text{in $H^2(R,Z(K)^P)$}
  .
\]
The existence of
a 1-cochain $w$
with the above properties
can be established
by the following construction.
Choose a section
$\ubar : R \rDotsto Q$
of $Q \rOnto^{\phibar} R$.
The $R$-invariance of
$[\lambda]$
means that
we can choose a map
$z : R \rDotsto Z(K)$
such that
for any $r \in R$
and any $p \in P$,
one has
\begin{equation}
  \upperleft
  {\theta(\ubar(r))}
  {\lambda
   \left(
     \ubar(r)^{-1}\,p\,\ubar(r)
   \right)
  }
  \ =\
  z(r)^{-1}
  \cdot
  \upperleft
  {\theta|_P(p)}
  {z(r)}
  \cdot
  \lambda(p)
  \qquad
  \text{in $Z(K)$}
  .
  \label{eqn:z}
\end{equation}
We define
$w : Q \rDotsto Z(K)$
by setting,
for any $q \in Q$
written in the form
$q = \jbar(p) \cdot \ubar(r)$
(with $p \in P$ and $r \in R$),
\[
  w(q)
  \ :=\
  \lambda(p)
  \cdot
  \upperleft
  {\theta|_P(p)}
  {z(r)}
  \ =\
  z(r)
  \cdot
  \upperleft
  {\theta(\ubar(r))}
  {\lambda
   (\,
     \ubar(r)^{-1}\,p\,\ubar(r)
   \,)
  }
  .
\]
One verifies that
the 1-cochain~$w$
as defined
has the required properties.
The resulting 2-cocycle
$d_\lambda : R \times R \rDotsto Z(K)^P$
is then given by
\begin{equation}
  d_\lambda(r_1,r_2)
  \ =\
  z(r_1)
  \cdot
  \upperleft
  {\theta(\ubar(r_1))}
  {z(r_2)}
  \cdot
  z(r_1r_2)^{-1}
  \cdot
  \upperleft
  {\theta(\ubar(r_1r_2))}
  {\lambda
   \left(\,
     \ubar(r_1r_2)^{-1}
     \,
     \ubar(r_1)\ubar(r_2)
   \,\right)^{-1}
  }
  .
  \label{eqn:d_lambda}
\end{equation}

The transgression homomorphism
can be interpreted
in terms of
the iterated extensions.
The discussion is facilitated
by the following
general result.

\begin{lemma}
\label{lemma:twist of conjugation action}
Let $G$ be a group,
and let $N \rInto^{j} G$ be
the inclusion homomorphism of
a normal subgroup.
Let $\eta \in \Aut(N)$ be
an automorphism of $N$.
The group $G$
acts by conjugation
on $N$
in two ways:
\[
  \conj_N^G
  \ :\
  G
  \rTo
  \Aut(N)
  \qquad
  \text{and}
  \qquad
  \conj_N^{G,\eta}
  \ :\
  G
  \rTo
  \Aut(N)
  ,
\]
characterized by
the property that
for any $g \in G$
and any $n \in N$,
one has
\[
  j
  \bigl(
    \upperleft
    {\conj_N^G(g)}
    {n}
  \bigr)
  \ =\
  g
  \cdot
  j(n)
  \cdot
  g^{-1}
  \qquad
  \text{and}
  \qquad
  (j \circ \eta)
  \bigl(
    \upperleft
    {\conj_N^{G,\eta}(g)}
    {n}
  \bigr)
  \ =\
  g
  \cdot
  (j \circ \eta)(n)
  \cdot
  g^{-1}
  \qquad
  \text{in $G$}
  .
\]
Then
$\conj_N^G$ and $\conj_N^{G,\eta}$
satisfy
the following relation:
for any $g \in G$,
one has
\[
  \conj_N^{G,\eta}(g)
  \ =\
  \eta^{-1}
  \circ
  \conj_N^G(g)
  \circ
  \eta
  \qquad
  \text{in $\Aut(N)$}
  .
\]
\end{lemma}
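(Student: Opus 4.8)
My approach would be to reduce the claimed identity of automorphisms of $N$ to an identity of \emph{elements} of $N$, by evaluating each side on an arbitrary $n \in N$, and then to exploit the injectivity of $j$ to transfer the resulting relation from $G$ back to $N$.

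Concretely, I would fix $g \in G$ and $n \in N$ and set $m := \eta(n)$. The defining property of $\conj_N^{G,\eta}$ reads
\[
  (j \circ \eta)\bigl(\upperleft{\conj_N^{G,\eta}(g)}{n}\bigr)
  \ =\
  g \cdot (j \circ \eta)(n) \cdot g^{-1}
  \ =\
  g \cdot j(m) \cdot g^{-1}
  \qquad\text{in }G,
\]
whereas the defining property of $\conj_N^{G}$, applied to the element $m$, gives
\[
  g \cdot j(m) \cdot g^{-1}
  \ =\
  j\bigl(\upperleft{\conj_N^{G}(g)}{m}\bigr)
  \ =\
  j\bigl(\upperleft{\conj_N^{G}(g)}{\eta(n)}\bigr)
  \qquad\text{in }G.
\]
Chaining these two displays and cancelling the (injective) map $j$ yields $\eta\bigl(\upperleft{\conj_N^{G,\eta}(g)}{n}\bigr) = \upperleft{\conj_N^{G}(g)}{\eta(n)}$ in $N$; applying $\eta^{-1}$ on the left and recalling that $n$ was arbitrary produces $\conj_N^{G,\eta}(g) = \eta^{-1} \circ \conj_N^{G}(g) \circ \eta$ in $\Aut(N)$, which is the assertion.

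There is no substantive obstacle here: the statement is a formal consequence of the two defining properties together with the injectivity of $j$. The only point that needs care is the placement of $\eta$ versus $\eta^{-1}$ --- because in the definition of $\conj_N^{G,\eta}$ the element being conjugated is already $(j\circ\eta)(n)$, the ordinary conjugation action $\conj_N^G$ receives the argument $\eta(n)$ rather than $n$, which is exactly what forces the corrective $\eta^{-1}$ to sit on the outside. One could also phrase the whole thing conceptually: $\conj_N^{G,\eta}$ is nothing but the conjugation action of $G$ on $N$ computed through the modified inclusion $j\circ\eta$, and changing the inclusion by $\eta$ conjugates the resulting homomorphism $G \to \Aut(N)$ by $\eta \in \Aut(N)$.
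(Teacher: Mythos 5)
Your proposal is correct and is essentially the paper's own argument: both rest on the substitution $n' = \eta(n)$ together with the injectivity of $j$ (equivalently, the uniqueness of the automorphism singled out by the characterizing property). The paper merely runs the computation in the opposite direction, checking that $\eta^{-1}\circ\conj_N^G(g)\circ\eta$ satisfies the characterizing property of $\conj_N^{G,\eta}$, which is the same calculation.
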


\begin{proof}
By the characterizing property
of $\conj_N^{G,\eta}$,
we have to show that
for any $g \in G$
and any $n \in N$,
one has
\[
  (j \circ \eta)
  \bigl(
  \upperleft
  {(\eta^{-1}
    \,\circ\,
    \conj_N^G(g)
    \,\circ\,
    \eta
    )}
  {n}
  \bigr)
  \ =\
  g
  \cdot
  (j \circ \eta)(n)
  \cdot
  g^{-1}
  \qquad
  \text{in $G$}
  .
\]
Since $\eta \in \Aut(N)$
is an automorphism,
we may write
$n' = \upperleft{\eta}{n}$
and reduce ourselves
to showing that
for any $g \in G$
and any $n' \in N$
one has
\[
  j
  \bigl(
    \upperleft
    {\conj_N^G(g)}
    {n'}
  \bigr)
  \ =\
  g
  \cdot
  j(n')
  \cdot
  g^{-1}
  \qquad
  \text{in $G$}
  ;
\]
but this holds by
the characterizing property
of $\conj_N^G$.
\end{proof}

Let
$(\,
 KNP
 \,,\,
 PQR
 \,,\,
 \Theta
 \,)
$
be an iterated extension problem
(cf.~definition~\ref{defn:iterext pbm}),
and let
$(\,
 G
 \,,\,
 j
 \,,\,
 \pi
 \,)
$
be an iterated extension
of $(KNP)$ by $(PQR)$
with mod-$K$outer action
$\Theta$.
For any
automorphism $\eta$
of the extension $(KNP)$,
we can pre-compose
the inclusion
$j : N \rInto G$
with
the automorphism $\eta$
to obtain
a ``twisted'' inclusion
\[
  j^\eta
  \ :\
  N
  \rInto
  G
  ,
  \qquad
  \text{given by}
  \quad
  j^\eta
  \ :=\
  j \circ \eta
  .
\]
The inclusion $j^\eta$
has the same image in $G$
as $j$ does,
and the fact that
$\eta$ induces
the trivial automorphism
on $K$ and on $P$
means that
$(\,
 G
 \,,\,
 j^\eta
 \,,\,
 \pi
 \,)
$
is still
an iterated extension
of $(KNP)$ by $(PQR)$.
Its mod-$K$outer action
$\Theta^\etabar$
is defined by
the ``twisted'' conjugation action
$\conj_N^{G,\eta}$ of $G$ on $N$
in the notation
of lemma~\ref{lemma:twist of conjugation action},
so that
the diagram~\eqref{diag:G Theta}
with $\conj_N^G,\Theta$
replaced by
$\conj_N^{G,\eta},\Theta^\etabar$
still commutes
and has exact rows.
It follows from
lemma~\ref{lemma:twist of conjugation action}
that
for any $q \in Q$,
one has
\[
  \Theta^{\etabar}(q)
  \ =\
  \etabar^{-1}
  \circ
  \Theta(q)
  \circ
  \etabar
  \qquad
  \text{in $\Out(N;K)$}
  ,
\]
where $\etabar$ denotes
the image of $\eta$
in $\Out(N;K)$;
in the terminology of
definition~\ref{defn:Aut(KNP)-conj Theta}
to be introduced later,
this says that
the ``twisted'' mod-$K$outer action
is an $\Aut(KNP)$-conjugate of $\Theta$.
Referring back to
definition~\ref{defn:aut Theta-compatible},
we see that
the iterated extension
$(\,
 G
 \,,\,
 j^\eta
 \,,\,
 \pi
 \,)
$
has $\Theta$ as
its mod-$K$outer action
if and only if
the automorphism
$\eta \in \Aut(KNP)$
is $\Theta$-compatible.

The group
$\Aut_\Theta(KNP)$
of $\Theta$-compatible automorphisms
(cf.~\eqref{eqn:Aut_Theta(KNP)})
of the extension $(KNP)$
thus acts
(from the right)
on the set of
isomorphism classes of
iterated extensions
of $(KNP)$ by $(PQR)$
with mod-$K$outer action $\Theta$,
sending
$\eta \in \Aut_\Theta(KNP)$
to the isomorphism class of
$(\,
 G
 \,,\,
 j^\eta
 \,,\,
 \pi
 \,)
$.
Moreover,
if the automorphism
$\eta \in \Aut_\Theta(KNP)$
is of the form
$\eta = \conj_N(z_0^{-1})$
for some $z_0 \in Z(K)$,
the resulting
iterated extension
$(\,
 G
 \,,\,
 j^\eta
 \,,\,
 \pi
 \,)
$
is isomorphic to
$(\,
 G
 \,,\,
 j
 \,,\,
 \pi
 \,)
$
via
$\conj_G(z_0) : G \rTo^{\simeq} G$.
Hence
(cf.~\eqref{eqn:Out_Theta(KNP;K)})
we have a well-defined map
\begin{equation}
  \begin{array}{rcl}
  \Out_\Theta(KNP;K)
  &
  \rTo
  &
  \left\{\
  \begin{array}{l}
  \text{isomorphism classes of}
  \\
  \text{iterated extensions}
  \\
  \text{of $(KNP)$ by $(PQR)$}
  \\
  \text{with mod-$K$ outer action $\Theta$}
  \end{array}
  \ \right\}
  ,
  \\[5ex]
  \etabar
  &
  \rMapsto
  &
  (\,
  G
  \,,\,
  j^\eta
  \,,\,
  \pi
  \,)
  \quad
  \begin{array}[t]{l}
    \text{for any $\eta \in \Aut_\Theta(KNP)$}
    \\
    \text{mapping to $\etabar \in \Out_\Theta(KNP;K)$}
    .
  \end{array}
  \end{array}
  \label{eqn:iterext twist}
\end{equation}

\begin{prop}
\label{prop:aut tgr}
Let $[\lambda] \in H^1(P,Z(K))^R$ be
an $R$-invariant cohomology class,
represented by
the 1-cocycle
$\lambda \in Z^1(P,Z(K))$.
Let
$\eta \in \Aut_\Theta(KNP)$
be the $\Theta$-compatible automorphism
of the extension $(KNP)$
corresponding to $\lambda$.
For any
iterated extension
$(\,
  G
  \,,\,
  j
  \,,\,
  \pi
 \,)
$
of $(KNP)$ by $(PQR)$
with mod-$K$outer action $\Theta$,
consider
the iterated extension
$(\,
  G
  \,,\,
  j^\eta
  \,,\,
  \pi
 \,)
$
obtained by
twisting the inclusion $j$
by $\eta$,
so that
$j^\eta := j \circ \eta$.
Then
\[
  \tgr[\lambda]
  \ =\
  \dfrac{(G,j^\eta,\pi)}{(G,j,\pi)}
  \qquad
  \text{in $H^2(R,Z(K)^P)$}
  .
\]
In other words,
the following diagram
commutes:
\[
  \begin{diagram}
  H^1(P,Z(K))^R
  &
  \rTo^{\quad \tgr \quad}
  &
  H^2(R,Z(K)^P)
  \\
  \dTo^{
    \text{cor.~\ref{cor:ext aut Theta-compatible H^1}}
    \quad
    \wr|
  }_{
    \quad
    - \star
  }
  &
  {\Bigg.}
  &
  \dTo^{
    - \boxtimes G
    \quad
  }_{
    |\wr
    \quad
    \text{cor.~\ref{cor:iterext classification H^2}}
  }
  \\
  \Out_\Theta(KNP;K)
  &
  \rTo_{\qquad \text{\eqref{eqn:iterext twist}} \qquad}
  &
  \mathrlap{
  \left\{\
  \begin{array}{l}
  \text{isomorphism classes of}
  \\
  \text{iterated extensions}
  \\
  \text{of $(KNP)$ by $(PQR)$}
  \\
  \text{with mod-$K$ outer action $\Theta$}
  \end{array}
  \ \right\}
  .
  }
  \qquad\qquad\qquad
  \end{diagram}
  \qquad\qquad\qquad\qquad
\]
\end{prop}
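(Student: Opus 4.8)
The plan is to establish the displayed identity $\tgr[\lambda]=\dfrac{(G,j^\eta,\pi)}{(G,j,\pi)}$ directly, by computing both sides as explicit normalized $2$-cocycles $R\times R\rDotsto Z(K)^P$; the commutativity of the square is then immediate from Notation~\ref{notatn:iterext cohom class}. First note that $(G,j^\eta,\pi)$ does have mod-$K$ outer action $\Theta$: by Lemma~\ref{lemma:twist of conjugation action} its mod-$K$ outer action carries $q$ to $\etabar^{-1}\circ\Theta(q)\circ\etabar$, which equals $\Theta$ precisely because $\eta$ is $\Theta$-compatible, so the right-hand ratio is defined (Corollary~\ref{cor:iterext classification H^2}). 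Fix once and for all a section $\ubar:R\rDotsto Q$ of $\phibar$, and run the explicit transgression construction recalled above with this $\ubar$: it produces a cochain $z:R\rDotsto Z(K)$ satisfying~\eqref{eqn:z} and the $2$-cocycle $d_\lambda$ given by~\eqref{eqn:d_lambda}. Choose a lifting $\Delta:R\rDotsto\Aut_K(N)$ of $\Theta\circ\ubar$ and, by Lemma~\ref{lemma:choice of u}, a section $u:R\rDotsto G$ of $\phi$ making $(G,j,\pi,u)$ a $(\ubar,\Delta)$-sectioned iterated extension, with left factor set $f:R\times R\rDotsto N$; write $\beta:=\pi_0\circ f:R\times R\rDotsto P$ for the associated $P$-valued factor set of $\ubar$.

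The heart of the argument is to present both iterated extensions in a common $(\ubar,\Delta^\eta)$-sectioned form, where $\Delta^\eta(r):=\eta^{-1}\circ\Delta(r)\circ\eta$ (again a lifting of $\Theta\circ\ubar$, by $\Theta$-compatibility of $\eta$). For $(G,j^\eta,\pi)$ the same section $u$ works: Lemma~\ref{lemma:twist of conjugation action} gives $\conj_N^G\circ u=\Delta$ (with $\conj_N^G$ the conjugation action via $j$), hence the conjugation action via $j^\eta=j\circ\eta$ composed with $u$ is $\Delta^\eta$; and the left factor set of $(G,j^\eta,\pi,u)$ relative to $j^\eta$ is $f^\eta(r_1,r_2)=\lambda(\beta(r_1,r_2))^{-1}\cdot f(r_1,r_2)$, since $\eta^{-1}(n)=\lambda(\pi_0(n))^{-1}\cdot n$. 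For $(G,j,\pi)$ I claim the re-sectioning $u'(r):=i(z(r)^{-1})\cdot u(r)$ does it. This is the one place relation~\eqref{eqn:z} is essential: evaluating $\eta^{-1}\circ\conj_N^G(u(r))\circ\eta\circ\conj_N^G(u(r))^{-1}$ on $n\in N$ and substituting $p=\pi_0(n)$ into~\eqref{eqn:z} collapses the expression to $z(r)^{-1}\cdot n\cdot z(r)$, so $\conj_N^G(u'(r))=\conj_N(z(r)^{-1})\circ\conj_N^G(u(r))=\Delta^\eta(r)$, while clearly $\phi(u'(r))=r$ and $\pi(u'(r))=\ubar(r)$; thus $(G,j,\pi,u')$ is $(\ubar,\Delta^\eta)$-sectioned. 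A short conjugation computation (pulling the $z$'s through $u(r_1)$, which acts on $Z(K)$ via $\theta(\ubar(r_1))$) gives its left factor set as $f'(r_1,r_2)=\bigl(z(r_1)\cdot\upperleft{\theta(\ubar(r_1))}{z(r_2)}\bigr)^{-1}\cdot f(r_1,r_2)\cdot z(r_1r_2)$.

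Since $(G,j^\eta,\pi,u)$ and $(G,j,\pi,u')$ are now both $(\ubar,\Delta^\eta)$-sectioned, Remark~\ref{rmk:iterext classification} identifies the ratio $\dfrac{(G,j^\eta,\pi)}{(G,j,\pi)}$ with the class of $d:=f^\eta\cdot(f')^{-1}$. Substituting the two factor sets and simplifying --- using that conjugation by $f(r_1,r_2)\in N$ acts on $Z(K)$ through $\theta|_P(\beta(r_1,r_2))$, the outer action of the extension $(KNP)$, and that $Z(K)^P$ is abelian --- rewrites $d(r_1,r_2)$ as $\lambda(\beta(r_1,r_2))^{-1}\cdot\upperleft{\theta|_P(\beta(r_1,r_2))}{z(r_1r_2)^{-1}}\cdot z(r_1)\cdot\upperleft{\theta(\ubar(r_1))}{z(r_2)}$. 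On the other hand, since $\ubar(r_1r_2)^{-1}\ubar(r_1)\ubar(r_2)$ equals $\ubar(r_1r_2)^{-1}\jbar(\beta(r_1,r_2))\ubar(r_1r_2)$ in $Q$, applying~\eqref{eqn:z} with $r=r_1r_2$ and $p=\beta(r_1,r_2)$ to the trailing $\lambda$-factor of~\eqref{eqn:d_lambda} rewrites $d_\lambda(r_1,r_2)$ (after the $z(r_1r_2)^{\pm1}$ cancel) into the identical expression. Hence $d=d_\lambda$, so $\dfrac{(G,j^\eta,\pi)}{(G,j,\pi)}=[d_\lambda]=\tgr[\lambda]$, which is the asserted identity and, through Notation~\ref{notatn:iterext cohom class}, the commutativity of the diagram.

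The only step that is not bookkeeping is the re-sectioning in the middle paragraph: one must see that twisting the inclusion by $\eta$ is compensated, at the level of conjugation actions, by twisting the section by $z$, so that $u'=i(z^{-1})\cdot u$ restores the lifting $\Delta^\eta$ --- and this compensation is exactly the content of the $R$-invariance of $[\lambda]$ in the cocycle form~\eqref{eqn:z}. After that, every remaining identity is an equality in the abelian group $Z(K)^P$.
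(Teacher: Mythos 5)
Your proposal is correct and is in substance the same proof as the paper's: both arguments come down to choosing sections compatible with a lifting of $\Theta\circ\ubar$, comparing the two left factor sets, and recognizing their difference as the cocycle $d_\lambda$ of~\eqref{eqn:d_lambda} via the cochain $z$ of~\eqref{eqn:z} (equivalently~\eqref{eqn:conj by Delta}). The only difference is bookkeeping: you twist the lifting to $\Delta^\eta=\eta^{-1}\circ\Delta\circ\eta$ and re-section the untwisted extension by $z^{-1}$, whereas the paper keeps $\Delta$ fixed and re-sections the twisted extension by $z$ (taking $u^\eta=z\cdot u$) --- a mirror image of the same computation, with your appeal to the independence of the bijection of corollary~\ref{cor:iterext classification H^2} from the auxiliary pair $(\ubar,\Delta)$ playing the role the paper avoids by using one pair throughout.
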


\begin{proof}
Choose a section
$\ubar : R \rDotsto Q$
of $Q \rOnto^{\phibar} R$,
and choose a lifting
$\Delta : R \rDotsto \Aut_K(N)$
of
$\Theta \circ \ubar : R \rDotsto \Out(N;K)$.
Let
$u : R \rDotsto G$
and
$u^\eta : R \rDotsto G$
be sections of
$G \rOnto^{\phi} R$
chosen by
applying lemma~\ref{lemma:choice of u}
to the iterated extensions
$(\,
 G
 \,,\,
 j
 \,,\,
 \pi
 \,)
$
and
$(\,
 G
 \,,\,
 j^\eta
 \,,\,
 \pi
 \,)
$
respectively,
so that
\[
  \begin{aligned}
  \pi
  \circ
  u
  &
  \ =\
  &
  \ubar
  &
  \ =\
  &
  \pi
  \circ
  u^\eta
  \qquad
  &
  \text{as maps}
  \quad
  R
  \rDotsto
  Q
  ,
  \\
  \text{and}
  \qquad
  \conj_N^G
  \circ
  u
  &
  \ =\
  &
  \Delta
  &
  \ =\
  &
  \conj_N^{G,\eta}
  \circ
  u^\eta
  \qquad
  &
  \text{as maps}
  \quad
  R
  \rDotsto
  \Aut_K(N)
  .
  \end{aligned}
\]
We note in passing that
these relations
only determine
the sections $u$ and $u^\eta$
modulo $Z(K)^P$.
Thus
$(\,
 G
 \,,\,
 j
 \,,\,
 \pi
 \,,\,
 u
 \,)
$
and
$(\,
 G
 \,,\,
 j^\eta
 \,,\,
 \pi
 \,,\,
 u^\eta
 \,)
$
are
$(\ubar,\Delta)$-sectioned
iterated extensions
of $(KNP)$ by $(PQR)$.
Their (left) factor sets
$f
 :
 R \times R
 \rDotsto
 N
$
and
$f^\eta
 :
 R \times R
 \rDotsto
 N
$
are characterized by
the property that
for any $r_1,r_2 \in R$,
one has
\begin{align}
  u(r_1)
  \cdot
  u(r_2)
  &
  \ =\
  j
  (\,
    f(r_1,r_2)
  \,)
  \cdot
  u(r_1r_2)
  &
  \qquad
  &
  \label{eqn:j f}
  \\
  \text{and}
  \qquad
  u^\eta(r_1)
  \cdot
  u^\eta(r_2)
  &
  \ =\
  j^\eta
  (\,
    f^\eta(r_1,r_2)
  \,)
  \cdot
  u^\eta(r_1r_2)
  &
  \qquad
  &
  \text{in $G$}
  .
  \label{eqn:j^eta f^eta}
\end{align}
By theorem~\ref{thm:iterext classification}
and remark~\ref{rmk:iterext classification},
there is a
2-cocycle
$d : R \times R \rDotsto Z(K)^P$
such that
\[
  f^\eta
  \ =\
  d
  \cdot
  f
  \qquad
  \text{as maps}
  \quad
  R \times R
  \rDotsto N
  ,
\]
and by notation~\ref{notatn:iterext cohom class},
the cohomology class of $d$
is precisely
$[d]
 =
 \dfrac{(G,j^\eta,\pi)}{(G,j,\pi)}
$
in $H^2(R,Z(K)^P)$.

By definition~\ref{defn:aut Theta-compatible},
the fact that
$\eta$ is $\Theta$-compatible
means precisely that
we can choose
a map
$z : R \rDotsto Z(K)$
such that
for any $r \in R$,
one has
\begin{equation}
  \Delta(r)
  \circ
  \eta
  \circ
  \Delta(r)^{-1}
  \ =\
  \conj_N(z(r)^{-1})
  \circ
  \eta
  \qquad
  \text{in $\Aut(KNP)$}
  .
  \label{eqn:conj by Delta}
\end{equation}
The map $z$
is the same as that
in equation~\eqref{eqn:z},
and so
it can be used
in~\eqref{eqn:d_lambda}
to determine
the transgression image
$\tgr[\lambda]$ of $[\lambda]$.
We claim that
$z$ can be interpreted as
the multiplicative difference
modulo $Z(K)^P$
between
the two sections
$u$ and $u^\eta$,
in the sense that
for any $r \in R$,
one has
\[
  u^\eta(r)
  \ =\
  z(r)
  \cdot
  u(r)
  \qquad
  \text{in $Z(K)$ modulo $Z(K)^P$}
  .
\]
To see this,
we merely have to check that
the map
$r \mapsto z(r) \cdot u(r)$
satisfies
the same relations
which determine $u^\eta$
modulo $Z(K)^P$;
and indeed,
we have
\[
  \begin{aligned}
  \pi
  (\,
    z(r)
    \cdot
    u(r)
  \,)
  &
  \ =\
  \pi(\,u(r)\,)
  &
  \ =\
  &
  \ubar(r)
  &
  \qquad
  &
  \text{in $Q$}
  \\
  \text{and}
  \qquad
  \conj_N^{G,\eta}
  (\,
    z(r)
    \cdot
    u(r)
  \,)
  &
  \ =\
  \eta^{-1}
  \circ
  \conj_N^G
  (\,
    z(r)
    \cdot
    u(r))
  \,)
  \circ
  \eta
  \\
  &
  \ =\
  \eta^{-1}
  \circ
  \conj_N^G(z(r))
  \circ
  \Delta(r)
  \circ
  \eta
  &
  \ =\
  &
  \Delta(r)
  &
  \qquad
  &
  \text{in $\Aut_K(N)$}
  ,
  \end{aligned}
\]
where the last equality
is obtained
by rewriting equation~\eqref{eqn:conj by Delta}.
We note that
in both equations~\eqref{eqn:z}
and~\eqref{eqn:conj by Delta},
the map $z$
is only determined
modulo $Z(K)^P$;
we are free to
multiply it by
any map
$R \rDotsto Z(K)^P$.
Accordingly,
we shall assume that
the map
$z : R \rDotsto Z(K)$
has been chosen
so that
$u^\eta = z \cdot u$
as maps
$R \rDotsto G$.

To evaluate
the 2-cocycle $d$ explicitly,
we shall compute
\begin{equation}
  i(d(r_1,r_2))
  \ =\
  j^\eta
  \Bigl(\
    f^\eta(r_1,r_2)
    \cdot
    f(r_1,r_2)^{-1}
  \ \Bigr)
  \ =\
  j^\eta
  (\,
    f^\eta(r_1,r_2)
  \,)
  \cdot
  j
  (\,
    \upperleft
    {\eta}
    {(\,f(r_1,r_2)\,)}
  \,)^{-1}
  \qquad
  \text{in $G$}
  .
  \label{eqn:d}
\end{equation}
Thanks to
our (justified) assumption that
$u^\eta = z \cdot u$,
we can proceed
to rewrite~\eqref{eqn:j^eta f^eta} as
\begin{equation}
  \begin{aligned}[b]
  j^\eta
  (\,
    f^\eta(r_1,r_2)
  \,)
  &
  \ =\
  z(r_1)
  u(r_1)
  \cdot
  z(r_2)
  u(r_2)
  \cdot
  u(r_1r_2)^{-1}
  z(r_1r_2)^{-1}
  \\
  &
  \ =\
  z(r_1)
  \cdot
  \upperleft
  {\theta(\ubar(r_1))}
  {z(r_2)}
  \cdot
  u(r_1)
  \,
  u(r_2)
  \,
  u(r_1r_2)^{-1}
  \cdot
  z(r_1r_2)^{-1}
  \\
  &
  \ =\ 
  z(r_1)
  \cdot
  \upperleft
  {\theta(\ubar(r_1))}
  {z(r_2)}
  \cdot
  j(f(r_1,r_2))
  \cdot
  z(r_1r_2)^{-1}
  \qquad
  \text{in $G$}
  ,
  \end{aligned}
  \label{eqn:d1}
\end{equation}
where the last equality holds
acoording to~\eqref{eqn:j f}.
Next,
equation~\eqref{eqn:conj by Delta}
and the relation
$\conj_N^G \circ u = \Delta$
gives
\[
  \conj_N^G(u(r_1r_2))
  \circ
  \eta
  \circ
  \conj_N^G(u(r_1r_2))^{-1}
  \ =\
  \conj_N(z(r_1r_2)^{-1})
  \circ
  \eta
  \qquad
  \text{in $\Aut(KNP)$}
  ,
\]
which we now apply
to the element
$f(r_1,r_2) \in N$
to get
\begin{multline*}
  \upperleft
  {(\,
    \conj_N^G(u(r_1r_2))
    \,\circ\,
    \eta
   \,)
  }
  {\bigl(\,
    u(r_1r_2)^{-1}
    \cdot
    j(f(r_1,r_2))
    \cdot
    u(r_1r_2)
   \,\bigr)
  }
  \\
  \ =\ 
  z(r_1r_2)^{-1}
  \cdot
  j
  (\,
    \upperleft
    {\eta}
    {(\,f(r_1,r_2)\,)}
  \,)
  \cdot
  z(r_1r_2)
  \qquad
  \text{in $G$}
  .
\end{multline*}
We expand
(only) the left hand side
using the fact that
the automorphism
$\eta \in \Aut(KNP)$
corresponds to
the cocycle
$\lambda \in Z^1(P,Z(K))$.
Since
$\pi_0(f(r_1,r_2))
 =
 \ubar(r_1)
 \,
 \ubar(r_2)
 \,
 \ubar(r_1r_2)^{-1}
$
in $P$
according to~\eqref{eqn:j f},
we see that
$z(r_1r_2)^{-1}
 \cdot
 j
 (\,
   \upperleft
   {\eta}
   {(\,f(r_1,r_2)\,)}
 \,)
 \cdot
 z(r_1r_2)
$
is equal to
\[
  u(r_1r_2)
  \cdot
  \lambda
  (\,
    \ubar(r_1r_2)^{-1}
    \,
    \ubar(r_1)
    \,
    \ubar(r_2)
  \,)
  \cdot
  \Bigl(\ 
    u(r_1r_2)^{-1}
    \cdot
    j(f(r_1,r_2))
    \cdot
    u(r_1r_2)
  \ \Bigr)
  \cdot
  u(r_1r_2)^{-1}
  \qquad
  \text{in $G$}
  ,
\]
and hence
\begin{multline}
  j
  (\,
    \upperleft
    {\eta}
    {(\,f(r_1,r_2)\,)}
  \,)
  \\
  \ =\ 
  z(r_1r_2)
  \cdot
  \upperleft
  {\theta(\ubar(r_1r_2))}
  {\lambda
   (\,
    \ubar(r_1r_2)^{-1}
    \,
    \ubar(r_1)
    \,
    \ubar(r_2)
   \,)
  }
  \cdot
  j(f(r_1,r_2))
  \cdot
  z(r_1r_2)^{-1}
  \qquad
  \text{in $G$}
  .
  \label{eqn:d2}
\end{multline}
Substituting~\eqref{eqn:d1}
and~\eqref{eqn:d2}
into~\eqref{eqn:d},
we obtain
\[
  d(r_1,r_2)
  \ =\
  z(r_1)
  \cdot
  \upperleft
  {\theta(\ubar(r_1))}
  {z(r_2)}
  \cdot
  \upperleft
  {\theta(\ubar(r_1r_2))}
  {\lambda
   (\,
     \ubar(r_1r_2)^{-1}
     \,
     \ubar(r_1)
     \,
     \ubar(r_2)
   \,)^{-1}
  }
  \cdot
  z(r_1r_2)^{-1}
  \qquad
  \text{in $Z(K)^P$}
  .
\]
This coincides with
the 2-cocycle
$d_\lambda
 :
 R \times R
 \rDotsto
 Z(K)^P
$
obtained in
equation~\eqref{eqn:d_lambda},
which represents
the transgression image
$\tgr[\lambda] \in H^2(R,Z(K)^P)$
of $[\lambda] \in H^1(P,Z(K))^R$.
From this
we conclude that
$\tgr[\lambda]
 =
 [d]
 =
 \dfrac{(G,j^\eta,\pi)}{(G,j,\pi)}
$
in $H^2(R,Z(K)^P)$.
\end{proof}

By propositions~\ref{prop:aut res}
and~\ref{prop:aut tgr},
the exactness of
\[
  H^1(Q,Z(K))
  \rTo^{\quad \res \quad}
  H^1(P,Z(K))^R
  \rTo^{\quad \tgr \quad}
  H^2(R,Z(K)^P)
\]
in the sequence~\eqref{eqn:long exact seq}
translates as:

\begin{prop}
Let
$(\,
  G
  \,,\,
  j
  \,,\,
  \pi
 \,)
$
be an iterated extension
of $(KNP)$ by $(PQR)$
with mod-$K$outer action $\Theta$,
and let $\eta \in \Aut(KNP)$ be
a $\Theta$-compatible automorphism
of the extension $(KNP)$.
Then
the iterated extension
$(\,
  G
  \,,\,
  j^\eta
  \,,\,
  \pi
 \,)
$
is isomorphic to
$(\,
  G
  \,,\,
  j
  \,,\,
  \pi
 \,)
$
if and only if
there exists
an automorphism
$\xi \in \Aut(KGQ)$
of the $Q$-main extension $(KGQ)$
such that
$\eta$ is the restriction
$\xi|_N$ of $\xi$ to $N$.
\end{prop}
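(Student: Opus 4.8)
The plan is to deduce the statement from the exactness of
\[
  H^1(Q,Z(K))
  \xrightarrow{\ \res\ }
  H^1(P,Z(K))^R
  \xrightarrow{\ \tgr\ }
  H^2(R,Z(K)^P)
\]
in~\eqref{eqn:long exact seq}, combined with the identifications of $\res$ and $\tgr$ already obtained. Write $\lambda\in Z^1(P,Z(K))$ for the unique $1$-cocycle with $\eta=\lambda\star$ (theorem~\ref{thm:ext aut} applied to the extension~$(KNP)$); since $\eta$ is $\Theta$-compatible, corollary~\ref{cor:ext aut Theta-compatible H^1} gives $[\lambda]\in H^1(P,Z(K))^R$. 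Note also that, $\eta$ being $\Theta$-compatible, the twisted iterated extension $(G,j^\eta,\pi)$ again has mod-$K$\,outer action $\Theta$, so that the constructions of section~\ref{sect:iterext classification} apply to both $(G,j,\pi)$ and $(G,j^\eta,\pi)$.

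First I would rephrase the left-hand side of the desired equivalence. By proposition~\ref{prop:aut tgr},
\[
  \dfrac{(G,j^\eta,\pi)}{(G,j,\pi)}
  \ =\
  \tgr[\lambda]
  \qquad\text{in }H^2(R,Z(K)^P);
\]
and by notation~\ref{notatn:iterext cohom class} together with corollary~\ref{cor:iterext classification H^2}, the iterated extensions $(G,j^\eta,\pi)$ and $(G,j,\pi)$ are isomorphic if and only if this cohomology class is zero, i.e.\ if and only if $\tgr[\lambda]=0$. By the exactness of~\eqref{eqn:long exact seq} at $H^1(P,Z(K))^R$, this holds if and only if $[\lambda]$ belongs to the image of $\res\colon H^1(Q,Z(K))\to H^1(P,Z(K))^R$.

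Next I would translate membership in $\operatorname{im}(\res)$ into the existence of $\xi\in\Aut(KGQ)$ with $\xi|_N=\eta$. The cocycle-level commutative square displayed just before proposition~\ref{prop:aut res} identifies the restriction homomorphism~\eqref{eqn:aut res}, $\xi\mapsto\xi|_N$, with $\res\colon Z^1(Q,Z(K))\to Z^1(P,Z(K))$ under the isomorphisms $-\star$ of theorem~\ref{thm:ext aut}. So if $\lambda=\res(\mu)$ for some $\mu\in Z^1(Q,Z(K))$, then $\xi:=\mu\star\in\Aut(KGQ)$ restricts to $\xi|_N=\res(\mu)\star=\lambda\star=\eta$; conversely, any $\xi\in\Aut(KGQ)$ with $\xi|_N=\eta$ is of the form $\xi=\mu\star$ for a unique $\mu\in Z^1(Q,Z(K))$, and then $\res(\mu)\star=\xi|_N=\lambda\star$, whence $\res(\mu)=\lambda$ by injectivity of $-\star$. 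It then remains only to check that the cocycle-level condition ``$\lambda\in\res\bigl(Z^1(Q,Z(K))\bigr)$'' is equivalent to the cohomology-level condition ``$[\lambda]\in\operatorname{im}(\res)$''. This is because $\res$ maps $B^1(Q,Z(K))$ \emph{onto} $B^1(P,Z(K))$: indeed $\res(\partial z_0)=\partial z_0$ for every $z_0\in Z(K)$, using $\theta\circ\jbar=\theta|_P$, and every element of $B^1(P,Z(K))$ has the form $\partial z_0$; hence any $\mu\in Z^1(Q,Z(K))$ whose class restricts to $[\lambda]$ may be multiplied by a suitable $1$-coboundary on $Q$ to yield a $1$-cocycle restricting exactly to $\lambda$.

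Concatenating the equivalences of the previous two paragraphs yields precisely the assertion. I expect the only point requiring care to be this last coboundary adjustment — the passage from the cohomology-level exactness to the on-the-nose identity $\xi|_N=\eta$; every other link is a direct appeal to proposition~\ref{prop:aut tgr}, corollary~\ref{cor:iterext classification H^2}, the cocycle description of $\res$ underlying proposition~\ref{prop:aut res}, and the exactness of~\eqref{eqn:long exact seq}.
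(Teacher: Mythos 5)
Your proposal is correct, but it takes a genuinely different route from the paper. The paper proves the proposition directly in a few lines: since $\eta$ acts trivially on $K$, any isomorphism $\varphi$ between the iterated extensions $(G,j,\pi)$ and $(G,j^\eta,\pi)$ automatically satisfies $\varphi\circ i=i$ and $\pi\circ\varphi=\pi$, so such isomorphisms are \emph{precisely} the automorphisms $\xi\in\Aut(KGQ)$ with $\xi\circ j=j\circ\eta$, i.e.\ $\xi|_N=\eta$ --- the statement is essentially tautological, and the direct argument does not use the exact sequence~\eqref{eqn:long exact seq} at all (nor, in fact, the $\Theta$-compatibility of $\eta$), so it serves as an independent confirmation of the exactness translation. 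You instead take the paper's phrase ``the exactness translates as'' at face value and make that translation rigorous: you combine the exactness at $H^1(P,Z(K))^R$ with proposition~\ref{prop:aut tgr}, the cocycle-level square underlying proposition~\ref{prop:aut res}, and corollary~\ref{cor:iterext classification H^2}, and you correctly handle the one delicate point --- upgrading ``$[\lambda]\in\operatorname{im}(\res)$'' to an on-the-nose equality $\res(\mu)=\lambda$, hence $\xi|_N=\eta$ exactly --- by observing that $\res$ carries $B^1(Q,Z(K))$ onto $B^1(P,Z(K))$ (both consist of the coboundaries $\partial z_0$, $z_0\in Z(K)$, and $\theta\circ\jbar=\theta|_P$). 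The trade-off: your derivation shows the dictionary really does deliver the proposition, but it consumes the Lyndon--Hochschild--Serre exactness as an input, whereas the paper's elementary argument is shorter, self-contained, and can be read as verifying that piece of exactness rather than invoking it.
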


\begin{proof}
This can be seen directly
as follows.
If $\xi \in \Aut(KGQ)$ is
any automorphism of
the $Q$-main extension $(KGQ)$,
then
one already has
$\xi \circ \pi = \pi$;
hence
$\xi : G \rTo^{\simeq} G$
is an isomorphism
between
the iterated extensions
$(\,
  G
  \,,\,
  j
  \,,\,
  \pi
 \,)
$
and
$(\,
  G
  \,,\,
  j^\eta
  \,,\,
  \pi
 \,)
$
if and only if
one also has
$\xi \circ j
 =
 j^\eta
 =
 j \circ \eta
$,
which is the case
if and only if
$\xi|_N = \eta$
in $\Aut(KNP)$.
\end{proof}

\section{$H^2(Q,Z(K))$
and the classification of
extensions}

Consider
the extension problem
$(\,
 K
 \,,\,
 Q
 \,,\,
 \theta
 \,)
$
deduced from
our given data
$(\,
 K
 \,,\,
 PQR
 \,,\,
 \theta
 \,)
$
in notation~\ref{notatn:fixed data}.
Throughout this section,
we fix the choice of
\[
  \text{a lifting}
  \quad
  \delta
  \ :\ 
  Q
  \rDotsto
  \Aut(K)
  \quad
  \text{of}
  \quad
  \theta : Q \rTo \Out(K)
  .
\]
The results of
section~\ref{sect:iterext classification}
specialize
to analogous results
for the extension $(KGQ)$
by putting $P = \{1\}$
and hence
$R = Q$,
$\phi = \pi$,
and
$N = K$,
$j = i$.
We state these results
in this section
for later references.

\begin{defn}
A \emph{$\delta$-sectioned extension
of $K$ by $Q$}
is a quadruple
$(\,
 G
 \,,\,
 i
 \,,\,
 \pi
 \,,\,
 s
 \,)
$,
where
the triplet
$(\,
 G
 \,,\,
 i
 \,,\,
 \pi
 \,)
$
is an extension
of $K$ by $Q$,
and
$s : Q \rDotsto G$
is a section of
$G \rOnto^{\pi} Q$
such that
\[
  \conj_K^G \circ s
  \ =\
  \delta
  \qquad
  \text{as maps}
  \quad
  Q \rDotsto \Aut(K)
  .
\]
Two $\delta$-sectioned extensions
$(\,
 G_\ell
 \,,\,
 i_\ell
 \,,\,
 \pi_\ell
 \,,\,
 s_\ell
 \,)
$
(for $\ell=1,2$)
are \emph{isomorphic}
iff
there exists
an isomorphism of extensions
$\varphi
 :
 (\,
 G_1
 \,,\,
 i_1
 \,,\,
 \pi_1
 \,)
 \rTo^{\simeq}
 (\,
 G_2
 \,,\,
 i_2
 \,,\,
 \pi_2
 \,)
$
such that
$\varphi \circ s_1
 =
 s_2
$
as maps
$Q \rDotsto G_2$.
They are \emph{equivalent}
iff
the underlying extensions
$(\,
 G_\ell
 \,,\,
 i_\ell
 \,,\,
 \pi_\ell
 \,)
$
(without the sections)
are isomorphic.
\end{defn}

The outer action of
a $\delta$-sectioned extension
is necessarily
equal to $\theta$;
conversely,
any extension
of $K$ by $Q$
with outer action $\theta$
can be enriched into
a $\delta$-sectioned
extension
(because
$\conj_K^G$ maps $K$
surjectively onto $\Inn(K)$).
Thus,
an equivalence class of
$\delta$-sectioned extensions
of $K$ by $Q$
is the same as
an isomorphism class of
extensions
of $K$ by $Q$
with outer action
$\theta$.

Let
$(\,
 G
 \,,\,
 i
 \,,\,
 \pi
 \,,\,
 s
 \,)
$
be a fixed
$\delta$-sectioned
extension of $K$ by $Q$.
For any 1-cocycle
$e \in Z^2(Q,Z(K))$,
let
$m_e : G \times G \rDotsto G$
be the map
given by
\[
  m_e(g_1,g_2)
  \ :=\
  e(\pi(g_1),\pi(g_2))
  \cdot
  g_1
  \cdot
  g_2
  \qquad
  \text{in $G$}
  .
\]
As in lemma~\ref{lemma:G boxtimes d well-def},
one shows that
the underlying set of $G$
given with $m_e$
as the multiplication map
is a group,
and that
if $e \boxtimes G$ denotes
the resulting group
with $m_e$ as multiplication,
the maps
$i : K \rInto e \boxtimes G$
and
$\pi : e \boxtimes G \rOnto Q$
are homomorphisms,
and the map
$s : R \rDotsto e \boxtimes G$
is a section of
$e \boxtimes G \rOnto^{\pi} R$
making
$(\,
 e \boxtimes G
 \,,\,
 i
 \,,\,
 \pi
 \,,\,
 s
 \,)
$
a $\delta$-sectioned extension
of $K$ by $Q$.
As in lemma~\ref{lemma:equivalent iterext},
multiplying the section $s$ by
any 1-cochain
$z : R \rDotsto Z(K)$
results in
another section
$z \cdot s$
such that
$(\,
 G
 \,,\,
 i
 \,,\,
 \pi
 \,,\,
 z \cdot s
 \,)
$
is a $\delta$-sectioned extension
of $K$ by $Q$,
which is equivalent to
$(\,
 G
 \,,\,
 i
 \,,\,
 \pi
 \,,\,
 s
 \,)
$
by construction;
and conversely,
any
$\delta$-sectioned extension
of $K$ by $Q$
which is equivalent to
$(\,
 G
 \,,\,
 i
 \,,\,
 \pi
 \,,\,
 s
 \,)
$
is isomorphic to one
obtained this way.

\begin{thm}
\label{thm:ext classification}
Let
$(\,
 G
 \,,\,
 i
 \,,\,
 \pi
 \,,\,
 s
 \,)
$
be a $\delta$-sectioned extension
of $K$ by $Q$.
Then the map
\[
  \begin{array}{rcl}
  - \boxtimes G
  \ :\
  Z^2(Q,Z(K))
  &
  \
  \rTo^{\simeq}
  \
  &
  \left\{\
  \begin{array}{l}
  \text{isomorphism classes of}
  \\
  \text{$\delta$-sectioned extensions
        of $K$ by $Q$}
  \end{array}
  \ \right\}
  ,
  \\[2ex]
  e
  &
  \rMapsto
  &
  \begin{array}[t]{l}
  \text{the isomorphism class of}
  \\
  (\,
  e \boxtimes G
  \,,\,
  i
  \,,\,
  \pi
  \,,\,
  s
  \,)
  \quad
  \text{as defined above}
  \end{array}
  \end{array}
\]
is a well-defined bijection,
whose inverse
is given by
\[
  \begin{array}{rcl}
  \left\{\
  \begin{array}{l}
  \text{isomorphism classes of}
  \\
  \text{$\delta$-sectioned extensions
        of $K$ by $Q$}
  \end{array}
  \ \right\}
  &
  \
  \rTo^{\simeq}
  \
  &
  Z^2(Q,Z(K))
  ,
  \\[2ex]
  (\,
  G'
  \,,\,
  i'
  \,,\,
  \pi'
  \,,\,
  s'
  \,)
  &
  \rMapsto
  &
  \left(\
    \begin{aligned}
    (q_1,q_2)
    \ \mapsto\
    &
    h'(s'(q_1),s'(q_2))
    \cdot
    \\
    &
    h(s(q_1),s(q_2))^{-1}
    \end{aligned}
  \ \right)
  ,
  \end{array}
\]
where $h$ and $h'$
are the (left) factor sets of
$(\,
 G
 \,,\,
 i
 \,,\,
 \pi
 \,,\,
 s
 \,)
$
and
$(\,
 G'
 \,,\,
 i'
 \,,\,
 \pi'
 \,,\,
 s'
 \,)
$
respectively.
\end{thm}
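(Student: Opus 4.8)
The plan is to obtain this statement as the specialization of Theorem~\ref{thm:iterext classification} and Remark~\ref{rmk:iterext classification} to the case $P = \{1\}$, as announced at the start of the section: one puts $P = \{1\}$, so that $R = Q$, the map $\phibar : Q \rOnto R$ is the identity, $\phi = \pi$, $N = K$ and $j = i$. First I would record the resulting dictionary. The group $\Out(N;K) = \Aut_K(N)/\conj_N(K)$ becomes $\Aut(K)/\Inn(K) = \Out(K)$, and the mod-$K$\,outer action $\Theta : Q \rTo \Out(N;K)$ induced by $\conj_N^G = \conj_K^G$ becomes the given outer action $\theta : Q \rTo \Out(K)$; thus $(\,KNP\,,\,PQR\,,\,\Theta\,)$ is an iterated extension problem which reduces to the extension problem $(\,K\,,\,Q\,,\,\theta\,)$. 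Since $\phibar$ is the identity, the section $\ubar : R \rDotsto Q$ fixed in section~\ref{sect:iterext classification} is forced to equal $\id_Q$, and a lifting $\Delta : R \rDotsto \Aut_K(N)$ of $\Theta \circ \ubar$ is then precisely a lifting $\delta : Q \rDotsto \Aut(K)$ of $\theta$, i.e.~the lifting fixed at the start of the present section. Finally, because $P = \{1\}$ one has $Z(K)^P = Z(K)$, the action $\theta_0$ of $R = Q$ on $Z(K)^P$ is just $\theta$ on $Z(K)$, and so $Z^2(R,Z(K)^P)$, $B^2(R,Z(K)^P)$ and $H^2(R,Z(K)^P)$ are $Z^2(Q,Z(K))$, $B^2(Q,Z(K))$ and $H^2(Q,Z(K))$.

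Next I would check that the objects being classified correspond under this dictionary. For $P = \{1\}$ the diagram~\eqref{diag:KNGQR} degenerates, and an iterated extension of $(KNP)$ by $(PQR)$ is simply an extension $(\,G\,,\,i\,,\,\pi\,)$ of $K$ by $Q$; the conditions $\pi \circ u = \ubar$ and $\conj_N^G \circ u = \Delta$ defining a $(\ubar,\Delta)$-sectioned iterated extension become $\pi \circ u = \id_Q$ (automatic for a section of $\pi$) together with $\conj_K^G \circ u = \delta$, so a $(\ubar,\Delta)$-sectioned iterated extension is exactly a $\delta$-sectioned extension $(\,G\,,\,i\,,\,\pi\,,\,s\,)$ with $s := u$, the notions of isomorphism and equivalence agreeing under this identification. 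Moreover, for $d = e \in Z^2(Q,Z(K))$ the multiplication $m_d(g_1,g_2) = d(\phi(g_1),\phi(g_2)) \cdot g_1 \cdot g_2$ of Lemma~\ref{lemma:G boxtimes d well-def} is literally the map $m_e(g_1,g_2) = e(\pi(g_1),\pi(g_2)) \cdot g_1 \cdot g_2$, so the two descriptions of $e \boxtimes G$ coincide and Lemma~\ref{lemma:G boxtimes d well-def} specializes to the statement that $(\,e \boxtimes G\,,\,i\,,\,\pi\,,\,s\,)$ is again a $\delta$-sectioned extension.

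With the dictionary in place, Theorem~\ref{thm:iterext classification} applied to the $(\ubar,\Delta)$-sectioned iterated extension $(\,G\,,\,i\,,\,\pi\,,\,s\,)$ gives exactly that $-\boxtimes G$ is a well-defined bijection from $Z^2(Q,Z(K))$ onto the set of isomorphism classes of $\delta$-sectioned extensions of $K$ by $Q$, and Remark~\ref{rmk:iterext classification} specializes to identify its inverse as the assignment sending $(\,G'\,,\,i'\,,\,\pi'\,,\,s'\,)$ to the $2$-cocycle $(q_1,q_2) \mapsto h'(s'(q_1),s'(q_2)) \cdot h(s(q_1),s(q_2))^{-1}$, where $h$ and $h'$ are the (left) factor sets of $(\,G\,,\,i\,,\,\pi\,,\,s\,)$ and $(\,G'\,,\,i'\,,\,\pi'\,,\,s'\,)$ and correspond to the factor sets $f$ and $f'$ of the remark. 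The proof is therefore a verbatim transcription of the results of section~\ref{sect:iterext classification}; there is no genuine obstacle, the only point deserving care being the bookkeeping of the specialization --- in particular checking that the forced choice $\ubar = \id_Q$ makes $\Delta$ coincide with the prescribed lifting $\delta$, so that the auxiliary data of section~\ref{sect:iterext classification} agree with those fixed at the start of this section. One could equally repeat the arguments of section~\ref{sect:iterext classification} with $P = \{1\}$; the computations are identical.
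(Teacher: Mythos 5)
Your proposal is correct and is exactly the route the paper takes: the paper gives no separate proof of Theorem~\ref{thm:ext classification}, stating it as the specialization of Theorem~\ref{thm:iterext classification} and Remark~\ref{rmk:iterext classification} to $P=\{1\}$ (so $R=Q$, $N=K$, $j=i$, $\ubar=\id_Q$, $\Delta=\delta$, $Z(K)^P=Z(K)$), which is precisely the dictionary you spell out. Your only addition is to make the bookkeeping of that specialization explicit, which matches the paper's intent.
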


\begin{cor}
The bijection $- \boxtimes G$
of theorem~\ref{thm:ext classification}
restricts to
a bijection
\[
  \begin{array}{rcl}
  - \boxtimes G
  \ :\
  B^2(Q,Z(K))
  &
  \rTo^{\simeq}
  &
  \left\{\
  \begin{array}{l}
  \text{isomorphism classes of}
  \\
  \text{$\delta$-sectioned extensions
        of $K$ by $Q$}
  \\
  \text{which are equivalent to}
  \quad
  (\,
  G
  \,,\,
  i
  \,,\,
  \pi
  \,,\,
  s
  \,)
  \end{array}
  \ \right\}
  ,
  \\[4ex]
  \partial z
  &
  \rMapsto
  &
  \begin{array}[t]{l}
  \text{the isomorphism class of}
  \\
  (\,
  G
  \,,\,
  i
  \,,\,
  \pi
  \,,\,
  z \cdot s
  \,)
  \quad
  \text{as defined above}
  ,
  \end{array}
  \end{array}
\]
where
$\partial z$ denotes
the 2-coboundary
$\partial z(q_1,q_2)
 :=
 z(q_1)
 \cdot
 \upperleft
 {\theta(q_1)}
 {z(q_2)}
 \cdot
 z(q_1q_2)^{-1}
$
for any 1-cochain
$z : Q \rDotsto Z(K)$.
\end{cor}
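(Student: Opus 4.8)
The plan is to obtain this as the $P = \{1\}$ specialization of Corollary~\ref{cor:iterext classification B^2} --- together with the $\delta$-sectioned analogues of Lemmas~\ref{lemma:equivalent iterext} and~\ref{lemma:isom criterion for sectioned iterext} recorded in the paragraph preceding Theorem~\ref{thm:ext classification} --- obtained by putting $R = Q$, $\phi = \pi$, $N = K$, $j = i$ and $u = s$. Since $- \boxtimes G$ is already a bijection on all of $Z^2(Q,Z(K))$ by Theorem~\ref{thm:ext classification}, its restriction to the subgroup $B^2(Q,Z(K))$ is automatically injective; so the only work is to identify the image of $B^2(Q,Z(K))$ with the set of isomorphism classes of $\delta$-sectioned extensions equivalent to $(G,i,\pi,s)$, and to check that $\partial z \in B^2(Q,Z(K))$ is carried to the class of $(G,i,\pi,z \cdot s)$.

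First I would fix a $1$-cochain $z : Q \rDotsto Z(K)$ and compare two factor sets. From the definition of the multiplication $m_{\partial z}$ on $\partial z \boxtimes G$, the (left) factor set $h'$ of the $\delta$-sectioned extension $(\partial z \boxtimes G, i, \pi, s)$ satisfies $h' = (\partial z) \cdot h$, where $h$ is the factor set of the fixed $(G,i,\pi,s)$; this is verbatim the computation in the proof of Corollary~\ref{cor:iterext classification B^2}. On the other hand, writing $h''$ for the factor set of $(G,i,\pi,z \cdot s)$, I would expand its defining relation $z(q_1)\,s(q_1) \cdot z(q_2)\,s(q_2) = i(h''(q_1,q_2)) \cdot z(q_1q_2)\,s(q_1q_2)$ in $G$: since $z(q_1q_2) \in Z(K)$ is central in $K$ it commutes with $i(h''(q_1,q_2)) \in i(K)$, and moving it across, the relation rearranges to $\partial z(q_1,q_2) \cdot s(q_1) \cdot s(q_2) = i(h''(q_1,q_2)) \cdot s(q_1q_2)$; comparing with the defining relation for $h$ yields $h'' = (\partial z) \cdot h = h'$.

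Two $\delta$-sectioned extensions of $K$ by $Q$ with equal factor sets are isomorphic (the $P = \{1\}$ case of Lemma~\ref{lemma:isom criterion for sectioned iterext}), so $- \boxtimes G$ carries $\partial z$ to the isomorphism class of $(G,i,\pi,z \cdot s)$, whose underlying extension is literally $(G,i,\pi)$ and which is therefore equivalent to $(G,i,\pi,s)$. For surjectivity onto the equivalent classes I would argue as in Lemma~\ref{lemma:equivalent iterext}: if $(G',i',\pi',s')$ is any $\delta$-sectioned extension whose underlying extension is isomorphic to $(G,i,\pi)$, then transporting $s'$ along such an isomorphism $\varphi : G' \rTo^{\simeq} G$ yields a section $s'' := \varphi \circ s'$ of $\pi$ with $\conj_K^G \circ s'' = \delta$, and $s''$ and $s$ differ by a $1$-cochain $z : Q \rDotsto Z(K)$ because $s''(q)\,s(q)^{-1}$ lies in $i(K) \cap \Ker(\conj_K^G) = i(Z(K))$; hence the class of $(G',i',\pi',s')$ equals that of $(G,i,\pi,z \cdot s) \cong \partial z \boxtimes G$, which lies in the image of $B^2(Q,Z(K))$.

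I do not anticipate a genuine obstacle here, since the whole argument is the $P = \{1\}$ specialization of material already developed in Section~\ref{sect:iterext classification}. The only point that needs a moment's care is the centrality step $z(q_1q_2) \in Z(K)$ --- this is exactly what makes the abelian coboundary $\partial z$, rather than some non-abelian replacement, the correct object to use --- together with the routine bookkeeping of normalization conventions, so that $z(1_Q) = 1$ is compatible with $s(1_Q) = 1_G$.
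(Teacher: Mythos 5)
Your proposal is correct and follows essentially the paper's own route: the paper obtains this corollary as the $P=\{1\}$ specialization of Corollary~\ref{cor:iterext classification B^2} (whose proof is exactly your factor-set comparison $h''=(\partial z)\cdot h=h'$ followed by the isomorphism criterion of Lemma~\ref{lemma:isom criterion for sectioned iterext}), together with the $\delta$-sectioned analogue of Lemma~\ref{lemma:equivalent iterext} for surjectivity. Your explicit verification that a transported section differs from $s$ by a $1$-cochain valued in $Z(K)$ is just the specialized form of the argument already given there, so no new idea is needed and none is missing.
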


\begin{cor}
\label{cor:ext classification H^2}
Let
$(\,
 G
 \,,\,
 i
 \,,\,
 \pi
 \,)
$
be an extension
of $K$ by $Q$
with outer action $\theta$.
The bijection $- \boxtimes G$
of theorem~\ref{thm:ext classification}
induces
a bijection
\[
  \begin{array}{rcl}
  - \boxtimes G
  \ :\
  H^2(Q,Z(K))
  &
  \rTo^{\simeq}
  &
  \left\{\
  \begin{array}{l}
  \text{isomorphism classes of}
  \\
  \text{extensions of $K$ by $Q$}
  \\
  \text{with outer action $\theta$}
  \end{array}
  \ \right\}
  \end{array}
\]
which is independent of
the auxiliary choice of
the lifting $\delta$.
\end{cor}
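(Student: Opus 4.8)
The plan is to argue exactly as in the proof of Corollary~\ref{cor:iterext classification H^2}, of which the present statement is the special case $P = \{1\}$ (so that $R = Q$, $\phi = \pi$, $N = K$, $j = i$); all that is needed is to transport the torsor--stabilizer bookkeeping from iterated extensions to ordinary ones. First I would recall from the discussion preceding Theorem~\ref{thm:ext classification} that every extension $(G,i,\pi)$ of $K$ by $Q$ with outer action $\theta$ can be enriched to a $\delta$-sectioned extension $(G,i,\pi,s)$, and that two $\delta$-sectioned extensions are equivalent precisely when their underlying extensions are isomorphic; thus the set of isomorphism classes of extensions of $K$ by $Q$ with outer action $\theta$ is canonically identified with the set of equivalence classes of $\delta$-sectioned extensions.

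Next I would use Theorem~\ref{thm:ext classification}: for each $\delta$-sectioned extension $(G,i,\pi,s)$ the assignment $e \mapsto e \boxtimes G$ is a bijection from $Z^2(Q,Z(K))$ onto the set of isomorphism classes of $\delta$-sectioned extensions of $K$ by $Q$. Checking, as in Section~\ref{sect:iterext classification}, that $(e_1 e_2) \boxtimes G$ is isomorphic as a $\delta$-sectioned extension to $e_1 \boxtimes (e_2 \boxtimes G)$, one concludes that this set of isomorphism classes is a torsor under $Z^2(Q,Z(K))$. The corollary immediately preceding the statement then identifies, for each base point $(G,i,\pi,s)$, the $\delta$-sectioned extensions equivalent to it with the $B^2(Q,Z(K))$-suborbit of its isomorphism class (via $\partial z \mapsto (G,i,\pi,z\cdot s)$). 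Hence passing to equivalence classes amounts to quotienting the $Z^2(Q,Z(K))$-torsor by $B^2(Q,Z(K))$, leaving a torsor under $H^2(Q,Z(K)) = Z^2(Q,Z(K))/B^2(Q,Z(K))$ on the set of equivalence classes, i.e.~on the set of isomorphism classes of extensions of $K$ by $Q$ with outer action $\theta$. Fixing $(G,i,\pi)$ as base point turns this torsor structure into the asserted bijection $[e] \mapsto (\text{isomorphism class of } e \boxtimes G)$, whose inverse is the one induced on cohomology by the factor-set difference formula of Theorem~\ref{thm:ext classification}.

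For the independence of $\delta$: the group $e \boxtimes G$, together with its structure maps $i$ and $\pi$, is defined purely from the multiplication $m_e(g_1,g_2) = e(\pi(g_1),\pi(g_2)) \cdot g_1 \cdot g_2$ on the underlying set of $G$, and hence depends only on the underlying extension $(G,i,\pi)$ and on the cocycle $e$, not on the section $s$ nor on the lifting $\delta$. Consequently the resulting bijection on isomorphism classes of extensions does not see $\delta$. I expect the only mildly non-routine point to be the verification of $(e_1 e_2)\boxtimes G \cong e_1 \boxtimes (e_2 \boxtimes G)$ as $\delta$-sectioned extensions, which underpins the torsor structure; but this is a direct computation with $m_{e_1}$, $m_{e_2}$, and the given section, entirely parallel to the manipulations carried out for iterated extensions in Section~\ref{sect:iterext classification}, and so presents no real difficulty.
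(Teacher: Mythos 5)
Your argument is correct and is essentially the paper's own: the paper obtains this corollary by specializing the results of the iterated-extension section to $P=\{1\}$, which is exactly the torsor/stabilizer bookkeeping you carry out (every extension with outer action $\theta$ can be $\delta$-sectioned, Theorem~\ref{thm:ext classification} gives transitivity of the $Z^2(Q,Z(K))$-action, and the preceding corollary identifies the stabilizer of an isomorphism class with $B^2(Q,Z(K))$), and the independence of $\delta$ holds for the reason you give, namely that $e\boxtimes G$ with its maps $i,\pi$ is built from $m_e$ alone and never uses $s$ or $\delta$. No gaps; your explicit check that $(e_1e_2)\boxtimes G\cong e_1\boxtimes(e_2\boxtimes G)$ is the same routine computation the paper leaves implicit.
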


\begin{notatn}
\label{notatn:ext cohom class}
For any pair of
extensions
$(\,
 G
 \,,\,
 i
 \,,\,
 \pi
 \,)
$
and
$(\,
 G'
 \,,\,
 i'
 \,,\,
 \pi'
 \,)
$
of $K$ by $Q$
with the same outer action
$\theta$,
let
$\dfrac{(G',i',\pi')}{(G,i,\pi)}
 \ \in\
 H^2(Q,Z(K))
$
denote the unique
cohomology class~$[e]$
such that
$(\,
 G'
 \,,\,
 i'
 \,,\,
 \pi'
 \,)
$
is isomorphic to
$(\,
 e \boxtimes G
 \,,\,
 i
 \,,\,
 \pi
 \,)
$
for any 2-cocycle
$e \in Z^2(Q,Z(K))$
belonging to
the cohomology class~$[e]$.
We also write
\[
  (\,
  G'
  \,,\,
  i'
  \,,\,
  \pi'
  \,)
  \ \cong\
  [e]
  \boxtimes
  (\,
  G
  \,,\,
  i
  \,,\,
  \pi
  \,)
  \qquad
  \text{as extensions of $K$ by $Q$}
  .
\]
\end{notatn}

\begin{rmk}
Corollary~\ref{cor:ext classification H^2}
appears as
theorem~11.1 in~\cite{EilenbergMacLane-GroupCohom-II},
proven there directly
(i.e.~without going through
$Z^1$ and $B^1$)
by means of
a generalization of
the Baer-product construction
for group extensions
(cf.~\cite{EilenbergMacLane-GroupCohom-II}~\S5).
\end{rmk}

\section{Restriction
from $H^2(Q,Z(K))$
to $H^2(P,Z(K))$}

Let
$(\,
 G
 \,,\,
 i
 \,,\,
 \pi
 \,)
$
be an extension
of $K$ by $Q$
with outer action
$\theta$,
and let
$(\,
 N
 \,,\,
 i_0
 \,,\,
 \pi_0
 \,)
$
denote
its $P$-subextension
(cf.~definition~\ref{defn:P-subextension}).
If
$(\,
 G'
 \,,\,
 i'
 \,,\,
 \pi'
 \,)
$
is any extension
of $K$ by $Q$
with the same
outer action $\theta$,
its $P$-subextension
$(\,
 N'
 \,,\,
 i_0'
 \,,\,
 \pi_0'
 \,)
$
has the same
outer action as
$(\,
 N
 \,,\,
 i_0
 \,,\,
 \pi_0
 \,)
$:
they are both given by
the restriction
$\theta|_P$
of $\theta$ to $P$.
Hence
we have
a well-defined map
\begin{equation}
  \begin{array}{rcl}
  \left\{\
  \begin{array}{l}
  \text{isomorphism classes of}
  \\
  \text{extensions of $K$ by $Q$}
  \\
  \text{with outer action $\theta$}
  \end{array}
  \ \right\}
  &
  \rTo
  &
  \left\{\
  \begin{array}{l}
  \text{isomorphism classes of}
  \\
  \text{extensions of $K$ by $P$}
  \\
  \text{with outer action $\theta|_P$}
  \end{array}
  \ \right\}
  ,
  \\[4ex]
  (\,
  G'
  \,,\,
  i'
  \,,\,
  \pi'
  \,)
  &
  \rMapsto
  &
  (\,
  N'
  \,,\,
  i_0'
  \,,\,
  \pi_0'
  \,)
  .
  \end{array}
  \label{eqn:ext res}
\end{equation}
By corollary~\ref{cor:ext classification H^2}
applied to
the extensions
$(KGQ)$ and $(KNP)$,
there exist unique
cohomology classes
\[
  [e]
  \ :=\
  \dfrac{(G',i',\pi')}{(G,i,\pi)}
  \ \in\
  H^2(Q,Z(K))
  \qquad
  \text{and}
  \qquad
  [e_0]
  \ :=\
  \dfrac{(N',i_0',\pi_0')}{(N,i_0,\pi_0)}
  \ \in\
  H^2(P,Z(K))
\]
such that
\[
  \begin{aligned}
  (\,
  G'
  \,,\,
  i'
  \,,\,
  \pi'
  \,)
  &
  \ \cong\
  [e]
  \boxtimes
  (\,
  G
  \,,\,
  i
  \,,\,
  \pi
  \,)
  &
  \qquad
  &
  \text{as extensions of $K$ by $Q$}
  \\
  \text{and}
  \qquad
  (\,
  N'
  \,,\,
  i_0'
  \,,\,
  \pi_0'
  \,)
  &
  \ \cong\
  [e_0]
  \boxtimes
  (\,
  N
  \,,\,
  i_0
  \,,\,
  \pi_0
  \,)
  &
  \qquad
  &
  \text{as extensions of $K$ by $P$}
  .
  \end{aligned}
\]

\begin{prop}
\label{prop:ext res}
The restriction homomorphism
$\res$ in cohomology
maps $[e]$ to $[e_0]$.
In other words,
the following diagram
commutes:
\[
  \begin{diagram}
  H^2(Q,Z(K))
  &
  \rTo^{\res}
  &
  H^2(P,Z(K))
  \\
  \dTo^{
    \text{cor.~\ref{cor:ext classification H^2}}
    \quad
    \wr|
  }_{
    \quad
    - \boxtimes G
  }
  &
  {\Bigg.}
  &
  \dTo^{
    - \boxtimes N
    \quad
  }_{
    |\wr
    \quad
    \text{cor.~\ref{cor:ext classification H^2}}
  }
  \\
  \left\{\
  \begin{array}{l}
  \text{isomorphism classes of}
  \\
  \text{extensions of $K$ by $Q$}
  \\
  \text{with outer action $\theta$}
  \end{array}
  \ \right\}
  &
  \rTo_{\qquad \text{\eqref{eqn:ext res}} \qquad}
  &
  \left\{\
  \begin{array}{l}
  \text{isomorphism classes of}
  \\
  \text{extensions of $K$ by $P$}
  \\
  \text{with outer action $\theta|_P$}
  \end{array}
  \ \right\}
  .
  \end{diagram}
\]
\end{prop}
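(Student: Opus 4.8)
The plan is to compute both $[e]$ and $[e_0]$ by means of explicit factor sets, using the $P=\{1\}$ specialization (recorded in Theorem~\ref{thm:ext classification}) of the machinery of Section~\ref{sect:iterext classification}. First I would choose a section $s : Q \rDotsto G$ of $\pi$ with $\conj_K^G \circ s = \delta$, so that $(G,i,\pi,s)$ is a $\delta$-sectioned extension of $K$ by $Q$; such a section exists since $\conj_K^G$ maps $K$ onto $\Inn(K)$. Likewise I would pick a section $s' : Q \rDotsto G'$ with $\conj_K^{G'} \circ s' = \delta$, making $(G',i',\pi',s')$ a $\delta$-sectioned extension. The key elementary observation is that, because $N = \pi^{-1}(P)$ and $\pi(s(p)) = p$ for $p \in P$, the restriction $s_0 := s|_P$ takes values in $N$ and is a section of $\pi_0$; moreover conjugation by $s_0(p)$ within $N$ agrees with conjugation within $G$, so $\conj_K^N \circ s_0 = \delta|_P$ (where $\delta|_P := \delta \circ \jbar$), and hence $(N,i_0,\pi_0,s_0)$ is a $\delta|_P$-sectioned extension of $K$ by $P$. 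The same applies to $G'$, yielding the $\delta|_P$-sectioned extension $(N',i_0',\pi_0',s_0')$ with $s_0' := s'|_P$.

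Next I would introduce the (left) factor sets $h, h' : Q \times Q \rDotsto K$ of $(G,i,\pi,s)$ and $(G',i',\pi',s')$, determined by $s(q_1)\,s(q_2) = i(h(q_1,q_2))\,s(q_1q_2)$ and the analogous identity for $h'$. Since $s_0$ is literally the restriction $s|_P$ and multiplication in $N$ is induced from that of $G$, the (left) factor set of the $P$-subextension $(N,i_0,\pi_0,s_0)$ is exactly $h|_{P \times P}$, and similarly that of $(N',i_0',\pi_0',s_0')$ is $h'|_{P \times P}$.

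Now I would invoke the inverse bijection of Theorem~\ref{thm:ext classification}. Relative to the base $(G,i,\pi,s)$, the $\delta$-sectioned extension $(G',i',\pi',s')$ corresponds to the $2$-cocycle $e := h' \cdot h^{-1} \in Z^2(Q,Z(K))$, whose cohomology class is $[e] = \dfrac{(G',i',\pi')}{(G,i,\pi)}$. Relative to the base $(N,i_0,\pi_0,s_0)$, the $\delta|_P$-sectioned extension $(N',i_0',\pi_0',s_0')$ corresponds to $e_0 := (h'|_{P \times P}) \cdot (h|_{P \times P})^{-1} \in Z^2(P,Z(K))$, whose class is $[e_0] = \dfrac{(N',i_0',\pi_0')}{(N,i_0,\pi_0)}$; here I use that the bijection of Corollary~\ref{cor:ext classification H^2} is independent of the chosen lifting, so that it may be computed via $\delta|_P$. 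Since restriction of cochains commutes with multiplication and inversion, $\res(e) = e|_{P \times P} = (h'|_{P \times P}) \cdot (h|_{P \times P})^{-1} = e_0$, and therefore $\res([e]) = [e_0]$, which is exactly the commutativity of the diagram.

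I expect no genuine obstacle here: the whole argument reduces to the observation that a section of $\pi$ adapted to $\delta$ restricts to a section of $\pi_0$ adapted to $\delta|_P$ — immediate from $N = \pi^{-1}(P)$ — together with the fact that the factor set of the $P$-subextension is the literal restriction of that of the ambient sectioned extension. Once these are in hand, the behaviour of $\res$ on $2$-cocycles is purely formal. The only care needed is to keep the two base extensions compatible (one the $P$-subextension of the other, with restricted sections), so that the two ``difference'' cocycles produced by Theorem~\ref{thm:ext classification} are honestly related by restriction rather than merely cohomologous to such.
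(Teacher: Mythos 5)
Your proposal is correct and follows essentially the same route as the paper's own proof: choose a lifting $\delta$ of $\theta$, take $\delta$-sectioned structures on $(KGQ)$ and $(KG'Q)$, observe that the restricted sections make the $P$-subextensions $\delta|_P$-sectioned with factor sets $h|_{P\times P}$ and $h'|_{P\times P}$, and conclude that the difference cocycle $e$ restricts to a representative of $[e_0]$. No gaps; the one point you flag (keeping the two base extensions and their sections compatible under restriction) is exactly the care the paper also takes.
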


\begin{proof}
Choose
a lifting
$\delta : Q \rDotsto \Aut(K)$
of
$\theta : Q \rTo \Out(K)$.
Since
$\conj_K^G$ sends $K$
surjectively onto
$\Inn(K)$,
we may choose
sections
$s : Q \rDotsto G$
and
$s' : Q \rDotsto G'$
of
$G \rOnto^{\pi} Q$
and
$G' \rOnto^{\pi'} Q$
respectively,
such that
\[
  \conj_K^G \circ s
  \ =\
  \delta
  \ =\
  \conj_K^G \circ s'
  \qquad
  \text{as maps}
  \quad
  Q \rDotsto \Aut(K)
  ;
\]
Thus
$(\,
 G
 \,,\,
 i
 \,,\,
 \pi
 \,,\,
 s
 \,)
$
and
$(\,
 G'
 \,,\,
 i'
 \,,\,
 \pi'
 \,,\,
 s'
 \,)
$
are $\delta$-sectioned extensions
of $K$ by $Q$.
Their factor sets
$h : Q \times Q \rDotsto K$
and
$h' : Q \times Q \rDotsto K$
are characterized by
the property that
for any $q_1,q_2 \in Q$,
one has
\[
  \begin{aligned}
  s(q_1)
  \cdot
  s(q_2)
  &
  \ =\
  i
  (\,
    h(q_1,q_2)
  \,)
  \cdot
  s(q_1q_2)
  &
  \qquad
  &
  \text{in $G$}
  \\
  \text{and}
  \qquad
  s'(q_1)
  \cdot
  s'(q_2)
  &
  \ =\
  i'
  (\,
    h'(q_1,q_2)
  \,)
  \cdot
  s'(q_1q_2)
  &
  \qquad
  &
  \text{in $G'$}
  .
  \end{aligned}
\]
By theorem~\ref{thm:ext classification},
there is a
2-cocycle
$e : Q \times Q \rDotsto Z(K)$
such that
\[
  h'
  \ =\
  e
  \cdot
  h
  \qquad
  \text{as maps}
  \quad
  Q \times Q \rDotsto K
  ,
\]
and by notation~\ref{notatn:ext cohom class},
the cohomology class of $e$
is precisely
$[e]
 =
 \dfrac{(G',i',\pi')}{(G,i,\pi)}
$
in $H^2(Q,Z(K))$.

Let
$s_0 : P \rDotsto N$
and
$s_0' : P \rDotsto N'$
be the restrictions to $P$
of $s$ and $s'$
respectively,
characterized by
the property that
\[
  j \circ s_0
  \ =\ 
  s \circ \jbar
  \quad
  \text{as maps}
  \quad
  P \rDotsto G
  \qquad
  \text{and}
  \qquad
  j \circ s_0'
  \ =\ 
  s' \circ \jbar
  \quad
  \text{as maps}
  \quad
  P \rDotsto G'
  .
\]
These are
sections of
$N \rOnto^{\pi_0} P$
and
$N' \rOnto^{\pi_0'} P$
respectively,
satisfying
\[
  \conj_K^N \circ s_0
  \ =\
  \delta|_P
  \ =\
  \conj_K^N \circ s_0'
  \qquad
  \text{as maps}
  \quad
  P \rDotsto \Aut(K)
  ,
\]
where
$\delta|_P
 :
 P
 \rInto^{\jbar}
 Q
 \rDotsto^{\delta}
 \Aut(K)
$
denotes
the restriction of $\delta$
to $P$;
it is a lifting
of $\theta|_P$.
Hence
$(\,
 N
 \,,\,
 i_0
 \,,\,
 \pi_0
 \,,\,
 s_0
 \,)
$
and
$(\,
 N'
 \,,\,
 i_0'
 \,,\,
 \pi_0'
 \,,\,
 s_0'
 \,)
$
are
$\delta|_P$-sectioned extensions
of $K$ by $P$.
Evidently,
their factor sets
are given by
the restrictions
$h|_P : P \times P \rDotsto K$
and
$h'|_P: P \times P \rDotsto K$
of $h$ and $h'$
to $P \times P$
respectively.
From the relation
between $h$ and $h'$,
it follows that
$h|_P$ and $h'|_P$
satisfy
\[
  h'|_P
  \ =\
  e|_P
  \cdot
  h|_P
  \qquad
  \text{as maps}
  \quad
  P \times P \rDotsto K
  ,
\]
where $e|_P : P \times P \rDotsto Z(K)$
is the restriction of $e$
to $P \times P$.
By theorem~\ref{thm:ext classification}
and notation~\ref{notatn:ext cohom class}
applied to the extension~$(KNP)$,
it follows that
$e|_P \in Z^2(P,Z(K))$
is a 2-cocycle
belonging to
the cohomology class
$[e_0]
 =
 \dfrac{(N',i_0',\pi_0')}{(N,i_0,\pi_0)}
$
in $H^2(P,Z(K))$.
Hence
\[
  \res(\,[e]\,)
  \ =\
  [e_0]
  \qquad
  \text{in $H^2(P,Z(K))$}
  .
\]
\end{proof}

\section{$H^2_P(Q,Z(K))$
and the classification of
extensions
with a given $P$-subextension}

Let
$(\,
 G
 \,,\,
 i
 \,,\,
 \pi
 \,)
$
and
$(\,
 G'
 \,,\,
 i'
 \,,\,
 \pi'
 \,)
$
be extensions
of $K$ by $Q$
with outer action
$\theta$,
and let
$(\,
 N
 \,,\,
 i_0
 \,,\,
 \pi_0
 \,)
$
and
$(\,
 N'
 \,,\,
 i_0'
 \,,\,
 \pi_0'
 \,)
$
be their $P$-subextensions,
as in the previous section.
Recall that
we have defined
\[
  H^2_P(Q,Z(K))
  \ :=\
  \Ker
  \left(
    H^2(Q,Z(K))
    \rTo^{\res}
    H^2(P,Z(K))
  \right)
  .
\]
Thus,
if
$[e]
 \ :=\
 \dfrac{(G',i',\pi')}{(G,i,\pi)}
 \ \in\
 H^2(Q,Z(K))
$
is the cohomology class
such that
\[
  (\,
  G'
  \,,\,
  i'
  \,,\,
  \pi'
  \,)
  \ \cong\
  [e]
  \boxtimes
  (\,
  G
  \,,\,
  i
  \,,\,
  \pi
  \,)
  \qquad
  \text{as extensions of $K$ by $Q$}
  ,
\]
then by
proposition~\ref{prop:ext res},
the cohomology class $[e]$
belongs to
$H^2_P(Q,Z(K))$
if and only if
\[
  (\,
  N'
  \,,\,
  i_0'
  \,,\,
  \pi_0'
  \,)
  \ \cong\
  (\,
  N
  \,,\,
  i_0
  \,,\,
  \pi_0
  \,)
  \qquad
  \text{as extensions of $K$ by $P$}
  .
\]
Hence:

\begin{thm}
\label{thm:ext classification given restriction}
The bijection of
corollary~\ref{cor:ext classification H^2}
restricts to
a bijection
\[
  \begin{array}{rcl}
  - \boxtimes G
  \ :\
  H^2_P(Q,Z(K))
  &
  \rTo^{\simeq}
  &
  \left\{\
  \begin{array}{l}
  \text{isomorphism classes of}
  \\
  \text{extensions of $K$ by $Q$}
  \\
  \text{with outer action $\theta$}
  \\
  \text{whose $P$-subextension}
  \\
  \text{is isomorphic to}
  \quad
  (\,
  N
  \,,\,
  i_0
  \,,\,
  \pi_0
  \,)
  \end{array}
  \ \right\}
  .
  \end{array}
\]
\end{thm}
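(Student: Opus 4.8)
The plan is to read off this theorem from corollary~\ref{cor:ext classification H^2} and proposition~\ref{prop:ext res}, with essentially no new computation. By corollary~\ref{cor:ext classification H^2} the map $-\boxtimes G$ is a bijection from $H^2(Q,Z(K))$ onto the set of isomorphism classes of extensions of $K$ by $Q$ with outer action $\theta$; being injective, it restricts to an injection on $H^2_P(Q,Z(K))$ automatically, so the only thing I need to pin down is its image, namely that $-\boxtimes G$ carries $H^2_P(Q,Z(K))$ \emph{onto} the subset of those isomorphism classes whose $P$-subextension is isomorphic to $(\,N\,,\,i_0\,,\,\pi_0\,)$.

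The key step is to characterize, for a cohomology class $[e] \in H^2(Q,Z(K))$, when $[e]$ lies in $H^2_P(Q,Z(K)) = \Ker(\res)$ in terms of the extension $(\,G'\,,\,i'\,,\,\pi'\,) := [e] \boxtimes (\,G\,,\,i\,,\,\pi\,)$. This extension has outer action $\theta$ (since $-\boxtimes G$ lands in such extensions), so proposition~\ref{prop:ext res} applies and tells us that $\res([e])$ equals the class $[e_0] := \dfrac{(N',i_0',\pi_0')}{(N,i_0,\pi_0)} \in H^2(P,Z(K))$, where $(\,N'\,,\,i_0'\,,\,\pi_0'\,)$ denotes the $P$-subextension of $(\,G'\,,\,i'\,,\,\pi'\,)$. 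Hence $[e] \in H^2_P(Q,Z(K))$ if and only if $[e_0] = 0$.

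It then remains to note that $[e_0] = 0$ if and only if $(\,N'\,,\,i_0'\,,\,\pi_0'\,)$ is isomorphic to $(\,N\,,\,i_0\,,\,\pi_0\,)$ as extensions of $K$ by $P$: this is immediate from the definition of $[e_0]$ in notation~\ref{notatn:ext cohom class} (for the extension $(KNP)$), since the trivial $2$-cocycle gives $0 \boxtimes (\,N\,,\,i_0\,,\,\pi_0\,) = (\,N\,,\,i_0\,,\,\pi_0\,)$ and the cohomology class realizing a given isomorphism class is unique by corollary~\ref{cor:ext classification H^2} applied to $(KNP)$. Putting these two observations together shows that $[e] \in H^2_P(Q,Z(K))$ exactly when the $P$-subextension of $[e] \boxtimes G$ is isomorphic to $(\,N\,,\,i_0\,,\,\pi_0\,)$, which is the desired identification of the image. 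The only point that really needs care --- and it is bookkeeping rather than a genuine obstacle --- is confirming that proposition~\ref{prop:ext res} attaches to the $P$-subextension precisely the cohomology class whose vanishing detects isomorphism with the fixed extension $(KNP)$; this is exactly what notation~\ref{notatn:ext cohom class} guarantees, so there is no real difficulty here.
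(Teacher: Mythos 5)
Your proposal is correct and follows essentially the same route as the paper: the paper likewise deduces the theorem by combining proposition~\ref{prop:ext res} (so that $\res[e]$ is the class of the $P$-subextension of $[e]\boxtimes(G,i,\pi)$ relative to $(N,i_0,\pi_0)$) with the torsor property of corollary~\ref{cor:ext classification H^2} applied to $(KNP)$, which says that this class vanishes exactly when the $P$-subextension is isomorphic to $(N,i_0,\pi_0)$. No gaps; your bookkeeping about the trivial cocycle and uniqueness of the class is precisely the implicit step in the paper's one-paragraph argument.
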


\section{Inflation
from $H^2(R,Z(K)^P)$
to $H^2_P(Q,Z(K))$}

Let
$(\,
 G
 \,,\,
 j
 \,,\,
 \pi
 \,)
$
be an iterated extension
of $(KNP)$ by $(PQR)$,
whose $Q$-main extension
$(\,
 G
 \,,\,
 i
 \,,\,
 \pi
 \,)
$
has outer action $\theta$.
By definition,
the $P$-subextension
of
$(\,
 G
 \,,\,
 i
 \,,\,
 \pi
 \,)
$
is
$(KNP)
 =
 (\,
 N
 \,,\,
 i_0
 \,,\,
 \pi_0
 \,)
$.
Let $\Theta$ denote
the mod-$K$outer action of
the iterated extension
$(\,
 G
 \,,\,
 j
 \,,\,
 \pi
 \,)
$.

Now let
$(\,
 G'
 \,,\,
 j'
 \,,\,
 \pi'
 \,)
$
be any iterated extension
of $(KNP)$ by $(PQR)$
with the same
mod-$K$outer action $\Theta$.
Its $Q$-main extension
$(\,
 G'
 \,,\,
 i'
 \,,\,
 \pi'
 \,)
$
is then an extension
of $K$ by $Q$
whose outer action
is also $\theta$;
moreover,
by construction,
the $P$-subextension of
$(\,
 G'
 \,,\,
 i'
 \,,\,
 \pi'
 \,)
$
is equal to
$(KNP)
 =
 (\,
 N
 \,,\,
 i_0
 \,,\,
 \pi_0
 \,)
$
as well.
Hence
we have
a well-defined map
\begin{equation}
  \begin{array}{rcl}
  \left\{\
  \begin{array}{l}
  \text{isomorphism classes of}
  \\
  \text{iterated extensions}
  \\
  \text{of $(KNP)$ by $(PQR)$}
  \\
  \text{with mod-$K$ outer action $\Theta$}
  \end{array}
  \ \right\}
  &
  \rTo
  &
  \left\{\
  \begin{array}{l}
  \text{isomorphism classes of}
  \\
  \text{extensions of $K$ by $Q$}
  \\
  \text{with outer action $\theta$}
  \\
  \text{whose $P$-subextension}
  \\
  \text{is isomorphic to}
  \quad
  (\,
  N
  \,,\,
  i_0
  \,,\,
  \pi_0
  \,)
  \end{array}
  \ \right\}
  ,
  \\[6ex]
  (\,
  G'
  \,,\,
  j'
  \,,\,
  \pi'
  \,)
  &
  \rMapsto
  &
  (\,
  G'
  \,,\,
  i'
  \,,\,
  \pi'
  \,)
  .
  \end{array}
  \label{eqn:ext infl}
\end{equation}
By corollary~\ref{cor:iterext classification H^2}
applied to
the iterated extension
$(KNGQR)$
and
by theorem~\ref{thm:ext classification given restriction}
applied to
the extension
$(KGQ)$,
there exist unique
cohomology classes
\[
  [d]
  \ :=\
  \dfrac{(G',j',\pi')}{(G,j,\pi)}
  \ \in\
  H^2(R,Z(K)^P)
  \qquad
  \text{and}
  \qquad
  [e]
  \ :=\
  \dfrac{(G',i',\pi')}{(G,i,\pi)}
  \ \in\
  H^2_P(Q,Z(K))
\]
such that
\[
  \begin{aligned}
  (\,
  G'
  \,,\,
  j'
  \,,\,
  \pi'
  \,)
  &
  \ \cong\
  [d]
  \boxtimes
  (\,
  G
  \,,\,
  j
  \,,\,
  \pi
  \,)
  &
  \qquad
  &
  \text{as iterated extensions
        of $(KNP)$ by $(PQR)$}
  \\
  \text{and}
  \qquad
  (\,
  G'
  \,,\,
  i'
  \,,\,
  \pi'
  \,)
  &
  \ \cong\
  [e]
  \boxtimes
  (\,
  G
  \,,\,
  i
  \,,\,
  \pi
  \,)
  &
  \qquad
  &
  \text{as extensions of $K$ by $Q$}
  .
  \end{aligned}
\]

\begin{prop}
\label{prop:iterext infl}
The inflation homomorphism
$\infl$ in cohomology
maps $[d]$ to $[e]$.
In other words,
the following diagram
commutes:
\[
  \begin{diagram}
  H^2(R,Z(K)^P)
  &
  \rTo^{\infl}
  &
  H^2_P(Q,Z(K))
  \\
  \dTo^{
    \text{cor.~\ref{cor:iterext classification H^2}}
    \quad
    \wr|
  }_{
    \quad
    - \boxtimes G
  }
  &
  {\Bigg.}
  &
  \dTo^{
    - \boxtimes G
    \quad
  }_{
    |\wr
    \quad
    \text{thm.~\ref{thm:ext classification given restriction}}
  }
  \\
  \left\{\
  \begin{array}{l}
  \text{isomorphism classes of}
  \\
  \text{iterated extensions}
  \\
  \text{of $(KNP)$ by $(PQR)$}
  \\
  \text{with mod-$K$ outer action $\Theta$}
  \end{array}
  \ \right\}
  &
  \rTo_{\qquad \text{\eqref{eqn:ext infl}} \qquad}
  &
  \left\{\
  \begin{array}{l}
  \text{isomorphism classes of}
  \\
  \text{extensions of $K$ by $Q$}
  \\
  \text{with outer action $\theta$}
  \\
  \text{whose $P$-subextension}
  \\
  \text{is isomorphic to}
  \quad
  (\,
  N
  \,,\,
  i_0
  \,,\,
  \pi_0
  \,)
  \end{array}
  \ \right\}
  .
  \qquad\qquad\qquad
  \end{diagram}
\]
\end{prop}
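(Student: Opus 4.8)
The plan is to follow the template of proposition~\ref{prop:ext res}: choose compatible sectionings of the iterated extensions and of their $Q$-main extensions, and then read off directly that the two cocycle differences are related by inflation. First fix a section $\ubar : R \rDotsto Q$ of $\phibar$, a lifting $\Delta : R \rDotsto \Aut_K(N)$ of $\Theta \circ \ubar$, and a section $s_0 : P \rDotsto N$ of $\pi_0$. Applying lemma~\ref{lemma:choice of u} to $(G,j,\pi)$ and to $(G',j',\pi')$ produces sections $u : R \rDotsto G$ and $u' : R \rDotsto G'$ of $\phi$ and $\phi'$ making $(G,j,\pi,u)$ and $(G',j',\pi',u')$ into $(\ubar,\Delta)$-sectioned iterated extensions. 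By theorem~\ref{thm:iterext classification}, remark~\ref{rmk:iterext classification} and notation~\ref{notatn:iterext cohom class}, their left factor sets $f,f' : R\times R \rDotsto N$ satisfy $f' = d\cdot f$, where $d \in Z^2(R,Z(K)^P)$ is the $2$-cocycle $f'\cdot f^{-1}$ and $[d] = \dfrac{(G',j',\pi')}{(G,j,\pi)}$.

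Next I would promote these to sectionings of the $Q$-main extensions by the recipe recalled after lemma~\ref{lemma:choice of u}: for $q = \jbar(p)\cdot\ubar(r)$ (with $p\in P$, $r\in R$) set $s(q) := j(s_0(p))\cdot u(r)$ in $G$, and define $s' : Q\rDotsto G'$ analogously from $j'$, $u'$, $s_0$. These are sections of $\pi$ and $\pi'$, and a short computation using $\conj_N^G\circ u = \Delta$ (to conjugate $j(i_0(k))$ through $u(r)$) shows $\conj_K^G(s(q)) = \conj_K^N(s_0(p))\circ(\Delta(r)|_K)$, an expression referring only to $N$ and the fixed data $s_0,\Delta$. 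Hence $\conj_K^{G'}(s'(q))$ is given by the same formula, so $\delta := \conj_K^G\circ s = \conj_K^{G'}\circ s'$ is a common lifting of $\theta$, and $(G,i,\pi,s)$, $(G',i',\pi',s')$ are $\delta$-sectioned extensions of $K$ by $Q$. By theorem~\ref{thm:ext classification} and notation~\ref{notatn:ext cohom class}, their left factor sets $h,h' : Q\times Q\rDotsto K$ satisfy $h' = e\cdot h$, where $e\in Z^2(Q,Z(K))$ is the $2$-cocycle $h'\cdot h^{-1}$ and $[e] = \dfrac{(G',i',\pi')}{(G,i,\pi)}$ (which lies in $H^2_P(Q,Z(K))$ by the setup preceding the proposition, via theorem~\ref{thm:ext classification given restriction}).

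The core computation then relates $h$ to $f$. Write $q_\ell = \jbar(p_\ell)\cdot\ubar(r_\ell)$ for $\ell=1,2$, and let $\widetilde p\in P$ be the element with $q_1q_2 = \jbar(\widetilde p)\cdot\ubar(r_1r_2)$, so that $\widetilde p$ records both the failure of $\ubar$ to be a homomorphism and the conjugation of $p_2$ by $\ubar(r_1)$. Expanding the defining relation $s(q_1)\cdot s(q_2) = i(h(q_1,q_2))\cdot s(q_1q_2)$ — using $\conj_N^G(u(r_1))=\Delta(r_1)$ to conjugate $j(s_0(p_2))$ through $u(r_1)$ and then the factor-set relation $u(r_1)u(r_2) = j(f(r_1,r_2))u(r_1r_2)$ — and cancelling $j$ yields
\[
  i_0\bigl(h(q_1,q_2)\bigr)
  \ =\
  s_0(p_1)
  \cdot
  \upperleft{\Delta(r_1)}{s_0(p_2)}
  \cdot
  f(r_1,r_2)
  \cdot
  s_0(\widetilde p)^{-1}
  \qquad
  \text{in $N$}
  ,
\]
and likewise for $h'$ with $f'$ in place of $f$; the element $\widetilde p$ is unchanged in the primed case because $f' = d\cdot f$ with $d$ valued in $K = \Ker(\pi_0)$, so $\pi_0\circ f' = \pi_0\circ f$. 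Since $d(r_1,r_2)\in Z(K)^P = Z(N)\cap K$ is central in $N$, substituting $f' = d\cdot f$ gives $i_0(h'(q_1,q_2)) = i_0(d(r_1,r_2))\cdot i_0(h(q_1,q_2))$, i.e.\ $h' = (\infl d)\cdot h$ as maps $Q\times Q\rDotsto K$, where $\infl d$ is the inflated cocycle $(q_1,q_2)\mapsto d(\phibar(q_1),\phibar(q_2))$. Comparing with $h' = e\cdot h$ and applying theorem~\ref{thm:ext classification} once more gives $[e] = [\infl d] = \infl[d]$, which is the assertion.

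The part I expect to be most delicate is not the final substitution but the bookkeeping that precedes it: one must verify that the two sections $s$, $s'$, built from $u$, $u'$ and the \emph{same} $s_0$, induce the \emph{same} lifting $\delta$ of $\theta$ on both sides — so that $[e]$ really is a single cocycle difference — and that the element $\widetilde p$ is independent of which side one computes on. Both points rest on $d$ taking values in $K$ and on $j$, $j'$ restricting to the common inclusion $i_0$ on $K$; once these compatibilities are in place, the $s_0$-contributions cancel telescopically and the centrality of $d$ makes $h' = (\infl d)\cdot h$ drop out.
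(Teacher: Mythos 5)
Your proposal is correct and follows essentially the same route as the paper's own proof: the same auxiliary choices $(\ubar,\Delta,s_0,u,u')$, the same sections $s(q)=j(s_0(p))\cdot u(r)$ and $s'$ inducing a common lifting $\delta$ of $\theta$, and the same key computation $i_0(h(q_1,q_2)) = s_0(p_1)\cdot\upperleft{\Delta(r_1)}{s_0(p_2)}\cdot f(r_1,r_2)\cdot s_0(p_{12})^{-1}$ together with centrality of $d$ in $N$ to conclude $e=\infl d$. The only cosmetic difference is that the paper defines $\delta$ by an explicit formula first and then checks $\conj_K^G\circ s=\conj_K^{G'}\circ s'=\delta$, whereas you define $\delta$ as $\conj_K^G\circ s$ and verify it is the same on both sides.
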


\begin{proof}
Choose a section
$\ubar : R \rDotsto Q$
of $Q \rOnto^{\phibar} R$,
and choose a lifting
$\Delta : R \rDotsto \Aut_K(N)$
of
$\Theta \circ \ubar : R \rDotsto \Out(N;K)$.
Apply lemma~\ref{lemma:choice of u}
to choose sections
$u : R \rDotsto G$
and
$u' : R \rDotsto G'$
of
$G \rOnto^{\phi} R$
and
$G' \rOnto^{\phi'} R$
respectively,
so that
\[
  \begin{aligned}
  \pi
  \circ
  u
  &
  \ =\
  &
  \ubar
  &
  \ =\
  &
  \pi
  \circ
  u'
  \qquad
  &
  \text{as maps}
  \quad
  R
  \rDotsto
  Q
  ,
  \\
  \text{and}
  \qquad
  \conj_N^G
  \circ
  u
  &
  \ =\
  &
  \Delta
  &
  \ =\
  &
  \conj_N^{G'}
  \circ
  u'
  \qquad
  &
  \text{as maps}
  \quad
  R
  \rDotsto
  \Aut_K(N)
  .
  \end{aligned}
\]
Thus
$(\,
 G
 \,,\,
 j
 \,,\,
 \pi
 \,,\,
 u
 \,)
$
and
$(\,
 G'
 \,,\,
 j'
 \,,\,
 \pi'
 \,,\,
 u'
 \,)
$
are
$(\ubar,\Delta)$-sectioned
iterated extensions
of $(KNP)$ by $(PQR)$.
Their (left) factor sets
$f
 :
 R \times R
 \rDotsto
 N
$
and
$f^\eta
 :
 R \times R
 \rDotsto
 N
$
are characterized by
the property that
for any $r_1,r_2 \in R$,
one has
\[
  \begin{aligned}
  u(r_1)
  \cdot
  u(r_2)
  &
  \ =\
  j
  (\,
    f(r_1,r_2)
  \,)
  \cdot
  u(r_1r_2)
  &
  \qquad
  &
  \text{in $G$}
  \\
  \text{and}
  \qquad
  u'(r_1)
  \cdot
  u'(r_2)
  &
  \ =\
  j'
  (\,
    f'(r_1,r_2)
  \,)
  \cdot
  u'(r_1r_2)
  &
  \qquad
  &
  \text{in $G'$}
  .
  \end{aligned}
\]
By theorem~\ref{thm:iterext classification}
and remark~\ref{rmk:iterext classification},
there is a
2-cocycle
$d : R \times R \rDotsto Z(K)^P$
such that
\[
  f'
  \ =\
  d
  \cdot
  f
  \qquad
  \text{as maps}
  \quad
  R \times R
  \rDotsto N
  ,
\]
and by notation~\ref{notatn:iterext cohom class},
the cohomology class of $d$
is precisely
$[d]
 =
 \dfrac{(G',j',\pi')}{(G,j,\pi)}
$
in $H^2(R,Z(K)^P)$.

Next,
choose a section
$s_0 : P \rDotsto N$
of $N \rOnto^{\pi_0} P$,
and consider the following
commutative diagram:
\[
  \begin{diagram}
  &
  &
  P
  &
  &
  \\
  &
  \ldDotsto^{\conj_K^N \,\circ\, s_0}
  &
  \dInline_{\jbar}
  &
  \rdDotsto^{\conj_N \,\circ\, s_0}
  &
  \\
  \Aut(K)
  &
  &
  \HonV
  &
  \lTo^{
    \qquad
    \mathclap{NK}
    \qquad\qquad
  }
  &
  \Aut_K(N)
  \\
  &
  &
  \dTo
  &
  \ruDotsto(2,6)^{\mathclap{
      \qquad\qquad\qquad
      \Delta
  }}
  &
  \dOnto
  \\
  \dOnto
  &
  \qquad
  \qquad
  &
  Q
  &
  &
  \\
  &
  \ldTo^{\theta}
  \qquad
  &
  \dLine^{\phibar}
  \uDotsto_{\ubar}
  &
  \rdTo^{\mathclap{
      \Theta
      \qquad
  }}
  &
  \\
  \Out(K)
  &
  \lTo_{
    \qquad\qquad\qquad
  }
  &
  \HonV
  &
  &
  \Out(N;K)
  \\
  &
  &
  \dOnto
  \uDots
  &
  \ruDotsto_{\Theta \,\circ\, \ubar}
  &
  \\
  &
  &
  R
  &
  &
  &
  &
  \end{diagram}
\]
We note that
$\conj_K^N \circ s_0$
is a lift of
$\theta|_P = \theta \circ \jbar$,
and that
$NK \circ \Delta$
is a lift of
$\theta \circ \ubar$.
Therefore,
the map
$\delta : Q \rDotsto \Aut(K)$
defined by setting,
for any $q \in Q$
written in the form
$q = \jbar(p) \cdot \ubar(r)$
(with $p \in P$ and $r \in R$),
\begin{equation}
  \delta(q)
  \ :=\
  \conj_K^N(\,s_0(p)\,)
  \,\circ\,
  NK(\,\Delta(r)\,)
  \qquad
  \text{in $\Aut(K)$}
  ,
  \label{eqn:delta}
\end{equation}
is a lifting of
the outer action $\theta$
of $Q$ on $K$.

We now
define the maps
$s : Q \rDotsto G$
and
$s' : Q \rDotsto G'$
in terms of $u$, $u'$ and $s_0$
by setting,
for any $q \in Q$
written in the form
$q = \jbar(p) \cdot \ubar(r)$
(with $p \in P$ and $r \in R$),
\[
  s(q)
  \ :=\ 
  j(\,s_0(p)\,)
  \cdot
  u(r)
  \qquad
  \text{in $G$}
  ,
  \qquad
  \text{and}
  \qquad
  s'(q)
  \ :=\ 
  j(\,s_0(p)\,)
  \cdot
  u'(r)
  \qquad
  \text{in $G'$}
  .
\]
Then
it is clear that
$s$ and $s'$
are sections of
$G \rOnto^{\pi} Q$
and
$G' \rOnto^{\pi'} Q$
respectively.
Furthermore,
since
\[
  \begin{aligned}
  \conj_K^G \circ j \circ s_0
  &
  \ =\ 
  \conj_K^N \circ s_0
  &
  \qquad
  &
  \text{as maps}
  \quad
  P \rDotsto \Aut(K)
  ,
  \\
  \text{and}
  \qquad
  \conj_K^G \circ u
  \ =\ 
  \conj_K^{G'} \circ u'
  &
  \ =\ 
  NK \circ \Delta
  &
  \qquad
  &
  \text{as maps}
  \quad
  R \rDotsto \Aut(K)
  ,
  \end{aligned}
\]
it follows from~\eqref{eqn:delta}
that
\[
  \conj_K^G \circ s
  \ =\ 
  \conj_K^{G'} \circ s'
  \ =\
  \delta
  \qquad
  \text{as maps}
  \quad
  Q \rDotsto \Aut(K)
  .
\]
Therefore,
$(\,
 G
 \,,\,
 i
 \,,\,
 \pi
 \,,\,
 s
 \,)
$
and
$(\,
 G'
 \,,\,
 i'
 \,,\,
 \pi'
 \,,\,
 s'
 \,)
$
are
$\delta$-sectioned extensions
of $K$ by $Q$.
Their (left) factor sets
$h : Q \times Q \rDotsto K$
and
$h' : Q \times Q \rDotsto K$
are characterized by
the property that
for any $q_1,q_2 \in Q$,
one has
\[
  \begin{aligned}
  s(q_1)
  \cdot
  s(q_2)
  &
  \ =\
  i
  (\,
    h(q_1,q_2)
  \,)
  \cdot
  s(q_1q_2)
  &
  \qquad
  &
  \text{in $G$}
  \\
  \text{and}
  \qquad
  s'(q_1)
  \cdot
  s'(q_2)
  &
  \ =\
  i'
  (\,
    h'(q_1,q_2)
  \,)
  \cdot
  s'(q_1q_2)
  &
  \qquad
  &
  \text{in $G'$}
  .
  \end{aligned}
\]
By theorem~\ref{thm:ext classification},
there is a
2-cocycle
$e : Q \times Q \rDotsto Z(K)$
such that
\[
  h'
  \ =\
  e
  \cdot
  h
  \qquad
  \text{as maps}
  \quad
  Q \times Q \rDotsto K
  ,
\]
and by notation~\ref{notatn:ext cohom class}
and theorem~\ref{thm:ext classification given restriction},
the cohomology class of $e$
is precisely
$[e]
 =
 \dfrac{(G',i',\pi')}{(G,i,\pi)}
$
in $H^2_P(Q,Z(K))$.

We claim that
the 2-cocycles
$e : Q \times Q \rDotsto Z(K)$
and
$d : R \times R \rDotsto Z(K)^P$
obtained above
satisfy the identity:
for any $q_1,q_2 \in Q$,
one has
\[
  e(q_1,q_2)
  \ =\
  d
  (\,
    \phibar(q_1)
    \,,\,
    \phibar(q_2)
  \,)
  \qquad
  \text{in $Z(K)^P \subseteq Z(K)$}
  ;
\]
in other words,
$e \in Z^2(Q,Z(K))$
is the 2-cocycle
obtained from
$d \in Z^2(R,Z(K)^P)$
by inflation.
Indeed,
let $p_1,p_2,p_{12} \in P$
and $r_1,r_2 \in R$ be
the uniquely determined elements
such that
\[
  q_1
  \ =\
  \jbar(p_1)
  \cdot
  \ubar(r_1)
  ,
  \qquad
  q_2
  \ =\
  \jbar(p_2)
  \cdot
  \ubar(r_2)
  ,
  \qquad
  q_1q_2
  \ =\
  \jbar(p_{12})
  \cdot
  \ubar(r_1r_2)
  \qquad
  \text{in $Q$}
  .
\]
The characterizing equation
for the factor set $h$
then gives
\[
  \bigl(\,
    \underbrace{
    j(\,s_0(p_1)\,)
    \cdot
    u(r_1)
    }_{
      \ =\
      s(q_1)
    }
  \,\bigr)
  \cdot
  \bigl(\,
    \underbrace{
    j(\,s_0(p_2)\,)
    \cdot
    u(r_2)
    }_{
      \ =\
      s(q_2)
    }
  \,\bigr)
  \ =\
  i
  (\,
    h(q_1,q_2)
  \,)
  \cdot
  \bigl(\,
    \underbrace{
    j(\,s_0(p_{12})\,)
    \cdot
    u(r_1r_2)
    }_{
      \ =\
      s(q_1q_2)
    }
  \,\bigr)
  \qquad
  \text{in $G$}
  .
\]
Since we have
$\conj_N^G \circ u = \Delta$
by assumption,
it follows that
\[
  i
  (\,
    h(q_1,q_2)
  \,)
  \ =\
  j
  \bigl(\,
    s_0(p_1)
    \cdot
    \upperleft
    {\Delta(r_1)}
    {s_0(p_2)}
  \,\bigr)
  \cdot
  \underbrace{
    u(r_1)
    u(r_2)
    u(r_1r_2)^{-1}
  }_{
    \ =\
    j(\,f(r_1,r_2)\,)
  }
  \cdot
  j
  \bigl(\,
    s_0(p_{12})
  \,\bigr)^{-1}
  \qquad
  \text{in $G$}
  .
\]
Writing
$s_0(p_1)
 \cdot
 \upperleft
 {\Delta(r_1)}
 {s_0(p_2)}
$
briefly as~$n_0$,
we obtain
\[
  h(q_1,q_2)
  \ =\
  n_0
  \cdot
  f(r_1,r_2)
  \cdot
  s_0(p_{12})^{-1}
  \qquad
  \text{in $N$}
  .
\]
The same argument,
starting from
the characterizing equation
for the factor set $h'$,
shows that
\[
  h'(q_1,q_2)
  \ =\
  n_0
  \cdot
  f'(r_1,r_2)
  \cdot
  s_0(p_{12})^{-1}
  \qquad
  \text{in $N$}
  .
\]
Therefore,
\[
  \begin{aligned}
  e(q_1,q_2)
  &
  \ =\
  h'(q_1,q_2)
  \cdot
  h(q_1,q_2)^{-1}
  \\
  &
  \ =\
  n_0
  \cdot
  f'(r_1,r_2)
  \cdot
  f(r_1,r_2)^{-1}
  \cdot
  n_0^{-1}
  \ =\
  d(r_1,r_2)
  \qquad
  \text{in $Z(K)^P \subseteq Z(K)$}
  ,
  \end{aligned}
\]
proving our claim.
Hence
\[
  \infl(\,[d]\,)
  \ =\
  [e]
  \qquad
  \text{in $H^2(Q,Z(K))$}
  .
\]
\end{proof}

By propositions~\ref{prop:aut tgr}
and~\ref{prop:iterext infl},
the exactness of
\[
  H^1(P,Z(K))^R
  \rTo^{\quad \tgr \quad}
  H^2(R,Z(K)^P)
  \rTo^{\quad \infl \quad}
  H^2_P(Q,Z(K))
\]
in the sequence~\eqref{eqn:long exact seq}
translates as:

\begin{prop}
Let
$(\,
 G
 \,,\,
 j
 \,,\,
 \pi
 \,)
$
and
$(\,
 G'
 \,,\,
 j'
 \,,\,
 \pi'
 \,)
$
be iterated extensions of
$(KNP)$ by $(PQR)$
with the same
mod-$K$outer action $\Theta$.
Then
their $Q$-main extensions
$(\,
 G
 \,,\,
 i
 \,,\,
 \pi
 \,)
$
and
$(\,
 G'
 \,,\,
 i'
 \,,\,
 \pi'
 \,)
$
of $K$ by $Q$
are isomorphic
if and only if
there exists
an automorphism
$\eta \in \Aut(KNP)$
of the extension $(KNP)$
such that
the iterated extensions
$(\,
 G
 \,,\,
 j^\eta
 \,,\,
 \pi
 \,)
$
and
$(\,
 G'
 \,,\,
 j'
 \,,\,
 \pi'
 \,)
$
are isomorphic.
\end{prop}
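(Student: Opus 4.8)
The plan is to translate into this statement the exactness of
\[
  H^1(P,Z(K))^R
  \ \rTo^{\ \tgr\ }\
  H^2(R,Z(K)^P)
  \ \rTo^{\ \infl\ }\
  H^2_P(Q,Z(K))
\]
from the sequence~\eqref{eqn:long exact seq}, using the identifications of $\tgr$ and $\infl$ furnished by propositions~\ref{prop:aut tgr} and~\ref{prop:iterext infl}. With $[d]$ and $[e]$ as in the discussion preceding proposition~\ref{prop:iterext infl}, namely
\[
  [d]
  \ =\
  \dfrac{(G',j',\pi')}{(G,j,\pi)}
  \ \in\
  H^2(R,Z(K)^P)
  ,
  \qquad
  [e]
  \ =\
  \dfrac{(G',i',\pi')}{(G,i,\pi)}
  \ \in\
  H^2_P(Q,Z(K))
  ,
\]
proposition~\ref{prop:iterext infl} gives $\infl[d] = [e]$. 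Moreover the $P$-subextension of each of $(G,i,\pi)$ and $(G',i',\pi')$ is $(KNP)$, so by theorem~\ref{thm:ext classification given restriction} (and notation~\ref{notatn:ext cohom class}) the two $Q$-main extensions are isomorphic as extensions of $K$ by $Q$ precisely when $[e] = 0$.

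Thus everything reduces to showing that $\infl[d] = 0$ if and only if there is an $\eta \in \Aut(KNP)$ with $(G,j^\eta,\pi) \cong (G',j',\pi')$. First I would dispose of a bookkeeping point: any $\eta \in \Aut(KNP)$ for which $(G,j^\eta,\pi) \cong (G',j',\pi')$ is automatically $\Theta$-compatible. Indeed, isomorphic iterated extensions have the same mod-$K$ outer action (the conjugation actions correspond under the isomorphism), so the mod-$K$ outer action $\Theta^{\etabar}$ of $(G,j^\eta,\pi)$ equals the mod-$K$ outer action $\Theta$ of $(G',j',\pi')$; by the observation recorded just before proposition~\ref{prop:aut tgr} this says exactly that $\eta \in \Aut_\Theta(KNP)$. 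Conversely, any $\eta \in \Aut_\Theta(KNP)$ yields an iterated extension $(G,j^\eta,\pi)$ with mod-$K$ outer action $\Theta$, and if $\eta = \conj_N(z_0^{-1})$ with $z_0 \in Z(K)$ then $\conj_G(z_0)$ is an isomorphism $(G,j^\eta,\pi) \rTo^{\simeq} (G,j,\pi)$; hence the isomorphism class of $(G,j^\eta,\pi)$ depends only on the image $\etabar \in \Out_\Theta(KNP;K)$, and "there exists $\eta \in \Aut(KNP)$ with $(G,j^\eta,\pi) \cong (G',j',\pi')$" is the same condition as "there exists $\etabar \in \Out_\Theta(KNP;K)$ with $(G,j^\eta,\pi) \cong (G',j',\pi')$".

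Now I would assemble the pieces. Corollary~\ref{cor:ext aut Theta-compatible H^1} identifies $\Out_\Theta(KNP;K)$ with $H^1(P,Z(K))^R$ via $-\star$, and proposition~\ref{prop:aut tgr} says that if $\lambda \in Z^1(P,Z(K))$ represents $[\lambda] \in H^1(P,Z(K))^R$ and $\eta \in \Aut_\Theta(KNP)$ is the corresponding automorphism, then
\[
  \tgr[\lambda]
  \ =\
  \dfrac{(G,j^\eta,\pi)}{(G,j,\pi)}
  \qquad
  \text{in $H^2(R,Z(K)^P)$}
  .
\]
Since $-\boxtimes G$ is a bijection (corollary~\ref{cor:iterext classification H^2}) and $\dfrac{(G,j^\eta,\pi)}{(G,j,\pi)}$, $\dfrac{(G',j',\pi')}{(G,j,\pi)}$ are invariants of the corresponding iterated extensions up to isomorphism, the equality $\tgr[\lambda] = [d]$ holds if and only if $(G,j^\eta,\pi) \cong (G',j',\pi')$ as iterated extensions. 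Combining this with the exactness at $H^2(R,Z(K)^P)$ --- which says $\infl[d] = 0$ if and only if $[d] = \tgr[\lambda]$ for some $[\lambda] \in H^1(P,Z(K))^R$ --- and with the first paragraph, one obtains the chain of equivalences: $(G,i,\pi) \cong (G',i',\pi')$ $\iff$ $[e] = \infl[d] = 0$ $\iff$ $[d] \in \operatorname{Im}(\tgr)$ $\iff$ there is $[\lambda] \in H^1(P,Z(K))^R$ with $(G,j^\eta,\pi) \cong (G',j',\pi')$ for the corresponding $\eta$ $\iff$ there is $\eta \in \Aut(KNP)$ with $(G,j^\eta,\pi) \cong (G',j',\pi')$, which is the assertion. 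I expect the only genuinely delicate step to be the middle paragraph: checking that the $\Theta$-compatibility of $\eta$ comes for free and that nothing is lost in passing between $\eta$, its class $\etabar$, and the cohomology class $[\lambda]$; the rest is a formal consequence of identifications already established.
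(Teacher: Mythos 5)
Your proof is correct, but it takes a different route from the paper's. The paper proves this proposition by a short direct argument with the isomorphism $\varphi$ itself: if $\varphi$ is an isomorphism of the iterated extensions $(G,j^\eta,\pi)$ and $(G',j',\pi')$, then $\varphi\circ j^\eta=j'$ pre-composed with $i_0$ gives $\varphi\circ i=i'$, so $\varphi$ is already an isomorphism of the $Q$-main extensions; conversely, an isomorphism $\varphi$ of the $Q$-main extensions satisfies $\pi'\circ\varphi=\pi$, hence carries $j(N)$ onto $j'(N)$ and so induces an automorphism $\eta$ of $N$ with $\varphi\circ j=j'\circ\eta^{-1}$, which is seen to lie in $\Aut(KNP)$ and to satisfy $\varphi\circ j^\eta=j'$. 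You instead derive the statement from the exactness of the segment $H^1(P,Z(K))^R\rTo^{\tgr}H^2(R,Z(K)^P)\rTo^{\infl}H^2_P(Q,Z(K))$ of~\eqref{eqn:long exact seq}, combined with propositions~\ref{prop:aut tgr} and~\ref{prop:iterext infl} and the classification bijections; this is exactly the ``translation'' the paper announces just before the proposition, and your bookkeeping (automatic $\Theta$-compatibility of any $\eta$ that works, and the passage between $\eta$, $\etabar$ and $[\lambda]$) is handled correctly, so the argument is logically sound since the exact sequence is imported from Lyndon/Hochschild--Serre rather than proved in the paper. The trade-off: your route is a formal consequence of identifications already in place, but it leans on the spectral-sequence exactness, whereas the paper's direct proof is self-contained and elementary, and it yields the slightly sharper information that one and the same map $\varphi$ serves as the isomorphism in both statements (the $\eta$ is produced by restricting $\varphi$), rather than a bare existence statement extracted from a cohomological identity.
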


Note that
the automorphism
$\eta \in \Aut(KNP)$
with the stated property
is necessarily
$\Theta$-compatible
(if it exists).

\begin{proof}
We give a direct argument.
For the ``if'' direction,
an isomorphism
$\varphi : G \rTo^{\simeq} G'$
between
the iterated extensions
$(\,
 G
 \,,\,
 j^\eta
 \,,\,
 \pi
 \,)
$
and
$(\,
 G'
 \,,\,
 j'
 \,,\,
 \pi'
 \,)
$
gives
$\varphi \circ j^\eta = j'$
and hence
by pre-composing with $i_0$,
one has
$\varphi \circ i = i'$,
which implies that
$\varphi$ is also
an isomorphism
between
the $Q$-main extensions
$(\,
 G
 \,,\,
 i
 \,,\,
 \pi
 \,)
$
and
$(\,
 G'
 \,,\,
 i'
 \,,\,
 \pi'
 \,)
$.
For the ``only if'' direction,
an isomorphism
$\varphi : G \rTo^{\simeq} G'$
between
the $Q$-main extensions
$(\,
 G
 \,,\,
 i
 \,,\,
 \pi
 \,)
$
and
$(\,
 G'
 \,,\,
 i'
 \,,\,
 \pi'
 \,)
$
gives
$\pi' \circ \varphi = \pi$,
whence
$\varphi$ must map
$j(N) \subseteq G$
isomorphically onto
$j'(N) \subseteq G'$
and thus induce
an automorphism
$\eta \in \Aut(N)$
such that
$\varphi \circ j
 =
 j' \circ \eta^{-1}
$;
from this
it follows that
$\eta$ lies in $\Aut(KNP)$
necessarily,
and that
$\varphi \circ j^\eta
 =
 j'
$,
which implies that
$\varphi$ is also
an isomorphism between
the iterated extensions
$(\,
 G
 \,,\,
 j^\eta
 \,,\,
 \pi
 \,)
$
and
$(\,
 G'
 \,,\,
 j'
 \,,\,
 \pi'
 \,)
$.
\end{proof}

\section{$H^1(R,H^1(P,Z(K)))$
and the classification of
mod-$K$outer actions}
\label{sect:mod-K outer action classification}

Let
\quad
$\smash{
 (KNP)
 \ :\ 
 K
 \rInto^{\quad i_0 \quad}
 N
 \rOnto^{\quad \pi_0 \quad}
 P
}$
\quad
be an extension
of $K$ by $P$
with outer action
$\theta|_P$,
and let
$\Theta_P : P \rTo \Out(N;K)$
denote
its mod-$K$outer action
(cf.~definition~\ref{defn:mod-K outer action of ext}).
To avoid
a proliferation of notation,
we will use the canonical isomorphism
of corollary~\ref{cor:ext aut H^1}
applied to the extension $(KNP)$
to identify
$H^1(P,Z(K))$
with the subgroup
$\Out(KNP;K)$ of $\Out(N;K)$
throughout this section.

\begin{thm}
\label{thm:mod-K outer action classification}
Let
$\Theta
 :
 Q
 \rTo
 \Out(N;K)
$
be a $(\theta,\conj_P^Q)$-prolongation of $\Theta_P$.
Then the map
\[
  \begin{array}{rcl}
  - \diamond \Theta
  \ :\
  Z^1(R,H^1(P,Z(K)))
  &
  \rTo^{\simeq}
  &
  \Bigl\{\
  \text{$(\theta,\conj_P^Q)$-prolongations of $\Theta_P$}
  \ \Bigr\}
  \\[1ex]
  \Gamma
  &
  \rMapsto
  &
  \text{the map}
  \quad
  \Gamma \diamond \Theta
  \ :=\ 
  \bigl(\
  q
  \mapsto
  \Gamma(\phibar(q))
  \cdot
  \Theta(q)
  \ \bigr)
  ,
  \end{array}
\]
is a well-defined bijection.
\end{thm}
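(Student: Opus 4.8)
The plan is to check that $-\diamond\Theta$ is well defined (its values really are $(\theta,\conj_P^Q)$-prolongations of $\Theta_P$), that it is injective, and that it is surjective, the last point being settled by writing down an explicit inverse. Throughout I will use the canonical isomorphism of corollary~\ref{cor:ext aut H^1}, applied to the extension~$(KNP)$, to identify $H^1(P,Z(K))$ with the subgroup $\Out(KNP;K)$ of $\Out(N;K)$; recall that $\Out(KNP;K)$ is the kernel of the canonical homomorphism $\Out(N;K)\rTo\Out(K)\times\Aut(P)$, and that it is abelian. Since $(\,KNP\,,\,PQR\,,\,\Theta\,)$ is an iterated extension problem, corollary~\ref{cor:ext aut R-equivariant H^1} says that, under this identification, the intrinsic $R$-action on $H^1(P,Z(K))$ becomes conjugation via $\Theta$; so a $1$-cocycle $\Gamma\in Z^1(R,H^1(P,Z(K)))$ is the same thing as a normalized map $\Gamma:R\rDotsto\Out(KNP;K)$ satisfying $\Gamma(r_1r_2)=\Gamma(r_1)\circ\Theta(q_1)\circ\Gamma(r_2)\circ\Theta(q_1)^{-1}$ whenever $\phibar(q_1)=r_1$.

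For well-definedness, set $\Theta':=\Gamma\diamond\Theta$. Writing $\Theta(q_1)\circ\Gamma(r_2)=\bigl(\Theta(q_1)\circ\Gamma(r_2)\circ\Theta(q_1)^{-1}\bigr)\circ\Theta(q_1)$ and invoking the cocycle relation for $\Gamma$ together with $\Theta(q_1q_2)=\Theta(q_1)\circ\Theta(q_2)$, one gets $\Theta'(q_1q_2)=\Theta'(q_1)\circ\Theta'(q_2)$; as $\Gamma$ and $\Theta$ are normalized, $\Theta'$ is a homomorphism $Q\rTo\Out(N;K)$. Since $\phibar\circ\jbar$ is trivial and $\Gamma(1_R)$ is the identity, $\Theta'\circ\jbar=\Theta\circ\jbar=\Theta_P$. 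Finally each value $\Gamma(\phibar(q))$ lies in $\Out(KNP;K)=\Ker(\Out(N;K)\rTo\Out(K)\times\Aut(P))$, so post-composing $\Theta'$ with either $\Out(N;K)\rTo\Out(K)$ or $\Out(N;K)\rTo\Aut(P)$ yields the same homomorphism as for $\Theta$, namely $\theta$ or $\conj_P^Q$; hence $\Theta'$ is a $(\theta,\conj_P^Q)$-prolongation of $\Theta_P$ in the sense of definition~\ref{defn:theta conj_P^Q prolongation}. Injectivity is then immediate: if $\Gamma_1\diamond\Theta=\Gamma_2\diamond\Theta$, then $\Gamma_1(\phibar(q))=\Gamma_2(\phibar(q))$ for every $q\in Q$, and $\phibar$ is surjective.

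For surjectivity, let $\Theta'$ be an arbitrary $(\theta,\conj_P^Q)$-prolongation of $\Theta_P$, and set $\Gamma(\phibar(q)):=\Theta'(q)\circ\Theta(q)^{-1}$. Because $\Theta'$ and $\Theta$ induce the same homomorphisms $Q\rTo\Out(K)$ and $Q\rTo\Aut(P)$, the element $\Theta'(q)\circ\Theta(q)^{-1}$ lands in $\Out(KNP;K)$. To see that this depends only on $r=\phibar(q)$, suppose $\phibar(q_1)=\phibar(q_2)$, write $q_1=\jbar(p)\cdot q_2$ with $p\in P$, and compute, using $\Theta'\circ\jbar=\Theta\circ\jbar=\Theta_P$, that $\Theta'(q_1)\circ\Theta(q_1)^{-1}=\Theta_P(p)\circ\bigl(\Theta'(q_2)\circ\Theta(q_2)^{-1}\bigr)\circ\Theta_P(p)^{-1}$; by lemma~\ref{lemma:Theta_P fixed under Aut(KNP)}, $\Theta_P(p)$ commutes with every element of $\Out(KNP;K)$, so the two expressions coincide. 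A direct manipulation of $\Theta'(q_1q_2)\circ\Theta(q_1q_2)^{-1}$, using that $\Theta'$ and $\Theta$ are homomorphisms and the defining relation $\Theta'(q)=\Gamma(\phibar(q))\circ\Theta(q)$, shows that $\Gamma$ is a normalized $1$-cocycle for conjugation via $\Theta$, hence — after the identifications above — an element of $Z^1(R,H^1(P,Z(K)))$; and by construction $(\Gamma\diamond\Theta)(q)=\Theta'(q)\circ\Theta(q)^{-1}\circ\Theta(q)=\Theta'(q)$, so $\Gamma\diamond\Theta=\Theta'$.

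Essentially everything here is formal manipulation inside $\Out(N;K)$; the single point that genuinely calls on a previous result is the well-definedness of the inverse, namely the independence of $\Theta'(q)\circ\Theta(q)^{-1}$ from the chosen lift $q$ of $r\in R$, which rests on lemma~\ref{lemma:Theta_P fixed under Aut(KNP)} (that $\Theta_P(p)$ centralizes $\Out(KNP;K)$) together with corollary~\ref{cor:ext aut R-equivariant H^1} to identify the relevant $R$-action. The rest is a matter of unwinding definition~\ref{defn:theta conj_P^Q prolongation} and the description of $\Out(KNP;K)$ as the kernel of $\Out(N;K)\rTo\Out(K)\times\Aut(P)$.
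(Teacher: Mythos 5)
Your proof is correct, and its first two thirds (well-definedness of $\Gamma\diamond\Theta$ as a $(\theta,\conj_P^Q)$-prolongation, and injectivity via surjectivity of $\phibar$) coincide with the paper's argument. The surjectivity step, however, is organized differently. The paper fixes a section $\ubar:R\rDotsto Q$, defines $\Gamma(r):=\Theta^*(\ubar(r))\cdot\Theta(\ubar(r))^{-1}$, and verifies the cocycle relation by bringing in the factor set $\fbar$ of $\ubar$, where the terms $\Theta^*(\jbar(\fbar(r_1,r_2)))^{-1}\cdot\Theta(\jbar(\fbar(r_1,r_2)))$ cancel because both prolongations restrict to $\Theta_P$; independence of the section is only remarked on, not proved up front. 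You instead define $\Gamma(\phibar(q)):=\Theta'(q)\circ\Theta(q)^{-1}$ for an arbitrary lift $q$ and prove lift-independence first, using lemma~\ref{lemma:Theta_P fixed under Aut(KNP)} (that $\Theta_P(p)$ centralizes $\Out(KNP;K)$) after writing $q_1=\jbar(p)\cdot q_2$; with that in hand, the cocycle relation follows from the clean computation $\Gamma(r_1r_2)=\Theta'(q_1)\circ\Gamma(r_2)\circ\Theta(q_1)^{-1}=\Gamma(r_1)\circ\Theta(q_1)\circ\Gamma(r_2)\circ\Theta(q_1)^{-1}$, identified with $\Gamma(r_1)\cdot\upperleft{r_1}{\Gamma(r_2)}$ via corollary~\ref{cor:ext aut R-equivariant H^1}, and the identity $\Gamma\diamond\Theta=\Theta'$ is immediate rather than requiring the paper's decomposition $q=\ubar(r)\cdot\jbar(p)$. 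What your route buys is a manifestly section-free (canonical) inverse and no factor-set bookkeeping, at the cost of invoking one extra lemma; the paper's route avoids that lemma and produces the inverse exactly in the form stated in its subsequent remark (in terms of a chosen section $\ubar$). Both arguments are complete and compatible with the rest of the paper.
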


Here,
$H^1(P,Z(K))$ is regarded as
an $R$-module
via the action
described in notation~\ref{notatn:R action on H^1}.
Note that
$Z^1(R,H^1(P,Z(K)))$ depends only on
the given data
$(\,
 K
 \,,\,
 PQR
 \,,\,
 \theta
 \,)
$
as in notation~\ref{notatn:fixed data},
whereas
the set
on the right hand side
is defined
only when
the extension $(KNP)$
(and hence
the mod-$K$outer action $\Theta_P$)
is given;
moreover;
the bijection itself
depends on
the choice of $\Theta$
as a $(\theta,\conj_P^Q)$-prolongation
of $\Theta_P$
(assuming that
one exists).

\begin{proof}
Let
$\Gamma
 \in
 Z^1(R,H^1(P,Z(K)))
$
be a 1-cocycle,
which we regard
as a map
from $R$ to $\Out(KNP;K)$.
Let $\Theta' : Q \rTo \Out(N;K)$ be
the map given by
$\Theta'(q)
 :=
 \Gamma(\phibar(q))
 \cdot
 \Theta(q)
$.
For any $q_1,q_2 \in Q$,
corollary~\ref{cor:ext aut R-equivariant H^1}
shows that
the cohomology class
$\upperleft{\phibar(q_1)}{\Gamma(\phibar(q_2))}$
in $H^1(P,Z(K))$
corresponds to
the element
$\Theta(q_1)
 \cdot
 \Gamma(\phibar(q_2))
 \cdot
 \Theta(q_1)^{-1}
$
in $\Out(KNP;K)$.
Thus
the cocycle relation
satisfied by $\Gamma$
yields
\[
  \begin{aligned}
  \Theta'(q_1q_2)
  &
  \ =\
  \Gamma(\phibar(q_1q_2))
  \cdot
  \Theta(q_1q_2)
  \\
  &
  \ =\
  \Gamma(\phibar(q_1))
  \cdot
  \underbrace{
    \upperleft
    {\phibar(q_1)}
    {\Gamma(\phibar(q_2))}
  }_{\mathclap{
    \ =\
    \Theta(\phibar(q_2)
    \cdot
    \Gamma(\phibar(q_2))
    \cdot
    \Theta(\phibar(q_2))^{-1}
  }}
  \cdot
  \Theta(q_1)
  \cdot
  \Theta(q_2)
  \ =\
  \Theta'(q_1)
  \cdot
  \Theta'(q_2)
  \qquad
  \text{in $\Out(N;K)$}
  ,
  \end{aligned}
\]
which shows that
$\Theta'$ is a homomorphism
from $Q$ to $\Out(N;K)$.
Since
$\Gamma(1_R)
 =
 1_{\Out(N;K)}
$,
it follows that
\[
  \Theta'
  \circ
  \jbar
  \ =\ 
  \Theta
  \circ
  \jbar
  \ =\ 
  \Theta_P
  \qquad
  \text{as maps}
  \quad
  P \rDotsto \Out(N;K)
  .
\]
On the other hand,
the composite homomorphisms
\[
  Q
  \rTo^{\Theta'}
  \Out(N;K)
  \rTo
  \Out(K)
  \qquad
  \text{and}
  \qquad
  Q
  \rTo^{\Theta'}
  \Out(N;K)
  \rTo
  \Aut(P)
\]
are equal to
$\theta$ and $\conj_P^Q$
respectively,
because
$\Gamma$ takes values in
$H^1(P,Z(K))
 =
 \Out(KNP;K)
$,
which is precisely
the kernel of
$\Out(N;K)
 \rTo
 \Out(K) \times \Aut(P)
$.
Thus
$\Theta'$ is
a $(\theta,\conj_P^Q)$-prolongation
of $\Theta_P$.
The map $- \diamond \Theta$
which sends
$\Gamma$ to $\Theta'$
is thus
a well-defined map.
If $- \diamond \Theta$ sends
$\Gamma_1,\Gamma_2 \in Z^1(R,H^1(P,Z(K)))$
to the same image,
then
$\Gamma_1(\phibar(q))
 =
 \Gamma_2(\phibar(q))
$
in $H^1(P,Z(K))$
for every $q \in Q$,
which implies that
$\Gamma_1 = \Gamma_2$;
hence
the map $- \diamond \Theta$
is injective.

We now show
the surjectivity of $- \diamond \Theta$.
Let
$\Theta^* : Q \rTo \Out(N;K)$
be a $(\theta,\conj_P^Q)$-prolongation
of $\Theta_P$.
We choose
any section
$\ubar : R \rDotsto Q$
of $Q \rOnto^{\phibar} R$,
and define the map
\[
  \Gamma
  \ :\
  R
  \rDotsto
  H^1(P,Z(K))
  ,
  \qquad
  \Gamma(r)
  \ :=\
  \Theta^*(\ubar(r))
  \cdot
  \Theta(\ubar(r))^{-1}
  .
\]
By assumption,
$\Theta$ and $\Theta^*$
become equal
when post-composed with
the canonical homomorphism
$\Out(N;K)
 \rTo
 \Out(K) \times \Aut(P)
$;
this shows that
the map $\Gamma$
is indeed well-defined,
taking values in
$H^1(P,Z(K)) = \Out(KNP;K)$.
(It will be seen
eventually that
$\Gamma$ is in fact
independent of the choice of
the section $\ubar$.)
Now let
$\fbar : R \times R \rDotsto P$
be the (right) factor set
corresponding to
the section $\ubar$,
characterized by
the property that
for any $r_1,r_2 \in R$,
one has
\[
  \ubar(r_1)
  \,
  \ubar(r_2)
  \ =\
  \ubar(r_1r_2)
  \cdot
  \jbar
  (\,
    \fbar(r_1,r_2)
  \,)
  \qquad
  \text{in $P$}
  .
\]
Then,
using the fact that
$\Theta$ and $\Theta^*$
are both prolongations
of $\Theta_P$,
we have
\[
  \begin{aligned}
  \Gamma(r_1r_2)
  &
  \ =\
  \Theta^*(\ubar(r_1r_2))
  \circ
  \Theta(\ubar(r_1r_2))^{-1}
  \\
  &
  \ =\
  \Theta^*(\ubar(r_1))
  \cdot
  \Theta^*(\ubar(r_2))
  \cdot
  \underbrace{
    \Theta^*(\jbar(\fbar(r_1,r_2)))^{-1}
  }_{
    \ =\
    \Theta_P(\fbar(r_1,r_2))^{-1}
  }
  \cdot
  \underbrace{
    \Theta(\jbar(\fbar(r_1,r_2)))
  }_{
    \ =\
    \Theta_P(\fbar(r_1,r_2))
  }
  \cdot
  \Theta(\ubar(r_2))^{-1}
  \cdot
  \Theta(\ubar(r_1))^{-1}
  \\
  &
  \ =\
  \Theta^*(\ubar(r_1))
  \cdot
  \underbrace{
    \Theta^*(\ubar(r_2))
    \cdot
    \Theta(\ubar(r_2))^{-1}
  }_{
    \ =\
    \Gamma(r_2)
  }
  \cdot
  \Theta(\ubar(r_1))^{-1}
  \\
  &
  \ =\
  \Gamma(r_1)
  \cdot
  \Theta(\ubar(r_1))
  \cdot
  \Gamma(r_2)
  \cdot
  \Theta(\ubar(r_1))^{-1}
  \ =\
  \Gamma(r_1)
  \cdot
  \upperleft
  {r_1}
  {\Gamma(r_2)}
  \qquad
  \text{in $H^1(P,Z(K))$}
  ,
  \end{aligned}
\]
where the last equality holds
by corollary~\ref{cor:ext aut R-equivariant H^1}.
This shows that
$\Gamma : R \rDotsto H^1(P,Z(K))$
is a 1-cocycle.
The map $- \diamond \Theta$
sends
$\Gamma \in Z^1(R,H^1(P,Z(K)))$
to the homomorphism
$\Theta' : Q \rTo \Out(N;K)$
given by
$\Theta'(q)
 :=
 \Gamma(\phibar(q))
 \cdot
 \Theta(q)
$.
For any element
$q \in Q$
written in the form
$q = \ubar(r) \cdot \jbar(p)$
(with $p \in P$ and $r \in R$),
we have
$\Theta(\jbar(p))
 =
 \Theta_P(p)
 =
 \Theta^*(\jbar(p))
$,
and so
\[
  \begin{aligned}
  \Theta'(q)
  &
  \ =\
  \bigl(\,
  \Theta^*(\ubar(r))
  \cdot
  \Theta(\ubar(r))^{-1}
  \,\bigr)
  \cdot
  \Theta(\ubar(r))
  \cdot
  \Theta(\jbar(p))
  \\
  &
  \ =\
  \Theta^*(\ubar(r))
  \cdot
  \Theta^*(\jbar(p))
  \ =\
  \Theta^*(q)
  \qquad
  \text{in $\Out(N;K)$}
  ,
  \end{aligned}
\]
which shows that
$\Theta' = \Theta^*$;
hence
the map $- \diamond \Theta$
is surjective.
\end{proof}

\begin{rmk}
The proof shows that
the inverse of
the bijection $- \diamond \Theta$
of theorem~\ref{thm:mod-K outer action classification}
is given by
\[
  \begin{array}{rcl}
  \Bigl\{\
  \text{$(\theta,\conj_P^Q)$-prolongations of $\Theta_P$}
  \ \Bigr\}
  &
  \rTo^{\simeq}
  &
  Z^1(R,H^1(P,Z(K)))
  \\[1ex]
  \Theta'
  &
  \rMapsto
  &
  \Bigl(\
    r
    \mapsto
    \Theta'(\ubar(r))
    \cdot
    \Theta(\ubar(r))^{-1}
  \ \Bigr)
  \end{array}
\]
for any choice of
a section $\ubar$ of $\phibar$.
\end{rmk}

\begin{defn}
\label{defn:Aut(KNP)-conj Theta}
Two mod-$K$outer actions
$\Theta_\ell
 :
 Q
 \rTo
 \Out(N;K)
$
of $Q$ on $N$
(for $\ell=1,2$)
are \emph{$\Aut(KNP)$-conjugate}
iff
there exists
an automorphism
$\eta \in \Aut(KNP)$
of the extension $(KNP)$
such that
for any $q \in Q$,
one has
\[
  \Theta_2(q)
  \ =\
  \etabar^{-1}
  \cdot
  \Theta_1(q)
  \cdot
  \etabar
  \qquad
  \text{in $\Out(N;K)$}
  ,
\]
where $\etabar \in \Out(KNP;K)$ denotes
the image of $\eta$
in $\Out(N;K)$.
In this case,
we write
$\Theta_2 = \Theta_1^{\etabar}$.
\end{defn}

If $\Theta$ is
a $(\theta,\conj_P^Q)$-prolongation of $\Theta_P$,
then so is
any $\Aut(KNP)$-conjugate $\Theta^\etabar$
of $\Theta$.
Indeed,
lemma~\ref{lemma:Theta_P fixed under Aut(KNP)}
shows that
$\Theta^\etabar \circ \jbar
 =
 \Theta \circ \jbar
 =
 \Theta_P
$,
and the fact that
$\eta$ induces
the trivial automorphism
on $K$ and on $P$
implies that
both $\Theta$ and $\Theta^\etabar$
induce
$\theta$ and $\conj_P^Q$.

\begin{cor}
\label{cor:mod-K outer action classification B^1}
The bijection $- \diamond \Theta$
of theorem~\ref{thm:mod-K outer action classification}
restricts to
a bijection
\[
  \begin{array}{rcl}
  - \diamond \Theta
  \ :\
  B^1(R,H^1(P,Z(K)))
  &
  \rTo^{\simeq}
  &
  \Bigl\{\
  \text{$\Aut(KNP)$-conjugates of $\Theta$}
  \ \Bigr\}
  ,
  \\[1ex]
  \partial\etabar
  &
  \rMapsto
  &
  \Theta^\etabar
  ,
  \end{array}
\]
where
$\partial\etabar$ denotes
the 1-coboundary
$\partial\etabar(r)
 :=
 \etabar^{-1}
 \cdot
 \upperleft{r}{\etabar}
$
for any
$\etabar \in H^1(P,Z(K))$.
\end{cor}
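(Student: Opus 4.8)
The plan is to deduce this corollary formally from theorem~\ref{thm:mod-K outer action classification}, using only that theorem and corollary~\ref{cor:ext aut R-equivariant H^1}. Since $-\diamond\Theta$ is already known to be a bijection from $Z^1(R,H^1(P,Z(K)))$ onto the set of all $(\theta,\conj_P^Q)$-prolongations of $\Theta_P$ --- its hypotheses are in force here, as $(\,KNP\,,\,PQR\,,\,\Theta\,)$ is an iterated extension problem in the sense of definition~\ref{defn:iterext pbm} --- its restriction to the subgroup $B^1(R,H^1(P,Z(K)))$ of $1$-coboundaries is injective for free. So the two things I would establish are: first, that $\partial\etabar$ is carried to $\Theta^\etabar$; and second, that the image of $B^1(R,H^1(P,Z(K)))$ under $-\diamond\Theta$ is exactly the set of $\Aut(KNP)$-conjugates of $\Theta$. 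As a preliminary I would record that $\partial\etabar$ is a genuine normalized $1$-coboundary: by theorem~\ref{thm:ext aut} applied to $(KNP)$, the group $\Aut(KNP)\cong Z^1(P,Z(K))$ is abelian, hence so is $\Out(KNP;K)=H^1(P,Z(K))$, which carries the $R$-action of notation~\ref{notatn:R action on H^1}.

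The computational core is the evaluation of $(\partial\etabar\diamond\Theta)(q)$ for $\etabar\in H^1(P,Z(K))=\Out(KNP;K)$ and $q\in Q$, with $r:=\phibar(q)$. Unwinding the definitions of $-\diamond\Theta$ and of the coboundary $\partial\etabar$, this equals $\etabar^{-1}\cdot\upperleft{r}{\etabar}\cdot\Theta(q)$ in $\Out(N;K)$. The single nontrivial input is corollary~\ref{cor:ext aut R-equivariant H^1}, applied to the extension $(KNP)$ with $\Theta$ in the role of its mod-$K$ outer action: it identifies the $R$-action on $\etabar$ with conjugation in $\Out(N;K)$, so that $\upperleft{r}{\etabar}=\Theta(q)\cdot\etabar\cdot\Theta(q)^{-1}$ for any $q$ with $\phibar(q)=r$. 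Substituting and cancelling $\Theta(q)^{-1}\Theta(q)$ leaves $\etabar^{-1}\cdot\Theta(q)\cdot\etabar$, which by definition~\ref{defn:Aut(KNP)-conj Theta} is precisely $\Theta^\etabar(q)$. So $\partial\etabar\diamond\Theta=\Theta^\etabar$, and at this point I would pause to check that the inversion built into $\partial\etabar(r)=\etabar^{-1}\cdot\upperleft{r}{\etabar}$ is exactly what lines the coboundary formula up with the conjugation convention of definition~\ref{defn:Aut(KNP)-conj Theta}.

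It then remains to see that $\Theta^\etabar$, as $\etabar$ ranges over $H^1(P,Z(K))=\Out(KNP;K)$, sweeps out precisely the $\Aut(KNP)$-conjugates of $\Theta$: this is immediate once one observes that conjugation in $\Out(N;K)$ by an automorphism $\eta\in\Aut(KNP)$ depends only on the image $\etabar\in\Out(KNP;K)$ of $\eta$, while conversely every element of $\Out(KNP;K)$ arises as such an image. Combining this with the injectivity inherited from theorem~\ref{thm:mod-K outer action classification} shows that $-\diamond\Theta$ restricts to the asserted bijection. I do not anticipate a genuine obstacle: the whole argument is a formal corollary of theorem~\ref{thm:mod-K outer action classification} and corollary~\ref{cor:ext aut R-equivariant H^1}. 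The only points requiring care are the conventional ones noted above --- the abelianness of $H^1(P,Z(K))$ that makes $\partial\etabar$ legitimate, and the bookkeeping of inverses so that $\partial\etabar\diamond\Theta$ matches $\Theta^\etabar$ on the nose.
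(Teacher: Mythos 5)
Your proposal is correct and follows essentially the same route as the paper: the key step in both is the computation $(\partial\etabar\diamond\Theta)(q)=\etabar^{-1}\cdot\upperleft{\phibar(q)}{\etabar}\cdot\Theta(q)$ followed by corollary~\ref{cor:ext aut R-equivariant H^1} to replace $\upperleft{\phibar(q)}{\etabar}$ by $\Theta(q)\cdot\etabar\cdot\Theta(q)^{-1}$, giving $\Theta^\etabar(q)$. The extra bookkeeping you include (injectivity inherited from theorem~\ref{thm:mod-K outer action classification}, and that every $\Aut(KNP)$-conjugate arises from some $\etabar\in\Out(KNP;K)=H^1(P,Z(K))$) is exactly what the paper leaves implicit, so there is no gap.
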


\begin{proof}
For any
$\etabar \in H^1(P,Z(K))$,
the coboundary
$\partial\etabar \in B^1(R,H^1(P,Z(K)))$
is mapped by $- \diamond \Theta$
to the mod-$K$outer action
$\Theta' : Q \rTo \Out(N;K)$
which sends
any element $q \in Q$
to
\[
  \Theta'(q)
  \ =\
  (\partial\etabar)(\phibar(q))
  \cdot
  \Theta(q)
  \ =\
  \etabar^{-1}
  \cdot
  \upperleft{\phibar(q)}{\etabar}
  \cdot
  \Theta(q)
  \qquad
  \text{in $\Out(N;K)$}
  .
\]
Corollary~\ref{cor:ext aut R-equivariant H^1}
allows us to replace
$\upperleft{\phibar(q)}{\etabar}$
by
$\Theta(q)
 \cdot
 \etabar
 \cdot
 \Theta(q)^{-1}
$
and see that
$\Theta'(q)
 =
 \Theta^\etabar(q)
$
in $\Out(N;K)$.
\end{proof}

Theorem~\ref{thm:mod-K outer action classification}
show that
$Z^1(R,H^1(P,Z(K)))$
acts transitively
(from the left)
on the set of
$\Aut(KNP)$-conjugacy classes of
$(\theta,\conj_P^Q)$-prolongations of $\Theta_P$,
while
corollary~\ref{cor:mod-K outer action classification B^1}
shows that
the stabilizer subgroup
of any given
$\Aut(KNP)$-conjugacy class
is $B^1(R,H^1(P,Z(K)))$.
Hence we have:

\begin{cor}
\label{cor:mod-K outer action classification H^1}
Let
$\Theta
 :
 Q
 \rTo
 \Out(N;K)
$
be a $(\theta,\conj_P^Q)$-prolongation of $\Theta_P$.
The bijection $- \diamond \Theta$
of theorem~\ref{thm:mod-K outer action classification}
induces a bijection
\[
  - \diamond \Theta
  \ :\
  H^1(R,H^1(P,Z(K)))
  \rTo^{\simeq}
  \left\{\
  \begin{array}{l}
  \text{$\Aut(KNP)$-conjugacy classes of}
  \\
  \text{$(\theta,\conj_P^Q)$-prolongations of $\Theta_P$}
  \end{array}
  \ \right\}
  .
\]
\end{cor}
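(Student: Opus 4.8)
The plan is to obtain the asserted bijection as the descent of the bijection $-\diamond\Theta$ of Theorem~\ref{thm:mod-K outer action classification} modulo the subgroup $B^1 := B^1(R,H^1(P,Z(K)))$ on the source and the relation of $\Aut(KNP)$-conjugacy on the target. First I would record the relevant formalism: $H^1(P,Z(K))$ is abelian, hence so is $Z^1 := Z^1(R,H^1(P,Z(K)))$, and by Theorem~\ref{thm:mod-K outer action classification}, applied with \emph{any} $(\theta,\conj_P^Q)$-prolongation of $\Theta_P$ in place of $\Theta$, for every such prolongation $\Theta'$ the map $-\diamond\Theta'\colon Z^1\to\{(\theta,\conj_P^Q)\text{-prolongations of }\Theta_P\}$ is a bijection. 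Unwinding the defining formula $(\Gamma\diamond\Theta')(q)=\Gamma(\phibar(q))\cdot\Theta'(q)$ gives the compatibility identity $\Gamma_1\diamond(\Gamma_2\diamond\Theta)=(\Gamma_1\Gamma_2)\diamond\Theta$ for all $\Gamma_1,\Gamma_2\in Z^1$, equivalently $\Gamma_1\diamond\Theta=(\Gamma_1\Gamma_2^{-1})\diamond(\Gamma_2\diamond\Theta)$; thus $Z^1$ acts simply transitively on the set of prolongations, with $-\diamond\Theta$ the orbit map based at $\Theta$.

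The heart of the argument is an identification of $\Aut(KNP)$-conjugacy classes with $B^1$-orbits that is valid \emph{uniformly} over all prolongations: for $\Gamma_1,\Gamma_2\in Z^1$, the prolongations $\Gamma_1\diamond\Theta$ and $\Gamma_2\diamond\Theta$ are $\Aut(KNP)$-conjugate if and only if $\Gamma_1\Gamma_2^{-1}\in B^1$. For the ``if'' direction, writing $\Gamma_1\Gamma_2^{-1}=\partial\etabar$ for some $\etabar\in H^1(P,Z(K))$, the compatibility identity gives $\Gamma_1\diamond\Theta=\partial\etabar\diamond(\Gamma_2\diamond\Theta)$, which by Corollary~\ref{cor:mod-K outer action classification B^1} applied with the prolongation $\Gamma_2\diamond\Theta$ as the base equals $(\Gamma_2\diamond\Theta)^{\etabar}$, an $\Aut(KNP)$-conjugate of $\Gamma_2\diamond\Theta$; this use of the corollary is legitimate because $\Gamma_2\diamond\Theta$ is again a $(\theta,\conj_P^Q)$-prolongation of $\Theta_P$ by Theorem~\ref{thm:mod-K outer action classification}. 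For the ``only if'' direction, if $\Gamma_1\diamond\Theta=(\Gamma_2\diamond\Theta)^{\etabar}$, the same corollary gives $(\Gamma_2\diamond\Theta)^{\etabar}=\partial\etabar\diamond(\Gamma_2\diamond\Theta)=(\partial\etabar\cdot\Gamma_2)\diamond\Theta$, so injectivity of $-\diamond\Theta$ forces $\Gamma_1=\partial\etabar\cdot\Gamma_2$, i.e.\ $\Gamma_1\Gamma_2^{-1}=\partial\etabar\in B^1$.

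Finally I would assemble the pieces: the composite of $-\diamond\Theta$ with the passage from prolongations to their $\Aut(KNP)$-conjugacy classes is a surjection $Z^1\to\{\Aut(KNP)\text{-conjugacy classes of }(\theta,\conj_P^Q)\text{-prolongations of }\Theta_P\}$, and by the claim above its fibres are precisely the cosets of $B^1$ in $Z^1$; it therefore factors through a bijection $H^1(R,H^1(P,Z(K)))=Z^1/B^1\to\{\Aut(KNP)\text{-conjugacy classes of }(\theta,\conj_P^Q)\text{-prolongations of }\Theta_P\}$, which is exactly the map induced by $-\diamond\Theta$. There is no genuine obstacle: everything is formal once Theorem~\ref{thm:mod-K outer action classification} and Corollary~\ref{cor:mod-K outer action classification B^1} are in hand. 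The only point demanding attention is the bookkeeping in the uniform step — invoking Corollary~\ref{cor:mod-K outer action classification B^1} with the varying basepoint $\Gamma_2\diamond\Theta$ rather than $\Theta$, and tracking the compatibility identity $\Gamma_1\diamond\Theta=(\Gamma_1\Gamma_2^{-1})\diamond(\Gamma_2\diamond\Theta)$ correctly — together with the standard fact that $B^1$ is the stabilizer, which is precisely the content of Corollary~\ref{cor:mod-K outer action classification B^1}.
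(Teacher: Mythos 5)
Your proof is correct and follows essentially the same route as the paper: the paper likewise deduces the corollary from Theorem~\ref{thm:mod-K outer action classification} (simply transitive action of $Z^1(R,H^1(P,Z(K)))$ on the prolongations) together with Corollary~\ref{cor:mod-K outer action classification B^1} identifying $B^1(R,H^1(P,Z(K)))$ as the stabilizer of an $\Aut(KNP)$-conjugacy class. Your only addition is to spell out the basepoint bookkeeping (applying Corollary~\ref{cor:mod-K outer action classification B^1} at the varying prolongation $\Gamma_2 \diamond \Theta$), which the paper leaves implicit.
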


\section{Reduction
from $H^2_P(Q,Z(K))$
to $H^1(R,H^1(P,Z(K)))$}

The \emph{reduction homomorphism}
\[
  \rd
  \ :\
  H^2_P(Q,Z(K))
  \rTo
  H^1(R,H^1(P,Z(K)))
  ,
  \qquad
  [e]
  \rMapsto\relax
  \rd[e]
  ,
\]
which appear in
the exact sequence~\eqref{eqn:long exact seq},
arises from
the $E_2$-spectral sequence
for the extension $(PQR)$
with coefficients in $Z(K)$;
let us first recall
its explicit description.

Given
a 2-cohomology class
$[e] \in H^2_P(Q,Z(K))$,
the fact that
$[e]$ becomes
the trivial class
when restricted to $P$
means that
we can choose
a representative 2-cocycle
$e : Q \times Q \rDotsto Z(K)$
with the property that
$e(\jbar(p_1),\jbar(p_2))
 =
 1_{Z(K)}
$
for any $p_1,p_2 \in P$.
For any $r \in R$
and any choice of
an element $q \in Q$
such that
$\phibar(q) = r$
in $R$,
we define the map
$\Gammatilde_e(r)_q : P \rDotsto Z(K)$
by setting
\begin{equation}
  \Gammatilde_e(r)_q(p)
  \ :=\
  e
  \bigl(\,
  q
  \,,\,
  q^{-1}\,\jbar(p)\,q
  \,\bigr)
  \cdot
  e
  \bigl(\,
  \jbar(p)
  \,,\,
  q
  \,\bigr)^{-1}
  .
  \label{eqn:Gammatilde_e(r)}
\end{equation}
Then
$\Gammatilde_e(r)_q$
is a 1-cocycle,
and its cohomology class
$\Gamma_e(r) \in H^1(P,Z(K))$
is independent of
the choice of
$q \in Q$ above.
The resulting map
$\Gamma_e : R \rDotsto H^1(P,Z(K))$,
which sends $r \in R$
to the cohomology class
$\Gamma_e(r) \in H^1(P,Z(K))$,
is then
a 1-cocycle
for the action of $R$
on $H^1(P,Z(K))$,
and its cohomology class
$[\Gamma_e] \in H^1(R,H^1(P,Z(K)))$
is independent of
the choice of
the representative 2-cocycle $e$
with the above property.
The reduction image of $[e]$
is then defined as
\[
  \rd[e]
  \ :=\
  [\Gamma_e]
  \qquad
  \text{in $H^1(R,H^1(P,Z(K)))$}
  .
\]

The reduction homomorphism
can be interpreted
in terms of
the extensions
and their outer actions.
Let
$(\,
 G
 \,,\,
 i
 \,,\,
 \pi
 \,)
$
and
$(\,
 G'
 \,,\,
 i'
 \,,\,
 \pi'
 \,)
$
be extensions
of $K$ by $Q$
with the same
outer action~$\theta$
and with
isomorphic
$P$-subextensions
$(\,
 N
 \,,\,
 i_0
 \,,\,
 \pi_0
 \,)
$
and
$(\,
 N'
 \,,\,
 i_0'
 \,,\,
 \pi_0'
 \,)
$.
As in definition~\ref{defn:P-subextension},
we have
the canonical inclusions
$j : N \rInto G$
and
$j' : N' \rInto G'$.
Let
$\Theta : Q \rTo \Out(N;K)$
be the mod-$K$outer action
of the iterated extension
$(\,
 G
 \,,\,
 j
 \,,\,
 \pi
 \,)
$;
it is
a $(\theta,\conj_P^Q)$-prolongation
of $\Theta_P$.
On the other hand,
consider the mod-$K$outer action
$\Theta' : Q \rTo \Out(N';K)$
of the iterated extension
$(\,
 G'
 \,,\,
 j'
 \,,\,
 \pi'
 \,)
$.
For any isomorphism
$\varphi : N' \rTo^{\simeq} N$
between
the $P$-subextensions
$(\,
 N'
 \,,\,
 i_0'
 \,,\,
 \pi_0'
 \,)
$
and
$(\,
 N
 \,,\,
 i_0
 \,,\,
 \pi_0
 \,)
$,
the homomorphism
$\upperleft{\varphi}{\Theta'} : Q \rTo \Out(N;K)$
given by
\[
  \upperleft{\varphi}{\Theta'}(q)
  \ :=\
  \varphi
  \circ
  \Theta'(q)
  \circ
  \varphi^{-1}
  \qquad
  \text{in $\Out(N;K)$}
\]
is a mod-$K$outer action
of $Q$ on $N$,
which is also
a $(\theta,\conj_P^Q)$-prolongation
of $\Theta_P$.
Another choice of
an isomorphism
between
$(\,
 N'
 \,,\,
 i_0'
 \,,\,
 \pi_0'
 \,)
$
and
$(\,
 N
 \,,\,
 i_0
 \,,\,
 \pi_0
 \,)
$
would be of the form
$\eta \circ \varphi$
for some $\eta \in \Aut(KNP)$,
and so
it follows that
the $\Aut(KNP)$-conjugacy class
of $\upperleft{\varphi}{\Theta'}$
is independent of
the choice of $\varphi$;
we denote it by
$[\Theta']$.
Hence
we have a well-defined map
\begin{equation}
  \begin{array}{rcl}
  \left\{\
  \begin{array}{l}
  \text{isomorphism classes of}
  \\
  \text{extensions of $K$ by $Q$}
  \\
  \text{with outer action $\theta$}
  \\
  \text{whose $P$-subextension}
  \\
  \text{is isomorphic to}
  \quad
  (\,
  N
  \,,\,
  i_0
  \,,\,
  \pi_0
  \,)
  \end{array}
  \ \right\}
  &
  \rTo
  &
  \left\{\
  \begin{array}{l}
  \text{$\Aut(KNP)$-conjugacy classes of}
  \\
  \text{$(\theta,\conj_P^Q)$-prolongations of $\Theta_P$}
  \end{array}
  \ \right\}
  ,
  \\[8ex]
  (\,
  G'
  \,,\,
  i'
  \,,\,
  \pi'
  \,)
  &
  \rMapsto
  &
  [\Theta']
  .
  \end{array}
  \label{eqn:mod-K outer action}
\end{equation}

\begin{prop}
\label{prop:ext rd}
With the above notation,
let $\Gamma \in Z^1(R,H^1(P,Z(K)))$ be
the unique 1-cocycle
such that
$\upperleft{\varphi}{\Theta'}
 =
 \Gamma \diamond \Theta
$
as $(\theta,\conj_P^Q)$-prolongation
of $\Theta_P$,
and let
\[
  [e]
  \ :=\
  \dfrac{(G',i',\pi')}{(G,i,\pi)}
  \ \in\
  H^2_P(Q,Z(K))
  .
\]
Then
\[
  \rd[e]
  \ =\
  [\Gamma]
  \qquad
  \text{in $H^1(R,H^1(P,Z(K)))$}
  .
\]
In other words,
the following diagram
commutes:
\[
  \begin{diagram}
  H^2_P(Q,Z(K))
  &
  \rTo^{\rd}
  &
  H^1(R,H^1(P,Z(K)))
  \\
  \dTo^{
    \text{thm.~\ref{thm:ext classification given restriction}}
    \quad
    \wr|
  }_{
    \quad
    - \boxtimes G
  }
  &
  {\Bigg.}
  &
  \dTo^{
    - \diamond \Theta
    \quad
  }_{
    |\wr
    \quad
    \text{cor.~\ref{cor:mod-K outer action classification H^1}}
  }
  \\
  \left\{\
  \begin{array}{l}
  \text{isomorphism classes of}
  \\
  \text{extensions of $K$ by $Q$}
  \\
  \text{with outer action $\theta$}
  \\
  \text{whose $P$-subextension}
  \\
  \text{is isomorphic to}
  \quad
  (\,
  N
  \,,\,
  i_0
  \,,\,
  \pi_0
  \,)
  \end{array}
  \ \right\}
  &
  \rTo_{\quad \text{\eqref{eqn:mod-K outer action}} \quad}
  &
  \left\{\
  \begin{array}{l}
  \text{$\Aut(KNP)$-conjugacy classes of}
  \\
  \text{$(\theta,\conj_P^Q)$-prolongations of $\Theta_P$}
  \end{array}
  \ \right\}
  .
  \end{diagram}
  \qquad\qquad
\]
\end{prop}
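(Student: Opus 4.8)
The plan is to follow the template used for the restriction and inflation homomorphisms in Propositions~\ref{prop:ext res} and~\ref{prop:iterext infl}: fix compatible auxiliary data, represent $[e]$ and $[\Gamma]$ by explicit cocycles built from factor sets, and check that they coincide. The only point needing care is to choose this data so that the $2$-cocycle representing $[e]$ vanishes on $P\times P$ --- the hypothesis under which formula~\eqref{eqn:Gammatilde_e(r)} computes $\rd[e]$ --- and so that its reduction is evaluated along the very section $\ubar$ entering the description of $\Gamma$ (cf.\ the remark after Theorem~\ref{thm:mod-K outer action classification}).

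Concretely, I would fix a section $\ubar:R\rDotsto Q$ of $\phibar$, sections $s_0':P\rDotsto N'$ of $\pi_0'$ and $u':R\rDotsto G'$ of $\phi'$, and set $s_0:=\varphi\circ s_0':P\rDotsto N$. Since $\varphi$ is an isomorphism of the $P$-subextensions (so $\varphi\circ i_0'=i_0$ and $\pi_0\circ\varphi=\pi_0'$) one checks that $\conj_K^N\circ s_0=\conj_K^{N'}\circ s_0'$; hence, after adjusting $u:R\rDotsto G$ by a map $R\rDotsto i(K)$ so that $\conj_K^G\circ u=\conj_K^{G'}\circ u'=:\Delta_K$, the formula $\delta(q):=\conj_K^N(s_0(p))\circ\Delta_K(r)$ for $q=\jbar(p)\cdot\ubar(r)$ defines a lifting of $\theta$ serving both $G$ and $G'$. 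The product-form sections $s(q):=j(s_0(p))\cdot u(r)$ and $s'(q):=j'(s_0'(p))\cdot u'(r)$ then make $(G,i,\pi,s)$ and $(G',i',\pi',s')$ into $\delta$-sectioned extensions, so by Theorem~\ref{thm:ext classification} their left factor sets satisfy $h'=e\cdot h$ for a unique $e\in Z^2(Q,Z(K))$ whose class is $\dfrac{(G',i',\pi')}{(G,i,\pi)}$ (cf.\ notation~\ref{notatn:ext cohom class}). The choice $s_0=\varphi\circ s_0'$ forces $h$ and $h'$ to agree on $\jbar(P)\times\jbar(P)$, hence $e$ vanishes on $P\times P$; and the product form of the sections forces $h(\jbar(p),\ubar(r))=1=h'(\jbar(p),\ubar(r))$.

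Now I would evaluate $\rd[e]$ at the element $q=\ubar(r)$. Since $e(\jbar(p),\ubar(r))=1$, formula~\eqref{eqn:Gammatilde_e(r)} collapses to $\Gammatilde_e(r)_{\ubar(r)}(p)=e(\ubar(r),\ubar(r)^{-1}\jbar(p)\ubar(r))=h'(\ubar(r),q_2)\,h(\ubar(r),q_2)^{-1}$, where $q_2:=\ubar(r)^{-1}\jbar(p)\ubar(r)=\jbar(\tilde p)$ and $\tilde p:=\conj_P^Q(\ubar(r))^{-1}(p)$. Putting $\Delta_N(r):=\conj_N^G(u(r))$ and $\Sigma_N(r):=\varphi\circ\conj_N^{G'}(u'(r))\circ\varphi^{-1}$ in $\Aut_K(N)$ --- lifts of $\Theta(\ubar(r))$ and of $\upperleft{\varphi}{\Theta'}(\ubar(r))$ respectively --- the defining relation for $h$ gives $i_0\bigl(h(\ubar(r),q_2)\bigr)=\Delta_N(r)(s_0(\tilde p))\cdot s_0(p)^{-1}$ in $N$, and transporting the relation for $h'$ through $\varphi$ gives $i_0\bigl(h'(\ubar(r),q_2)\bigr)=\Sigma_N(r)(s_0(\tilde p))\cdot s_0(p)^{-1}$. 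Dividing these, and using that $\Sigma_N(r)$ and $\Delta_N(r)$ both restrict to $\delta(\ubar(r))$ on $K$ (which makes the answer central), a short manipulation yields
\[
  i_0\bigl(\Gammatilde_e(r)_{\ubar(r)}(p)\bigr)
  \ =\
  \upperleft{\Sigma_N(r)\circ\Delta_N(r)^{-1}}{s_0(p)}\cdot s_0(p)^{-1}
  \qquad\text{in }N .
\]
On the other hand, by the remark after Theorem~\ref{thm:mod-K outer action classification}, $\Gamma(r)=\upperleft{\varphi}{\Theta'}(\ubar(r))\cdot\Theta(\ubar(r))^{-1}$, i.e.\ $\Gamma(r)$ is the class of $\Sigma_N(r)\circ\Delta_N(r)^{-1}$ in $\Out(KNP;K)$; under the identification $\Out(KNP;K)\cong H^1(P,Z(K))$ from Corollary~\ref{cor:ext aut H^1} applied to $(KNP)$ with the section $s_0$, this class is represented by $p\mapsto i_0^{-1}\bigl(\upperleft{\eta_r}{s_0(p)}\cdot s_0(p)^{-1}\bigr)$ for any lift $\eta_r\in\Aut(KNP)$. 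Writing $\Sigma_N(r)\circ\Delta_N(r)^{-1}=\conj_N(k_r)\circ\eta_r$, the triviality of $\Sigma_N(r)\circ\Delta_N(r)^{-1}$ on $K$ forces $k_r\in Z(K)$, and comparing then shows that the cocycle in the display differs from this representative by exactly the coboundary $\partial(k_r^{-1})$. Hence $\Gamma_e(r)=\Gamma(r)$ in $H^1(P,Z(K))$ for every $r\in R$, so $\Gamma_e=\Gamma$ as maps $R\rDotsto H^1(P,Z(K))$, and therefore $\rd[e]=[\Gamma_e]=[\Gamma]$, which is the asserted commutativity.

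The step I expect to be the main obstacle is the compatibility juggling of the second paragraph: one must arrange a single lifting $\delta$ and sections $s,s'$ that are simultaneously adapted to $G$, $G'$ and to $\varphi$ (this is what forces $s_0=\varphi\circ s_0'$), verify that the resulting $e$ meets the vanishing condition legitimizing~\eqref{eqn:Gammatilde_e(r)}, and finally check that the correcting element $k_r$ lands in $Z(K)$ so that it contributes only a coboundary. Individually these are routine, but the whole computation unravels if the auxiliary data are introduced in the wrong order.
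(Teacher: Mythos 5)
Your proposal is correct and follows essentially the same route as the paper's own proof: choose one lifting $\delta$ and $\delta$-adapted sections of $\pi$ and $\pi'$ compatible with $\varphi$ (so that $\varphi\circ s_0'=s_0$, forcing the comparison cocycle $e$ to vanish on $P\times P$), and then identify the reduction cocycle $\Gammatilde_e$ with the cocycle measuring $\upperleft{\varphi}{\Theta'}\cdot\Theta^{-1}$ by computing with the explicit lifts $\conj_N^G\circ s$ and $\varphi\circ\conj_{N'}^{G'}\circ s'\circ$ (your specialization to $q=\ubar(r)$ merely streamlines the paper's computation at general $q$). The one point you should make explicit is that the product-form maps $s(q)=j(s_0(p))\cdot u(r)$ and $s'(q)=j'(s_0'(p))\cdot u'(r)$ are sections of $\pi$ and $\pi'$ (and that $\Delta_N(r)$, $\Sigma_N(r)$ lift $\Theta(\ubar(r))$, $\upperleft{\varphi}{\Theta'}(\ubar(r))$) only after normalizing $u$, $u'$ so that $\pi\circ u=\ubar=\pi'\circ u'$, a routine adjustment in the spirit of lemma~\ref{lemma:choice of u}.
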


\begin{proof}
Choose
a lifting
$\delta : Q \rDotsto \Aut(K)$
of
$\theta : Q \rTo \Out(K)$.
Since
$\conj_K^G$ and $\conj_K^{G'}$
both send $K$
surjectively onto
$\Inn(K)$,
we may choose
sections
$s : Q \rDotsto G$
and
$s' : Q \rDotsto G'$
of
$G \rOnto^{\pi} Q$
and
$G' \rOnto^{\pi'} Q$
respectively,
such that
\begin{equation}
  \conj_K^G \circ s
  \ =\
  \delta
  \ =\
  \conj_K^{G'} \circ s'
  \qquad
  \text{as maps}
  \quad
  Q \rDotsto \Aut(K)
  .
  \label{eqn:conj_K^G s}
\end{equation}
Thus
$(\,
 G
 \,,\,
 i
 \,,\,
 \pi
 \,,\,
 s
 \,)
$
and
$(\,
 G'
 \,,\,
 i'
 \,,\,
 \pi'
 \,,\,
 s'
 \,)
$
are
$\delta$-sectioned extensions
of $K$ by $Q$.
We note in passing that
these conditions
only determine
the sections $s$ and $s'$
modulo $Z(K)$;
we are free to
multiply (say) $s$ by
any map
$Q \rDotsto Z(K)$.

Let
$s_0 : P \rDotsto N$
and
$s_0' : P \rDotsto N'$
be the restrictions to $P$
of $s$ and $s'$
respectively;
these are
sections of
$N \rOnto^{\pi_0} P$
and
$N' \rOnto^{\pi_0'} P$,
characterized by
the property that
\[
  j \circ s_0
  \ =\ 
  s \circ \jbar
  \quad
  \text{as maps}
  \quad
  P \rDotsto G
  \qquad
  \text{and}
  \qquad
  j' \circ s_0'
  \ =\ 
  s' \circ \jbar
  \quad
  \text{as maps}
  \quad
  P \rDotsto G'
  .
\]
Fix an isomorphism
 $\varphi : N' \rTo^{\simeq} N$
between
the $P$-subextensions
$(\,
 N'
 \,,\,
 i_0'
 \,,\,
 \pi_0'
 \,)
$
and
$(\,
 N
 \,,\,
 i_0
 \,,\,
 \pi_0
 \,)
$.
The map
$\varphi \circ s_0' : P \rDotsto N$
is then
also a section of
$N \rOnto^{\pi_0} P$.
We claim that
the sections $s$ and $s'$
can be chosen
to be compatible with $\varphi$,
in the sense that
$\varphi \circ s_0' = s_0$
as sections of
$N \rOnto^{\pi_0} P$.
Indeed,
for any $k \in K$
and any $p \in P$,
the isomorphism $\varphi$
transforms the equation
\[
  i_0'
  \bigl(\,
    \upperleft
    {\conj_K^{N'}(s_0'(p))}
    {k}
  \,\bigr)
  \ =\ 
  s_0'(p)
  \cdot
  i_0'(k)
  \cdot
  s_0'(p)^{-1}
  \qquad
  \text{in $N'$}
\]
into the equation
\[
  i_0
  \bigl(\,
    \upperleft
    {\conj_K^{N'}(s_0'(p))}
    {k}
  \,\bigr)
  \ =\ 
  (\varphi \circ s_0')(p)
  \cdot
  i_0(k)
  \cdot
  (\varphi \circ s_0')(p)^{-1}
  \ =\ 
  i_0
  \bigl(\,
    \upperleft
    {\conj_K^{N}((\varphi \circ s_0')(p))}
    {k}
  \,\bigr)
  \qquad
  \text{in $N$}
  ,
\]
which shows that
$\conj_K^{N'}(s_0'(p))
 =
 \conj_K^N((\varphi \circ s_0')(p))
$
in $\Aut(K)$.
On the other hand,
equation~\eqref{eqn:conj_K^G s}
implies that
$\conj_K^{N'}(s_0'(p))
 =
 \conj_K^{N}(s_0(p))
$
in $\Aut(K)$.
Hence
$\conj_K^N \circ (\varphi \circ s_0')
 =
 \conj_K^{N} \circ s_0
$
as maps
$P \rDotsto \Aut(K)$,
which implies that
$\varphi \circ s_0'$ and $s_0$
differ multiplicatively by
some map
$P \rDotsto Z(K)$.
We can therefore
adjust the section $s$
accordingly
(on the subgroup $P$
of its domain $Q$)
to achieve
$\varphi \circ s_0' = s_0$.

The (left) factor sets
$h : Q \times Q \rDotsto K$
and
$h' : Q \times Q \rDotsto K$
of the $\delta$-sectioned
extensions
$(\,
 G
 \,,\,
 i
 \,,\,
 \pi
 \,,\,
 s
 \,)
$
and
$(\,
 G'
 \,,\,
 i'
 \,,\,
 \pi'
 \,,\,
 s'
 \,)
$
are characterized by
the property that
for any $q_1,q_2 \in Q$,
one has
\begin{align}
  s(q_1)
  \cdot
  s(q_2)
  &
  \ =\
  i
  (\,
    h(q_1,q_2)
  \,)
  \cdot
  s(q_1q_2)
  &
  &
  \text{in $G$}
  \label{eqn:h}
  \\
  \text{and}
  \qquad
  s'(q_1)
  \cdot
  s'(q_2)
  &
  \ =\
  i'
  (\,
    h'(q_1,q_2)
  \,)
  \cdot
  s'(q_1q_2)
  &
  &
  \text{in $G'$}
  .
  \label{eqn:h'}
\end{align}
By theorem~\ref{thm:ext classification},
there is a
2-cocycle
$e : Q \times Q \rDotsto Z(K)$
such that
\[
  h'
  \ =\
  e
  \cdot
  h
  \qquad
  \text{as maps}
  \quad
  Q \times Q \rDotsto K
  ,
\]
and by notation~\ref{notatn:ext cohom class}
and theorem~\ref{thm:ext classification given restriction},
the cohomology class of $e$
is precisely
$[e]
 =
 \dfrac{(G',i',\pi')}{(G,i,\pi)}
$
in $H^2_P(Q,Z(K))$.
Then
for any $p_1,p_2 \in P$,
equation~\eqref{eqn:h'} gives
\[
  s_0'(p_1)
  \cdot
  s_0'(p_2)
  \ =\
  i_0'
  \Bigl(\,
  e(\jbar(p_1),\jbar(p_2))
  \cdot
  h(\jbar(p_1),\jbar(p_2))
  \,\Bigr)
  \cdot
  s_0'(p_1p_2)
  \qquad
  \text{in $N'$}
  ,
\]
which,
thanks to
our (justified) assumption that
$\varphi \circ s_0' = s_0$,
is transformed
by the isomorphism $\varphi$
into the equation
\[
  s_0(p_1)
  \cdot
  s_0(p_2)
  \ =\
  i_0
  \Bigl(\,
  e(\jbar(p_1),\jbar(p_2))
  \cdot
  h(\jbar(p_1),\jbar(p_2))
  \,\Bigr)
  \cdot
  s_0(p_1p_2)
  \qquad
  \text{in $N$}
  .
\]
But by equation~\eqref{eqn:h},
the left hand side
is equal to
$i_0
 \Bigl(\,
 h(\jbar(p_1),\jbar(p_2))
 \,\Bigr)
 \cdot
 s_0(p_1p_2)
$.
From this,
we see that
the cocycle $e$
has the property that
$e(\jbar(p_1),\jbar(p_2))
 =
 1_{Z(K)}
$
for any $p_1,p_2 \in P$,
and so
it can be used
in~\eqref{eqn:Gammatilde_e(r)}
to determine
the reduction image
$\rd[e]$ of $[e]$.

Note that
(cf.~definition~\ref{defn:mod-K outer action of iterext})
since $\Theta$ is
the mod-$K$outer action
of the iterated extension
$(\,
 G
 \,,\,
 j
 \,,\,
 \pi
 \,)
$,
it induced by
the conjugation action $\conj_N^G$
of $G$ on $N$
and it makes
the diagram~\eqref{diag:G Theta}
commutes;
similarly for $\Theta'$.
It follows that
the maps
\[
  \Sigma
  \ :=\
  \conj_N^G \circ s
  \ :\
  Q
  \rDotsto
  \Aut_K(N)
  \qquad
  \text{and}
  \qquad
  \Sigma'
  \ :=\
  \conj_{N'}^{G'} \circ s'
  \ :\
  Q
  \rDotsto
  \Aut_K(N')
\]
are liftings of
$\Theta : Q \rTo \Out(N;K)$
and
$\Theta' : Q \rTo \Out(N';K)$
respectively.

We now fix $q \in Q$
and compute the effects of
conjugation in $G$ and $G'$
respectively.
First,
for any $n \in N$
written in the form
$n = i_0(k) \cdot s_0(p)$
(with $k \in K$ and $p \in P$),
we have
\[
  \begin{aligned}
  s(q)^{-1}
  \cdot
  j(n)
  \cdot
  s(q)
  &
  \ =\
  s(q)^{-1}
  \cdot
  i(k)
  \cdot
  s(\jbar(p))
  \cdot
  s(q)
  \\
  &
  \ =\
  i
  \Bigl(\
  \upperleft
  {\delta(q)^{-1}}
  {k}
  \ \Bigr)
  \cdot
  s(q)^{-1}
  \cdot
  i
  \Bigl(\
  h(\jbar(p),q)
  \ \Bigr)
  \cdot
  s(\jbar(p)\,q)
  \\
  &
  \ =\
  i
  \Bigl(\
  \upperleft
  {\delta(q)^{-1}}
  {k}
  \cdot
  \upperleft
  {\delta(q)^{-1}}
  {h(\jbar(p),q)}
  \ \Bigr)
  \cdot
  s(q)^{-1}
  \cdot
  s(\jbar(p)\,q)
  \\
  &
  \ =\
  \begin{array}[t]{r}
  i
  \Bigl(\
  \upperleft
  {\delta(q)^{-1}}
  {k}
  \cdot
  \upperleft
  {\delta(q)^{-1}}
  {h(\jbar(p),q)}
  \cdot
  \upperleft
  {\delta(q)^{-1}}
  {h(q,q^{-1}\,\jbar(p)\,q)^{-1}}
  \ \Bigr)
  \cdot
  s(q^{-1}\,\jbar(p)\,q)
  \\
  \text{in $G$}
  ,
  \end{array}
  \end{aligned}
\]
where the last equality
follows from
the identity
\[
  s(q)
  \cdot
  s(q^{-1}\,\jbar(p)\,q)
  \ =\
  i
  \Bigl(\,
  h(q,q^{-1}\,\jbar(p)\,q)
  \,\Bigr)
  \cdot
  s(\jbar(p)\,q)
  \qquad
  \text{in $G$}
  \qquad
  \text{deduced from~\eqref{eqn:h}}
  .
\]
Consequently,
we see that
the automorphism
$\Sigma(q)^{-1}
 =
 \conj_N^G(s(q)^{-1})
 \in
 \Aut_K(N)
$,
which is a lift of
$\Theta(q)^{-1} \in \Out(N;K)$,
acts on $N$ by
sending $n = i_0(k) \cdot s_0(p)$ to
\begin{equation}
  \upperleft
  {\Sigma(q)^{-1}}
  {n}
  \ =\
  i_0
  \Bigl(\
  \upperleft
  {\delta(q)^{-1}}
  {k}
  \cdot
  \upperleft
  {\delta(q)^{-1}}
  {h(\jbar(p),q)}
  \cdot
  \upperleft
  {\delta(q)^{-1}}
  {h(q,q^{-1}\,\jbar(p)\,q)^{-1}}
  \ \Bigr)
  \cdot
  s_0
  \Bigl(\
  \upperleft
  {\conj_P^Q(q^{-1})}
  {p}
  \ \Bigr)
  \qquad
  \text{in $N$}
  .
  \label{eqn:Sigma inv on n}
\end{equation}
Next,
for any $n' \in N'$
written in the form
$n' = i_0'(k) \cdot s_0'(p)$
(with $k \in K$ and $p \in P$),
we have
\[
  \begin{aligned}
  s'(q)
  \cdot
  j'(n')
  \cdot
  s'(q)^{-1}
  &
  \ =\
  s'(q)
  \cdot
  i'(k)
  \cdot
  s'(\jbar(p))
  \cdot
  s'(q)^{-1}
  \\
  &
  \ =\
  i'
  \Bigl(\
  \upperleft
  {\delta(q)}
  {k}
  \ \Bigr)
  \cdot
  s'(q)
  \cdot
  s'(\jbar(p))
  \cdot
  s'(q)^{-1}
  \\
  &
  \ =\
  i'
  \Bigl(\
  \upperleft
  {\delta(q)}
  {k}
  \cdot
  h'(q,\jbar(p))
  \ \Bigr)
  \cdot
  s'(q\,\jbar(p))
  \cdot
  s'(q)^{-1}
  \\
  &
  \ =\
  i'
  \Bigl(\
  \upperleft
  {\delta(q)}
  {k}
  \cdot
  h'(q,\jbar(p))
  \cdot
  h'(q\,\jbar(p)\,q^{-1},q)^{-1}
  \ \Bigr)
  \cdot
  s'(q\,\jbar(p)\,q^{-1})
  \qquad
  \text{in $G'$}
  ,
  \end{aligned}
\]
where the last equality
follows from
the identity
\[
  s'(q\,\jbar(p)\,q^{-1})
  \cdot
  s'(q)
  \ =\
  i'
  \Bigl(\,
  h'(q\,\jbar(p)\,q^{-1},q)
  \,\Bigr)
  \cdot
  s'(q\,\jbar(p))
  \qquad
  \text{in $G'$}
  \qquad
  \text{deduced from~\eqref{eqn:h'}}
  .
\]
Thus,
we see that
the automorphism
$\Sigma'(q)
 =
 \conj_{N'}^{G'}(s'(q))
 \in
 \Aut_K(N')
$,
which is a lift of
$\Theta'(q) \in \Out(N';K)$,
acts on $N'$ by
sending $n' = i_0'(k) \cdot s_0'(p)$ to
\begin{equation}
  \upperleft
  {\Sigma'(q)}
  {n'}
  \ =\
  i_0'
  \Bigl(\
  \upperleft
  {\delta(q)}
  {k}
  \cdot
  h'(q,\jbar(p))
  \cdot
  h'(q\,\jbar(p)\,q^{-1},q)^{-1}
  \ \Bigr)
  \cdot
  s_0'
  \Bigl(\,
  \upperleft
  {\conj_P^Q(q)}
  {p}
  \,\Bigr)
  \qquad
  \text{in $N'$}
  .
  \label{eqn:Sigma' on n'}
\end{equation}
We now use
the isomorphism
$\varphi : N' \rTo^{\simeq} N$
to re-express this
as an equality in $N$.
Thus,
for any $n \in N$
written in the form
$n = i_0(k) \cdot s_0(p)$
(with $k \in K$ and $p \in P$),
we apply $\varphi$
to equation~\eqref{eqn:Sigma' on n'}
with
$n'
 :=
 \varphi^{-1}(n)
 =
 i_0'(k) \cdot s_0'(p)
 \in
 N'
$
and obtain
\[
  \upperleft
  {(\varphi
    \,\circ\,
    \Sigma'(q)
    \,\circ\,
    \varphi^{-1})
  }
  {n}
  \ =\
  i_0
  \Bigl(\
  \upperleft
  {\delta(q)}
  {k}
  \cdot
  h'(q,\jbar(p))
  \cdot
  h'(q\,\jbar(p)\,q^{-1},q)^{-1}
  \ \Bigr)
  \cdot
  s_0
  \Bigl(\,
  \upperleft
  {\conj_P^Q(q)}
  {p}
  \,\Bigr)
  \qquad
  \text{in $N$}
  .
\]
Note that
the automorphism
$\varphi\circ\Sigma'(q)\circ\varphi^{-1}
 \in
 \Aut_K(N)
$
is a lift of
$\upperleft{\varphi}{\Theta'}(q)
 \in
 \Out(N;K)
$.
We now replace $n$ by
$\upperleft{\Sigma(q)^{-1}}{n}$
in this last equation;
by~\eqref{eqn:Sigma inv on n},
this amounts to
replacing
\[
  p
  \quad
  \text{by}
  \quad
  \upperleft{\conj_P^Q(q^{-1})}{p}
  \qquad
  \text{and}
  \qquad
  k
  \quad
  \text{by}
  \quad
  \upperleft
  {\delta(q)^{-1}}
  {k}
  \cdot
  \upperleft
  {\delta(q)^{-1}}
  {h(\jbar(p),q)}
  \cdot
  \upperleft
  {\delta(q)^{-1}}
  {h(q,q^{-1}\,\jbar(p)\,q)^{-1}}
  ,
\]
and we arrive at
the final result
of our computations:
for any $q \in Q$,
the automorphism
given by
$\varphi
 \,\circ\,
 \Sigma'(q)
 \,\circ\,
 \varphi^{-1}
 \,\circ\,
 \Sigma(q)^{-1}
 \in
 \Aut_K(N)
$,
which is a lift of
the element
$\Gamma(\phibar(q))
 =
 \upperleft{\varphi}{\Theta'}(q)
 \circ
 \Theta(q)^{-1}
 \in
 \Out(KNP;K)
$,
sends $n \in N$ to
\[
  \begin{aligned}
  &
  i_0
  \Bigl(\
  k
  \cdot
  h(\jbar(p),q)
  \cdot
  h(q,q^{-1}\,\jbar(p)\,q)^{-1}
  \cdot
  h'(q,q^{-1}\,\jbar(p)\,q)
  \cdot
  h'(\jbar(p),q)^{-1}
  \ \Bigr)
  \cdot
  s_0(p)
  \\
  \ =\
  &
  i_0
  \Bigl(\
  \underbrace{
  e(q,q^{-1}\,\jbar(p)\,q)
  \cdot
  e(\jbar(p),q)^{-1}
  }_{
    \text{in $Z(K)$}
  }
  \ \Bigr)
  \cdot
  \underbrace{
  i_0(k)
  \cdot
  s_0(p)
  }_{
    \ =\
    n
  }
  \qquad
  \text{in $N$}
  .
  \end{aligned}
\]

The above computations
show that
for any $r \in R$
and any choice of
an element $q \in Q$
such that
$\phibar(q) = r$
in $R$,
$\Gamma(r) \in H^1(P,Z(K))$
is the cohomology class of
the 1-cocycle
\[
  \Gammatilde(r)_q
  \ =\
  \Bigl(\
  p
  \ \mapsto\
  e
  (\,
  q
  \,,\,
  q^{-1}\,\jbar(p)\,q
  \,)
  \cdot
  e
  (\,
  \jbar(p)
  \,,\,
  q
  \,)^{-1}
  \ \Bigr)
  .
\]
This coincides with
the 1-cocycle
$\Gammatilde_e(r) \in Z^1(P,Z(K))$
defined in
equation~\eqref{eqn:Gammatilde_e(r)},
so it follows that
$\Gamma(r)
 =
 \Gamma_e(r)
$
in $H^1(P,Z(K))$
for any $r \in R$.
Since
$\Gamma_e \in Z^1(R,H^1(P,Z(K)))$
represents
the reduction image
$\rd[e] \in H^1(R,H^1(P,Z(K)))$
of $[e] \in H^2_P(Q,Z(K))$,
we conclude that
$\rd[e] = [\Gamma]$
in $H^1(R,H^1(P,Z(K)))$.
\end{proof}

By propositions~\ref{prop:iterext infl}
and~\ref{prop:ext rd},
the exactness of
\[
  H^2(R,Z(K)^P)
  \rTo^{\quad \infl \quad}
  H^2_P(Q,Z(K))
  \rTo^{\quad \rd \quad}
  H^1(R,H^1(P,Z(K)))
\]
in the sequence~\eqref{eqn:long exact seq}
translates as:

\begin{prop}
Let
$(\,
 G
 \,,\,
 i
 \,,\,
 \pi
 \,)
$
and
$(\,
 G'
 \,,\,
 i'
 \,,\,
 \pi'
 \,)
$
be extensions of
$K$ by $Q$
with the same outer action
$\theta$
and
with isomorphic
$P$-subextensions
$(KNP)
 =
 (\,
 N
 \,,\,
 i_0
 \,,\,
 \pi_0
 \,)
$
and
$(\,
 N'
 \,,\,
 i_0'
 \,,\,
 \pi_0'
 \,)
$
respectively;
let
$\varphi : N' \rTo^{\simeq} N$
be such an isomorphism.
Let
\[
  \Theta
  \ :\
  Q
  \rTo
  \Out(N;K)
  \qquad
  \text{and}
  \qquad
  \Theta'
  \ :\
  Q
  \rTo
  \Out(N',K)
\]
be the mod-$K$outer actions
induced by
the conjugation actions of
$G$ on $N$
and
$G'$ on $N'$
respectively.
Then
$\Theta$ and $\upperleft{\varphi}{\Theta'}$
are $\Aut(KNP)$-conjugate
if and only if
there exists
an iterated extension
of $(KNP)$ by $(PQR)$,
having
$(\,
 G'
 \,,\,
 i'
 \,,\,
 \pi'
 \,)
$
as its $Q$-main extension,
and with $\Theta$
as its mod-$K$outer action.
\end{prop}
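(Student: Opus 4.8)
The plan is to give a direct argument, paralleling the earlier ``translation'' propositions; the statement also follows from the exactness of
\[
  H^2(R,Z(K)^P)
  \rTo^{\infl}
  H^2_P(Q,Z(K))
  \rTo^{\rd}
  H^1(R,H^1(P,Z(K)))
\]
in~\eqref{eqn:long exact seq} together with propositions~\ref{prop:iterext infl} and~\ref{prop:ext rd}, once the condition ``$\im(\infl)=\ker(\rd)$'' is read off using corollaries~\ref{cor:iterext classification H^2} and~\ref{cor:mod-K outer action classification H^1} and theorem~\ref{thm:ext classification given restriction}; but the direct route is shorter. The key reduction is to parametrize all iterated extensions of $(KNP)$ by $(PQR)$ whose $Q$-main extension is $(G',i',\pi')$, and to see how their mod-$K$ outer actions vary.

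First I would record the structural observation. Write $N':=\Ker(\phibar\circ\pi')$ for the underlying group of the $P$-subextension $(N',i_0',\pi_0')$ of $(G',i',\pi')$, with canonical inclusion $j':N'\rInto G'$. Unwinding definition~\ref{defn:iterext}, the conditions $j''\circ i_0=i'$, $\pi'\circ j''=\jbar\circ\pi_0$ and $j''(N)=N'$ say exactly that an iterated extension $(G',j'',\pi')$ with $Q$-main extension $(G',i',\pi')$ is the same thing as a triple $(G',\,j'\circ\psi,\,\pi')$ for an isomorphism $\psi:N\rTo^{\simeq}N'$ of extensions of $K$ by $P$ (here $\psi$ is the corestriction of $j''$; one checks it intertwines $i_0$ with $i_0'$ and $\pi_0$ with $\pi_0'$). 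Unwinding the defining property of the conjugation action then shows that the mod-$K$ outer action of $(G',j'\circ\psi,\pi')$ is conjugate to the mod-$K$ outer action $\Theta'$ of $(G',j',\pi')$ via $\psi$; in particular, for the canonical choice $\psi=\varphi^{-1}$ the resulting mod-$K$ outer action is exactly ${}^\varphi\Theta'$ in the notation of the paragraph preceding the proposition.

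Next I would fix $\varphi:N'\rTo^{\simeq}N$ as a basepoint. Any isomorphism $\psi:N\rTo^{\simeq}N'$ of extensions differs from $\varphi^{-1}$ by a unique $\eta\in\Aut(KNP)$, so the iterated extensions with $Q$-main extension $(G',i',\pi')$ are precisely the twists $(G',(j'\circ\varphi^{-1})^\eta,\pi')$, $\eta\in\Aut(KNP)$, of the fixed one $(G',j'\circ\varphi^{-1},\pi')$ in the sense of section~\ref{sect:iterext twist}. By the computation recorded there (resting on lemma~\ref{lemma:twist of conjugation action}), twisting by $\eta$ replaces a mod-$K$ outer action $\Theta''$ by $(\Theta'')^{\etabar}$; hence, as $\eta$ ranges over $\Aut(KNP)$, the mod-$K$ outer actions realised by iterated extensions with $Q$-main extension $(G',i',\pi')$ are exactly the $\Aut(KNP)$-conjugates of ${}^\varphi\Theta'$ (definition~\ref{defn:Aut(KNP)-conj Theta}). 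Therefore such an iterated extension with mod-$K$ outer action equal to $\Theta$ exists if and only if $\Theta$ is an $\Aut(KNP)$-conjugate of ${}^\varphi\Theta'$, which is the assertion.

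The genuine content is the structural observation of the second paragraph; after that the argument is bookkeeping. The main thing to handle with care is the direction of conjugation when passing from the conjugation action of $G'$ on $N$ (through $j'\circ\varphi^{-1}$, respectively through its $\eta$-twist) to the induced element of $\Out(N;K)$, so that the resulting relation matches definition~\ref{defn:Aut(KNP)-conj Theta} on the nose and not up to an inverse; I would verify this against diagram~\eqref{diag:G Theta} and lemma~\ref{lemma:twist of conjugation action}. With those identifications pinned down, both implications are immediate.
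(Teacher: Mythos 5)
Your proposal is correct and is essentially the paper's own (direct) argument: both rest on the observation that an iterated extension with $Q$-main extension $(\,G'\,,\,i'\,,\,\pi'\,)$ is exactly an inclusion of the form $j'\circ\varphi^{-1}\circ\eta$ with $\eta\in\Aut(KNP)$, and that composing the inclusion with $\eta$ conjugates the induced mod-$K$\,outer action by $\etabar$ (the paper verifies this by the same conjugation computation you delegate to lemma~\ref{lemma:twist of conjugation action} and section~\ref{sect:iterext twist}), so the realizable actions are precisely the $\Aut(KNP)$-conjugates of $\upperleft{\varphi}{\Theta'}$. Your directional bookkeeping matches definition~\ref{defn:Aut(KNP)-conj Theta}, so no gap remains.
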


\begin{proof}
It is instructive
to prove this result
directly.
We first note that
any iterated extension
of $(KNP)$ by $(PQR)$
having
$(\,
 G'
 \,,\,
 i'
 \,,\,
 \pi'
 \,)
$
as its $Q$-main extension
must be of the form
$(\,
 G'
 \,,\,
 j^*
 \,,\,
 \pi'
 \,)
$
for some
injective homomorphism
$j^* : N \rInto G'$
such that
$j^* \circ i_0
 =
 i'
$
and
$\pi' \circ j^*
  =
 \jbar \circ \pi_0
$;
that is to say,
such that
$j^*$ makes
the following diagram
commute:
\begin{equation}
  \begin{diagram}
  K
  &
  \rInto^{\qquad i_0 \qquad}
  &
  N
  &
  \rOnto^{\qquad \pi_0 \qquad}
  &
  P
  \\
  \dEq
  &
  &
  \dInto_{j^*}
  &
  &
  \dInto_{\jbar}
  \\
  K
  &
  \rInto^{\qquad i' \qquad}
  &
  G'
  &
  \rOnto^{\qquad \pi' \qquad}
  &
  Q
  \end{diagram}
  \label{diag:j^vee}
\end{equation}
On the other hand,
if $j'$ denotes
the canonical inclusion
from $N'$ into $G'$,
then
the fact that
$\varphi : N' \rTo^{\simeq} N$
is an isomorphism of
extensions of $K$ by $P$
shows that
the composite inclusion
$j' \circ \varphi^{-1} : N \rInto G'$
also makes
the diagram~\eqref{diag:j^vee}
commute
(when $j^*$
is replaced by
$j' \circ \varphi^{-1}$).
From these,
it follows that
$j^*$ and $j' \circ \varphi^{-1}$
differ by
an automorphism of
the extension $(KNP)$:
there exists
$\eta \in \Aut(KNP)$
such that
$j^* \circ \eta
 =
 j' \circ \varphi^{-1}
$
as homomorphisms
$N \rInto G'$.
The conjugation action
$\conj_N^{G'*}$
of $G'$ on $N$
(with respect to $j^*$)
is characterized by
the property that
for any $g' \in G$
and any $n \in N$,
one has
\[
  j^*
  \bigl(
    \upperleft
    {\conj_N^{G'*}(g')}
    {n}
  \bigr)
  \ =\ 
  g'
  \cdot
  j^*(n)
  \cdot
  {g'}^{-1}
  \qquad
  \text{in $G'$}
  ,
\]
which,
using
$j^*
 =
 j' \circ \varphi^{-1} \circ \eta^{-1}
$,
we can rewrite as
\[
  j'
  \Bigl(\ 
    \upperleft
    {(\,
      \varphi^{-1}
      \,\circ\,
      \eta^{-1}
      \,\circ\,
      \conj_N^{G'*}(g')
     \,)
    }
    {n}
  \ \Bigr)
  \ =\ 
  g'
  \cdot
  j'
  \Bigl(\ 
    \upperleft
    {(\,
      \varphi^{-1}
      \,\circ\,
      \eta^{-1}
     \,)
    }
    {n}
  \ \Bigr)
  \cdot
  {g'}^{-1}
  \ =\ 
  j'
  \Bigl(\ 
    \upperleft
    {(\,
      \conj_{N'}^{G'}(g')
      \,\circ\,
      \varphi^{-1}
      \,\circ\,
      \eta^{-1}
     \,)
    }
    {n}
  \ \Bigr)
  \qquad
  \text{in $G'$}
  ,
\]
where $\conj_{N'}^{G'}$
is the conjugation action
of $G'$ on $N'$
(with respect to $j'$).
From this
we infer that
\[
  \eta^{-1}
  \circ
  \conj_{N}^{G'*}(g')
  \circ
  \eta
  \ =\ 
  \varphi
  \circ
  \conj_{N'}^{G'}(g')
  \circ
  \varphi^{-1}
  \qquad
  \text{in $\Aut_K(N)$}
  .
\]
The conjugation action
$\conj_N^{G'*}$
induces
the mod-$K$outer action
$\Theta^* : Q \rTo \Out(N;K)$
of the iterated extension
$(\,
 G'
 \,,\,
 j^*
 \,,\,
 \pi'
 \,)
$,
whereas
$\conj_{N'}^{G'}$
induces
$\Theta' : Q \rTo \Out(N';K)$;
hence
for any $q \in Q$,
one has
\[
  \etabar^{-1}
  \cdot
  \Theta^*(q)
  \cdot
  \etabar
  \ =\ 
  \varphi
  \circ
  \Theta'(q)
  \circ
  \varphi^{-1}
  \qquad
  \text{in $\Out(N;K)$}
  .
\]
This shows that
$(\Theta^*)^\etabar
 =
 \upperleft
 {\varphi}
 {\Theta'}
$
as mod-$K$outer actions
of $Q$ on $N$;
in other words,
$\Theta^*$
and
$\upperleft
 {\varphi}
 {\Theta'}
$
are $\Aut(KNP)$-conjugate
$(\theta,\conj_P^Q)$-prolongations of $\Theta_P$.

In the situation of
the proposition,
if we have
an iterated extension
$(\,
 G'
 \,,\,
 j^*
 \,,\,
 \pi'
 \,)
$
whose $Q$-main extension
is
$(\,
 G'
 \,,\,
 i'
 \,,\,
 \pi'
 \,)
$
and
whose mod-$K$outer action $\Theta^*$
is given by $\Theta$,
then
$\Theta = \Theta^*$
and
$\upperleft
 {\varphi}
 {\Theta'}
$
are $\Aut(KNP)$-conjugate.
Conversely,
if we have
$\upperleft
 {\varphi}
 {\Theta'}
 =
 \Theta^\etabar
$
for some
$\eta \in \Aut(KNP)$,
then
the injective homomorphism
$j^* : N \rInto G'$,
defined by setting
$j^* := j' \circ \varphi^{-1} \circ \eta^{-1}$,
makes the diagram~\eqref{diag:j^vee}
commute,
whence
$(\,
 G'
 \,,\,
 j^*
 \,,\,
 \pi'
 \,)
$
is an iterated extension
of $(KNP)$ by $(PQR)$
having
$(\,
 G'
 \,,\,
 i'
 \,,\,
 \pi'
 \,)
$
as its $Q$-main extension,
and its mod-$K$outer action
$\Theta^*$
satisfies
$(\Theta^*)^\etabar
 =
 \upperleft
 {\varphi}
 {\Theta'}
 =
 \Theta^\etabar
$,
which is to say
$\Theta^* = \Theta$.
\end{proof}




\providecommand{\bysame}{\leavevmode\hbox to3em{\hrulefill}\thinspace}

\end{document}